\DeclareMathOperator{\Aut}{Aut}
\DeclareMathOperator{\id}{id}
\DeclareMathOperator{\Spec}{Spec}
\DeclareMathOperator{\sgn}{sgn}
\DeclareMathOperator{\im}{im}
\DeclareMathOperator{\Proj}{Proj}
\DeclareMathOperator{\Tor}{Tor}
\DeclareMathOperator{\Ext}{Ext}
\DeclareMathOperator{\Sym}{Sym}
\DeclareMathOperator{\GL}{GL}
\DeclareMathOperator{\SL}{SL}
\DeclareMathOperator{\orb}{orb}
\DeclareMathOperator{\br}{br}
\DeclareMathOperator{\univ}{univ}
\DeclareMathOperator{\sm}{sm}
\DeclareMathOperator{\Quot}{Quot}
\DeclareMathOperator{\gen}{gen}
\DeclareMathOperator{\Hur}{Hur}
\DeclareMathOperator{\Conf}{Conf}
\DeclareMathOperator{\PConf}{PConf}
\DeclareMathOperator{\stab}{stab}
\DeclareMathOperator{\std}{std}
\DeclareMathOperator{\Covers}{Covers}
\DeclareMathOperator{\Gor}{Gor}
\DeclareMathOperator{\Cov}{Cov}
\DeclareMathOperator{\fr}{fr}
\DeclareMathOperator{\coker}{coker}
\DeclareMathOperator{\CovRes}{CovRes}
\DeclareMathOperator{\length}{length}
\DeclareMathOperator{\loc}{loc}
\title{Hurwitz spaces, Nichols algebras, and Igusa zeta functions}
\author{Kevin Chang}
\begin{document}
\maketitle

\begin{abstract}
By constructing new quasimap compactifications of Hurwitz spaces of degrees 4 and 5, we establish a new connection between arithmetic statistics, quantum algebra, and geometry and answer a question of Ellenberg-Tran-Westerland and Kapranov-Schechtman. It follows from the geometry of our compactifications and a comparison theorem of Kapranov-Schechtman that we can precisely relate the following 3 quantities: (1) counts of $\mathbb{F}_q[t]$-algebras of degrees 3, 4, and 5 (2) the ``invariant'' part of the cohomology of certain special Nichols algebras (3) Igusa local zeta functions for certain prehomogeneous vector spaces.

Using Igusa's computation of the zeta function for the space of pairs of ternary quadratic forms, we compute the number of quartic $\mathbb{F}_q[t]$-algebras with cubic resolvent of discriminant $q^b$ and the part of the cohomology of a 576-dimensional Nichols algebra $\mathfrak{B}_4$ invariant under a natural $\mathbf{S}_4$-action. From the comparison for degree 3, we also obtain two answers to Venkatesh's question about the topological origin of the secondary term in the count of cubic fields.
\end{abstract}

\section{Introduction}\label{section:introduction}
An important problem in number theory is to estimate the number of extensions of a global field with fixed degree and bounded discriminant. So far, the main term of this estimate has been found by Bhargava-Shankar-Wang \cite{bsw_geometry} in degrees up to 5 for global fields of characteristic not 2. In the work of Bhargava-Shankar-Wang and their predecessors \cite{b_density_quartic,b_density_quintic,dw_density_cubic,dh_density_discriminants_cubic}, the strategy to count extensions of a global field $K$ has two steps: (1) count $\mathcal{O}_K$-algebras (2) sieve out the ones that do not correspond to rings of integers of extensions of $K$.

The reason this strategy works in degrees up to 5 is the following: for $3 \le d \le 5$, degree $d$ $\mathcal{O}_K$-algebras (with extra resolvent structure if $d \in \{4,5\}$) are parametrized by orbits in \emph{prehomogeneous vector spaces} $(G_d,V_d)$. Each $G_d$ is a reductive group acting linearly on a vector space $V_d$ with a Zariski dense open orbit $V_d^{ss}$ such that the generic stabilizer is the symmetric group $\mathbf{S}_d$. Each $V_d$ is equipped with a function $\Delta_d$ whose value at a point is the discriminant of the corresponding degree $d$ algebra. Thus, the problem of counting degree $d$ $\mathcal{O}_K$-algebras with bounded discriminant is reduced to counting lattice points $V_d(\mathcal{O}_K)$ with bounded $\Delta_d$. Because of the classification of prehomogeneous vector spaces, there are no parametrizations of higher degree algebras by prehomogeneous vector spaces.

From the geometric point of view, the reason why the degrees $d \le 5$ are special is that for these $d$, $B\mathbf{S}_d$ can be presented as a GIT quotient of a prehomogeneous vector space. For $3 \le d \le 5$, there is a unique (up to scalar) \emph{discriminant} function $\Delta_d$ on $V_d$ generating the ring of relative $G_d$-invariants. The dense open $G_d$-orbit in $V_d$ is precisely the subset $V_d^{ss} \subset V_d$ where $\Delta_d$ does not vanish, and the quotient $[V_d^{ss}/G_d]$ is isomorphic to $B\mathbf{S}_d$. By results of Wood \cite{w_quartic_arbitrary} and Landesman-Vakil-Wood \cite{lvw_low_degree_hurwitz}, the quotients $[V_d/G_d]$ are genuinely moduli stacks of covers. Thus, if we have a curve $C$, a map $C \to [V_d/G_d]$ that generically lands in $B\mathbf{S}_d \subset [V_d/G_d]$ corresponds to a degree $d$ cover that is generically \'{e}tale. Such maps are \emph{quasimaps} to $B\mathbf{S}_d$ in the sense of Ciocan-Fontanine-Kim-Maulik \cite{ckm_git} and Cheong-Ciocan-Fontanine-Kim \cite{cck_orbifold}.

One goal of this paper is to study the geometry of spaces of quasimaps to $B\mathbf{S}_d$. To do this, it will be useful to study the local problem of counting $G_d(\mathbb{F}_q[[t]])$-orbits in $V_d(\mathbb{F}_q[[t]])$, which can also be thought of as counting (possibly singular) degree $d$ covers of the formal disc over $\mathbb{F}_q$. This local counting problem is related to the computation of certain Igusa local zeta functions. In many works \cite{i_complex_powers,i_results_complex,i_functional_complex,i_b_functions,i_local_prehomog,i_stationary_phase}, Igusa studied $p$-adic integrals of the form $\int_V|\Delta(v)|^sdv$, where $\Delta$ is a compactly supported function on the $K$-vector space $V$. He computed these \emph{Igusa zeta functions} for the discriminants of many prehomogeneous vector spaces, including $(G_3,V_3)$ and $(G_4,V_4)$. It turns out that if $d \in \{3,4\}$ and the size of the residue field $q$ is coprime to 6, the Igusa zeta function $\int_{V_d}|\Delta_d(v)|^sdv$ is a power series in $q$ and $t = q^{-s}$ (in fact, a rational function). A similar fact is conjectured for all Igusa zeta functions and, in particular, for the zeta function for $(G_5,V_5)$.

As the other goal of this paper, we'll relate these zeta functions to generating functions for counts of $\mathbb{F}_q[t]$-algebras and cohomology of certain special braided Hopf algebras called \emph{Nichols algebras}. We hope this new connection will be interesting to both the arithmetic statistics community and the Nichols algebra community because each side seems to have some interesting implications for the other (see \Cref{section:arith_nichols}).

\subsection{Main results}\label{section:main_results}
In this paper, we construct smooth compactifications of low degree Hurwitz stacks as stacks of quasimaps from orbinodal curves to $B\mathbf{S}_d$. As stated above, Igusa zeta functions govern a good amount of the geometry of these compactifications.  providing bounds on fiber dimensions of \emph{branch morphisms} from these compactifications to symmetric powers of the base curve. Moreover, when our base curve is $\mathbb{A}^1$, we can use $\mathbb{G}_m$-localization to compute the cohomology of our compactifications and obtain counts of low degree $\mathbb{F}_q[t]$-algebras by reducing to the local counting problem of computing Igusa zeta functions. The cohomology of our compactifications for base curve $\mathbb{A}^1$ is also equal to the cohomology of certain Nichols algebras, so we are able to obtain new computations of Nichols algebra cohomology and reprove some earlier computations.

To state our results in a nice way, we define the generating functions $$I_d(q,t) \coloneqq C_d\int_{V_d}|\Delta_d(v)|^{s - 1}dv$$ for $d \in \{3,4\}$, where the $C_d$ are constants such that the $I_d$ have constant term 1. Throughout the paper, we will call these ``Igusa zeta functions,'' even though Igusa zeta functions do not traditionally have the shift $s \mapsto s - 1$. Their precise values were obtained by Igusa in \cite{i_stationary_phase}: \begin{align*}
I_3(q,t) &= \frac{1 + t + t^2 + t^3 + t^4}{(1 - t^2)(1 - qt^6)} \\
I_4(q,t) &= \frac{f(q,t)}{(1 - t)(1 - t^2)(1 - qt^6)(1 - q^2t^8)(1 - q^3t^{12})},
\end{align*}
where $f(q,t) = 1 + t^2 + t^3 + t^4 - 2t^5 + 2qt^6 + (q - 1)t^7 + qt^8 - qt^9 + (q - 1)qt^{10} - 2qt^{11} + 2q^2t^{12} - q^2t^{13} - q^2t^{14} - q^2t^{15} - q^2t^{17}$.

\subsubsection{Algebraic geometry}\label{section:ag_results}
The main geometric construction on which all our results are based is the \emph{big stack of degree $d$ covers with resolvents} $\mathcal{R}^d$, where $d \in \{3,4,5\}$. In this section, we give the key properties and ideas of our construction and an overview of how properties of our stacks lead to the other results of this paper. For this section, fix $d$, and let $\mathbb{S}_d = \Spec\mathbb{Z}\left[\frac{1}{d!}\right]$.

Roughly, for a scheme $S$ over $\mathbb{S}_d$, an $S$-point of $\mathcal{R}^d$ parametrizes an orbinodal curve $\mathcal{C} \to S$ along with a representable morphism $\chi: \mathcal{C} \to \mathcal{X}_d \coloneqq [V_d/G_d]$ sending all generic, marked, and nodal points to the \emph{\'{e}tale locus} $\mathcal{E}_d \coloneqq [V_d^{ss}/G_d] \cong B\mathbf{S}_d$. A morphism $\mathcal{C} \to \mathcal{X}_d$ with this condition parametrizes a degree $d$ cover $\mathcal{D} \to \mathcal{C}$ \'{e}tale above $\chi^{-1}(\mathcal{E}_d)$ along with extra resolvent data (if $d \in \{4,5\}$) above the points mapping outside $\mathcal{E}_d$, a.k.a. the branch points. Note that there are no conditions on whether $\mathcal{D}$ is connected.

To translate between morphisms $\mathcal{C} \to \mathcal{X}_d$ and covers of $\mathcal{C}$, we rely on results of Wood \cite{w_quartic_arbitrary} and Landesman-Vakil-Wood \cite[\S 3-4]{lvw_low_degree_hurwitz} that parametrize covers of arbitrary base schemes. These results are crucial because we need to deal with orbinodal curves over an arbitrary base $\mathcal{C} \to S$ to construct our stacks. A discussion of Wood's and Landesman-Vakil-Wood's parametrizations can be found in \Cref{section:prehomog_covs}.

The stack $\mathcal{R}^d$ comes equipped with a \emph{branch morphism} $\br: \mathcal{R}^d \to \mathcal{M}$, where $\mathcal{M}$ is the stack of pointed nodal curves equipped with a marked divisor disjoint from the nodes and marked points. Let $\mathcal{M}^{\sm}$ denote the open substack of smooth curves, and let $\mathcal{R}^{d,\sm}$ denote its preimage under $\br$. Let $\mathcal{M}^{b,n}$ denote the open and closed substack of $\mathcal{M}$ where the marked divisor has degree $b$ and there are $n$ marked points, and let $\mathcal{R}^{d;b,n}$ denote its preimage. We have a morphism $\mathcal{M} \to \mathcal{M}^{0,*}$ forgetting the marked divisor.

Our main result about the stack $\mathcal{R}^d$ is the following:

\begin{theorem}[\Cref{theorem:rd_nice}, \Cref{corollary:sm_smooth}]\label{theorem:intro_rd_nice}
\begin{enumerate}[(a)]
\item $\mathcal{R}^d$ is a smooth algebraic stack over $\mathbb{S}_d$.

\item The morphism $\br: \mathcal{R}^d \to \mathcal{M}$ is proper and representable by Deligne-Mumford stacks.

\item The composite $\mathcal{R}^{d,\sm} \xrightarrow[]{\br} \mathcal{M}^{\sm} \to \mathcal{M}^{\sm;0,*}$ is smooth. The restriction $\mathcal{R}^{d,\sm;b,*} \to \mathcal{M}^{\sm;0,*}$ has relative dimension $b$.
\end{enumerate}
\end{theorem}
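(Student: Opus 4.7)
The plan is to reduce all three parts to the relative tangent--obstruction theory of the branch morphism $\br\colon \mathcal{R}^d \to \mathcal{M}$. First I would invoke a general Hom-stack representability theorem (\'a la Olsson or Hall--Rydh) to realize $\mathcal{R}^d$ as an open substack of $\underline{\Hom}_{\mathcal{M}}(\mathcal{C}^{\univ}, \mathcal{X}_d)$, with $\mathcal{C}^{\univ}$ the universal orbinodal curve over $\mathcal{M}$; the quasimap conditions (representability of $\chi$, generic landing in $\mathcal{E}_d$, landing in $\mathcal{E}_d$ at marked and nodal points) are all open. The deformation theory of $\br$ at $(\mathcal{C}, \chi)$ is then governed by $\mathbb{H}^\bullet(\mathcal{C}, \chi^* T_{\mathcal{X}_d})$, where $T_{\mathcal{X}_d}$ is the two-term tangent complex $[\mathfrak{g}_d \xrightarrow{a} V_d]$ of the quotient stack $\mathcal{X}_d = [V_d/G_d]$.

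The key observation for (a) is that both cohomology sheaves of this complex---$\ker(a)$, the infinitesimal stabilizer, and $\coker(a)$, the normal sheaf to the $G_d$-orbit---vanish on $\mathcal{E}_d \cong B\mathbf{S}_d$, since $\mathbf{S}_d$ is discrete and acts with open orbit. The quasimap condition then forces $\chi^{-1}(\mathcal{X}_d \setminus \mathcal{E}_d)$ to be the finite branch locus $B \subset \mathcal{C}$, so $\chi^*\ker(a)$ and $\chi^*\coker(a)$ are both torsion. The spectral sequence $E_2^{p,q} = H^p(\mathcal{C}, \chi^*\mathcal{H}^q(T_{\mathcal{X}_d})) \Rightarrow \mathbb{H}^{p+q}$ then gives $\mathbb{H}^1(\mathcal{C}, \chi^* T_{\mathcal{X}_d}) = 0$, because $H^1$ of a torsion sheaf vanishes and $H^2$ of a coherent sheaf on the orbinodal curve vanishes. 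Hence $\br$ is formally smooth, and combined with smoothness of $\mathcal{M}$, this yields (a).

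For (b), Deligne--Mumford representability reduces to finiteness of the relative stabilizer at $(\mathcal{C}, \chi)$. Via the Wood / Landesman--Vakil--Wood parametrization, an automorphism of $\chi$ corresponds to an automorphism of the associated generically \'etale degree $d$ cover $\mathcal{D} \to \mathcal{C}$ with resolvent, which injects into $\mathbf{S}_d$ on a dense open and is therefore finite. Properness is the valuative criterion: given a DVR $R$ with fraction field $K$, $\mathcal{C}/R \in \mathcal{M}$, and a quasimap $\chi_K$ on $\mathcal{C}_K$, extend uniquely to $\chi$ on $\mathcal{C}$. I would first extend the $G_d$-torsor underlying $\chi_K$ using smoothness of $\Bun_{G_d}$ over $R$, then extend the section of $P \times^{G_d} V_d$ using that $V_d$ is affine together with the Landesman--Vakil--Wood local classification of degree $d$ covers at each node and orbifold point; the marked divisor of $\mathcal{C}$ prescribes exactly the orbifold structure needed to accommodate the limit branch behavior, forcing a unique extension.

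Part (c) uses the same obstruction computation over $\mathcal{M}^\sm$, giving smoothness of $\br\colon \mathcal{R}^{d,\sm} \to \mathcal{M}^\sm$, which composes with the smooth forget-divisor map $\mathcal{M}^\sm \to \mathcal{M}^{\sm;0,*}$ (whose fibers are symmetric products of smooth curves). The relative dimension of $\mathcal{R}^{d,\sm;b,*} \to \mathcal{M}^{\sm;0,*}$ comes from an Euler characteristic calculation:
\[
\chi\bigl(\mathbb{H}^\bullet(\chi^* T_{\mathcal{X}_d})\bigr) = \chi(V_\chi) - \chi(\mathfrak{g}_\chi) = (\rk V_d - \rk \mathfrak{g}_d)(1 - g) + \deg V_\chi - \deg \mathfrak{g}_\chi,
\]
with $\rk V_d = \rk \mathfrak{g}_d$ since $(G_d, V_d)$ is prehomogeneous with $0$-dimensional generic stabilizer; finally $\deg V_\chi - \deg \mathfrak{g}_\chi = b$ because $\det V_d \otimes (\det \mathfrak{g}_d)^{-1}$ on $\mathcal{X}_d$ is the line bundle associated to the character by which $G_d$ scales $\Delta_d$, and its pullback along $\chi$ has zero locus precisely the branch divisor. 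The main obstacle is the properness argument in (b): controlling the degeneration of the generically \'etale cover $\mathcal{D}_K \to \mathcal{C}_K$ at each node and orbifold point of the special fiber requires a careful local analysis of orbinodal covers, likely the technical heart of the full proof.
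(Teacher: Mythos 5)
Your tangent--complex framework for smoothness and dimension is a genuinely different (and arguably cleaner) route than the paper's. The paper chooses an adapted affine cover of $C$, lifts $\chi$ over each chart, and glues using uniqueness of \'etale lifts on overlaps; then separately identifies the local first-order deformation space with $V_d(R)/\mathfrak{g}_d(R)v$ and proves $\dim_k(V_d(k[[t]])/\mathfrak{g}_d(k[[t]])v) = v_t(\Delta_d(v))$ via the Jacobian identity $\pi_v^*\omega_{V_d} = \mathcal{J}_d\Delta_d(v)\chi_d^2\omega_{G_d}$. Your spectral-sequence vanishing of $\mathbb{H}^1$ (cokernel torsion because the quasimap condition forces $\chi^{-1}(\Sigma_d)$ to be finite; kernel zero because $\chi^*\mathfrak{g}_d$ is torsion-free on a reduced curve and the map is generically an isomorphism) is the same phenomenon packaged globally, and the Euler-characteristic computation $\chi(V_\chi) - \chi(\mathfrak{g}_\chi) = \deg\chi^*\mathcal{L}_d = b$ is a nice shortcut to the relative dimension. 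Your DM argument (injectivity of $\chi^*\mathfrak{g}_d \to \chi^*V_d$) is essentially identical to the paper's.

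However, there are three real gaps.

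First, "\emph{Hence $\br$ is formally smooth}" is false, and this is not just a typo: $\br\colon \mathcal{R}^d \to \mathcal{M}$ is proper and generically quasi-finite, so it cannot be smooth of positive relative dimension. What your $\mathbb{H}^1 = 0$ computation actually proves is that the composite $\mathcal{R}^d \to \mathcal{M}^{\orb;0,*}$ (\emph{forgetting} the branch divisor) is formally smooth --- deforming $\chi$ freely moves the branch divisor, which is exactly the point. Relative deformations over $\mathcal{M}$ would have to preserve the branch divisor and are obstructed. You should replace the claim with the correct target and note that $\mathcal{M}^{\orb;0,*}$ is itself smooth over $\mathbb{S}_d$.

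Second, and more seriously, the properness sketch would not compile into a proof. "Smoothness of $\Bun_{G_d}$" handles infinitesimal thickenings, not the codimension-two extension problem over the surface $\mathcal{C}_R$; and "$V_d$ is affine" does not let you extend a section across the special fiber without a priori control of its poles. The argument actually needed is: (i) Abhyankar's lemma (applied to the $\mathbf{S}_d$-torsor $D^\circ \to \mathcal{C}^\circ$ over the tame base) to determine the correct orbinodal structure at nodes and marked points of the special fiber; (ii) Horrocks's theorem --- every vector bundle, hence every $G_d$-torsor, on the punctured spectrum of a 2-dimensional regular local ring is trivial --- to extend the $G_d$-torsor across the finitely many codimension-two points; and (iii) a Hartogs argument on the normal surface $C^{\gen}$ to extend both isomorphisms (for uniqueness) and sections (for existence) across codimension two. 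Your sentence about "the marked divisor of $\mathcal{C}$ prescribes exactly the orbifold structure" also conflates two disjoint loci: the orbifold structure lives at the nodes and marked points, whereas the marked divisor $\Sigma$ is the branch divisor, which lies in the non-stacky smooth locus away from both.

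Third, you have omitted boundedness entirely, which is needed to upgrade "locally of finite type" to "finite type" for $\br$ (and hence for the valuative criterion to be applicable in the usual form). The paper does genuine work here: it reduces the $G_d$-torsor to a $B_d$-torsor via Drinfeld--Simpson, applies a GIT-style lemma (Ciocan-Fontanine--Kim--Maulik, Lemma 3.2.8) to bound the degrees of all the associated line bundles in terms of $b$, and from there bounds $h^0$ of the relevant bundles on the normalization. A Hom-stack theorem (and note that $\mathcal{X}_d$ is an Artin, not DM, stack, so the simplest Olsson-type statements don't directly apply) would give you algebraicity and local finite type, but boundedness requires a separate argument of this kind.
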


Let $\mathcal{M}^{\circ} \subset \mathcal{M}$ denote the open substack where the marked divisor is reduced. We can then define the \emph{big Hurwitz stack of simply branched degree $d$ covers} $\mathcal{H}^d = \br^{-1}(\mathcal{M}^{\circ}) \subset \mathcal{R}^d$. It turns out that morphisms $\mathcal{C} \to \mathcal{X}_d$ corresponding to simply branched covers $\mathcal{D} \to \mathcal{C}$ have unique resolvent data above the branch points up to unique isomorphism. Thus, $\mathcal{H}^d$ genuinely parametrizes simply branched degree $d$ covers, as its name would suggest.

Given a smooth $n$-pointed curve $(C;\sigma)$ over a field $k$ (over $\mathbb{S}_d$), we get a map $$\br: \mathcal{R}_{(C;\sigma),b}^d \to \Sym^b(C^{\gen})$$ by taking the base change of $\mathcal{R}^{d;b,n} \xrightarrow[]{\br} \mathcal{M}^{b,n} \to \mathcal{M}^{0,n}$ along the map $\Spec k \to \mathcal{M}^{0,*}$ classifying $(C;\sigma)$. Here, $C^{\gen}$ is the complement of the marked points $\sigma$ in $C$. By \Cref{theorem:intro_rd_nice}, $\mathcal{R}_{(C;\sigma),b}^d$ is a smooth Deligne-Mumford stack, and $\br$ is proper. The open substack $\mathcal{H}_{(C;\sigma),b}^d = \br^{-1}(\Conf^b(C^{\gen}))$ is now the familiar Hurwitz stack of (possibly disconnected) degree $d$ covers of $C$ simply branched above $C^{\gen}$. From the deformation theory of simply branched covers, we know that $\pi: \mathcal{H}_{(C;\sigma),b}^d \to \Conf^b(C^{\gen})$ is \'{e}tale.

We now explain the main applications of our construction: \begin{itemize}
\item \textbf{Arithmetic}: To obtain our count of quartic $\mathbb{F}_q[t]$-algebras with cubic resolvent (\Cref{theorem:intro_fqt_alg_counts_34}), we consider the case $(C;\sigma) = (\mathbb{P}^1;\infty)$. In this case, there are $\mathbb{G}_m$-actions on all the stacks such that the map $$\br: \mathcal{R}_{(\mathbb{P}^1;\infty),b}^d \to \Sym^b(\mathbb{A}^1)$$ is $\mathbb{G}_m$-equivariant. Note that $\Sym^b(\mathbb{A}^1)$ is affine such that the $\mathbb{G}_m$-action contracts everything to 0. By a $\mathbb{G}_m$-localization lemma (\Cref{lemma:sawin_gm_local}), the $\mathbb{Q}_{\ell}$-cohomology of $\mathcal{R}_{(\mathbb{P}^1;\infty),b}^d$ is isomorphic to the cohomology of its central fiber $\mathcal{F}_b^d \coloneqq \br^{-1}(0)$. Because $\mathcal{R}_{(\mathbb{P}^1;\infty),b}^d$ is smooth and $\br$ is proper, the cohomology of both of them is pure in the sense of Deligne and can be computed by counting $\mathbb{F}_q$-points of $\mathcal{F}_b^d$. This point count can be computed using the Igusa zeta function; in fact, its generating function is exactly $I_d$. This tells us the cohomology of $\mathcal{R}_{(\mathbb{P}^1;\infty),b}^d$, and by smoothness, we can apply Poincar\'{e} duality to get the compactly supported cohomology and point count of $\mathcal{R}_{(\mathbb{P}^1;\infty),b}^d$. This gives us \Cref{theorem:intro_fqt_alg_counts_34}. From the method outlined above, it is clear that the local-global duality of the counts comes from the $\mathbb{G}_m$-action and Poincar\'{e} duality.

We take a moment to explain why we allow for stackiness above $\infty$ instead of just considering non-stacky $\mathbb{P}^1$. A cover $D \to \mathbb{A}^1$ can be uniquely extended to a cover $\overline{D} \to \mathbb{P}^1$ with $\overline{D}$ smooth, which can be made into a cover of orbicurves $\overline{\mathcal{D}} \to \mathbb{P}_r^1$ \'{e}tale over $\infty$ in a unique way such that the classifying map $\mathbb{P}_r^1 \to \Covers_d$ is representable. Because $\overline{\mathcal{D}}$ is smooth above $\infty$, there is no extra resolvent data above $\infty$. Thus, a point of $\mathcal{R}_{(\mathbb{P}^1;\infty),b}^d$ corresponds to a degree $d$ cover of $\mathbb{A}^1$ with resolvent. If we consider $\br: \mathcal{R}_{\mathbb{P}^1,b}^d \to \Sym^b(\mathbb{P}^1)$ instead, then the points of $\br^{-1}(\Sym^b(\mathbb{A}^1))$ parametrize degree $d$ covers of $\mathbb{A}^1$ with resolvent that are unramified above $\infty$. These could be counted if we knew what part of the Igusa zeta function counted covers with resolvents unramified above $\infty$, but for now, we only have counts without any local conditions above $\infty$.

\item \textbf{Intersection cohomology and Nichols algebra cohomology}: We can also use the Igusa zeta functions $I_d$ to prove dimension bounds on the fibers of $\br$. Using these bounds, we obtain the following result:

\begin{theorem}[\Cref{theorem:dense_small34}, \Cref{theorem:dense_small5}]\label{theorem:intro_dense_small34}
For $d \in \{3,4\}$, \begin{enumerate}[(a)]
\item $\mathcal{H}_{(C;\sigma),b}^d$ is dense in $\mathcal{R}_{(C;\sigma),b}^d$.

\item $\br$ is small.
\end{enumerate}
The same results hold for $d = 5$ assuming a precise conjecture about the relevant Igusa zeta function (\Cref{conjecture:igusa5_value}).
\end{theorem}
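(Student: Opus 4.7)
The plan is to reduce both parts of the theorem to a single numerical inequality on the dimensions of local fibers of $\br$, and then extract these dimensions from the explicit expressions for $I_d(q,t)$. I stratify $\Sym^b(C^{\gen})$ by partitions: for $\lambda = (b_1,\ldots,b_k) \vdash b$, let $S_\lambda$ be the locally closed stratum on which the marked divisor has support type $\lambda$, so $\dim S_\lambda = k$ and $\text{codim}\, S_\lambda = \sum(b_i - 1) = b - k$. Because a degree $d$ cover with resolvent is determined locally on the base, a fiber $\br^{-1}(x)$ over $x \in S_\lambda$ factors as a product $\prod_i \mathcal{F}_{b_i}^d$, where $\mathcal{F}_m^d$ is the moduli stack of degree $d$ covers of $\Spec k[[t]]$ with resolvent data and discriminant valuation $m$ at the origin. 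Thus $\dim \br^{-1}(S_\lambda) = k + \sum_i \dim \mathcal{F}_{b_i}^d$.

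The stack $\mathcal{F}_m^d$ is smooth and carries a contracting $\mathbb{G}_m$-action, so the same localization/purity reasoning used in the arithmetic application identifies its groupoid $\mathbb{F}_q$-point count with the coefficient $[t^m]I_d(q,t)$; hence $\dim \mathcal{F}_m^d = \nu_d(m)$, the $q$-degree of this coefficient. Reading off the denominators, the only $q$-dependent factor of $I_3$ is $(1-qt^6)$, giving $\nu_3(m) = \lfloor m/6 \rfloor$, while the $q$-dependent factors $(1-qt^6)(1-q^2t^8)(1-q^3t^{12})$ of $I_4$ yield $\nu_4(m) \leq \lfloor m/4 \rfloor$ with $\nu_4(m) = 0$ for $m \leq 5$.

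The key inequality is then $\nu_d(m) < (m-1)/2$ for all $m \geq 2$ (and $\nu_d(1) = 0$): both $\lfloor m/6 \rfloor < (m-1)/2$ and $\lfloor m/4 \rfloor < (m-1)/2$ (for $m \geq 4$; trivial for $m \leq 3$) are elementary. Summing over the parts of any $\lambda \neq (1,\ldots,1)$ gives $\sum_i \nu_d(b_i) \leq (b-k)/2 < b - k$, so $\dim \br^{-1}(S_\lambda) \leq k + (b-k)/2 < b$. Since $\mathcal{R}_{(C;\sigma),b}^d$ is smooth and hence equidimensional of dimension $b$ by \Cref{theorem:intro_rd_nice}, the complement of $\mathcal{H}_{(C;\sigma),b}^d$ has strictly smaller dimension than every component, proving (a). For (b), the locus $\{x : \dim \br^{-1}(x) \geq K\}$ is the union of closures $\overline{S_\lambda}$ with $\sum_i \nu_d(b_i) \geq K$, each of codimension $\sum(b_i - 1) > 2\sum_i \nu_d(b_i) \geq 2K$. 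The $d = 5$ case is formally identical once \Cref{conjecture:igusa5_value} supplies the analogous bound on $\nu_5$.

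The main obstacle is not the numerics but the setup: one must justify both the local-to-global factorization $\br^{-1}(x) \cong \prod_i \mathcal{F}_{b_i}^d$ in families over $S_\lambda$ and the identification of $\dim \mathcal{F}_m^d$ with the $q$-degree of the Igusa coefficient. The first uses the Wood and Landesman-Vakil-Wood parametrizations to pass from covers to $G_d$-orbits on $V_d$ and then localizes at formal neighborhoods of the branch points; the second invokes an analog of the $\mathbb{G}_m$-localization lemma on the local stack of covers, exploiting that $V_d$ is $\mathbb{G}_m$-contractible via dilation of the uniformizer $t$. Once these are established, the proof is a short verification with the explicit Igusa formulas.
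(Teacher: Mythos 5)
Your proposal is essentially the paper's argument: stratify $\Sym^b(C^{\gen})$ by partitions, bound the fiber dimensions over each stratum via the $q$-degree of the Igusa coefficients (this is \Cref{proposition:fiber_bound_34} and \Cref{corollary:fiber_finite_field} in the paper), and then run the numerics $\lfloor m/6\rfloor, \lfloor m/4\rfloor < (m-1)/2$. The key inequality you isolate, $\nu_d(m) < (m-1)/2$ for $m\ge 2$, is exactly the smallness condition and simultaneously implies the density statement; the paper proves the two parts with the same pair of inequalities.

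Two inaccuracies in your setup are worth flagging, although neither breaks the argument. First, the fiber over $x\in S_\lambda$ does \emph{not} factor as a literal product $\prod_i \mathcal{F}_{b_i}^d$: a cover of $C^{\gen}$ is not determined by its formal germs at the branch points, and one must also choose a global monodromy datum (the \'{e}tale degree-$d$ cover $D$ of $C^{\gen}-\Sigma$) compatible with the local germs. The paper's \Cref{proposition:local_description_fibers} records the correct statement as a fiber product of groupoids over $\bigsqcup_D$; the dimension bound survives because the number of such $D$ (and the finite groups $G_D$, $G_{\widehat{D}_i}$) are uniformly bounded, giving the multiplicative point-count bound of \Cref{corollary:fiber_finite_field}. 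Second, $\mathcal{F}_m^d$ is not smooth, and no $\mathbb{G}_m$-localization is needed to prove the dimension \emph{upper} bound: once you have the groupoid point count $\#\mathcal{F}_m^d(\mathbb{F}_{q^r}) = [t^m]I_d(q^r,t) = O(q^{r\nu_d(m)})$, the Lang-Weil bound for tame Deligne-Mumford stacks (\Cref{lemma:lang_weil_dm}(a)) gives $\dim \mathcal{F}_m^d \le \nu_d(m)$ directly, which is all that is used. (Purity and localization do come in later for the cohomology computation, but there they rely on the smoothness of $\mathcal{R}^d_{(\mathbb{P}^1;\infty),b}$, not of $\mathcal{F}_m^d$.) You should also note, as the paper does, the need for a spreading-out step to pass from $k=\overline{\mathbb{F}}_q$ to an arbitrary field of the right residue characteristic, and the use of \Cref{proposition:hurwitz_etale} to see that $\br$ is quasi-finite over the open stratum.
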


Density and smallness imply that if we let $j: \Conf^b(C^{\gen}) \xhookrightarrow{} \Sym^b(C^{\gen})$ denote the open immersion, then $$j_{!*}\pi_*\underline{\mathbb{Q}_{\ell}}_{\mathcal{H}_{(C;\sigma),b}^d} \simeq R\br_*\underline{\mathbb{Q}_{\ell}}_{\mathcal{R}_{(C;\sigma),b}^d}.$$ If $k = \mathbb{F}_q$, then by proper base change, the trace function of the right-hand side at a point $p \in \Sym^b(C^{\gen})(\mathbb{F}_q)$ counts degree $d$ covers $D \to C^{\gen}$ (with a cubic resolvent if $d = 4$) with branch divisor $p$. This provides a counting interpretation to the trace function of the intersection complex on the left-hand side, answering a question of Ellenberg-Tran-Westerland and Kapranov-Schechtman in this case (see \Cref{section:arith_nichols}). Moreover, the comparison theorem of Kapranov-Schechtman \cite[Corollary 3.3.5]{ks_shuffle_algs} lets us use the above quasi-isomorphism of complexes to compute the $\mathbf{S}_4$-invariant part of the Nichols algebra $\mathfrak{B}_4$ (\Cref{theorem:intro_inv_coh_bd_4}) and provide an arithmetic interpretation of the cohomology of the Nichols algebras $\mathfrak{B}_d$.
\end{itemize}

\subsubsection{Arithmetic statistics}\label{section:arith_results}
Our main arithmetic result is the following sharp count of quartic algebras with cubic resolvent over $\mathbb{F}_q[t]$. The definition of a quartic algebra with cubic resolvent is due to Wood \cite{w_quartic_arbitrary} and can be found in \Cref{section:prehomog_cov_res}. For this section, let $q$ be a prime power coprime to 6.

\begin{theorem}[\Cref{theorem:fqt_alg_counts_34}]\label{theorem:intro_fqt_alg_counts_34}
Let $N_{4,q,b}$ be the number of isomorphism classes of quartic $\mathbb{F}_q[t]$-algebras with cubic resolvent of discriminant $q^b$, inversely weighted by automorphisms over $\mathbb{F}_q[t]$. Then $$\sum_{b = 0}^{\infty}N_{4,q,b}t^b = I_4(q^{-1},qt) = \frac{f(q^{-1},qt)}{(1 - qt)(1 - q^2t^2)(1 - q^5t^6)(1 - q^6t^8)(1 - q^9t^{12})},$$ where $f(q^{-1},qt) = 1 + q^2t^2 + q^3t^3 + q^4t^4 - 2q^5t^5 + 2q^5t^6 - (q - 1)q^6t^7 + q^7t^8 - q^8t^9 - (q - 1)q^8t^{10} - 2q^{10}t^{11} + 2q^{10}t^{12} - q^{11}t^{13} - q^{12}t^{14} - q^{13}t^{15} - q^{15}t^{17}$.
\end{theorem}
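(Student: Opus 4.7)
The plan is to identify $N_{4,q,b}$ with the stacky $\mathbb{F}_q$-point count of $\mathcal{R}_{(\mathbb{P}^1;\infty),b}^4$ and then extract the generating function using the geometric input from \Cref{theorem:intro_rd_nice} together with $\mathbb{G}_m$-localization and Poincar\'{e} duality. By Wood's parametrization, an $\mathbb{F}_q$-point of $\mathcal{R}_{(\mathbb{P}^1;\infty),b}^4$ is the same as a quartic cover of $\mathbb{A}^1_{\mathbb{F}_q}$ with cubic resolvent whose branch divisor has degree $b$, equivalently a quartic $\mathbb{F}_q[t]$-algebra with cubic resolvent of discriminant $q^b$. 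Weighting by automorphisms gives $N_{4,q,b} = \#\mathcal{R}_{(\mathbb{P}^1;\infty),b}^4(\mathbb{F}_q)$, so by the Grothendieck-Lefschetz trace formula it suffices to compute the Frobenius action on $H^*_c(\mathcal{R}_{(\mathbb{P}^1;\infty),b}^4, \mathbb{Q}_\ell)$ for all $b$.

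The $\mathbb{G}_m$-action on $\mathbb{A}^1$ by scaling makes $\br: \mathcal{R}_{(\mathbb{P}^1;\infty),b}^4 \to \Sym^b(\mathbb{A}^1)$ equivariant, and $\Sym^b(\mathbb{A}^1)$ is contracted onto the origin. The localization lemma \Cref{lemma:sawin_gm_local} then yields a Frobenius-equivariant isomorphism $H^*(\mathcal{R}_{(\mathbb{P}^1;\infty),b}^4, \mathbb{Q}_\ell) \cong H^*(\mathcal{F}_b^4, \mathbb{Q}_\ell)$, where $\mathcal{F}_b^4 = \br^{-1}(0)$ is the central fiber. By properness of $\br$ and smoothness of $\mathcal{R}_{(\mathbb{P}^1;\infty),b}^4$ from \Cref{theorem:intro_rd_nice}, both cohomologies are pure, so the entire Frobenius action on $H^*$ is recorded by the weighted point count $\#\mathcal{F}_b^4(\mathbb{F}_q)$. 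Points of $\mathcal{F}_b^4$ parametrize quartic covers of $\Spec \mathbb{F}_q[[t]]$ with cubic resolvent and discriminant of valuation $b$; translating through Wood's correspondence, these are exactly $G_4(\mathbb{F}_q[[t]])$-orbits on $\{v \in V_4(\mathbb{F}_q[[t]]) : \ord_t \Delta_4(v) = b\}$, weighted by reciprocal stabilizer size. A Haar-measure computation identifies the generating function of this orbit count with the Igusa integral $C_4 \int_{V_4}|\Delta_4(v)|^{s-1}\,dv$, giving $\sum_b \#\mathcal{F}_b^4(\mathbb{F}_q)\,t^b = I_4(q,t)$; the shift $s \mapsto s-1$ built into the definition of $I_d$ and the choice of $C_4$ are precisely what produce the normalization compatible with counting orbits rather than total volume.

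To conclude, since $\mathcal{R}_{(\mathbb{P}^1;\infty),b}^4$ is smooth of dimension $b$, Poincar\'{e} duality for smooth Deligne-Mumford stacks sends a Frobenius eigenvalue $\alpha$ on $H^i$ to $q^b/\alpha$ on $H^{2b-i}_c$, so the non-compactly supported generating function $I_4(q,t)$ and the compactly supported generating function $\sum_b N_{4,q,b}\,t^b$ are related by the substitution $(q,t) \mapsto (q^{-1}, qt)$. This yields the claimed formula, after which the explicit expression on the right-hand side is just plugging Igusa's value of $I_4$ into this substitution. I expect the main technical obstacle to be the Haar-measure identification in the second paragraph: matching the weighted orbit count on $V_4(\mathbb{F}_q[[t]])$ with Igusa's $p$-adic integral requires care in tracking the constant $C_4$ and in verifying that Wood's parametrization really does send $G_4(\mathbb{F}_q[[t]])$-orbits to isomorphism classes of covers of the formal disc with the correct automorphism groups. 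Once this identification is in place and the purity from \Cref{theorem:intro_rd_nice} is invoked, the remaining steps are formal.
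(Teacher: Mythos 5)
Your proof proposal matches the paper's argument in essentially every step: identify $N_{4,q,b}$ with the stacky point count of $\mathcal{R}_{(\mathbb{P}^1;\infty),b}^4$; use $\mathbb{G}_m$-localization (\Cref{lemma:sawin_gm_local}) to pass to the central fiber $\mathcal{F}_b^4$; use purity from smoothness and properness to read off cohomology from point counts; and invoke Poincar\'{e} duality to convert the local generating function $I_4(q,t)$ into the global one $I_4(q^{-1},qt)$. The chain of ideas, and the final substitution $(q,t) \mapsto (q^{-1},qt)$ you derive, are exactly what \Cref{proposition:coh_fbd_34} and the proof of \Cref{theorem:fqt_alg_counts_34} do.

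One step is understated relative to where the genuine work lies. You write that points of $\mathcal{F}_b^4$ parametrize quartic covers of $\Spec\mathbb{F}_q[[t]]$ with cubic resolvent, and you flag the Haar-measure normalization as the likely technical bottleneck. In fact the Haar-measure identification (\Cref{proposition:counting_integral}, \Cref{proposition:measure_of_gd}) is the straightforward part, once the constant is normalized so that the $b=0$ coefficient is $1$. The nontrivial claim you elide is that the fiber $\mathcal{F}_b^4$ — which by definition parametrizes covers of $\mathbb{A}^1$ (completed over the stacky $\infty$) branched only over $0$ — is equivalent \emph{as a groupoid} to the local groupoid $V_4(\mathbb{F}_q[[t]])_b\,//\,G_4(\mathbb{F}_q[[t]])$. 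This is \Cref{proposition:fbd_k_pts}, and it hinges on the equivalence of categories between degree-$d$ \'{e}tale covers of $\mathbb{G}_m$ and of $\Spec k((t))$ — i.e., the comparison of tame fundamental groups — combined with the Beauville-Laszlo gluing in \Cref{proposition:local_description_fibers}. Without this, you only get a map from local data to $\mathcal{F}_b^4(\mathbb{F}_q)$, not an equivalence, and the weighted point count would not match. Wood's parametrization alone does not give you the reduction from the global $\mathbb{A}^1$ moduli problem to the purely local one. That said, the architecture of your argument is sound, and this is a matter of filling in one cited proposition, not of a wrong turn.
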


\begin{corollary}[\Cref{corollary:fqt_count_terms_34}]\label{corollary:intro_fqt_count_terms_34}
There are rational functions $A_b,B_b,C_b,D_b,E_b \in \mathbb{Q}(q^{1/12})$ depending only on $b \pmod{24}$ such that $$N_{4,q,b} = (A_bb + B_b)q^b + C_bq^{5b/6} + (D_bb + E_b)q^{3b/4}$$ for all $q,b$.
\end{corollary}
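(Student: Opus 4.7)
The plan is to extract the coefficient of $t^b$ in the rational function
$$\sum_{b=0}^{\infty} N_{4,q,b} t^b = \frac{f(q^{-1},qt)}{(1-qt)(1-q^2t^2)(1-q^5t^6)(1-q^6t^8)(1-q^9t^{12})}$$
provided by \Cref{theorem:intro_fqt_alg_counts_34} via partial fraction decomposition, grouping contributions by the absolute value of the pole. Since $[t^b](1-\alpha t)^{-m} = \binom{b+m-1}{m-1}\alpha^b$, each simple pole at $t=\alpha$ contributes a scalar multiple of $\alpha^b$ and each double pole contributes a linear-in-$b$ multiple of $\alpha^b$; the three growth orders $q^b$, $q^{5b/6}$, $q^{3b/4}$ in the statement will correspond exactly to the three absolute values $q^{-1}$, $q^{-5/6}$, $q^{-3/4}$ taken by the poles.

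First, I would analyze the poles. Writing $(1-qt)(1-q^2t^2) = (1-qt)^2(1+qt)$ exhibits a double pole at $t = q^{-1}$ together with a simple pole at $t = -q^{-1}$. The factor $1 - q^5 t^6$ contributes six simple poles at $t = \zeta q^{-5/6}$ for $\zeta \in \mu_6$. The remaining two factors $1-q^6t^8$ and $1-q^9t^{12}$ both have roots of the form $\zeta q^{-3/4}$, with $\zeta$ ranging over $\mu_8$ and $\mu_{12}$ respectively; since $\gcd(8,12)=4$, they share exactly the four roots with $\zeta \in \mu_4$, producing double poles, while the remaining $8+12-2\cdot 4 = 12$ values of $\zeta$ give simple poles. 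All relevant roots of unity lie in $\mu_{24}$.

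I would then perform partial fractions and read off the three blocks. The block of poles of absolute value $q^{-1}$ combines the double pole at $q^{-1}$ with the simple pole at $-q^{-1}$ (whose contribution is a scalar times $(-1)^b q^b$) into a term $(A_b b + B_b) q^b$ with $A_b, B_b$ depending only on $b \bmod 2$. Summing $c_\zeta \zeta^{-b}$ over $\zeta \in \mu_6$ turns the block at absolute value $q^{-5/6}$ into $C_b q^{5b/6}$ with $C_b$ depending on $b \bmod 6$. The block at $\mu_{24} \cdot q^{-3/4}$ combines a linear-in-$b$ contribution from the four double poles at $\mu_4 \cdot q^{-3/4}$ with a constant-in-$b$ contribution from the twelve simple poles, producing $(D_b b + E_b) q^{3b/4}$ with $D_b, E_b$ depending on $b \bmod \lcm(8,12) = 24$. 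Since $24$ is a common multiple of $2$ and $6$, all five coefficient functions depend only on $b \bmod 24$; and because the partial-fraction residues are rational in $q$, $q^{5/6}$, $q^{3/4}$, and the relevant roots of unity, summing over Galois orbits (which is forced by $N_{4,q,b} \in \mathbb{Q}$) lands them in $\mathbb{Q}(q^{1/12})$. The main computational obstacle will be the residue calculation at the four double poles on $\mu_4 \cdot q^{-3/4}$, where both $1-q^6t^8$ and $1-q^9t^{12}$ vanish simply and each must be expanded to second order to extract the $(1-\zeta^{-1}q^{3/4}t)^{-2}$ and $(1-\zeta^{-1}q^{3/4}t)^{-1}$ coefficients; all other residues are routine.
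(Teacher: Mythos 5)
Your proposal is correct and fills in, by the standard partial-fraction argument, exactly the coefficient extraction the paper compresses into the single line ``we get the following corollary by expanding the generating functions as power series.'' The only small point you leave implicit (and which is needed so that no polynomial part pollutes the formula for small $b$) is that $\deg_t f(q^{-1},qt) = 17$ is strictly less than the denominator degree $1+2+6+8+12 = 29$, so the expansion is pure partial fractions with no polynomial summand.
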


If we let $N_{3,q,b}$ be the number of isomorphism classes of cubic $\mathbb{F}_q[t]$-algebras of discriminant $q^b$, also inversely weighted by automorphisms over $\mathbb{F}_q[t]$, then $$\sum_{b = 0}^{\infty}N_{3,q,b}t^b = I_3(q^{-1},qt) = \frac{1 + qt + q^2t^2 + q^3t^3 + q^4t^4}{(1 - q^2t^2)(1 - q^5t^6)}.$$ Though this can be computed using our method in the same way as $N_{4,q,b}$, there is a much easier and more direct method to compute $N_{3,q,b}$, which is described in \Cref{remark:direct_counting}.

Using $\mathbb{G}_m$-localization and Poincar\'{e} duality, we show that the above formulas for $N_{3,q,b}$ and $N_{4,q,b}$ come from the following local counts, which fall directly out of Igusa's computations of $I_d$.

\begin{proposition}[Special case of \Cref{corollary:local_cover_count}]\label{proposition:intro_local_cover_count}
\begin{enumerate}[(a)]
\item Let $N_{3,q,b}^{\loc}$ be the number of isomorphism classes of cubic $\mathbb{F}_q[[t]]$-algebras of discriminant $q^b$, inversely weighted by automorphisms over $\mathbb{F}_q[[t]]$. Then $$\sum_{b = 0}^{\infty}N_{3,q,b}^{\loc}t^b = I_3(q,t) = \frac{1 + t + t^2 + t^3 + t^4}{(1 - t^2)(1 - qt^6)}.$$

\item Let $N_{4,q,b}^{\loc}$ be the number of isomorphism classes of quartic $\mathbb{F}_q[[t]]$-algebras with cubic resolvent of discriminant $q^b$, inversely weighted by automorphisms over $\mathbb{F}_q[[t]]$. Then $$\sum_{b = 0}^{\infty}N_{4,q,b}^{\loc}t^b = I_4(q,t) = \frac{f(q,t)}{(1 - t)(1 - t^2)(1 - qt^6)(1 - q^2t^8)(1 - q^3t^{12})},$$ where $f(q,t) = 1 + t^2 + t^3 + t^4 - 2t^5 + 2qt^6 + (q - 1)t^7 + qt^8 - qt^9 + (q - 1)qt^{10} - 2qt^{11} + 2q^2t^{12} - q^2t^{13} - q^2t^{14} - q^2t^{15} - q^2t^{17}$.
\end{enumerate}
\end{proposition}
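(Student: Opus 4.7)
The plan is to translate the count $N_{d,q,b}^{\loc}$ into a $p$-adic integral over $V_d(\mathbb{F}_q[[t]])$ and invoke Igusa's explicit evaluation of that integral.

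\emph{Parametrization.} The Wood and Landesman-Vakil-Wood parametrizations reviewed in Section~\ref{section:prehomog_covs} give an equivalence of groupoids between degree-$d$ $\mathbb{F}_q[[t]]$-algebras (with cubic resolvent for $d=4$) and the quotient groupoid $[V_d(\mathbb{F}_q[[t]])/G_d(\mathbb{F}_q[[t]])]$, matching the algebra's discriminant with $\Delta_d$ of any representative and its automorphism group with the $G_d(\mathbb{F}_q[[t]])$-stabilizer. Consequently
\[
N_{d,q,b}^{\loc} \;=\; \sum_{\substack{[v]\,\in\,G_d(\mathbb{F}_q[[t]])\backslash V_d(\mathbb{F}_q[[t]]) \\ \ord_t \Delta_d(v)\,=\,b}} \frac{1}{|\operatorname{Stab}_{G_d(\mathbb{F}_q[[t]])}(v)|}.
\]

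\emph{Jacobian identity.} For $v$ with $\Delta_d(v)\ne 0$, the orbit map $\phi_v\colon G_d \to V_d$, $g\mapsto g\cdot v$, has differential $d\phi_v|_g = g|_{V_d}\circ d\rho_v$ at each $g$, so its Jacobian equals $\det(g|_{V_d})\cdot J(v)$ with $J(v):=\det(d\rho_v\colon \Lie G_d^{\mathrm{eff}}\to V_d)$ (passing to the quotient $G_d^{\mathrm{eff}}$ of $G_d$ by the kernel of the action, so that $\dim G_d^{\mathrm{eff}} = \dim V_d$). The polynomial $J$ is a $G_d$-relative invariant of degree $\dim V_d$. A direct check gives $\deg \Delta_d = \dim V_d$ for $d\in\{3,4\}$ (both equal $4$ for $d=3$; both equal $12$ for $d=4$, since $\Delta_4=\operatorname{disc}_x\det(Q_1+xQ_2)$ has degree $3\cdot 4$). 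Since $\Delta_d$ generates the ring of relative invariants, $J = c_d\Delta_d$ with $c_d$ a scalar that is a $q$-adic unit when $q$ is coprime to $d!$. Combined with compactness of $G_d(\mathbb{F}_q[[t]])$ (which forces $|\det(g|_{V_d})|=1$ there), this yields the orbit-measure formula
\[
\mu\bigl(G_d(\mathbb{F}_q[[t]])\cdot v\bigr) \;=\; |\Delta_d(v)|\cdot\frac{\mu(G_d(\mathbb{F}_q[[t]]))}{|\operatorname{Stab}_{G_d(\mathbb{F}_q[[t]])}(v)|}.
\]

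\emph{Assembly and normalization.} Since $|\Delta_d|$ is constant on $G_d(\mathbb{F}_q[[t]])$-orbits, summing the orbit measures yields, with $t = q^{-s}$,
\[
\int_{V_d(\mathbb{F}_q[[t]])} |\Delta_d(v)|^{s-1}\, dv \;=\; \mu(G_d(\mathbb{F}_q[[t]]))\cdot\sum_{b\ge 0} N_{d,q,b}^{\loc}\, t^b.
\]
The inverse-weighted count of \'etale $\mathbb{F}_q[[t]]$-algebras of degree $d$ is $\sum_{[\sigma]\in \mathbf{S}_d/\mathrm{conj}} |Z_{\mathbf{S}_d}(\sigma)|^{-1} = 1$ by Burnside, and for $d=4$ the cubic resolvent of an \'etale quartic algebra is canonically determined, so $N_{d,q,0}^{\loc} = 1$. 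Matching constant terms forces the normalizing constant $C_d = \mu(G_d(\mathbb{F}_q[[t]]))^{-1}$, whence $I_d(q,t) = \sum_b N_{d,q,b}^{\loc}\, t^b$. The explicit rational formulas for $I_3$ and $I_4$ are Igusa's computations recorded in Section~\ref{section:main_results}.

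The main technical obstacle is establishing $J = c_d\Delta_d$ with exact first power: this rests on the numerical coincidence $\deg\Delta_d = \dim V_d$, which holds for $d\in\{3,4\}$ by inspection but is genuinely special to small $d$ and underlies why the Igusa-integral argument is available precisely in the prehomogeneous regime.
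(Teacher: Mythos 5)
Your proof is correct and follows essentially the same path as the paper (Propositions \ref{proposition:counting_integral} and \ref{proposition:measure_of_gd} feeding into \Cref{corollary:local_cover_count}): translate the count into orbit measures via the Jacobian relative-invariant identity, then normalize by the constant term. Two small cosmetic differences: the paper pins down $J = c_d\Delta_d$ by comparing the $G_d$-characters (both transform by $\chi_d^2$) rather than by the degree count $\deg J = \dim V_d = \deg\Delta_d$ (both work, since the relative-invariant ring is generated by $\Delta_d$), and the paper cites direct counting or a generating-function identity for the weighted \'etale count rather than the class equation. Also note that your hedge about passing to $G_d^{\mathrm{eff}}$ is unnecessary: for $d\in\{3,4,5\}$ the action is effective and $\dim G_d = \dim V_d$ already.
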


In \cite[Problem 5]{ggw_problems_arith_top}, Venkatesh asks the following question: ``What is the topological meaning of secondary terms appearing in asymptotic counts in number theory?'' The example he considers is the counting problem for cubic extensions of $\mathbb{Q}$, where the number of cubic fields of absolute discriminant less than $X$ is currently known to be $CX + KX^{5/6} + O(X^{2/3 + \epsilon})$ for constants $C,K$ by Bhargava-Taniguchi-Thorne \cite{btt_improved_error}. The terms $X$ and $X^{5/6}$ can be seen in the less refined count of cubic algebras of bounded discriminant, which was done much earlier by Shintani \cite{s_on_dirichlet}.

From the ``local-global duality'' above, we can provide one answer to Venkatesh's question: the lower order terms in the global counts correspond to the higher order terms in the local counts. To put this answer in more geometric terms, we will show (\Cref{proposition:fbd_k_pts}) that the Igusa zeta function $I_3$ is roughly the generating function for the point counts of the central fibers $\mathcal{F}_b^3$ of the branch maps $\mathcal{R}_{(\mathbb{P}^1;\infty),b}^3 \to \Sym^b(\mathbb{A}^1)$. The $1 - qt^6$ in the denominator of $I_3(q,t)$ tells us that the dimension of $\mathcal{F}_b^3$ is roughly $\frac{b}{6}$. This means that the leading term $q^{b/6}$ in the point counts of the central fibers corresponds under Poincar\'{e} duality to the $q^{5b/6}$ term in the count of cubic $\mathbb{F}_q[t]$-algebras of discriminant $q^b$ \cite{i_b_functions}.

\begin{remark}\label{remark:deopurkar_fb3}
The fact that the central fibers $\mathcal{F}_b^3$ have dimension roughly $\frac{b}{6}$ was actually observed by Deopurkar in \cite[\S4]{d_modular}, where he considers the stratification of $\mathcal{F}_b^3$ by Maroni invariant and shows that the strata are quotients of affine spaces by finite groups by describing all the possible generically \'{e}tale triple covers of the formal disc. This approach has the advantage of being much more explicit than our computation of the dimension, which just uses the point count given by the Igusa zeta function. However, it is much more difficult to generalize this explicit approach to describe all the points of $\mathcal{F}_b^4$, which correspond to generically \'{e}tale quartic covers of the formal disc with cubic resolvent. Our approach of expressing the generating function of the point count as an Igusa zeta function lets us compute not only the dimension of $\mathcal{F}_b^4$ (roughly $\frac{b}{4}$) but also the cohomology and the exact point count.
\end{remark}

\subsubsection{Quantum algebra}\label{section:quantum_results}
In contrast to the strategy of Ellenberg-Venkatesh-Westerland \cite{evw_homological_stability} and Ellenberg-Tran-Westerland \cite{etw_fnf_cells}, we use number theory and algebraic geometry to deduce something on the topological/quantum algebraic side. The theorem that allows us to make this translation is due to Kapranov-Schechtman:

\begin{theorem}[{\cite[Corollary 3.3.5]{ks_shuffle_algs}}]\label{theorem:intro_ks_nichols_ic}
There is an isomorphism $$\Ext_{T_{!*}(V(c,-1))}^{n - j,n}(k,k) \cong H^j(\Sym^n(\mathbb{A}^1),j_{!*}\pi_*\underline{k}_{\Hur_{G,n}^c})^{\vee}.$$
\end{theorem}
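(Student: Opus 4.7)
My plan is to realize both sides of the claimed isomorphism as two faces of a single factorization structure on $\mathbb{A}^1$ and then use the bar construction to pass between them. The starting observation is that $V(c,-1)$ is a braided vector space whose braiding is realized topologically by the Hurwitz braid action: the braided local system on $\Conf^n(\mathbb{A}^1)$ whose stalk is $V(c,-1)^{\otimes n}$ and whose half-twist monodromy is the given braiding is canonically identified with the restriction of $\pi_*\underline{k}_{\Hur_{G,n}^c}$ to the configuration stratum, since the monodromy of a $G$-cover around a collision of branch points permutes sheets by simultaneous conjugation inside $c$.

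The heart of the proof is the identification, as a graded algebra, of the shuffle algebra $A_\bullet \coloneqq \bigoplus_n H^\bullet(\Sym^n(\mathbb{A}^1), j_{!*}\pi_*\underline{k}_{\Hur_{G,n}^c})$ with $T_{!*}(V(c,-1))$. The multiplication on $A_\bullet$ comes from the factorizable structure: on the open locus where $\Sym^m\times\Sym^n\to\Sym^{m+n}$ is \'etale, the intermediate extensions multiply via a K\"unneth map, and the associated product makes $A_\bullet$ a graded-connected braided-commutative algebra. I would then exhibit $A_\bullet$ as a quotient of the free braided tensor algebra on $V(c,-1)$ and show that the relations cutting out this quotient are exactly those defining the image of $T_{!*}$: the relevant local cohomology of the diagonal arrangement (computed via Orlik-Solomon / Schechtman-Varchenko type integrals) vanishes on precisely the Nichols ideal, so passing from $j_!\pi_*$ or $j_*\pi_*$ to $j_{!*}\pi_*$ kills exactly this ideal.

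Given this algebra identification, I would compute $\Ext_A^{*,*}(k,k)$ for $A = T_{!*}(V(c,-1))$ using the normalized bar complex. Because $A$ is graded-connected and generated in weight one, its weight-$n$ component lives in bar degrees $0,\ldots,n$; the degree $n-j$ piece is a sum over ordered compositions $n = n_1+\cdots+n_{n-j}$ of tensor products $A_{n_1}\otimes\cdots\otimes A_{n_{n-j}}$. Dually, this is precisely the $E_1$-page of the stratification spectral sequence of $j_{!*}\pi_*\underline{k}_{\Hur_{G,n}^c}$ on $\Sym^n(\mathbb{A}^1)$ along the partition strata $\Sym^\lambda(\mathbb{A}^1)$; the bar differential and the stratification differential should match term by term, giving the isomorphism in the stated bidegree. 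The main obstacle I expect is this last matching, together with a careful track of the shifts and orientations --- the perverse shift by $[n]$ in the IC-normalization, the relative orientation sheaf of each diagonal stratum, and the Verdier duality responsible for the $(-)^\vee$ on the right-hand side --- which together force the appearance of the bidegree $(n-j,n)$ on the Ext side and cohomological degree $j$ on the sheaf-theoretic side. Getting all signs literally correct, rather than only up to sign, is where I expect most of the technical work to live.
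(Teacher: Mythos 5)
This is a cited result (Kapranov--Schechtman, Corollary 3.3.5); the paper supplies no proof of its own, so your reproof attempt must stand on its own feet. It contains a genuine gap at the identification you call ``the heart of the proof.'' The claim that $A_\bullet := \bigoplus_n H^\bullet(\Sym^n(\mathbb{A}^1), j_{!*}\pi_*\underline{k}_{\Hur_{G,n}^c})$ is isomorphic as a graded algebra to $T_{!*}(V(c,-1))$ is false. Take $G = \mathbb{Z}/2$ and $c = \{1\}$: the braid group acts trivially on the singleton $c^{\times n}$, so $\pi_*\underline{k}$ is the constant rank-one local system on $\Conf^n(\mathbb{A}^1)$, $j_{!*}\pi_*\underline{k}$ is the constant sheaf on $\Sym^n(\mathbb{A}^1) \cong \mathbb{A}^n$, and $H^\bullet(\Sym^n,j_{!*}\pi_*\underline{k}) = k$ in degree zero for every $n$. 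Thus $A_\bullet$ is a polynomial ring on one generator. On the other side $T_{!*}(V(c,-1))$ is $k[x]/(x^2)$, which vanishes in all weights $\ge 2$. A polynomial ring is not an exterior algebra, so the proposed identification cannot hold.

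The error is a conflation of two levels of the Kapranov--Schechtman correspondence. Their Theorems 3.3.1 and 3.3.3 set up an equivalence under which the \emph{bialgebra} $T_{!*}(V)$ corresponds to the \emph{collection of perverse sheaves} $\{j_{!*}\mathcal{L}_n\}_n$ via local stalk/costalk data along the partition strata, not via hypercohomology. The hypercohomology $H^\bullet(\Sym^n, j_{!*}\mathcal{L}_n)$ computes (dual to) $\Tor_{T_{!*}(V)}(k,k)$, which is precisely the theorem you set out to prove, not an identification you may assume along the way. Your example confirms this: $\Ext_{k[x]/(x^2)}(k,k) \cong k[y]$ with $y$ in bidegree $(1,1)$, and its graded dual is polynomial, matching $A_\bullet$. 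Your final step, comparing the bar complex of $T_{!*}(V)$ against the stratification spectral sequence of $j_{!*}\mathcal{L}$ along the $\Sym^\lambda$ strata, is the right shape of argument and is essentially how the cited theorem is proved; but the $E_1$-terms of that spectral sequence must be furnished by the stalk/costalk identification from the KS equivalence, not by the false step 2. As written, step 2 is inconsistent with the conclusion: if $A_\bullet \cong T_{!*}(V)$, the theorem would read $\Ext_{T_{!*}(V)}(k,k) \cong T_{!*}(V)^\vee$, which fails already in the example above.
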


Here, $G$ is an arbitrary finite group, $T_{!*}(V(c,-1))$ is a braided graded Hopf algebra associated to a union of conjugacy classes $c \subset G$ known as the \emph{Nichols algebra}, and $\pi: \Hur_{G,n}^c \to \Conf^n(\mathbb{A}^1)$ is a covering space corresponding to a natural action of the $n$th braid group $\mathbf{B}_n$ on $c^{\times n}$. The Hurwitz space $\Hur_{G,n}^c$ parametrizes marked $G$-covers of the (closed) unit disc with $n$ branch points whose local monodromy is in $c$, where a marking is a $G$-equivariant isomorphism between the fiber above 1 and $G$. Our Hurwitz stacks $\mathcal{H}_{(C;\sigma),b}^d$, which correspond to the case $G = \mathbf{S}_d$ and $c = \tau_d$, the class of transpositions, are unmarked. To make better use of the theorem of Kapranov-Schechtman, we construct marked Hurwitz spaces as certain $\mathbf{S}_d$-torsors $\mathcal{H}_{M,(C;\sigma),b}^d \to \mathcal{H}_{(C;\sigma),b}^d$ along with marked compactifications $\mathcal{R}_{M,(C;\sigma),b}^d \to \mathcal{R}_{(C;\sigma),b}^d$, and we show that the space $\mathcal{H}_{M,(\mathbb{P}^1;\infty),b}^d(\mathbb{C})$ is homeomorphic to $\Hur_{\mathbf{S}_d,b}^{\tau_d}$.

Our result concerns the cohomology of the Nichols algebra $\mathfrak{B}_4$ associated to the Yetter-Drinfeld module $k\tau_4 \otimes \sgn \in \mathcal{YD}_{\mathbf{S}_4}^{\mathbf{S}_4}$, where $k$ is a field of characteristic 0 and $\sgn$ is the sign representation of $\mathbf{S}_4$. This 576-dimensional Nichols algebra is significant because it is one of very few known examples of finite-dimensional elementary Nichols algebras. The cohomology of the previous Nichols algebra on this list, the 12-dimensional $\mathfrak{B}_3$, was computed algebraically by \c{S}tefan-Vay in \cite{sv_cohomology_fk_alg}. We prove the following theorem using the Kapranov-Schechtman theorem and the density/smallness from \Cref{theorem:intro_dense_small34}. We can also recover the $\mathbf{S}_3$-invariant part of the \c{S}tefan-Vay computation by the same method and recover the entire computation by applying $\mathbb{G}_m$-localization to the marked compactification (\Cref{remark:sv_compare_future}).

\begin{theorem}[\Cref{theorem:inv_coh_bd_34} for $d = 4$]\label{theorem:intro_inv_coh_bd_4}
\begin{enumerate}[(a)]
\item There are $\mathbf{S}_4$-equivariant isomorphisms $$\Ext_{\mathfrak{B}_4}^{a,b}(k,k) \cong H^{b - a}(\mathcal{R}_{M,(\mathbb{P}^1;\infty),b}^4(\mathbb{C}),k)^{\vee} \cong H^{b - a}(\mathcal{F}_{M,b}^4(\mathbb{C}),k)^{\vee},$$ where the $\mathbf{S}_4$-action on the $\Ext$ groups is the geometric action.

\item Let $E^{a,b} = \dim\Ext_{\mathfrak{B}_4}^{a,b}(k,k)^{\mathbf{S}_4}$. Then $$\sum_{a = 0}^{\infty}\sum_{b = 0}^{\infty}E^{a,b}q^at^b = I_4(q^{-2},qt) = \frac{f(q^{-2},qt)}{(1 - qt)(1 - q^2t^2)(1 - q^4t^6)(1 - q^4t^8)(1 - q^6t^{12})},$$ where $f(q^{-2},qt) = 1 + q^2t^2 + q^3t^3 + q^4t^4 - 2q^5t^5 + 2q^4t^6 - (q^2 - 1)q^5t^7 + q^6t^8 - q^7t^9 - (q^2 - 1)q^6t^{10} - 2q^9t^{11} + 2q^8t^{12} - q^9t^{13} - q^{10}t^{14} - q^{11}t^{15} - q^{13}t^{17}$.
\end{enumerate}
\end{theorem}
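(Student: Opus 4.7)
The plan is to chain three tools: the Kapranov–Schechtman comparison (\Cref{theorem:intro_ks_nichols_ic}), the marked analog of the density and smallness in \Cref{theorem:intro_dense_small34}, and $\mathbb{G}_m$-localization (\Cref{lemma:sawin_gm_local}) combined with purity and the Igusa point count $\sum_b |\mathcal{F}_b^4(\mathbb{F}_q)|\,t^b = I_4(q,t)$.

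For part (a), I apply \Cref{theorem:intro_ks_nichols_ic} to $G=\mathbf{S}_4$, $c=\tau_4$, and the Yetter–Drinfeld module $k\tau_4\otimes\sgn$, which realizes $\mathfrak{B}_4$ as $T_{!*}(V(\tau_4,-1))$. Via the homeomorphism $\mathcal{H}_{M,(\mathbb{P}^1;\infty),b}^4(\mathbb{C})\cong \Hur_{\mathbf{S}_4,b}^{\tau_4}$ already established in the paper, this gives
$$\Ext_{\mathfrak{B}_4}^{a,b}(k,k)\cong H^{b-a}\bigl(\Sym^b(\mathbb{A}^1),\,j_{!*}\pi_*\underline{k}_{\mathcal{H}_{M,(\mathbb{P}^1;\infty),b}^4(\mathbb{C})}\bigr)^\vee.$$
Because $\mathcal{H}_M^4\to\mathcal{H}^4$ and $\mathcal{R}_M^4\to\mathcal{R}^4$ are compatible $\mathbf{S}_4$-torsors, density and smallness pull back from the unmarked setting of \Cref{theorem:intro_dense_small34}; the resulting quasi-isomorphism $j_{!*}\pi_*\underline{k}\simeq R\br_*\underline{k}$ on the marked side, pushed to a point, identifies the right-hand side with $H^{b-a}(\mathcal{R}_{M,(\mathbb{P}^1;\infty),b}^4(\mathbb{C}),k)^\vee$. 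The $\mathbb{G}_m$-action on $(\mathbb{P}^1;\infty)$ lifts equivariantly to the marked compactification and contracts it onto $\mathcal{F}_{M,b}^4$, so \Cref{lemma:sawin_gm_local} produces the second isomorphism. Each step is $\mathbf{S}_4$-equivariant, since the geometric action on the $\Ext$ side corresponds precisely to the deck-transformation action of $\mathbf{S}_4$ on the torsor.

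For part (b), taking $\mathbf{S}_4$-invariants of the chain above and using that finite group quotients commute with $\mathbb{Q}$-cohomology, $H^{b-a}(\mathcal{R}_{M,(\mathbb{P}^1;\infty),b}^4)^{\mathbf{S}_4}\cong H^{b-a}(\mathcal{R}_{(\mathbb{P}^1;\infty),b}^4)$, and then applying $\mathbb{G}_m$-localization again, I get $E^{a,b}=\dim H^{b-a}(\mathcal{F}_b^4(\mathbb{C}),k)$. By the purity argument outlined in \Cref{section:ag_results} (smoothness of $\mathcal{R}^4$, properness of $\br$, and $\mathbb{G}_m$-contractibility onto $\mathcal{F}_b^4$), together with mixed-Tate-ness of $H^*(\mathcal{F}_b^4)$ coming from a stratification of $\mathcal{F}_b^4$ in the spirit of Deopurkar's (\Cref{remark:deopurkar_fb3}) adapted to the $d=4$ prehomogeneous picture, the point count reads
$$|\mathcal{F}_b^4(\mathbb{F}_q)|=\sum_{j}\dim H^{2j}(\mathcal{F}_b^4(\mathbb{C}),k)\,q^j.$$
Combining with $\sum_b|\mathcal{F}_b^4(\mathbb{F}_q)|\,t^b=I_4(q,t)$ (from \Cref{proposition:intro_local_cover_count}(b) and the fact that $I_4$ is the generating function for central-fiber point counts) gives $E^{a,b}=[q^{(b-a)/2}t^b]I_4(q,t)$. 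The elementary substitution $I_4(q^{-2},qt)=\sum c_{i,j}q^{j-2i}t^j$ shows that $[q^at^b]I_4(q^{-2},qt)=c_{(b-a)/2,b}$, matching $E^{a,b}$; expanding yields the explicit closed form in the statement.

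The main obstacle is the purity/mixed-Tate input for $\mathcal{F}_b^4$: the point count determines Betti numbers only after one knows the cohomology is pure and of Tate type concentrated in even degrees, so this must be established either via an explicit stratification by iterated affine bundles over finite quotients of affine spaces (the natural generalization of Deopurkar's Maroni stratification) or by leveraging $\mathbb{G}_m$-contractibility onto $\mathcal{F}_b^4$ more carefully. Once that input is in place, the marked density/smallness and marked $\mathbb{G}_m$-localization are routine packagings of the unmarked statements via the torsor structure, and the generating function identity collapses to the simple substitution $(q,t)\mapsto(q^{-2},qt)$.
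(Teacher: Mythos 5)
Your outline matches the paper's argument almost step for step. Part (a) is exactly: apply \Cref{theorem:intro_ks_nichols_ic} with $G=\mathbf{S}_4$, $c=\tau_4$, use the homeomorphism $\mathcal{H}_{M,(\mathbb{P}^1;\infty),b}^4(\mathbb{C})\cong\Hur_{\mathbf{S}_4,b}^{\tau_4}$, pass density and smallness to the marked side along the $\mathbf{S}_4$-torsor to get $j_{!*}\pi_*\underline{k}\simeq R\br_{M*}\underline{k}$, and contract via \Cref{lemma:sawin_gm_local}. For (b), taking $\mathbf{S}_4$-invariants, identifying $H^*(\mathcal{R}_{M,\dots})^{\mathbf{S}_4}\cong H^*(\mathcal{R}_{(\mathbb{P}^1;\infty),b}^4)$, and doing the substitution $[q^at^b]I_4(q^{-2},qt)=c_{(b-a)/2,b}$ is also what the paper does.

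The one genuine gap is the one you flag yourself at the end: the purity and Tate-type statement for $H^*(\mathcal{F}_b^4)$, without which the point count does not determine the Betti numbers. Of your two proposed fixes, be aware that the Deopurkar-style stratification of $\mathcal{F}_b^4$ is explicitly called out in \Cref{remark:deopurkar_fb3} as something that does \emph{not} easily generalize to $d=4$, so that route would require substantial new work. The paper takes your second option, and it is worth making it precise because it closes the gap cleanly with no stratification: by $\mathbb{G}_m$-localization, restriction $H^i(\mathcal{R}_{M,(\mathbb{P}^1;\infty),b}^4)\to H^i(\mathcal{F}_{M,b}^4)$ is a Frobenius-equivariant isomorphism. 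Since $\mathcal{R}_{M,(\mathbb{P}^1;\infty),b}^4$ is a smooth algebraic space of dimension $b$ (\Cref{theorem:intro_rd_nice}), Poincaré duality rewrites $H^i$ as $H_c^{2b-i}(-b)^\vee$, so its weights are at least $i$; since $\mathcal{F}_{M,b}^4$ is proper, Deligne's bound (Sun's version for stacks) gives weights at most $i$. Hence $H^i$ is pure of weight $i$. To get the Tate statement that all eigenvalues on $H^i$ equal $q^{i/2}$ (and that odd cohomology vanishes), combine purity with the fact that Igusa's formula shows the coefficient of $t^b$ in $I_4(q^r,t)$ is an integer polynomial in $q^r$: the Lefschetz trace formula then forces the eigenvalues of weight $2j$ to all be $q^j$ and the odd-degree groups to be zero. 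Once you supply that chain in place of the acknowledged ``obstacle,'' the rest of your proposal is a correct proof.
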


We clarify what we mean by ``geometric action'' in part (a). There are two $\mathbf{S}_4$-actions on $k\tau_4 \otimes \sgn$. The first is the \emph{standard action}, which is the action on $k\tau_4 \otimes \sgn$ as a Yetter-Drinfeld module: for $t \in \tau_4$ and $g \in \mathbf{S}_4$, we have $(g,t) \mapsto \sgn(g)gtg^{-1}$. The second is the \emph{geometric action}: for $t \in \tau_4$ and $g \in \mathbf{S}_4$, we have $(g,t) \mapsto gtg^{-1}$. Each of these actions induces an $\mathbf{S}_4$-action on $\mathfrak{B}_4$. Although the standard action may seem more natural from an algebraic point of view, the geometric action is the one that appears in our work since it corresponds to the $\mathbf{S}_4$-action on $\mathcal{H}_{M,(C;\sigma),b}^d$.

Unfortunately, we cannot say anything about the Yoneda ring structure of $\Ext_{\mathfrak{B}_4}^{*,*}(k,k)^{\mathbf{S}_4}$ with our methods, as the multiplication on the Hurwitz space side is topological rather than algebro-geometric. However, we do note that the terms in the denominator suggest the presence of polynomial generators in bidegrees $(1,1)$, $(2,2)$, $(4,6)$, $(4,8)$, and $(6,12)$. In the $d = 3$ case, the terms in the denominator $(1 - q^2t^2)(1 - q^4t^6)$ indeed appear as polynomial generators in $\Ext_{\mathfrak{B}_3}^{2,2}$ and $\Ext_{\mathfrak{B}_3}^{4,6}$ by the computation of the cohomology ring by \c{S}tefan-Vay \cite{sv_cohomology_fk_alg}, so it is likely that the $\mathfrak{B}_4$ picture looks like this too.

We get another answer to Venkatesh's question about the topological origin of secondary terms in arithmetic counts: the secondary terms correspond to polynomial generators of the $\mathbf{S}_d$-invariant cohomology of $\mathfrak{B}_d$. In particular, the secondary term for the count of cubic fields/algebras comes from the generator in $\Ext_{\mathfrak{B}_3}^{4,6}$. Since the denominator terms in the Nichols algebra cohomology really come from the denominator of the Igusa zeta function, this answer is secretly the same as the one about local counts.

\subsection{Motivation: Arithmetic statistics and Nichols algebras}\label{section:arith_nichols}
This paper originated from a question asked by Ellenberg-Tran-Westerland (ETW) in their paper proving the upper bound in the Malle conjecture over $\mathbb{F}_q(t)$ \cite{etw_fnf_cells}. In that paper, the authors study the cohomology of the quantum shuffle algebra $T_*(V(c,-1))$, which contains the Nichols algebra $T_{!*}(V(c,-1))$ as a natural subalgebra. Whereas the Nichols algebra cohomology is dual to the cohomology of the intersection complex $j_{!*}\pi_*\underline{k}_{\Hur_{G,n}^c}$ on $\Sym^n(\mathbb{A}^1)$, the quantum shuffle algebra cohomology is dual to the cohomology of the Hurwitz space itself, so its associated trace function counts extensions of $\mathbb{F}_q(t)$. In contrast to the simplicity of its topological and arithmetic interpretation, the quantum shuffle algebra is generally much more complicated than the Nichols algebra and much harder to work with.

In \cite{etw_fnf_cells}, the authors ask whether the trace of Frobenius on the Nichols algebra/intersection cohomology counts anything, particularly in the case $(G,c) = (\mathbf{S}_3,\tau_3)$, where the cohomology has been computed by \c{S}tefan-Vay \cite{sv_cohomology_fk_alg}. In a similar vein, in \cite{ks_shuffle_algs}, Kapranov-Schechtman remark that it would be interesting to describe the intersection cohomology as the cohomology of some natural (partial) compactification of $\Hur_{G,n}^c$. By general facts about perverse sheaves, the cohomology of such a compactification is the same as the intersection cohomology as long as it is smooth and admits a small extension of the branch map to $\Sym^n(\mathbb{A}^1)$. There are many previously studied Hurwitz space compactifications such as twisted admissible covers compactifications \cite{acv_twisted_bundles} and Deopurkar's quasimap compactifications \cite{d_compact}, but we have to construct new compactifications in this paper because these prior compactifications either fail to be small (twisted admissible covers) or smooth (Deopurkar's for $d \ge 4$).

We note that the Nichols algebra $T_{!*}(V(c,-1))$ and corresponding intersection complexes $j_{!*}\pi_*\underline{k}_{\Hur_{G,n}^c}$ on $\Sym^n(\mathbb{A}^1)$ only depend on the pair $(G,c)$. This means that the cohomology and $\mathbb{F}_q$-point counts of a smooth small compactification are intrinsic to the pair $(G,c)$ as well. In other words, whatever counting problem is governed by a smooth small compactification of $\Hur_{G,n}^c$ is a canonical approximation to the problem of counting $G$-extensions of $\mathbb{F}_q(t)$. By constructing the smooth small compactifications in this paper, we show that the approximations for the cases $(G,c) = (\mathbf{S}_d,\tau_d)$ with $3 \le d \le 5$ are rank $d$ algebras with resolvent. In other words, the parametrizations of low degree algebras used in arithmetic statistics, which seem to fall out of the classification of prehomogeneous vector spaces, are actually intrinsic to the groups $\mathbf{S}_3,\mathbf{S}_4,\mathbf{S}_5$. This suggests some heuristics relating arithmetic statistics, Nichols algebras, and geometry: \begin{itemize}
\item Heuristic 1: Constructing smooth small compactifications of $\Hur_{G,n}^c$ may tell us how to parametrize $G$-extensions. It would be very interesting to try to find smooth small compactifications for other $(G,c)$ or to prove that such compactifications cannot exist. We remark that the approach taken in this paper of compactifying Hurwitz spaces by spaces of quasimaps to $BG$ cannot lead to anything beyond what we do in this paper with prehomogeneous vector spaces (see \Cref{remark:quotient_impossible} for details), but it may still be possible to find smooth small compactifications by other means.

\item Heuristic 2: The complexity of the count of $G$-extensions should match the complexity of the relevant Nichols algebras over $G$. In particular, when the relevant Nichols algebra is finite-dimensional, the count should be simpler in some way. For instance, for $G$ abelian, the Nichols algebras are exterior algebras, which are finite-dimensional and have extremely simple cohomology.

In this paper, we are concerned with Nichols algebras over symmetric groups $\mathbf{S}_d$. One observation that we have no explanation for is the similarity between the following two facts: \begin{enumerate}[(1)]
\item $\mathbf{S}_d$ can be realized as the generic stabilizer of a prehomogeneous vector space precisely when $d \le 5$.

\item The Nichols algebras $\mathfrak{B}_d$ for $(G,c) = (\mathbf{S}_d,\tau_d)$ are finite-dimensional when $d \le 5$ and conjectured to be infinite-dimensional for $d \ge 6$.
\end{enumerate}
This paper matches prehomogeneous vector spaces and Nichols algebras by equating the coefficients of the Igusa zeta function for $(G_d,V_d)$ with the $\mathbf{S}_d$-invariant cohomology of $\mathfrak{B}_d$. Given this connection, the non-existence of prehomogeneous vector spaces with generic stabilizer $\mathbf{S}_d$ for $d \ge 6$ can be seen as evidence for the conjectured infinite-dimensionality of $\mathfrak{B}_d$ for $d \ge 6$. However, our connection is rather indirect, so many aspects remain mysterious, e.g. what it means geometrically if the Nichols algebra is finite-dimensional. For more context about the algebras $\mathfrak{B}_d$, see \Cref{section:nichols_bd}.
\end{itemize}

\subsection{Relation to previous work}\label{section:relation_prev}
We owe a large debt to Anand Deopurkar and his thesis work. Much of this paper came from trying to interpret the geometry of his stacks of triple covers arithmetically and find higher degree compactifications with similar properties. Many details of our construction of stacks of degree $d$ covers with resolvents carry over identically from his construction of stacks of degree $d$ covers \cite{d_compact}. Both of us construct stacks of quasimaps to $B\mathbf{S}_d$. The difference is that we consider $B\mathbf{S}_d$ as an open substack of $\mathcal{X}_d$ for $d \in \{3,4,5\}$, whereas Deopurkar uses the stacks $\Covers_d$, which exist for all $d$ and fail to be smooth for $d \ge 4$. There are maps $\mathcal{X}_d \to \Covers_d$ inducing maps from our stacks to Deopurkar's stacks; this map is an isomorphism for $d = 3$, so we end up constructing the same triple cover compactification in this case. When $d \ge 4$, Deopurkar's stacks are singular, and for our applications, we need our compactifications to be smooth. The contributions of this paper are to use resolvent data to construct resolutions of Deopurkar's stacks for $d \in \{4,5\}$ and to connect the geometry of these low degree Hurwitz spaces to the broader picture of prehomogeneous vector spaces and Nichols algebras.

Because we use the presentation of $B\mathbf{S}_d$ as the GIT quotient stack $[V_d^{ss}/G_d]$, our construction over $\mathbb{C}$ is also a special case of the stacks of orbifold quasimaps constructed by Cheong-Ciocan-Fontanine-Kim \cite{cck_orbifold}. However, we do everything in mixed characteristic, so we rewrite everything.

We point out the recent work of Han-Park \cite{hp_enumerating_hyperelliptic} and Bejleri-Park-Satriano \cite{bps_height_moduli}, where the authors provide alternative answers to Venkatesh's question about secondary terms in separate contexts. In particular, Han-Park treat the $d = 2$ of the problem we study in this paper by enumerating quasi-admissible odd-degree hyperelliptic curves over $\mathbb{P}^1$. It would be interesting to see if their explanation for their lower order terms has the same underlying reason as our lower order terms. However, the techniques in both of these papers are totally different from ours. Their counting problems are equivalent to counting points of a mapping stack from $\mathbb{P}^1$ or a stacky $\mathbb{P}^1$ to a weighted projective space, so they are able to compute the motives of these stacks and extract the point counts. On the other hand, we count maps to $\mathcal{X}_d = [V_d/G_d]$, and our point counts are based on the point counts of the central fiber $\mathcal{F}_b^d$, which encode local data (e.g. the Igusa zeta function).

\subsection{Further questions}\label{section:further_questions}
We hope that the present work will serve as motivation for interested readers to compute the Igusa zeta function for $(G_5,V_5)$, as we will get many results for $d = 5$ if $I_5$ is computed (see \Cref{section:conj_igusa5} for specific conjectures). Igusa was unable to do so, but maybe the advances in both mathematics and technology will make this computation feasible nowadays. We point to the recent work of Tajima-Yukie \cite{ty_git_strat_iii} on the GIT stratification of $V_5$.

Another result that would be useful to study the case $d = 5$ would be a parametrization of quintic algebras with sextic resolvents over a general base analogous to Wood's theorem \cite{w_quartic_arbitrary} parametrizing quartic algebras with cubic resolvents over a general base. Because we have parametrizations of cubics and quartics over a general base, $\mathcal{R}^3$ and $\mathcal{R}^4$ genuinely are moduli stacks of cubic and quartic covers with resolvents. Right now, we do not know that $\mathcal{R}^5$ is actually a moduli stack of quintic covers with sextic resolvents.

\subsection{Outline of the paper}\label{section:outline_paper}
Readers who are interested only in the arithmetic or algebro-geometric aspects of this paper should start at \Cref{section:prehomog_covs}.

\Cref{section:nichols_algs} is a brief introduction to Nichols algebras, focusing on the examples $\mathfrak{B}_d$. \Cref{section:top_hurwitz} is a topological introduction to Hurwitz spaces, recapping the material of \cite[\S2]{evw_homological_stability} and stating the theorem of Kapranov-Schechtman. \Cref{section:prehomog_covs} discusses the prehomogeneous vector spaces $(G_d,V_d)$ and the relationship between $\mathcal{X}_d \coloneqq [V_d/G_d]$ and the stack of degree $d$ covers $\Covers_d$. \Cref{section:prehomog_local} translates the values of Igusa zeta functions to counts of $G_d(\mathcal{O}_K)$-orbits in $V_d(\mathcal{O}_K)$ for a local field $K$. \Cref{section:orbinodal_curves} is an introduction to orbinodal curves. Many important stacks and their properties can be found here. \Cref{section:covers_resolvents} contains the main geometric content of this paper. In it, we construct our stacks $\mathcal{R}^d$ and prove their important properties (properness of the branch morphism, smoothness, relative dimension over $\mathcal{M}$, etc.). \Cref{section:fibers_of_br} uses Igusa zeta functions to bound the dimensions of the fibers of the branch morphism $\br$. \Cref{section:related_stacks} constructs and proves properties of several important geometric objects derived from $\mathcal{R}^d$. These include Hurwitz stacks $\mathcal{H}^d$, marked variants of $\mathcal{R}^d$ and $\mathcal{H}^d$, and stacks $\mathcal{R}_{(C;\sigma),b}^d$ for a single pointed curve $(C;\sigma)$. We present some applications of our constructions to smoothability of curves in \Cref{section:app_smoothability}. \Cref{section:covers_of_a1} specializes to the case of $(C;\sigma) = (\mathbb{P}^1;\infty)$. This is where we use $\mathbb{G}_m$-localization to prove all of our main results about counts of $\mathbb{F}_q[t]$-algebras and cohomology of Nichols algebras.

\subsection{Acknowledgments}\label{section:acknowledgments}
I am indebted to my advisor Will Sawin for suggesting the problem that led to this paper and for introducing me to the beautiful theory of prehomogeneous vector spaces and parametrizations of low degree covers. I am grateful to Will, Aaron Landesman, Jordan Ellenberg, and Craig Westerland for providing helpful comments on a draft of this paper. I would like to thank Melissa Liu for introducing me to quasimaps and the utility of $\mathbb{G}_m$-localization early on in graduate school.  Finally, I thank Will, Aaron, Jordan, Craig, Andres Fernandez Herrero, Johan de Jong, and Anand Deopurkar for very helpful conversations related to the work and Alan Peng for valuable typesetting advice.

\section{Notation and conventions}\label{section:notation_conventions}
\begin{enumerate}[(1)]
\item $\mathbf{S}_d$ will denote the $d$th symmetric group. In the context of $\mathbf{S}_d$, we will denote the conjugacy class of transpositions in $\mathbf{S}_d$ by $\tau_d$.

\item $\mu_n$ will denote the group scheme of $n$th roots of unity.

\item Given a groupoid $X$, let $X/\sim$ denote its set of isomorphism classes. If $X$ has finitely many isomorphism classes, $\# X$ will denote the sum $\sum_x\frac{1}{\#\Aut(x)}$, where $x$ ranges over the isomorphism classes of $X$. We will often call $\# X$ the count \emph{inversely weighted by automorphisms}.

\item Given a finite rank locally free sheaf $\mathcal{E}$ on a scheme $X$, let $\mathbb{P}\mathcal{E} \coloneqq \Proj\Sym^{\bullet}\mathcal{E}$.

\item We will use the term \emph{finite cover} to refer to a finite locally free morphism of constant positive degree. We will use \emph{degree $d$ cover} to refer to a finite cover of degree $d$. For a finite cover $f: Y \to S$, we will often write $\mathcal{O}_Y$ in place of $f_*\mathcal{O}_Y$.

\item Whenever we have a \emph{curve over a scheme $S$} written $C \to S$, we mean that $C$ is an algebraic space and $C \to S$ is a flat proper morphism whose geometric fibers are purely 1-dimensional.

\item For a separated finite type scheme $X/S$, $\PConf^n(X)$ will denote the ordered configuration space given by removing the image of all the diagonal embeddings $X^{n - 1} \xhookrightarrow{} X^n$ (a.k.a. the \emph{big diagonal}) from $X^n$,where the fiber product is over $S$). Then $\Conf^n(X)$ will denote the unordered configuration space $\PConf^n(X)/S_n$.

\item We can work with $\ell$-adic and perverse sheaves on Deligne-Mumford stacks the same way we would on schemes, thanks to the work of Olsson \cite{o_sheaves_artin} and Laszlo-Olsson \cite{lo_six_operations_i,lo_six_operations_ii,lo_perverse_artin}. We always take $\ell$ to be a prime different from the characteristic of the base field.

\item For an open immersion of a smooth subvariety $j: U \xhookrightarrow{} X$ and a local system $\mathcal{L}$ on $U$, $j_{!*}\mathcal{L}$ will denote the shifted perverse sheaf $j_{!*}(\mathcal{L}[\dim X])[-\dim X]$.
\end{enumerate}

\section{Nichols algebras}\label{section:nichols_algs}
This section is a brief introduction to Nichols algebras. For a more detailed introduction, see \cite[\S2]{etw_fnf_cells} and \cite[\S2-3]{ks_shuffle_algs}. For a general survey, see \cite{as_pointed_hopf}.

For this section, let $k$ be a field of characteristic 0. All tensor products of $k$-vector spaces will be over $k$.

\subsection{Braided vector spaces and braided monoidal categories}\label{section:braided_vs_mc}
A \emph{braided vector space} is a pair $(V,c)$, where $V$ is a finite-dimensional $k$-vector space and $c: V \otimes V \xrightarrow[]{\sim} V \otimes V$ is an automorphism satisfying the braid equation $$(c \otimes \id)(\id \otimes c)(c \otimes \id) = (\id \otimes c)(c \otimes \id)(\id \otimes c)$$ as automorphisms of $V \otimes V \otimes V$.

Recall that the \emph{$n$th braid group} $\mathbf{B}_n$ is a group with presentation \begin{align*}
\mathbf{B}_n \coloneqq \langle\sigma_1,\ldots,\sigma_{n - 1}|\sigma_i\sigma_{i + 1}\sigma_i &= \sigma_{i + 1}\sigma_i\sigma_{i + 1} ,\forall 1 \le i \le n - 2; \\
\sigma_i\sigma_j &= \sigma_j\sigma_i,\forall |i - j| \ge 2\rangle.
\end{align*}
A braiding $c$ on a vector space $V$ naturally makes $V^{\otimes n}$ a $\mathbf{B}_n$-representation for each $n$: each $\sigma_i$ acts on $V^{\otimes n}$ as $1 \otimes \cdots \otimes 1 \otimes c \otimes 1 \otimes 1$ with $c$ acting on the $i$th and $(i + 1)$th copies of $V$, and the braid equation ensures that the braid relations between the $\sigma_i$ are satisfied.

Let $(\mathcal{V},\otimes,1,R)$ be a braided monoidal $k$-linear abelian category where $\otimes$ is exact with respect to both entries. Here, $(\mathcal{V},\otimes,1)$ is the underlying monoidal category, and for any $V_1,V_2 \in \mathcal{V}$, we have the $R$-matrix $$R_{V_1,V_2}: V_1 \otimes V_2 \xrightarrow[]{\sim} V_2 \otimes V_1$$ satisfying the braiding axioms \cite[Definition 3.2.1]{hs_hopf_algs}. The braiding axioms imply that $R_{V,V}: V \otimes V \xrightarrow[]{\sim} V \otimes V$ satisfies the braid equation. If $\mathcal{V}$ is equipped with a faithful $k$-linear strict monoidal functor to the category of $k$-vector spaces, then the underlying vector space of an object $V$ is a braided vector space with braiding $c = R_{V,V}$.

The main examples of braided monoidal categories relevant to us are categories of Yetter-Drinfeld modules:

\begin{definition}\label{definition:yd_module}
For a group $G$, a (right, right) \emph{Yetter-Drinfeld module}\footnote{Yetter-Drinfeld modules can be defined over any Hopf algebra \cite[\S3.4]{hs_hopf_algs}, but we will only deal with group algebras (of finite groups).} $M$ is a $G$-graded right $G$-module with the property that $M_gh \subset M_{h^{-1}gh}$ for all $g,h \in G$. A map of Yetter-Drinfeld modules is a $G$-equivariant map respecting the $G$-grading.

A \emph{graded Yetter-Drinfeld module} is a $\mathbb{Z}$-graded collection of Yetter-Drinfeld modules. A map of graded Yetter-Drinfeld modules is a map of Yetter-Drinfeld modules respecting the $\mathbb{Z}$-grading.
\end{definition}

Note that if we have a Yetter-Drinfeld module $M$ and a character $\psi: G \to k^{\times}$, we can define the twisted Yetter-Drinfeld module $M^{\psi}$, in which the $G$-action is multiplied by $\psi$.

\begin{example}\label{example:yd_c_times_n}
The most important example of a Yetter-Drinfeld module we will work with is $M = kc$, where $c$ is a union of conjugacy classes in $G$. Here, the basis element corresponding to an element $g \in c$ is in $M_g$.
\end{example}

\begin{definition}\label{definition:yd_braidings}
We define two braided monoidal structures on the category of Yetter-Drinfeld modules:
\begin{enumerate}[(a)]
\item For $m \in M$ and $n \in N_g$, we will write $m^n \coloneqq mg$. Then the $R$-matrix $R_{M,N}: M \otimes N \to N \otimes M$ is defined by $R_{M,N}(m \otimes n) = n \otimes m^n$, where $m \in M$ and $n \in N_g$ for some $g \in G$. We will use $\mathcal{YD}_G^G$ to denote the braided monoidal category of Yetter-Drinfeld modules with braiding $R$.

\item We define an alternate braided monoidal structure on $\mathcal{YD}_G^G$: $R_{M,N}^{\epsilon}(m \otimes n) = -n \otimes m^n$. We will use $\mathcal{YD}_{G,\epsilon}^G$ to denote the braided monoidal category obtained from the braiding $R^{\epsilon}$.
\end{enumerate}
\end{definition}

\subsection{Braided bialgebras}\label{section:bialgs_braided}
With the above examples in mind, we return to the setting of an arbitrary braided monoidal $k$-linear abelian category $\mathcal{V}$. We will refer to monoid objects $(A,\mu,\eta)$ as algebras and comonoid objects $(A,\Delta,\epsilon)$ as coalgebras. Given two algebras $A,B \in \mathcal{V}$, we define $A \underline{\otimes} B$ as the algebra having underlying object $A \otimes B$, multiplication $$\mu_{A \underline{\otimes} B}: A \otimes B \otimes A \otimes B \xrightarrow[]{1 \otimes R_{B,A} \otimes 1} A \otimes A \otimes B \otimes B \xrightarrow[]{\mu_A \otimes \mu_B} A \otimes B$$ and unit $$\eta_{A \underline{\otimes} B}: 1 \xrightarrow[]{\sim} 1 \otimes 1 \xrightarrow[]{\eta_A \otimes \eta_B} A \otimes B.$$

\begin{definition}\label{definition:braided_bialg}
A \emph{braided bialgebra} in $\mathcal{V}$ is a tuple $(A,\mu,\eta,\Delta,\epsilon)$, where \begin{enumerate}[(1)]
\item $(A,\mu,\eta)$ is an algebra.

\item $(A,\Delta,\epsilon)$ is a coalgebra.

\item $\Delta: A \to A \underline{\otimes} A$ and $\epsilon: 1 \to A$ are morphisms of algebras.
\end{enumerate}
A braided bialgebra is a \emph{braided Hopf algebra} if there exists a morphism $\chi: A \to A$ such that $$\eta\epsilon = \mu(1 \otimes \chi)\Delta = \mu(\chi \otimes 1)\Delta.$$ Such a $\chi$ is called an antipode, and it is unique if it exists.

A braided graded bialgebra is a braided bialgebra $A$ equipped with an $\mathbb{N}$-grading $A = \bigoplus_{n = 0}^{\infty}A^n$ that makes $(A,\mu,\eta)$ a graded algebra and $(A,\Delta,\epsilon)$ a graded bialgebra.
\end{definition}

Given any object $V$ of a braided monoidal category $\mathcal{V}$, there are three braided graded Hopf algebras naturally associated to it: \begin{enumerate}[(1)]
\item The \emph{tensor algebra} $T_!(V)$ has underlying object $$T_!(V) = \bigoplus_{n = 0}^{\infty}V^{\otimes n}$$ with multiplication induced by the identity map $$V^{\otimes m} \otimes V^{\otimes n} \to V^{\otimes m + n}.$$ We give $T_!(V)$ the unique coalgebra structure that makes $T_!^1(V) = V$ \emph{primitive}, i.e. $\Delta|_{T_!^1(v)} = 1 \otimes \id + \id \otimes 1: V \to V \otimes V$.

\item The \emph{cotensor algebra} $T_*(V)$ has underlying object $$T_*(V) = \bigoplus_{n = 0}^{\infty}V^{\otimes n}$$ with comultiplication induced by the identity map $$V^{\otimes m + n} \to V^{\otimes m} \otimes V^{\otimes n}.$$ To define the multiplication, we first recall that a \emph{shuffle} $w: \{1,\ldots,m\} \sqcup \{1,\ldots,n\} \to \{1,\ldots,m + n\}$ is a bijection that respects the orders on $\{1,\ldots,m\}$ and $\{1,\ldots,n\}$. For each shuffle $w$, there is a natural morphism $R_w: V^{\otimes m} \otimes V^{\otimes n} \to V^{\otimes m + n}$ that uses $R_{V,V}$ to shuffle the copies of $V$ according to $w$. The multiplication $\star$ on $T_*(V)$, called the \emph{shuffle product}, is defined as $\star = \sum_wR_w$. In \cite{etw_fnf_cells}, $T_*(V)$ is called the \emph{quantum shuffle algebra}, and its cohomology is bounded, related to the cohomology of Hurwitz spaces (\Cref{theorem:braid_qs_coh}), and used to prove the upper bound in Malle's conjecture over $\mathbb{F}_q(t)$.

\item Because $T_!(V)$ is the free algebra on $V$ (or alternatively, because $T_*(V)$ is the free coalgebra on $V$), there is a natural map $T_!(V) \to T_*(V)$. This is a map of coalgebras because $T_*(V)^1 = V$ is primitive and $T_!(V)$ is generated by the primitives $T_!(V)^1 = V$. Hence, we have a map of braided Hopf algebras $T_!(V) \to T_*(V)$. We define the \emph{Nichols algebra} $T_{!*}(V)$ as the braided Hopf algebra $$T_{!*}(V) = \im(T_!(V) \to T_*(V)).$$ 
\end{enumerate}
Details (e.g. proofs that these are braided Hopf algebras) can be found in \cite[\S2.4, \S3.1]{ks_shuffle_algs}. Of course, if $\mathcal{V}$ is a category of vector spaces (e.g. $\mathcal{YD}_G^G$), then all the above braided Hopf algebras have underlying $k$-algebras/coalgebras. It is possible to define a Nichols algebra of any braided vector space in the same way as above, and the underlying $k$-algebra/coalgebra of a Nichols algebra $T_{!*}(V) \in \mathcal{V}$ is determined by the underlying braided vector space of $V$.

The notation comes from the correspondence of Kapranov-Schechtman \cite[Theorem 3.3.1, Theorem 3.3.3]{ks_shuffle_algs} between certain bialgebras and certain sequence of perverse sheaves on $\Sym^n(\mathbb{A}^1)$. According to this correspondence, an object $V$ corresponds to a sequence of local systems $\mathcal{L}_n(V[1])$ on $\Conf^n(\mathbb{A}^1)$. If we let $j_n: \Conf^n(\mathbb{A}^1) \xhookrightarrow{} \Sym^n(\mathbb{A}^1)$ denote the usual open immersion, then $T_!(V)$ (resp. $T_*(V)$) corresponds to the sequence of perverse sheaves $j_{n!}\mathcal{L}_n(V[1])[n]$ (resp. $j_{n*}\mathcal{L}_n(V[1])[n]$), and the Nichols algebra $T_{!*}(V)$ corresponds to the intermediate extension $$j_{!*}\mathcal{L}_n(V[1])[n] = \im(j_{n!}\mathcal{L}_n(V[1])[n] \to j_{n*}\mathcal{L}_n(V[1])[n]).$$ We will be interested in the cohomological consequences of this correspondence (\Cref{theorem:ks_nichols_ic}).

\subsection{Quantum shuffle algebras and homology of braid groups}\label{section:qsa_homology}
The Nichols algebras and quantum shuffle algebras that are relevant to Hurwitz spaces and arithmetic statistics are those arising from the Yetter-Drinfeld modules $V(c,-1) \coloneqq kc \in \mathcal{YD}_{G,\epsilon}^G$ (note the sign $\epsilon$). The reason is the following key result, proven independently by Ellenberg-Tran-Westerland \cite[Corollary 3.7]{etw_fnf_cells} and Kapranov-Schechtman \cite[Corollary 3.3.4]{ks_shuffle_algs}:

\begin{theorem}[{\cite{etw_fnf_cells,ks_shuffle_algs}}]\label{theorem:braid_qs_coh}
Consider a finite-dimensional Yetter-Drinfeld module $V_{\epsilon} \in \mathcal{YD}_{G,\epsilon}^G$. Then $$H_j(\mathbf{B}_n,(V_{\epsilon}^{\vee})^{\otimes n} \otimes \sgn) \cong \Ext_{T_*(V_{\epsilon})}^{n - j,n}(k,k),$$ where $\sgn: \mathbf{B}_n \to \{\pm 1\}$ is the sign representation. 
\end{theorem}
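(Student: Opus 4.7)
The plan is to realize both sides of the claimed isomorphism as invariants of the configuration space $\Conf^n(\mathbb{A}^1) = \Conf^n(\mathbb{C})$ and match them through a common cellular model. Throughout, the key observation is that cells of the Fox--Neuwirth cellular decomposition of (a deformation retract of) $\Conf^n(\mathbb{C})$ are indexed by ordered compositions $(n_1,\ldots,n_p)$ of $n$ into positive parts, which is precisely the indexing of the internal-degree-$n$ bar complex computing $\Ext^{*,n}_{T_*(V_\epsilon)}(k,k)$.

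First, I would set up the topological side. Since $\Conf^n(\mathbb{C})$ is a classifying space for $\mathbf{B}_n$, the group homology $H_j(\mathbf{B}_n,(V_\epsilon^\vee)^{\otimes n}\otimes\sgn)$ is computed by the cellular chain complex of a $K(\mathbf{B}_n,1)$ with local coefficients in $(V_\epsilon^\vee)^{\otimes n}\otimes\sgn$, where the local system is built from the $\mathbf{B}_n$-action on $V_\epsilon^{\otimes n}$ via the braiding $R^\epsilon$. The cellular boundary merges adjacent clusters of points with signs coming from cell orientations.

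Next, I would identify this cellular complex with the bar complex computing the right-hand side. The reduced bar complex has
$$\overline{\mathrm{Bar}}^{p,n} = \bigoplus_{(n_1,\ldots,n_p)} \Hom_k\bigl(V_\epsilon^{\otimes n_1}\otimes\cdots\otimes V_\epsilon^{\otimes n_p},\, k\bigr),$$
indexed by the same ordered compositions, with each summand canonically isomorphic to $(V_\epsilon^\vee)^{\otimes n}$ and differential induced by the shuffle multiplication on $T_*(V_\epsilon)$ (which is itself built from $R^\epsilon$). The degree matching is that a composition with $p$ parts contributes to $\Ext^{p,n}$ on the algebra side and to the cell of dimension $j = n - p$ on the topological side, so $p = n - j$ as required. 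The core claim is that the shuffle-product differential, up to a sign twist, agrees termwise with the cellular boundary; this is natural because both encode the same combinatorial operation of merging two adjacent parts of a composition.

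The main obstacle is the careful accounting of sign conventions: the $\epsilon = -1$ inserted into the braiding (\Cref{definition:yd_braidings}(b)), the Koszul signs of the bar complex, the orientation signs of Fox--Neuwirth cells, and the sign character coming from the permutation action of $\mathbf{S}_n$ on ordered cells via $\mathbf{B}_n \twoheadrightarrow \mathbf{S}_n \to \{\pm 1\}$. Each contributes a factor of $\pm 1$, and the statement is precisely tuned so that these signs combine to give the $\sgn$-twist in the coefficient module. A more conceptual alternative, closer in spirit to \cite{ks_shuffle_algs}, would bypass the explicit cellular model and invoke Kapranov--Schechtman's equivalence between graded braided bialgebras and factorizable perverse sheaves on $\Sym^n(\mathbb{A}^1)$, under which $T_*(V_\epsilon)$ corresponds to $j_{n*}\mathcal{L}_n[n]$; one then applies Verdier duality on $\Conf^n(\mathbb{C})$ to convert cohomology of $\mathcal{L}_n$ to homology of $\mathcal{L}_n^\vee$ twisted by the orientation sheaf, which produces exactly the $\sgn$ appearing in the statement. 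Either route, the delicate sign-matching step is unavoidable, and I expect it to be the heart of the proof.
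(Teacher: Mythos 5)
The paper does not prove this statement; it quotes it from \cite{etw_fnf_cells} and \cite{ks_shuffle_algs} without supplying a new argument, so there is no in-paper proof to compare against. Your two proposed routes do correspond to the two cited proofs: the Fox--Neuwirth cellular route is essentially Ellenberg--Tran--Westerland's, and the factorizable-perverse-sheaf route via Verdier duality is Kapranov--Schechtman's.

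There is, however, a real gap in the cellular sketch as you wrote it. The Fox--Neuwirth strata of $\Conf^n(\mathbb{C})$ are \emph{open} cells of a stratification, not closed cells of a CW structure on a deformation retract, so the associated chain complex does not compute ordinary group homology of $\mathbf{B}_n$ directly. It computes compactly supported cohomology $H^*_c(\Conf^n(\mathbb{C}),-)$ (equivalently, reduced homology of the one-point compactification), and the stratum labeled by a composition $(n_1,\ldots,n_p)$ of $n$ has real dimension $n+p$, not $n-p$. To reach $H_j(\mathbf{B}_n,-)$ you must then invoke Poincar\'{e} duality for the oriented $2n$-manifold $\Conf^n(\mathbb{C})$, which identifies $H^{n+p}_c$ with $H_{n-p}$; only after this step does the composition with $p = n-j$ parts sit in homological degree $j$, so your sentence ``the cell of dimension $j = n-p$'' misstates the bookkeeping. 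This duality step, together with how $\mathbf{B}_n$ permutes orientations of the strata, is precisely what produces the $\sgn$-twist in the coefficient module, so it cannot be elided. Once it is made explicit, the remaining work is the sign-matching that you correctly flag as the heart of the argument.
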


When $V_{\epsilon} = V(c,-1)$ for some $c \subset G$, then the left-hand side $H_j(\mathbf{B}_n,(V_{\epsilon}^{\vee})^{\otimes n} \otimes \sgn) \cong H_j(\mathbf{B}_n,kc^{\times n})$ is the homology of the Hurwitz space $\Hur_{G,n}^c$, which we will discuss in \Cref{section:top_hurwitz}. The quantum shuffle algebras $T_*(V(c,-1))$ are very complicated in general but their cohomology groups have a straightforward arithmetic interpretation in terms of point counts of Hurwitz spaces. On the other hand, the Nichols algebras $T_{!*}(V(c,-1))$ are simpler, but we do not yet know when their cohomology groups admit point counting interpretations.

\subsection{The Nichols algebras $\mathfrak{B}_d$}\label{section:nichols_bd}
The main objects from quantum algebra featured in this paper are the Nichols algebras $\mathfrak{B}_d \coloneqq T_{!*}(V(\tau_d,-1))$ for $3 \le d \le 5$, which are rather special.

First off, for arbitrary $d \ge 3$, we can consider $\mathfrak{B}_d$ as the underlying $k$-algebra of the Nichols algebra $T_{!*}(k\tau_d \otimes \sgn) \in \mathcal{YD}_{\mathbf{S}_d}^{\mathbf{S}_d}$, where $\sgn: \mathbf{S}_d \to \{\pm 1\}$ is the sign representation, since $V(\tau_d,-1) = k\tau_d \in \mathcal{YD}_{\mathbf{S}_d,\epsilon}^{\mathbf{S}_d}$ and $k\tau_d \otimes \sgn \in \mathcal{YD}_{\mathbf{S}_d}^{\mathbf{S}_d}$ have the same underlying braided vector space. In particular, $\mathfrak{B}_d$ is an \emph{elementary} Nichols algebra: the Yetter-Drinfeld module $k\tau_d \otimes \sgn$ is finite-dimensional and absolutely irreducible, and its support (the subset of $\mathbf{S}_d$ with nonzero graded pieces) generates $\mathbf{S}_d$. For $3 \le d \le 5$, $\mathfrak{B}_d$ is finite-dimensional, and for $d \ge 6$, it is unknown but expected that $\mathfrak{B}_d$ is infinite-dimensional.

The fact that $\mathfrak{B}_d$ is finite-dimensional for $d \in \{3,4,5\}$ is significant, as the list of known finite-dimensional elementary Nichols algebras \cite[Table 9.1]{hlv_nichols_cubic} is extremely short. Letting $(a)_t = 1 + t + \cdots + t^{a - 1}$, we present some of the numerical data associated to $\mathfrak{B}_d$ for $3 \le d \le 5$: \begin{itemize}
\item $\mathfrak{B}_3$ has dimension 12, Hilbert series $(2)_t^2(3)_t$, and top degree 4.

\item $\mathfrak{B}_4$ has dimension 576, Hilbert series $(2)_t^2(3)_t^2(4)_t^2$, and top degree 12.

\item $\mathfrak{B}_5$ has dimension 8294400, Hilbert series $(4)_t^4(5)_t^2(6)_t^4$, and top degree 40.
\end{itemize}
We will see in \Cref{section:prehomog_covs} that the numbers 4, 12, and 40 appear as the dimensions of certain prehomogeneous vector spaces. We do not know why these numbers match or what the other numerical data of $\mathfrak{B}_d$ correspond to on the prehomogeneous side. These numbers appear as the number of indecomposable representations of the preprojective algebra of type $A_{d - 1}$, as noted by Majid \cite[\S3]{m_noncommutative_differentials}, and equivalently as the number of cluster variables in the coordinate ring of the group of upper unitriangular matrices $N_d \subset \SL_d$ by work of Gei\ss-Leclerc-Schr\"{o}er \cite{gls_rigid_preprojective}.

Another distinguishing feature of $\mathfrak{B}_d$ for $d \in \{3,4,5\}$ is that the algebras are known to be defined by quadratic relations in this range and not known to be quadratic outside it, though they are expected to be. For concreteness, we give the defining relations in the generators $(ij) \in \mathfrak{B}_d^1 = k\tau_d$: \begin{align*}
(ij)^2 &= 0,\quad \forall i,j \\
(ij)(kl) + (kl)(ij) &= 0,\quad \forall \{i,j\} \cap \{k,l\} = \emptyset \\
(ij)(jk) + (jk)(ik) + (ik)(ij) &= 0,\quad \forall i < j < k \\
(jk)(ij) + (ik)(jk) + (ij)(ik) &= 0,\quad \forall i < j < k.
\end{align*}

We end by noting that there are two different $\mathbf{S}_d$-actions on $\mathfrak{B}_d$, one coming from $k\tau_d \otimes \sgn \in \mathcal{YD}_{\mathbf{S}_d}^{\mathbf{S}_d}$ and one coming from $k\tau_d \in \mathcal{YD}_{\mathbf{S}_d,\epsilon}^{\mathbf{S}_d}$. We call the former the \emph{standard $\mathbf{S}_d$-action} and the latter the \emph{geometric $\mathbf{S}_d$-action}.

\section{Topology of Hurwitz spaces}\label{section:top_hurwitz}
In this section, we will review the topological models for Hurwitz spaces studied in \cite{etw_fnf_cells,evw_homological_stability} and state the Kapranov-Schechtman theorem comparing the cohomology of certain Nichols algebra and the cohomology of certain intersection cohomology complexes on $\Sym^n(\mathbb{A}^1)$ (\Cref{theorem:ks_nichols_ic}). For this section, $G$ will denote a finite group, and $c$ will denote a union of conjugacy classes of $G$. Again, $k$ will be a field of characteristic 0 for this section. Just for this section, all schemes will be over $\mathbb{C}$ and will be identified with their spaces of $\mathbb{C}$-points.

\subsection{Hurwitz spaces of the disc}\label{section:hurwitz_disc}
Following \cite[\S2]{evw_homological_stability}, we will define Hurwitz spaces $\Hur_{G,n}^c$ parametrizing marked $G$-covers of the disc with monodromy in $c$ around $n$ branch points.

Let $\overline{D} \subset \mathbb{C}$ be the closed unit disc, and let $D \subset \overline{D}$ be the open unit disc. The configuration space $\Conf^n(D)$ is the space of $n$ unordered points in $D$. Let $c_n$ be the set of $n$ points $\{p_1,\ldots,p_n\}$ where $p_j = \frac{j - 1}{n}i \in D$. There is a well-known description of the braid group $\mathbf{B}_n = \pi_1(\Conf^n(D),c_n)$, where $\sigma_j$ corresponds to the loop in $\Conf^n(D)$ where all the points remain the same except $p_j$ and $p_{j + 1}$, which move according to \begin{align*}
\widetilde{p}_j(t) &= \frac{2j - 1}{n}i - \frac{1}{2n}ie^{\pi it} \\
\widetilde{p}_{j + 1}(t) &= \frac{2j - 1}{n}i + \frac{1}{2n}ie^{\pi it}
\end{align*}
for $0 \le t \le 1$. Note that \begin{align*}
\widetilde{p}_j(0) &= p_j \\
\widetilde{p}_j(1) &= p_{j + 1} \\
\widetilde{p}_{j + 1}(0) &= p_{j + 1} \\
\widetilde{p}_{j + 1}(1) &= p_j,
\end{align*}
so $\sigma_j$ corresponds to the loop moving $p_j$ and $p_{j + 1}$ counterclockwise around each other until they swap positions.

Because $\pi_1(\Conf^n(D),c_n) = \mathbf{B}_n$, any $\mathbf{B}_n$-set $T$ gives rise to a covering space of $\Conf^n(D)$ by taking $\widetilde{\Conf}^n(D) \times_{\mathbf{B}_n} T \to \Conf^n(D)$, where $\widetilde{\Conf}^n(D) \to \Conf^n(D)$ is the universal cover. In particular, we will consider $S = c^{\times n}$, where $\mathbf{B}_n$ acts on $\underline{g} = (g_1,\ldots,g_n) \in c^{\times n}$ by $$\sigma_i(\underline{g})_j = \begin{cases}g_{j + 1} &\text{if }j = i \\ g_{j + 1}^{-1}g_jg_{j + 1} &\text{if }j = i + 1 \\ g_j &\text{otherwise}.\end{cases}$$

\begin{definition}\label{definition:hurwitz_disc}
The \emph{$n$th Hurwitz space of the disc associated to $(G,c)$} is $$\Hur_{G,n}^c \coloneqq \widetilde{\Conf}^n(D) \times_{\mathbf{B}_n} c^{\times n}.$$ If $c = G$, we will simply write $\Hur_{G,n}$.
\end{definition}

The Hurwitz space $\Hur_{G,n}^c$ has a natural moduli description in terms of marked covers of the disc. Note that in the following definition, we do not restrict to connected covers.

\begin{definition}\label{definition:marked_branched_cover}
A \emph{marked $n$-branched $G$-cover of the disc} is a quintuple $(Y,p,\bullet,S,\alpha)$, where \begin{enumerate}[(1)]
\item $S \subset D$ is a set of $n$ distinct points.

\item $p: Y \to \overline{D} - S$ is a covering map.

\item $\alpha: G \to \Aut(p)$ is a homomorphism inducing a simply transitive action of $G$ on each fiber of $p$.

\item $\bullet$ is a point in the fiber $p^{-1}(1)$.
\end{enumerate}
\end{definition}

\begin{proposition}[{\cite[\S2.3]{evw_homological_stability}}]\label{proposition:tophur_moduli}
There are bijections between the following sets: \begin{enumerate}[(a)]
\item Points of $\Hur_{G,n}$.

\item Pairs $(S,f)$, where $S \in \Conf^n(D)$ and $f: \pi_1(\overline{D} - S,1) \to G$ is a homomorphism.

\item Isomorphism classes of marked $n$-branched $G$-covers of the disc.
\end{enumerate}
\end{proposition}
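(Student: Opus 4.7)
The plan is to establish both bijections by introducing as auxiliary data an ordered free basis of $\pi_1(\overline{D} - S, 1)$. First, fix $S = \{s_1, \ldots, s_n\} \subset D$ and choose non-self-intersecting paths from the basepoint $1 \in \partial\overline{D}$ to points near each $s_i$, returning via small counterclockwise loops around $s_i$; call the resulting loops $\gamma_1, \ldots, \gamma_n$, ordered by the cyclic order in which they leave $1$. A standard deformation-retract/van Kampen argument presents $\pi_1(\overline{D} - S, 1)$ as the free group $F_n$ on $\gamma_1, \ldots, \gamma_n$, so a homomorphism $f: \pi_1(\overline{D} - S, 1) \to G$ is the same data as a tuple $(f(\gamma_1), \ldots, f(\gamma_n)) \in G^{\times n}$.

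To establish (a) $\Leftrightarrow$ (b), I would let $S$ vary. The space of pairs $(S, \text{ordered basis of }\pi_1)$ is naturally a cover of $\Conf^n(D)$, and since the space of ordered bases of type above is connected and simply connected in the fiber direction, this cover is identified with the universal cover $\widetilde{\Conf}^n(D) \to \Conf^n(D)$. Under this identification, I would compute the monodromy of $\sigma_i \in \mathbf{B}_n$, using the explicit half-twist paths $\widetilde{p}_j, \widetilde{p}_{j+1}$ of the excerpt: the counterclockwise half-twist transforms the generating set by $\gamma_j \mapsto \gamma_{j+1}$ and $\gamma_{j+1} \mapsto \gamma_{j+1}^{-1}\gamma_j\gamma_{j+1}$ (the classical Artin representation of $\mathbf{B}_n$ on $F_n$). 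Under $f \leftrightarrow (g_1, \ldots, g_n)$ this is exactly the $\mathbf{B}_n$-action on $G^{\times n}$ declared in the definition of $\Hur_{G,n}$, so pairs $(S, f)$ are in bijection with points of $\widetilde{\Conf}^n(D) \times_{\mathbf{B}_n} G^{\times n} = \Hur_{G,n}$.

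For (b) $\Leftrightarrow$ (c), I would fix $S$ and unwind the marked-cover data. A datum $(Y, p, \bullet, S, \alpha)$ is a marked principal $G$-cover of $\overline{D} - S$: since $G$ acts simply transitively on $p^{-1}(1) \ni \bullet$, the map $g \mapsto \alpha(g)\bullet$ identifies $G$ with the fiber over $1$, and each loop $\gamma \in \pi_1(\overline{D} - S, 1)$ lifts uniquely to a path in $Y$ starting at $\bullet$ and terminating at some $\alpha(f_Y(\gamma))\bullet$. Concatenation of loops shows $f_Y$ is a homomorphism. Conversely, given $f$, the associated bundle $(\widetilde{\overline{D} - S} \times G)/\pi_1(\overline{D} - S, 1)$ with the marked point $[\widetilde{1}, e]$ recovers a marked $G$-cover, and the two constructions are mutually inverse up to isomorphism. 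Letting $S$ vary now glues this fiberwise bijection into the global bijection with (b).

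The main obstacle is the bookkeeping in the monodromy calculation of the second paragraph: one must verify that traversing the half-twist loop of $\sigma_i$ really drags the generators $\gamma_j, \gamma_{j+1}$ into $\gamma_{j+1}$ and $\gamma_{j+1}^{-1} \gamma_j \gamma_{j+1}$, and that this matches the specific sign and side-of-passage conventions in the formula for the $\mathbf{B}_n$-action on $c^{\times n}$ given in the excerpt. This is a standard picture in braid-group / Hurwitz theory, but orientations (counterclockwise half-twists, which loop passes in front, direction of travel) must be tracked carefully to ensure the conventions align.
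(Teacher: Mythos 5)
Your proposal is correct and is the standard argument (the paper does not spell out a proof, attributing the result to \cite[\S2.3]{evw_homological_stability}; your reconstruction follows essentially the same route). The reduction of (b) to tuples in $G^{\times n}$ via a geometric free basis, the identification of the space of pairs (configuration, ordered geometric basis) with $\widetilde{\Conf}^n(D)$, the monodromy computation via the Artin representation, and the covering-space dictionary for (b) $\Leftrightarrow$ (c) are all the right moves.

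Two minor points worth tightening. First, the phrase ``connected and simply connected in the fiber direction'' is not quite the right way to say what you need: the fibers are discrete. What you actually want is that $\mathbf{B}_n$ acts \emph{simply transitively} on ordered geometric bases, which makes the total space of pairs connected (transitivity) and identifies it with the cover corresponding to the trivial subgroup, i.e.\ the universal cover (freeness). Freeness is the faithfulness of the Artin representation $\mathbf{B}_n \hookrightarrow \Aut(F_n)$, and transitivity is a short ambient-isotopy argument; both should be invoked rather than waved at. Second, for the (b) $\Leftrightarrow$ (c) correspondence, one does need to fix a path-composition convention in $\pi_1$ so that the lifted-monodromy map $f_Y$ comes out as a homomorphism rather than an anti-homomorphism; you tacitly assume the ``do $\gamma_1$ first'' convention, which is fine as long as it is consistent with the monodromy computation in the (a) $\Leftrightarrow$ (b) step. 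You already flag the orientation/convention bookkeeping in the Artin-action verification as the main delicate step, and that assessment is accurate.
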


Given a pair $(S,f)$ as in (b) and a point $s \in S$, we can get a well-defined conjugacy class in $G$, which we call the \emph{monodromy around $s$}. This is defined by taking a small counterclockwise loop around $s$, identifying it with a conjugacy class in $\pi_1(\overline{D} - S,1)$ by picking a path from 1 to a point on the loop, and taking the image of this conjugacy class in $G$ under $f$. Under the bijections above, $\Hur_{G,n}^c \subset \Hur_{G,n}$ corresponds to pairs $(S,f)$ with monodromy in $c$ around each $s \in S$.

Note that there is a $G$-action on $\Hur_{G,n}^c$ because the $G$-action and $\mathbf{B}_n$-action on $c^{\times n}$ commute. This $G$-action corresponds to the action of $G$ on the marking $\bullet \in p^{-1}(1)$. Thus, the points of the quotient $\Hur_{G,n}^c/G = \widetilde{\Conf}^n(D) \times_{\mathbf{B}_n} (c^{\times n}/G)$ correspond to unmarked $n$-branched $G$-covers of the disc.

\subsubsection{The case $G = \mathbf{S}_d$}\label{section:hurwitz_sd}
Recall the equivalence between $\mathbf{S}_d$-covering spaces and degree $d$ covering spaces. Starting with an $\mathbf{S}_d$-covering space $Y \to S$, we get a degree $d$ covering space $Y/\mathbf{S}_{d - 1} \to S$. Starting with a degree $d$ covering space $X \to S$, we get a $\mathbf{S}_d$-covering space by taking the $d$th fiber product $X^d \to S$ and deleting the big diagonal.

With this equivalence in mind, we make the following definition:

\begin{definition}\label{definition:marked_degree_d}
A \emph{marked $n$-branched degree $d$ cover of the disc} is a quintuple $(X,p,\beta,S)$, where \begin{enumerate}[(1)]
\item $S \subset D$ is a set of $n$ distinct points.

\item $p: X \to \overline{D} - S$ is a degree $d$ covering map.

\item $\beta: p^{-1}(1) \xrightarrow[]{\sim} \{1,\ldots,d\}$ is an isomorphism of sets.
\end{enumerate}
\end{definition}

Note that $\beta$ plays the role of the marking $\bullet$. From the equivalence of $\mathbf{S}_d$-covering spaces and degree $d$ covering spaces, we get the following corollary of \Cref{proposition:tophur_moduli}:

\begin{corollary}\label{corollary:tophur_degree_d}
There is a bijection between points of $\Hur_{G,n}$ and isomorphism classes of marked $n$-branched degree $d$ covers of the disc.
\end{corollary}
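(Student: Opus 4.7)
The plan is to combine \Cref{proposition:tophur_moduli}(c), which identifies points of $\Hur_{\mathbf{S}_d,n}$ with isomorphism classes of marked $n$-branched $\mathbf{S}_d$-covers of the disc, with the equivalence between $\mathbf{S}_d$-covers and degree $d$ covers recalled at the start of \Cref{section:hurwitz_sd}. So the whole task reduces to promoting that equivalence of unmarked objects to an equivalence of marked objects and checking that it preserves isomorphism classes.

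First I would fix the equivalence of groupoids between marked $n$-branched $\mathbf{S}_d$-covers and marked $n$-branched degree $d$ covers on a fixed branch locus $S \subset D$. Given a marked $\mathbf{S}_d$-cover $(Y,p,\bullet,S,\alpha)$, I form $X \coloneqq Y/\mathbf{S}_{d-1}$ (quotienting by the stabilizer of $1 \in \{1,\ldots,d\}$), which is a degree $d$ cover of $\overline{D} - S$. The marking $\bullet \in p^{-1}(1)$ together with $\alpha$ gives a canonical bijection $p^{-1}(1) \xrightarrow{\sim} \mathbf{S}_d$ by $\bullet \cdot g \leftrightarrow g$, and descending along the quotient by $\mathbf{S}_{d-1}$ produces the required bijection $\beta: (X)_1 \xrightarrow{\sim} \mathbf{S}_d/\mathbf{S}_{d-1} = \{1,\ldots,d\}$. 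Conversely, given $(X,p,\beta,S)$, I recover the $\mathbf{S}_d$-cover as $Y \coloneqq X^d_{/S} \setminus \Delta$ (the complement of the big diagonal in the $d$-fold fiber product over $\overline{D}-S$) with its tautological $\mathbf{S}_d$-action by permuting factors, and the marking $\bullet$ is specified by the ordered tuple $(\beta^{-1}(1),\ldots,\beta^{-1}(d)) \in p^{-1}(1)$.

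The second step is to verify these two constructions are mutually inverse and that a map of marked $\mathbf{S}_d$-covers (equivariant, marking-preserving) corresponds to a map of marked degree $d$ covers (marking-preserving). This is formal given the classical unmarked equivalence: the only content is that "preserves $\bullet$" on the $\mathbf{S}_d$-side is equivalent to "preserves $\beta$" on the degree $d$ side, which is clear from the explicit identification $\bullet \cdot g \leftrightarrow g \leftrightarrow g\mathbf{S}_{d-1}$. Combined with the isomorphism-class bijection in \Cref{proposition:tophur_moduli}, this yields the claimed bijection between $\Hur_{\mathbf{S}_d,n}$ and isomorphism classes of marked $n$-branched degree $d$ covers of the disc.

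The main obstacle, such as it is, is bookkeeping the marking correspondence compatibly with conventions: whether the $\mathbf{S}_d$-action used to form $Y/\mathbf{S}_{d-1}$ is on the left or right, and whether $\mathbf{S}_{d-1}$ is realized as the stabilizer of $1$ or of $d$, must be fixed consistently with the conventions that made \Cref{proposition:tophur_moduli}(c) work (in particular the choice in \Cref{definition:marked_branched_cover} that $G$ acts simply transitively on fibers via $\alpha$). Once these conventions are pinned down, the argument is entirely formal and there is no further geometric input beyond the classical equivalence between $\mathbf{S}_d$-torsors and degree $d$ \'etale covers.
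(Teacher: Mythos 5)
Your proposal is correct and matches the paper's (unwritten) argument: the paper simply asserts this corollary as a consequence of the $\mathbf{S}_d$-torsor/degree $d$ cover equivalence recalled at the start of \Cref{section:hurwitz_sd} together with \Cref{proposition:tophur_moduli}(c), and what you have done is precisely spell out the marked version of that equivalence (that $\bullet$ together with $\alpha$ determines $\beta$ via $\mathbf{S}_d/\mathbf{S}_{d-1} \cong \{1,\ldots,d\}$ and conversely), which is the only content being left implicit.
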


For a marked $n$-branched degree $d$-cover of the disc $(X,p,\beta,S)$, each point $s \in S$ gives us a well-defined conjugacy class of $\mathbf{S}_d$: take a small counterclockwise loop around $s$, and see how it permutes a nearby fiber. Hence, the points $\Hur_{\mathbf{S}_d,n}^c$ parametrize marked $n$-branched degree $d$ covers of the disc with monodromy in $c$ around each point in $S$. We will be particularly interested in the case $c = \tau_d$, the conjugacy class of transpositions, as $\Hur_{\mathbf{S}_d,n}^{\tau_d}$ parametrizes simply branched covers.

\subsubsection{Hurwitz spaces of the affine line}\label{section:hurwitz_a1}
For the rest of this paper, fix a radial homeomorphism $D \cong \mathbb{A}^1$ induced by a homeomorphism $[0,1) \xrightarrow[]{\sim} [0,\infty)$. This induces homeomorphisms $\Conf^n(D) \cong \Conf^n(\mathbb{A}^1)$, so that $\Hur_{G,n}^c \to \Conf^n(\mathbb{A}^1)$ and $\Hur_{G,n}^c/G \to \Conf^n(\mathbb{A}^1)$ are covering spaces. In fact, by Riemann's existence theorem, $\Hur_{G,n}^c$ and $\Hur_{G,n}^c/G$ are the spaces of $\mathbb{C}$-points of finite \'{e}tale covers of $\Conf^n(\mathbb{A}^1)$. For the rest of \Cref{section:top_hurwitz}, we will not distinguish between the spaces $\Hur_{G,n}^c$ and the corresponding varieties over $\mathbb{C}$.

\subsection{Nichols algebra cohomology and intersection cohomology}\label{section:nichols_ic}
Because $$\pi: \Hur_{G,n}^c \to \Conf^n(\mathbb{A}^1)$$ is the finite \'{e}tale cover (or equivalently, finite covering space) corresponding to the $\mathbf{B}_n$-set $c^{\times n}$, the pushforward $\pi_*\underline{k}$ is the local system on $\Conf^n(\mathbb{A}^1)$ corresponding to the $\mathbf{B}_n$-representation $kc^{\times n}$.

Note that there in an open immersion $j: \Conf^n(\mathbb{A}^1) \xhookrightarrow{} \Sym^n(\mathbb{A}^1)$. The following theorem of Kapranov-Schechtman is a major motivation for this paper. We present it in its dual formulation, as the $\Ext$ algebra seems more natural to us than the $\Tor$ coalgebra \cite[Remark 3.3.6]{ks_shuffle_algs}.

\begin{theorem}[{\cite[Corollary 3.3.5]{ks_shuffle_algs}}]\label{theorem:ks_nichols_ic}
There is an isomorphism $$\Ext_{T_{!*}(V(c,-1))}^{n - j,n}(k,k) \cong H^j(\Sym^n(\mathbb{A}^1),j_{!*}\pi_*\underline{k}_{\Hur_{G,n}^c})^{\vee}.$$
\end{theorem}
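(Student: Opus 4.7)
The plan is to deduce the statement from the general algebraic/geometric correspondence that sends a graded braided Hopf algebra in $\mathcal{YD}^G_{G,\epsilon}$ to a sequence of factorizable perverse sheaves on the symmetric powers $\Sym^n(\mathbb{A}^1)$ stratified by multiplicity partitions. The strategy breaks into three steps: (i) identify the factorizable sheaf attached to $T_{!*}(V(c,-1))$; (ii) compute $\Ext^{*,*}_{T_{!*}(V(c,-1))}(k,k)$ by a bar-type resolution; (iii) match that bar complex with the hypercohomology of $j_{!*}\pi_*\underline{k}_{\Hur_{G,n}^c}$.

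For (i), the local system on $\Conf^n(\mathbb{A}^1)$ canonically associated to a braided vector space $V$ is obtained by transporting $V^{\otimes n}$ around loops via the $\mathbf{B}_n$-action induced by the braiding. For $V = V(c,-1) = kc \in \mathcal{YD}^G_{G,\epsilon}$, a direct computation of $R^{\epsilon}_{V,V}$ on the basis $c \otimes c$ shows that the resulting $\mathbf{B}_n$-action on $c^{\times n} \subset V^{\otimes n}$ agrees, up to a sign character of $\mathbf{B}_n$, with the action used in \Cref{definition:hurwitz_disc}; the sign is exactly what is absorbed by working in $\mathcal{YD}^G_{G,\epsilon}$ rather than $\mathcal{YD}^G_G$. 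Consequently, the local system attached to $V(c,-1)$ is $\pi_*\underline{k}_{\Hur_{G,n}^c}$, and under the bialgebra/perverse-sheaf correspondence recalled in \Cref{section:bialgs_braided}, the three braided Hopf algebras $T_!(V)$, $T_*(V)$, $T_{!*}(V)$ correspond respectively to $j_{n!}$, $j_{n*}$, $j_{n!*}$ applied to $\pi_*\underline{k}_{\Hur_{G,n}^c}[n]$.

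For (ii) and (iii), compute $\Ext^{p,n}_A(k,k)$ for $A = T_{!*}(V)$ using the reduced bar resolution, whose weight-$n$ piece in bar degree $p$ is
$$\bigoplus_{\substack{n_1+\cdots+n_p=n \\ n_i \ge 1}} (A^{n_1}\otimes\cdots\otimes A^{n_p})^{\vee}.$$
On the geometric side, the stratification of $\Sym^n(\mathbb{A}^1)$ by partition type, with open stratum $j_n: \Conf^n(\mathbb{A}^1) \hookrightarrow \Sym^n(\mathbb{A}^1)$, yields a spectral sequence converging to the hypercohomology of $j_{n!*}\pi_*\underline{k}_{\Hur_{G,n}^c}[n]$ whose $E_1$-page is identified, via the factorization property of the sheaf, with exactly the bar complex above. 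The crucial point is that the intermediate extension is precisely what produces the Nichols weights $A^{n_i}$ on each stratum, as opposed to the weights of $T_!$ or $T_*$ that would arise from $j_{n!}$ or $j_{n*}$. Setting $p = n-j$ and dualizing yields the stated isomorphism; the discrepancy between the cohomological degree $j$ on the geometric side and the bar degree $n-j$ on the algebraic side reflects the $[n]$-shift of convention (9) of \Cref{section:notation_conventions}, which turns a local system on the $n$-dimensional $\Conf^n(\mathbb{A}^1)$ into a (shifted) perverse sheaf.

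The main obstacle is the differential and sign matching in (iii): one must verify that the boundary maps of the stratification spectral sequence, together with the conventions for the intermediate extension near codimension-one strata (where exactly two branch points collide), assemble into the bar differential of $T_{!*}(V)$ with the correct signs. This demands a careful local computation of nearby cycles along the collision divisor and an induction on the number of parts of the partition; it is the technical heart of the Kapranov--Schechtman correspondence from which the stated isomorphism is extracted.
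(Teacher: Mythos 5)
The paper does not prove this theorem: it is cited verbatim from Kapranov--Schechtman (their Corollary~3.3.5), with the only reformulation being the passage from the Tor statement to the Ext statement by duality, as the paper itself says (``We present it in its dual formulation''). There is therefore no proof in the paper to compare your argument against; what you have written is an attempt to reconstruct the underlying Kapranov--Schechtman argument.

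As such a reconstruction, your outline is consistent with the summary of the correspondence that the paper records in \Cref{section:bialgs_braided} and \Cref{section:nichols_ic}. Step~(i) is correct: $\pi_*\underline{k}$ is the local system attached to the $\mathbf{B}_n$-representation $kc^{\times n}$, and the degree shift $V \mapsto V[1]$ in the KS correspondence produces exactly the $\sgn$-twist that reconciles the $R^{\epsilon}$-braiding on $V(c,-1)^{\otimes n}$ with the braid action of \Cref{definition:hurwitz_disc}. The genuine gap is in step~(iii), and it is more than a sign-bookkeeping issue. The claim that the $E_1$-page of the stratification spectral sequence for $j_{!*}\pi_*\underline{k}[n]$ is ``exactly the bar complex'' of $T_{!*}(V(c,-1))$ does not follow from factorizability alone. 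Factorizability pins down the stalks at a point of partition type $(n_1,\ldots,n_p)$ as external tensor products, but identifying those stalks (or costalks, depending on which stratification device one uses — this matters, since stalks versus costalks is precisely the $j_!$ versus $j_*$ distinction, and for $j_{!*}$ neither is simply the truncation one might guess) with the graded pieces $A^{n_1}\otimes\cdots\otimes A^{n_p}$ of the Nichols algebra, and verifying that the induced maps assemble into the bar differential with the correct signs, is the entire content of Theorems~3.3.1 and 3.3.3 of Kapranov--Schechtman. You correctly flag this as the technical heart, but having flagged it without carrying out the local nearby-cycles computation you mention, what you have is a description of the structure of their argument rather than a proof.
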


Note that $\pi$ factors as $\Hur_{G,n}^c \to \Hur_{G,n}^c/G \xrightarrow[]{\pi^{\prime}} \Conf^n(\mathbb{A}^1)$. Then $$\pi_*^{\prime}\underline{k}_{\Hur_{G,n}^c/G} \cong (\pi_*\underline{k}_{\Hur_{G,n}^c})^G.$$ Because $k$ has characteristic 0, $(\pi_*\underline{k}_{\Hur_{G,n}^c})^G$ is naturally a direct summand of $\pi_*\underline{k}_{\Hur_{G,n}^c}$, so $$j_{!*}\pi_*^{\prime}\underline{k}_{\Hur_{G,n}^c/G} \cong (j_{!*}\pi_*\underline{k}_{\Hur_{G,n}^c})^G.$$ From the proof of \cite[Theorem 3.3.1]{ks_shuffle_algs}, we see that this action corresponds to the $G$-action on $T_{!*}(V(c,-1))$, so we get the following corollary.

\begin{corollary}\label{corollary:nichols_quotient_ic}
There is an isomorphism $$\Ext_{T_{!*}(V(c,-1))}^{n - j,n}(k,k)^G \cong H^j(\Sym^n(\mathbb{A}^1),j_{!*}\pi_*^{\prime}\underline{k}_{\underline{k}_{\Hur_{G,n}^c/G}})^{\vee}.$$
\end{corollary}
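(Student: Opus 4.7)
The plan is to deduce this statement from \Cref{theorem:ks_nichols_ic} by taking $G$-invariants on both sides of the Kapranov-Schechtman isomorphism. The crucial inputs are the characteristic zero hypothesis on $k$ (which makes $(-)^G$ exact and given by a projector) and the compatibility of the geometric $G$-action on the Hurwitz space with the algebraic $G$-action on the Nichols algebra.

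First I would factor the covering map as $\pi: \Hur_{G,n}^c \xrightarrow{q} \Hur_{G,n}^c/G \xrightarrow{\pi'} \Conf^n(\mathbb{A}^1)$, where the first map is the quotient by the $G$-action on the marking $\bullet \in p^{-1}(1)$. Since $q$ is a finite map and we work in characteristic zero, the natural identification $\pi'_*\underline{k}_{\Hur_{G,n}^c/G} \cong (\pi_*\underline{k}_{\Hur_{G,n}^c})^G$ holds as local systems on $\Conf^n(\mathbb{A}^1)$, with the averaging idempotent $\tfrac{1}{|G|}\sum_{g\in G} g$ realizing the invariants as a direct summand of $\pi_*\underline{k}_{\Hur_{G,n}^c}$.

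Next, because $(-)^G$ is an exact endofunctor on $k[G]$-linear perverse sheaves realized by a projector, it commutes with each operation used to form the intermediate extension ($j_!$, $j_*$, and the image on perverse sheaves). Consequently
\[
j_{!*}\pi'_*\underline{k}_{\Hur_{G,n}^c/G} \;\cong\; j_{!*}\bigl((\pi_*\underline{k}_{\Hur_{G,n}^c})^G\bigr) \;\cong\; \bigl(j_{!*}\pi_*\underline{k}_{\Hur_{G,n}^c}\bigr)^G,
\]
and applying $H^j(\Sym^n(\mathbb{A}^1),-)$ followed by $k$-linear duality expresses the right-hand side of the claimed isomorphism as the $G$-invariants of the right-hand side of \Cref{theorem:ks_nichols_ic}.

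The subtle step, and the main obstacle, is to match the two $G$-actions: the geometric action on $H^j(\Sym^n(\mathbb{A}^1), j_{!*}\pi_*\underline{k}_{\Hur_{G,n}^c})$ coming from the deck transformations of the marking, and the algebraic action on $\Ext_{T_{!*}(V(c,-1))}^{n-j,n}(k,k)$ coming from $G$-conjugation on $V(c,-1) = kc$. For this I would invoke the naturality of the Kapranov-Schechtman correspondence $V \mapsto \mathcal{L}_n(V[1])$ in the proof of \cite[Theorem 3.3.1]{ks_shuffle_algs}: conjugation by $g \in G$ is an automorphism of $V(c,-1)$ in $\mathcal{YD}_{G,\epsilon}^G$, and functoriality produces an automorphism of $j_{!*}\pi_*\underline{k}_{\Hur_{G,n}^c}$ that one identifies under the combinatorial/topological dictionary with the action on the marking. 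Once this compatibility is in place, taking $G$-invariants of the isomorphism of \Cref{theorem:ks_nichols_ic} yields precisely the corollary.
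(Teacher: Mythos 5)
Your proposal is correct and follows the same route as the paper: factor $\pi$ through $\Hur_{G,n}^c/G$, identify $\pi'_*\underline{k}$ with $(\pi_*\underline{k})^G$, use that in characteristic zero the invariants are a direct summand so $j_{!*}$ commutes with $(-)^G$, and then take $G$-invariants in \Cref{theorem:ks_nichols_ic} after matching the geometric $G$-action on the intersection complex with the $G$-action on the Nichols algebra via the proof of Kapranov--Schechtman's Theorem 3.3.1. (One small imprecision: conjugation by $g$ is not a morphism of $V(c,-1)$ in $\mathcal{YD}_{G,\epsilon}^G$ but rather is the internal $G$-module action, equivalently an automorphism of the underlying braided vector space; this is exactly what induces the ``geometric'' $G$-action on $\mathfrak{B}_d$ and its cohomology, and the rest of your argument goes through unchanged.)
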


\begin{remark}\label{remark:std_vs_geo}
Note that when $G = \mathbf{S}_d$ and $c = \tau_d$ for $d \in \{3,4,5\}$, the $\mathbf{S}_d$-action in \Cref{corollary:nichols_quotient_ic} comes from the geometric $\mathbf{S}_d$-action on $\mathfrak{B}_d$, as the name suggests. On the other hand, by \cite{sv_cohomology_fk_alg}, the invariants of the cohomology of $\mathfrak{B}_d$ under the standard $\mathbf{S}_d$-action correspond to the cohomology of a Hopf algebra, the bosonization $\mathfrak{B}_d \# k[\mathbf{S}_d]$.

We find it a bit unsatisfying that these two actions for which we take invariants are not the same, and we wonder if there is any quantum algebraic interpretation of the geometric $\mathbf{S}_d$-invariants or geometric interpretation of the standard $\mathbf{S}_d$-invariants. On the bright side, the actions agree on $\Ext_{\mathfrak{B}_d}^{n - j,n}(k,k)$ with $n$ even. In \Cref{theorem:inv_coh_bd_34}, we will compute the geometric $\mathbf{S}_4$-invariant cohomology of $\mathfrak{B}_4$, which agrees with the cohomology of $\mathfrak{B}_4 \# k[\mathbf{S}_4]$ in even internal degrees by \Cref{corollary:bosonization_even_b}.
\end{remark}

\section{Prehomogeneous vector spaces and low degree covers}\label{section:prehomog_covs}
In this section, we will review some of the connections between prehomogeneous vector spaces, parametrizations of low degree covers, and arithmetic statistics that we will need to construct our stacks of covers with resolvents.

Recall that a \emph{prehomogeneous vector space} (over some base field $k$) is a triple $(G,\rho,V)$ such that $G$ is a connected linear algebraic group, $V$ is a vector space, and $\rho: G \to \GL(V)$ is a representation such that there is a Zariski dense orbit $V^{ss} \subset V$.

Throughout this paper, we will only ever consider three prehomogeneous vector spaces, which we will denote $(G_d,V_d)$ for $d \in \{3,4,5\}$. We use $\std_n$ to denote the standard $n$-dimensional representation of $\GL_n$: \begin{itemize}
\item $d = 3$: $G_3 = \GL_2$, $V_3 = \Sym^3(\std_2) \otimes \det\std_2^{\vee}$.

\item $d = 4$: $G_4 = \{(g_3,g_2) \in \GL_3 \times \GL_2|\det g_3 = \det g_2\}$, $V_4 = \Sym^2(\std_3) \otimes \std_2^{\vee}$.

\item $d = 5$: $G_5 = \{(g_4,g_5) \in \GL_4 \times \GL_5|(\det g_4)^2 = \det g_5\}$, $V_5 = \std_4 \otimes \det\std_4^{\vee} \otimes \Lambda^2\std_5$.
\end{itemize}
If $d \in \{4,5\}$, we will write a $G_d$-bundle as a tuple $(\mathcal{F},\mathcal{F}^{\prime},\alpha)$, where \begin{itemize}
\item $d = 4$: $\mathcal{F}$ is a rank 3 vector bundle, $\mathcal{F}^{\prime}$ is a rank 2 vector bundle, and $\alpha: \det(\mathcal{F}) \xrightarrow[]{\sim} \det(\mathcal{F}^{\prime})$ is an isomorphism.

\item $d = 5$: $\mathcal{F}$ is a rank 4 vector bundle, $\mathcal{F}^{\prime}$ is a rank 5 vector bundle, and $\alpha: \det(\mathcal{F})^2 \xrightarrow[]{\sim} \det(\mathcal{F}^{\prime})$ is an isomorphism.
\end{itemize}

For the rest of this paper, $d$ will denote either 3, 4, or 5. We will use $G_d$ and $V_d$ to denote the corresponding group schemes and affine spaces over $\mathbb{Z}$. We will use $\mathfrak{g}_d$ to denote the Lie algebra of $G_d$ as an affine space over $\mathbb{Z}$. Define the quotient stack $\mathcal{X}_d = [V_d/G_d]$.

Each $V_d$ comes with a (unique up to a scalar) homogeneous polynomial $\Delta_d: V_d \to S$ called the \emph{discriminant} that scales with respect to the $G_d$-action according to the square of a generator $\chi_d: G_d \to \mathbb{G}_m$ of the character group of $G_d$: \begin{itemize}
\item $d = 3$: For a binary cubic form $f(x,y) = ax^3 + bx^2y + cxy^2 + dy^3 \otimes (x \wedge y)^{\vee} \in V_3$, we have $$\Delta_3(f) = b^2c^2 - 4ac^3 - 4b^3d - 27a^2d^2 + 18abcd.$$ For $g \in G_3$, we have $\chi_3(g) = \det(g)$.

\item $d = 4$: For a pair of ternary quadratic forms $(\sum_{0 \le i \le j \le 2}a_{ij}x_ix_j,\sum_{0 \le i \le j \le 2}b_{ij}x_ix_j)$ represented as a pair of $3 \times 3$ symmetric matrices $$(A,B) = \left(\begin{pmatrix}a_{00} & \frac{1}{2}a_{01} & \frac{1}{2}a_{02} \\ \frac{1}{2}a_{01} & a_{11} & \frac{1}{2}a_{12} \\ \frac{1}{2}a_{02} & \frac{1}{2}a_{12} & a_{22}\end{pmatrix},\begin{pmatrix}b_{00} & \frac{1}{2}b_{01} & \frac{1}{2}b_{02} \\ \frac{1}{2}b_{01} & b_{11} & \frac{1}{2}b_{12} \\ \frac{1}{2}b_{02} & \frac{1}{2}b_{12} & b_{22}\end{pmatrix}\right),$$ we have\footnote{Even though there are $\frac{1}{2}$'s in the expressions for $A,B$, the expression $4\det(Ax - By)$ has only integer coefficients. Thus, it makes sense even when 2 is not invertible.} $$\Delta_4(A,B) = \Delta_3(4\det(Ax - By) \otimes (x \wedge y)^{\vee}).$$ For $(g_3,g_2) \in G_4$, we have $\chi_4(g_3,g_2) = \det(g_3)$.

\item $d = 5$: The function $\Delta_5$ is a complicated degree 40 polynomial, so we refer the reader to \cite[Example 2.11]{k_prehomog} and \cite[\S4]{b_hcl_iv}. For $(g_4,g_5) \in G_5$, we have $\chi_5(g_4,g_5) = \det(g_4)$.
\end{itemize}
We will call the open subset of $V_d$ where $\Delta_d$ does not vanish the \emph{semistable locus} $V_d^{ss}$. It is well-known (e.g. \cite[Example 2.4, Example 2.8, Example 2.11]{k_prehomog}) that in each case, $\Delta_d(gv) = \chi_d(g)^2v$. Thus, $\Delta_d$ descends to a section of a line bundle $\mathcal{L}_d$ on $\mathcal{X}_d$, which we call the \emph{discriminant line bundle}. Imitating \cite[\S 2.1]{d_compact}, we will call the zero locus in $\mathcal{X}_d$ of $\Delta_d$ the \emph{branch locus} $\Sigma_d$ and its complement the \emph{\'{e}tale locus} $\mathcal{E}_d$. We will justify this terminology later.

For a field $k$ of characteristic not dividing $d!$, the ring of relative invariant functions on $(V_d)_k$ is a polynomial ring generated by $(\Delta_d)_k$. This means that $(V_d^{ss})_k$ really is the $\chi_d$-semistable locus in the sense of GIT (see \cite[\S 2]{k_moduli_of_reps} for definitions). In fact, $(V_d^{ss})_k$ is also the $\chi_d$-stable locus because $(G_d)_k$ acts transitively on $(V_d^{ss})_k$ with finite stabilizers. Thus, we will sometimes write $V_d^s$ for $V_d^{ss}$.

\subsection{Covers and resolvents}\label{section:prehomog_cov_res}
Our motivation for studying prehomogeneous vector spaces comes from their relationship with parametrizations of low degree covers and arithmetic statistics. In the work of Davenport-Heilbronn \cite{dh_density_discriminants_cubic} and Bhargava \cite{b_hcl_iii,b_hcl_iv}, the authors count degree $d$ number fields (for $3 \le d \le 5$) by counting certain unrefined objects and sieving out the ones that do not contribute to the number field count. These unrefined objects are parametrized by orbits in prehomogeneous vector spaces: \begin{itemize}
\item $V_3(\mathbb{Z})/G_3(\mathbb{Z})$ parametrizes cubic $\mathbb{Z}$-algebras.

\item $V_4(\mathbb{Z})/G_4(\mathbb{Z})$ parametrizes quartic $\mathbb{Z}$-algebras with cubic resolvents.

\item $V_5(\mathbb{Z})/G_5(\mathbb{Z})$ parametrizes quintic $\mathbb{Z}$-algebras with sextic resolvents.
\end{itemize}
The idea is that the quotient stacks $\mathcal{X}_d$ should parametrize versions of these objects for general base. For a fixed base scheme $S$, we make the following definitions: \begin{itemize}
\item A \emph{cubic cover of $S$} is a degree 3 finite locally free morphism $C \to S$ (to define an \emph{$n$-ic cover} or \emph{degree $n$ cover}, replace 3 with $n$). It is shown in \cite[\S2]{w_quartic_arbitrary} that the stack of cubic covers is isomorphic to the stack $\mathcal{X}_3$.

\item A \emph{quartic cover of $S$ with cubic resolvent}, as defined by Wood in \cite[\S3]{w_quartic_arbitrary}, is a tuple $(Q,C,\phi,\delta)$, where \begin{itemize}
\item $Q$ is a quartic cover of $S$.

\item $C$ is a cubic cover of $S$.

\item The \emph{resolvent map} $\phi$ is a map $\Sym_2(\mathcal{O}_Q/\mathcal{O}_S) \to \mathcal{O}_C/\mathcal{O}_S$ of vector bundles on $S$, which we can view as a quadratic map $\mathcal{O}_Q/\mathcal{O}_S \to \mathcal{O}_C/\mathcal{O}_S$. Here, as in \cite{w_quartic_arbitrary}, for a locally free sheaf $\mathcal{F}$, we use $\Sym_n\mathcal{F}$ to denote the subsheaf of symmetric elements of $\mathcal{F}^{\otimes n}$, as opposed to the usual quotient $\Sym^n\mathcal{F}$ of $\mathcal{F}^{\otimes n}$. By \cite[Lemma A.4]{w_quartic_arbitrary}, $(\Sym_n\mathcal{F})^{\vee} \cong \Sym^n\mathcal{F}^{\vee}$. If 2 is invertible (as it will be for most of this paper), the map $\Sym_2\mathcal{F} \to \Sym^2\mathcal{F}$ is an isomorphism, so there is no real distinction.

\item The \emph{orientation} $\delta$ is an isomorphism $\det(\mathcal{O}_Q/\mathcal{O}_S) \xrightarrow[]{\sim} \det(\mathcal{O}_C/\mathcal{O}_S)$.
\end{itemize}
such that \begin{enumerate}[(1)]
\item For local sections $x,y$ of $\mathcal{O}_Q/\mathcal{O}_S$, we have\footnote{Here, $xy$ does not make sense as a section of $\mathcal{O}_Q/\mathcal{O}_S$, but it does make sense when wedged with $x \wedge y$.} $\delta(x \wedge y \wedge xy) = \phi(x) \wedge \phi(y)$.

\item If we pick local frames for our vector bundles, we can write $\phi$ as a pair of $3 \times 3$ symmetric matrices $(A,B)$ and define the binary cubic form $(x,y) \mapsto 4\det(Ax - By) \otimes (x \wedge y)^{\vee}$. These local expressions glue to a global section $\det\phi \in H^0(S,\Sym^3(\mathcal{O}_C/\mathcal{O}_S)^{\vee} \otimes \det(\mathcal{O}_C/\mathcal{O}_S))$. We require that $C$ is the cubic cover of $S$ corresponding to the binary cubic form $\det\phi$ with underlying bundle $(\mathcal{O}_C/\mathcal{O}_S)^{\vee}$.
\end{enumerate}
The main result of \cite{w_quartic_arbitrary} is that there is an isomorphism from the stack of quartic covers with cubic resolvents to the stack $\mathcal{X}_4$ that sends $(Q,C,\phi,\delta)$ to the section $\phi$ of the $V_4$-bundle associated to the $G_4$-bundle $((\mathcal{O}_Q/\mathcal{O}_S)^{\vee},(\mathcal{O}_C/\mathcal{O}_S)^{\vee},(\delta^{\vee})^{-1})$.

\item \emph{Quintic covers with sextic resolvent} have only been studied for Dedekind domains. In \cite{o_quintic_dedekind}, it is shown that for a Dedekind domain $R$, there is a bijection between $\mathcal{X}_5(R)/\sim$ and isomorphism classes of quintic $R$-algebras with sextic resolvents, and it is also shown that each quintic $R$-algebra admits a sextic resolvent. However, unlike the cases $d \in \{3,4\}$, an isomorphism between $\mathcal{X}_5$ and a suitable stack of quintic covers with sextic resolvents has not been found. The closest result is the Casnati-Ekedahl/Landesman-Vakil-Wood parametrization of quintic Gorenstein covers, which we will discuss in \Cref{section:gorenstein_covers}.
\end{itemize}

\subsection{The stack of covers}\label{section:stack_covers}
\begin{definition}\label{definition:stack_of_covers}
The \emph{stack of degree $d$ covers} $\Covers_d$ is the stack over $\mathbb{Z}$ whose objects over $S$ are degree $d$ finite locally free maps $C \to S$.
\end{definition}

The following facts about $\Covers_d$ are well-known and can be found in \cite{p_commalg}: \begin{itemize}
\item $\Covers_d$ has a presentation as a quotient stack $[B_d/\GL_d]$, where $B_d$ is the finite type affine scheme over $\mathbb{Z}$ parametrizing \emph{based degree $d$ covers}: the points $B_d(S)$ correspond to $\mathcal{O}_S$-algebra structures on $\mathcal{O}_S^{\oplus d}$, and $\GL_d(S)$ acts on $B_d(S)$ by acting on $\mathcal{O}_S^{\oplus d}$. The coordinates of $B_d$ can be thought of as the coordinates of the identity as a linear combination of the standard basis of $\mathcal{O}_S^{\oplus d}$ and the elements in the multiplication table for the algebra structure in terms of the standard basis. $B_d$ is cut out of the affine space with these coordinates by equations corresponding to the identity, associativity, and commutativity axioms.

\item There is a function $\Delta_d^{\Cov}$ on $B_d$ called the \emph{discriminant} which computes the usual discriminant of a based degree $d$ algebra. This function has the property that for $g \in \GL_d(S)$ and $b \in B_d(S)$, $\Delta_d^{\Cov}(gb) = \det(g)^{-2}\Delta_d^{\Cov}(b)$. Thus, $\Delta_d^{\Cov}$ descends to a section of a line bundle $\mathcal{L}_d^{\Cov}$ on $\Covers_d$. We will use $\Delta_d^{\Cov}$ to denote this section (in addition to the function above). We call $\mathcal{L}_d^{\Cov}$ the \emph{discriminant line bundle}.

\item The complement of the zero locus of $\Delta_d^{\Cov}$ is called the \emph{\'{e}tale locus}, and it is naturally isomorphic to $B\mathbf{S}_d$ because it parametrizes degree $d$ finite \'{e}tale covers.
\end{itemize}
Note that we have used $\Delta_d$ for the discriminant on both $\mathcal{X}_d$ and $\Covers_d$. We will justify this by exhibiting morphisms $\mathcal{X}_d \to \Covers_d$ and then showing that the pullback of the discriminant on $\Covers_d$ agrees with the discriminant on $\mathcal{X}_d$.

\subsection{The morphisms $\phi_d: \mathcal{X}_d \to \Covers_d$}\label{section:phi_d}
We define natural morphisms $\phi_d: \mathcal{X}_d \to \Covers_d$. Fix a base scheme $S$ throughout. Whenever we have a projective bundle $\mathbb{P}\mathcal{F} \to S$, let $\pi$ denote the projection. The cases $d \in \{3,4\}$ are done in \cite{w_quartic_arbitrary}, so we will recall the construction in those cases without providing too many details. This discussion is similar to \cite[\S3]{lvw_low_degree_hurwitz}, except that they consider only the Gorenstein case: \begin{itemize}
\item $\phi_3: \mathcal{X}_3 \xrightarrow[]{\sim} \Covers_3$: Let $\mathcal{F}_2$ be a rank 2 bundle on $S$, along with a section $\eta \in H^0(S,\Sym^3\mathcal{F}_2 \otimes \det\mathcal{F}_2^{\vee})$. There is a natural isomorphism $\Phi_3: H^0(S,\Sym^3\mathcal{F}_2 \otimes \det\mathcal{F}_2^{\vee}) \xrightarrow[]{\sim} H^0(\mathbb{P}\mathcal{F}_2,\pi^*\det\mathcal{F}_2^{\vee}(3))$. Using $\Phi_3(\eta)$, we can form a cdga $K$ of bundles on $\mathbb{P}\mathcal{F}_2$: \begin{align}
\pi^*\det\mathcal{F}_2(-3) &\to \mathcal{O}_{\mathbb{P}\mathcal{F}_2} \label{equation:complex_d=3}
\end{align}
Here $\mathcal{O}_{\mathbb{P}\mathcal{F}_2}$ is in degree 0. Then $\mathcal{O}_C \coloneqq R^0\pi_*K$ is the structure sheaf of a cubic cover $C \to S$. This construction commutes with base change on $S$, and we can recover $\mathcal{F}_2$ from $C$ as $\mathcal{F}_2 = (\mathcal{O}_C/\mathcal{O}_S)^{\vee}$. In this case (and only this case), $\phi_3$ is an isomorphism.

\item $\phi_4: \mathcal{X}_4 \to \Covers_4$: Let $\mathcal{F}_3$ and $\mathcal{F}_2$ be bundles on $S$ of respective ranks 3 and 2, along with an isomorphism $\delta: \det\mathcal{F}_3 \xrightarrow[]{\sim} \det\mathcal{F}_2$ and a section $\eta \in H^0(S,\Sym^2\mathcal{F}_3 \otimes \mathcal{F}_2^{\vee})$. There is a natural isomorphism $\Phi_4: H^0(S,\Sym^2\mathcal{F}_3 \otimes \mathcal{F}_2^{\vee}) \xrightarrow[]{\sim} H^0(\mathbb{P}\mathcal{F}_3,\pi^*\mathcal{F}_2^{\vee}(2))$. Taking the Koszul complex of $\Phi_4(\eta)$, we can form a cdga $K$ of bundles on $\mathbb{P}\mathcal{F}_3$: \begin{align}
\pi^*\det\mathcal{F}_2(-4) \to \pi^*\mathcal{F}_2(-2) &\to \mathcal{O}_{\mathbb{P}\mathcal{F}_3} \label{equation:complex_d=4}
\end{align}
Here $\mathcal{O}_{\mathbb{P}\mathcal{F}_3}$ is in degree 0. Then $\mathcal{O}_Q \coloneqq R^0\pi_*K$ is the structure sheaf of a quartic cover $Q \to S$. This construction commutes with base change on $S$, and we can recover $\mathcal{F}_3$ from $Q$ as $\mathcal{F}_3 = (\mathcal{O}_Q/\mathcal{O}_S)^{\vee}$.

\item $\phi_5: \mathcal{X}_5 \to \Covers_5$: Let $\mathcal{F}_4$ and $\mathcal{F}_5$ be bundles on $S$ of respective ranks 4 and 5, along with an isomorphism $\delta: (\det\mathcal{F}_4)^2 \xrightarrow[]{\sim} \det\mathcal{F}_5$ and a section $\eta \in H^0(S,\mathcal{F}_4 \otimes \det\mathcal{F}_4^{\vee} \otimes \Lambda^2\mathcal{F}_5)$. There is a natural isomorphism $\Phi_5: H^0(S,\mathcal{F}_4 \otimes \det\mathcal{F}_4^{\vee} \otimes \Lambda^2\mathcal{F}_5) \xrightarrow[]{\sim} H^0(\mathbb{P}\mathcal{F}_3,\pi^*\det\mathcal{F}_4^{\vee} \otimes \Lambda^2\pi^*\mathcal{F}_5(1))$. We can form a complex of bundles $K$ on $\mathbb{P}\mathcal{F}_4$ as in \cite[Lemma 3.15]{lvw_low_degree_hurwitz}: \begin{align}
\pi^*\det\mathcal{F}_4(-5) \xrightarrow[]{\beta_3} \pi^*\det\mathcal{F}_4 \otimes \pi^*\mathcal{F}_5^{\vee}(-3) \xrightarrow[]{\beta_2} \pi^*\mathcal{F}_5(-2) &\xrightarrow[]{\beta_1} \mathcal{O}_{\mathbb{P}\mathcal{F}_4} \label{equation:complex_d=5}
\end{align}
Here, $\mathcal{O}_{\mathbb{P}\mathcal{F}_4}$ is in degree 0. The map $\beta_2$ comes from the section $\Phi_5(\eta)$. If we use $\delta$ to write $\pi^*\det\mathcal{F}_4(-5) \cong \pi^*\det\mathcal{F}_4^{\vee} \otimes \pi^*\det\mathcal{F}_5(-5)$, then $\beta_3$ is obtained by taking the five $4 \times 4$ Pfaffians of $\beta_2$. We obtain $\beta_1$ by taking the dual of $\beta_3$ and tensoring with $\pi^*\det\mathcal{F}_4(-5)$. We now describe the $\mathcal{O}_{\mathbb{P}\mathcal{F}_4}$-algebra structure on $K$ term by term: \begin{itemize}
\item $\mathcal{O}_{\mathbb{P}\mathcal{F}_4} \otimes K \to K$: This is just the $\mathcal{O}_{\mathbb{P}\mathcal{F}_4}$-module structure on $K$.

\item $\pi^*\mathcal{F}_5(-2) \otimes \pi^*\mathcal{F}_5(-2) \to \pi^*\det\mathcal{F}_4 \otimes \pi^*\mathcal{F}_5^{\vee}(-3)$: Because of our orientation $\delta$, there is a natural isomorphism $$\pi^*\det\mathcal{F}_4 \otimes \pi^*\mathcal{F}_5^{\vee} \cong \pi^*\det\mathcal{F}_4^{\vee} \otimes \pi^*\det\mathcal{F}_5 \otimes \pi^*\mathcal{F}_5^{\vee}$$ Moving the $\pi^*\mathcal{F}_5^{\vee}$ to the other side, we see that we want to provide a map $$\pi^*\mathcal{F}_5^{\otimes 3}(-4) \to \pi^*\det\mathcal{F}_4^{\vee} \otimes \pi^*\det\mathcal{F}_5$$ This map is given by $(x,y,z) \mapsto \Phi_5(\eta) \wedge (x \wedge y \wedge z)$.

\item $\pi^*\mathcal{F}_5(-2) \otimes (\pi^*\det\mathcal{F}_4 \otimes \pi^*\mathcal{F}_5^{\vee}(-3)) \to \pi^*\det\mathcal{F}_4(-5)$: This comes from the pairing $\pi^*\mathcal{F}_5 \otimes \pi^*\mathcal{F}_5^{\vee}(-3) \to \mathcal{O}_{\mathbb{P}\mathcal{F}_4}$.

\item All the other pairings are 0 for degree reasons.
\end{itemize}
By construction, our multiplication on $K$ is graded commutative. The proof of \cite[Theorem 4.1]{be_algebra_structures} shows that the multiplication we have defined is associative and compatible with the differential. Thus, we have given $K$ a cdga structure, and its derived pushforward $\mathcal{O}_Q \coloneqq R^0\pi_*K$ is an $\mathcal{O}_S$-algebra. By cohomology and base change, we have natural quasi-isomorphisms \begin{align*}
R\pi_*\mathcal{O}_{\mathbb{P}\mathcal{F}_4} &\simeq \mathcal{O}_S[0] \\
R\pi_*\left(\pi^*\mathcal{F}_5(-2)\right) &\simeq 0 \\
R\pi_*\left(\pi^*\det\mathcal{F}_4 \otimes \pi^*\mathcal{F}_5^{\vee}(-3)\right) &\simeq 0 \\
R\pi_*\left(\pi^*\det\mathcal{F}_4(-5)\right) &\simeq \mathcal{F}_4^{\vee}[-3]
\end{align*}
The hypercohomology spectral sequence then shows that $R\pi_*K \simeq \mathcal{O}_Q[0]$ and that there is a natural short exact sequence $$0 \to \mathcal{O}_S \to \mathcal{O}_Q \to \mathcal{F}_4^{\vee} \to 0$$ Thus, $\mathcal{O}_Q$ is the structure sheaf of a quintic cover $Q \to S$. Because all the $R^i\pi_*K$ are flat, the cohomology and base change theorem implies that the formation of $Q$ is compatible with base change. We also have a natural isomorphism $\mathcal{F}_4 \cong (\mathcal{O}_Q/\mathcal{O}_S)^{\vee}$.
\end{itemize}

We will need the following definition when we discuss Gorenstein covers in \Cref{section:gorenstein_covers}.

\begin{definition}[{\cite[Definition 3.12]{lvw_low_degree_hurwitz}}]\label{definition:right_codim}
Fix a base scheme $S$ and a map $S \to \mathcal{X}_d$. Let $\eta$ be the corresponding section of \begin{itemize}
\item $H^0(S,\Sym^3\mathcal{F}_2 \otimes \det\mathcal{F}_2^{\vee})$ if $d = 3$.

\item $H^0(S,\Sym^2\mathcal{F}_3 \otimes \mathcal{F}_2^{\vee})$ if $d = 4$.

\item $H^0(S,\mathcal{F}_4 \otimes \det\mathcal{F}_4^{\vee} \otimes \Lambda^2\mathcal{F}_5)$ if $d = 5$.
\end{itemize}
Define a closed subscheme $\Psi_d(\eta) \subset \mathbb{P}\mathcal{F}_{d - 1}$ as follows. Here, the maps to $\mathcal{O}_{\mathbb{P}\mathcal{F}_{d - 1}}$ are the ones obtained from $\eta$ in the construction of $\phi_d$ above: \begin{itemize}
\item $\mathcal{O}_{\Psi_3(\eta)} \coloneqq \coker(\pi^*\det\mathcal{F}_2(-3) \to \mathcal{O}_{\mathbb{P}\mathcal{F}_2})$ if $d = 3$.

\item $\mathcal{O}_{\Psi_4(\eta)} \coloneqq \coker(\pi^*\mathcal{F}_2(-2) \to \mathcal{O}_{\mathbb{P}\mathcal{F}_3})$ if $d = 4$.

\item $\mathcal{O}_{\Psi_5(\eta)} \coloneqq \coker(\pi^*\mathcal{F}_5(-2) \to \mathcal{O}_{\mathbb{P}\mathcal{F}_4})$ if $d = 5$.
\end{itemize}
We say that $\eta$ has the \emph{right codimension} at a point $s \in S$ if the fiber $\Psi_d(\eta)_s$ has dimension 0. If $\eta$ has the right codimension at all points $s \in S$, we say that $\eta$ has the \emph{right codimension}.
\end{definition}

\subsection{The discriminants agree}\label{section:discs_agree}
We check that the discriminants on $\mathcal{X}_d$ and $\Covers_d$ agree.

\begin{proposition}\label{proposition:disc_agrees}
The pullback $\phi_d^*\Delta_d^{\Cov}$ agrees with $\Delta_d$. More precisely, there is an isomorphism of the relevant line bundles on $\mathcal{X}_d$ that takes one section to the other.
\end{proposition}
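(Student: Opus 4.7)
The plan is to identify both line bundles canonically in terms of the tautological bundle $\mathcal{F}_{d-1}$ on $\mathcal{X}_d$, then use classical invariant theory for $V_d$ to conclude that both sections must be proportional, and finally fix the scalar by a direct computation.

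First, I would show that both $\mathcal{L}_d$ and $\phi_d^* \mathcal{L}_d^{\Cov}$ are canonically isomorphic to $\det(\mathcal{F}_{d-1})^{\otimes 2}$. For the left-hand side, this is because $\mathcal{L}_d$ is the line bundle associated to $\chi_d^{\otimes 2}$, and $\chi_d$ pulls back along $S \to \mathcal{X}_d$ to $\det \mathcal{F}_{d-1}$. For the right-hand side, the algebra discriminant of a rank $d$ algebra is naturally a section of $\det(\mathcal{O}_C)^{\otimes -2}$, and by the construction of $\phi_d$ in \Cref{section:phi_d} we have $(\mathcal{O}_C/\mathcal{O}_S)^\vee \cong \mathcal{F}_{d-1}$, giving $\det \mathcal{O}_C \cong \det(\mathcal{O}_C/\mathcal{O}_S) \cong \det(\mathcal{F}_{d-1})^{-1}$. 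This identification is canonical (for $d \in \{4,5\}$, the orientation $\alpha$ ensures compatibility with the second tautological bundle).

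Under this identification, $\Delta_d$ and $\phi_d^*\Delta_d^{\Cov}$ become $G_d$-semiinvariant polynomial functions on $V_d$ transforming by $\chi_d^{\otimes 2}$. As recalled in the excerpt, over any field of characteristic prime to $d!$, the ring of relative $G_d$-invariants on $V_d$ is freely generated by $\Delta_d$; hence $\phi_d^* \Delta_d^{\Cov} = c \cdot \Delta_d$ for some $c \in \mathbb{Z}[1/d!]$, provided the pullback is nonzero and has the same homogeneous degree. For $d \in \{3,4\}$ both nonvanishing and the normalization $c = 1$ (for the standard choice of $\Delta_d$) are contained in Wood \cite{w_quartic_arbitrary}. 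For $d = 5$, nonvanishing reduces to checking that $\phi_5$ maps $V_5^{ss}$ into the étale locus of $\Covers_5$: this can be tested on geometric points, where $[V_5^{ss}/G_5] \cong B\mathbf{S}_5$ forces the induced cover to be a finite étale $\mathbf{S}_5$-torsor. The remaining scalar is then either absorbed into the inherent normalization ambiguity of $\Delta_5$, or fixed by evaluating both sides at a single explicit point such as the split étale algebra $\mathcal{O}_S^{\oplus d}$ with the tautological trivializations, where both discriminants can be computed by hand.

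The main difficulty I anticipate is the $d = 5$ nonvanishing step, which does not follow formally from Wood. Although the geometric picture makes it transparent --- the semistable locus is a single $G_5$-orbit with stabilizer $\mathbf{S}_5$, and the étale locus of $\Covers_5$ is $B\mathbf{S}_5$ --- making it rigorous requires unpacking the complex \eqref{equation:complex_d=5} fiber by fiber and verifying that $R^0\pi_* K$ is the structure sheaf of an étale cover precisely when $\eta \in V_5^{ss}$. This can be done either by analyzing the Pfaffian structure of $\beta_3$ directly or by transferring the result from the Casnati--Ekedahl / Landesman--Vakil--Wood parametrization of Gorenstein covers; once in place, everything else is formal bookkeeping with characters and tautological bundles.
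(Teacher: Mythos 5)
Your proposal follows the same three-step strategy as the paper: identify both line bundles with $\det(\mathcal{F}_{d-1})^{\otimes 2}$ via the natural isomorphism $\mathcal{F}_{d-1} \cong (\mathcal{O}_Y/\mathcal{O}_S)^\vee$, deduce $\phi_d^*\Delta_d^{\Cov} = c\,\Delta_d$ from the one-dimensionality of the space of relative invariants of weight $\chi_d^2$, and pin the constant by evaluating at an explicit point. That is the paper's proof; you have the right argument.

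The one thing I'd push back on is your framing of the $d=5$ step as a genuine difficulty requiring a prior nonvanishing check. There is no separate nonvanishing lemma to prove. Once you've identified the line bundles, $\phi_d^*\Delta_d^{\Cov}$ is a polynomial on $V_d$ that transforms by $\chi_d^2$; since that space of relative invariants is one-dimensional over $\mathbb{C}$ and the coordinate ring of $V_d$ over $\mathbb{Z}[1/d!]$ injects into the complexified one, you get $\phi_d^*\Delta_d^{\Cov} = c\,\Delta_d$ automatically, with $c$ possibly zero. Evaluating both sides at a single explicit $v \in V_d(\mathbb{Z})$ for which the associated cover is étale (and hence has unit discriminant) then does double duty: it shows $c \neq 0$ and that $c = 1$. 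This is exactly what the paper does for all three $d$, including $d=5$, where the relevant $5\times 5$ alternating matrix and its Pfaffians are written out explicitly. No fiber-by-fiber analysis of the complex, no separate verification that $\phi_5$ sends $V_5^{ss}$ into the étale locus — that fact is a consequence of the evaluation rather than a prerequisite for it. So your anticipated hard case evaporates once you notice the computation at one point serves both purposes, and the argument becomes uniform in $d$.
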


\begin{proof}
Recall that in all the above morphisms $\mathcal{X}_d \to \Covers_d$, we have a natural isomorphism $\mathcal{F}_{d - 1} \cong (\mathcal{O}_Y/\mathcal{O}_S)^{\vee}$. Thus, $\det\mathcal{O}_Y^{\univ}$ and $(\det\mathcal{F}_{d - 1}^{\univ})^{\vee}$ are isomorphic, and the pullback $\phi_d^*\mathcal{L}_d^{\Cov} = \phi^*(\det\mathcal{O}_Y^{\univ})^{-2}$ is isomorphic to the discriminant line bundle $\mathcal{L}_d = (\det\mathcal{F}_{d - 1}^{\univ})^2$ on $\mathcal{X}_d$. Here, $\mathcal{O}_Y^{\univ}$ denotes the structure sheaf of the universal degree $d$ cover on $\Covers_d$, and $\mathcal{F}_{d - 1}^{\univ}$ denotes the universal $\mathcal{F}_{d - 1}$-bundle on $\mathcal{X}_d$.

It remains to show that the $\phi_d^*\Delta_d^{\Cov} = \Delta_d$ as sections of $\mathcal{L}_d$. By the definition of $\mathcal{L}_d$, we can identify the global sections $H^0(\mathcal{X}_d,\mathcal{L}_d)$ with the functions $f$ on $V_d$ such that for $g \in G_d(S)$ and $v \in V_d(S)$, $f(gv) = \chi_d(g)^2f(v)$. Recall that $\Delta_d$ generates the 1-dimensional $\mathbb{C}$-vector space of functions with this property on $(V_d)_{\mathbb{C}}$. Because the coordinate ring of $V_d$ injects into that of $(V_d)_{\mathbb{C}}$, we see that $\phi_d^*\Delta_d^{\Cov} = a\Delta_d$ for some $a \in \mathbb{C}$. To show that $a = 1$, we just need to check that $\phi_d^*\Delta_d^{\Cov}$ and $\Delta_d$ agree and are nonzero when evaluated at a single element $v \in V_d(\mathbb{Z})$. In all the following examples, $v$ has the right codimension, so the cover of $\Spec\mathbb{Z}$ corresponding to $v$ is actually just $\Psi_d(v)$ by \Cref{theorem:lvw_gorenstein}(b) below: \begin{itemize}
\item $d = 3$: The binary cubic form $v = (x^2y - xy^2) \otimes (x \wedge y)^{\vee}$ cuts out the finite \'{e}tale $\mathbb{Z}$-scheme $\Psi_3(v) = \{(1:0),(0:1),(1:1)\} \subset \mathbb{P}_{\mathbb{Z}}^2$. We check by direct computation that $\Delta_3(v) = 1$. Because $\Psi_3(v) \to \Spec\mathbb{Z}$ is \'{e}tale, its discriminant is 1, independent of basis\footnote{This just means that to calculate the discriminant, we do not have to check what basis of $\mathcal{O}_{\Psi_3(v)}/\mathbb{Z}$ we get from the frame $x,y$ of $\mathcal{F}_2$. This is because changing the $\mathbb{Z}$-basis multiplies the discriminant by the square of a unit, which must be 1.}. Thus, $a = 1$.

\item $d = 4$: The pair of ternary quadratic forms $v = (x_0x_2 - x_1x_2,x_0x_1 - x_1x_2)$ cuts out the finite \'{e}tale $\mathbb{Z}$-scheme $\{(1:0:0),(0:1:0),(0:0:1),(1:1:1)\} \subset \mathbb{P}_{\mathbb{Z}}^2$. We check by direct compuation that $\Delta_4(v) = 1$. Because $\Psi_4(v) \to \Spec\mathbb{Z}$ is \'{e}tale, its discriminant is 1, independent of basis. Thus, $a = 1$.

\item $d = 5$: The $5 \times 5$ alternating matrix $$v = \begin{pmatrix}0 & x_0 - x_3 & 0 & x_3 & x_2 \\ -x_0 + x_3 & 0 & 0 & x_3 & x_3 \\ 0 & 0 & 0 & x_1 & x_2 \\ -x_3 & -x_3 & -x_1 & 0 & -x_3 \\ -x_2 & -x_3 & -x_2 & x_3 & 0\end{pmatrix} \otimes (x_0 \wedge x_1 \wedge x_2 \wedge x_3)^{\vee}$$ has $4 \times 4$ Pfaffians $x_1x_3 - x_2x_3,-x_1x_2 + x_2x_3,-x_0x_3 + x_2x_3,-x_0x_2 + x_2x_3,x_0x_1 - x_1x_3$, which cut out the finite \'{e}tale $\mathbb{Z}$-scheme $\{(1:0:0:0),(0:1:0:0),(0:0:1:0),(0:0:0:1),(1:1:1:1)\} \subset \mathbb{P}_{\mathbb{Z}}^3$. We check using the description of $\Delta_5$ in \cite[\S4]{b_hcl_iv} that $\Delta_5(v) = 1$. Because $\Psi_5(v) \to \Spec\mathbb{Z}$ is \'{e}tale, its discriminant is 1, independent of basis. Thus, $a = 1$.
\end{itemize}
We conclude that $\phi_d^*\Delta_d^{\Cov} = \Delta_d$.
\end{proof}

\begin{remark}\label{remark:why_prehomog}
One important advantage of $\mathcal{X}_d$ over $\Covers_d$ is that $\mathcal{X}_d$ is smooth. For $d \ge 4$, the stacks $\Covers_d$ are not smooth by \cite[Proposition 8.4]{p_commalg}, so the moduli stacks of maps from curves to $\Covers_d$, as studied in \cite{d_compact}, are no longer nice (as far as we know). If we instead consider stacks of maps from curves to $\mathcal{X}_d$, we get smoothness for our stacks, among other useful properties (\Cref{theorem:rd_nice}).
\end{remark}

\subsection{Parametrization of low degree Gorenstein covers}\label{section:gorenstein_covers}
We say that a finite locally free map of schemes $Y \to S$ is \emph{Gorenstein} if all its fibers are Gorenstein. In particular, if $S$ is Gorenstein, a finite cover $Y \to S$ is Gorenstein if $Y$ is Gorenstein. When we restrict to Gorenstein covers, we do not need to deal with resolvents.

By \cite[Lemma 4.4]{lvw_low_degree_hurwitz}, there is an open substack $\Covers_d^{\Gor} \subset \Covers_d$ parametrizing degree $d$ Gorenstein covers. By the upper semicontinuity of fiber dimension, there is a $G_d$-invariant open subset $U_d \subset V_d$ parametrizing sections with the right codimension. We have the following beautiful parametrization of low degree Gorenstein covers, first proved by Casnati-Ekedahl over a Noetherian integral base and then extended by Landesman-Vakil-Wood to an arbitrary base.

\begin{theorem}[Casnati-Ekedahl, Landesman-Vakil-Wood]\label{theorem:lvw_gorenstein}
\begin{enumerate}[(a)]
\item Over $\Covers_d^{\Gor}$, $\phi_d$ restricts to an isomorphism $[U_d/G_d] \xrightarrow[]{\sim} \Covers_d^{\Gor}$.

\item Suppose we have a morphism $S \to [U_d/G_d]$ classifying a section $\eta$ of the right codimension. Then the map from the complex \eqref{equation:complex_d=3} (if $d = 3$), \eqref{equation:complex_d=4} (if $d = 4$), or \eqref{equation:complex_d=5} (if $d = 5$) to its 0th homology $\mathcal{O}_{\Psi_d(\eta)}$ is a quasi-isomorphism, i.e. the complex is a resolution of $\mathcal{O}_{\Psi_d(\eta)}$.
\end{enumerate}
\end{theorem}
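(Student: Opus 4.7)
The plan is to establish part (b) first and then deduce part (a) from it, since (b) provides the explicit resolutions needed to analyze the covers parametrized by $[U_d/G_d]$. Both statements are local on $S$ and compatible with base change, and the terms of the complexes are locally free on $\mathbb{P}\mathcal{F}_{d-1}$. So after verifying flatness of $\pi_*\mathcal{O}_{\Psi_d(\eta)}$ over $S$ by a standard cohomology-and-base-change argument, I can reduce both assertions to the case $S = \Spec k$ for $k$ algebraically closed.

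For (b), the analysis splits into three cases by the shape of the complex. When $d=3$, the complex $\pi^*\det\mathcal{F}_2(-3) \to \mathcal{O}_{\mathbb{P}\mathcal{F}_2}$ is a single map on $\mathbb{P}^1$, so being a resolution just means injectivity, which is immediate from the right-codimension hypothesis (the cubic form is not identically zero). When $d=4$, the complex is the Koszul complex on two ternary quadratic forms on $\mathbb{P}^2$; the right-codimension hypothesis forces the two forms to cut out a codimension-2 subscheme, which by Serre's criterion means they form a regular sequence locally, and the Koszul complex is then a resolution. When $d=5$, the complex is precisely the Buchsbaum-Eisenbud Pfaffian complex associated to the skew map $\beta_2$, and the structure theorem of Buchsbaum-Eisenbud for Gorenstein ideals of codimension 3 asserts exactly that this complex is a resolution whenever the Pfaffian ideal has codimension 3 — which is the right-codimension condition.

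For (a), one direction is short given (b): starting from $\eta \in U_d(S)$, I push the resolution down along $\pi: \mathbb{P}\mathcal{F}_{d-1} \to S$. A hypercohomology spectral sequence (of the type already spelled out in the $d=5$ discussion in the paper) gives $R^0\pi_*\mathcal{O}_{\Psi_d(\eta)} \cong \mathcal{O}_Y$ with higher direct images vanishing, showing $\Psi_d(\eta) \to S$ is a flat degree $d$ cover. Gorensteinness follows from the shape of the resolution: $\Psi_d(\eta)$ is a divisor in $\mathbb{P}^1$ ($d=3$), a complete intersection of two quadrics in $\mathbb{P}^2$ ($d=4$), or a codimension-3 Pfaffian scheme in $\mathbb{P}^3$ ($d=5$), all Gorenstein. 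For the inverse direction, given a Gorenstein cover $Y \to S$ of degree $d$, I would use the Casnati-Ekedahl construction: identify $\mathcal{F}_{d-1} \cong (\mathcal{O}_Y/\mathcal{O}_S)^{\vee}$ and embed $Y \hookrightarrow \mathbb{P}\mathcal{F}_{d-1}$ via the relative dualizing sheaf, then extract the minimal free resolution of $\mathcal{O}_Y$ on $\mathbb{P}\mathcal{F}_{d-1}$. The shape of this resolution (a principal divisor, a Koszul complex, or a Pfaffian complex) reads off a section $\eta$ of the appropriate bundle, and the right-codimension property of $\eta$ is equivalent to $Y$ being finite over $S$. Verifying that these two constructions are mutually inverse, on both objects and morphisms, then gives the isomorphism of stacks.

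The main obstacle is the $d=5$ case, which simultaneously requires the Buchsbaum-Eisenbud structure theorem for Gorenstein codimension-3 ideals and the passage from a Noetherian integral base (the Casnati-Ekedahl setting) to an arbitrary base scheme (the Landesman-Vakil-Wood setting). The original Noetherian proofs make essential use of primary decomposition and passage to the generic point to prove that the canonical embedding exists and that the Pfaffian resolution is unique up to the appropriate $G_5$-action; replacing these with statements that only use flatness, base change, and fiberwise criteria is the main technical work required.
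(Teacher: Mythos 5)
The paper does not prove this theorem itself: it simply cites \cite[Proposition 4.7]{lvw_low_degree_hurwitz} for part (a) and the proofs of \cite[Theorems 3.13, 3.14, 3.16]{lvw_low_degree_hurwitz} for part (b). Your outline is a fair reconstruction of what those cited results (and the underlying Casnati--Ekedahl arguments) actually establish, and you have correctly identified the key ingredients: the fiberwise reduction, regular sequence plus Koszul for $d=4$, the Pfaffian complex for $d=5$, and the Casnati--Ekedahl relative-canonical embedding for the inverse direction in (a). Two refinements worth making explicit. First, for $d=5$ the tool you want is the Buchsbaum--Eisenbud \emph{acyclicity criterion} (that the length-3 Pfaffian complex of a generically split alternating map is exact precisely when its Pfaffian ideal has grade $\ge 3$), not the structure theorem, which goes the other way (it says every Gorenstein codimension-3 ideal \emph{has} such a resolution). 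Second, your reduction to $S = \Spec k$ should be phrased differently: the terms of the complex are locally free on $\mathbb{P}\mathcal{F}_{d-1}$, hence $S$-flat, and a bounded complex of $S$-flat coherent sheaves is exact (away from degree 0) iff its restriction to every geometric fiber is; the flatness of $\pi_*\mathcal{O}_{\Psi_d(\eta)}$ is then an output of the argument (via the hypercohomology spectral sequence and cohomology-and-base-change), not something you verify up front to permit the reduction. With these two adjustments your sketch matches the cited proofs.
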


\begin{proof}
Part (a) is \cite[Proposition 4.7]{lvw_low_degree_hurwitz}. Part (b) follows from the proofs of \cite[Theorem 3.13]{lvw_low_degree_hurwitz}, \cite[Theorem 3.14]{lvw_low_degree_hurwitz}, and \cite[Theorem 3.16]{lvw_low_degree_hurwitz} for $d = 3$, $d = 4$, and $d = 5$, respectively.
\end{proof}

\begin{corollary}\label{corollary:vss_bsd}
The morphism $\phi_d: \mathcal{X}_d \to \Covers_d$ restricts to an isomorphism $\mathcal{E}_d = [V_d^{ss}/G_d] \xrightarrow[]{\sim} B\mathbf{S}_d$. In particular, for any scheme $S$ and any vector $v \in V_d^{ss}(S)$, the map $(G_d)_S \to (V_d)_S$ given by $g \mapsto gv$ is \'{e}tale, and the differential $(\mathfrak{g}_d)_S \to (V_d)_S$ given by $x \mapsto xv$ is an isomorphism.
\end{corollary}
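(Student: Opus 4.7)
The plan is to derive both assertions from Theorem~\ref{theorem:lvw_gorenstein}(a) and Proposition~\ref{proposition:disc_agrees}, together with a short Lie-theoretic argument. For the isomorphism $\mathcal{E}_d \xrightarrow[]{\sim} B\mathbf{S}_d$, note that $B\mathbf{S}_d \subset \Covers_d$ is cut out by the invertibility of $\Delta_d^{\Cov}$, and finite \'{e}tale covers are Gorenstein, so $B\mathbf{S}_d \subset \Covers_d^{\Gor}$. Proposition~\ref{proposition:disc_agrees} identifies $\phi_d^{-1}(B\mathbf{S}_d)$ with $\mathcal{E}_d$, while Theorem~\ref{theorem:lvw_gorenstein}(a) yields $\phi_d^{-1}(\Covers_d^{\Gor}) = [U_d/G_d]$ together with an isomorphism $\phi_d \colon [U_d/G_d] \xrightarrow[]{\sim} \Covers_d^{\Gor}$. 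Restricting this isomorphism to the \'{e}tale loci on both sides gives the desired $\mathcal{E}_d \xrightarrow[]{\sim} B\mathbf{S}_d$ (and the inclusion $V_d^{ss} \subset U_d$ as a byproduct).

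For the differential claim, the isomorphism $\mathcal{E}_d \cong B\mathbf{S}_d$ makes $\mathcal{E}_d$ a Deligne-Mumford stack with unramified inertia, since $\mathbf{S}_d$ is a finite constant group scheme over $\mathbb{Z}$. Pulling back the inertia along a section $v \in V_d^{ss}(S)$ shows that the stabilizer group scheme of $v$ in $(G_d)_S$ is unramified over $S$, and therefore has zero Lie algebra. The kernel of the map $(\mathfrak{g}_d)_S \to (V_d)_S$, $x \mapsto xv$, is precisely this Lie algebra and so vanishes, which makes the map injective. A direct dimension count gives $\dim V_d = \dim G_d$ (equal to $4$, $12$, $40$ for $d = 3$, $4$, $5$, respectively), so both source and target are locally free of the same rank, and injectivity then forces the map to be an isomorphism.

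Finally, to see that the orbit map $\mu_v \colon (G_d)_S \to (V_d)_S$, $g \mapsto gv$, is \'{e}tale, I will use $G_d$-equivariance: $\mu_v$ intertwines left translation on $(G_d)_S$ with the $G_d$-action on $(V_d)_S$, so the differential of $\mu_v$ at any point $g$ is obtained from the differential at the identity by pre- and post-composition with isomorphisms, and is therefore an isomorphism everywhere. Since $(G_d)_S$ and $(V_d)_S$ are smooth over $S$ of the same relative dimension, a morphism whose differential is everywhere an isomorphism is smooth of relative dimension zero, hence \'{e}tale. The only nontrivial input is the Landesman-Vakil-Wood parametrization cited in Theorem~\ref{theorem:lvw_gorenstein}(a); the rest is formal.
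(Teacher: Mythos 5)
Your proof of the first assertion (the isomorphism $\mathcal{E}_d \cong B\mathbf{S}_d$) is exactly the paper's: $B\mathbf{S}_d \subset \Covers_d^{\Gor}$ because \'{e}tale covers are Gorenstein, then combine \Cref{proposition:disc_agrees} with \Cref{theorem:lvw_gorenstein}(a).

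For the second assertion your logical flow is the reverse of the paper's, and it works, but one sentence needs tightening. The paper first establishes \'{e}taleness of $\mu_v\colon (G_d)_S \to (V_d)_S$ directly via the Cartesian square with bottom arrow $S \to (B\mathbf{S}_d)_S$ (which is \'{e}tale because $\mathbf{S}_d$ is finite \'{e}tale over $\mathbb{S}_d$), and then deduces that the differential is a fiberwise isomorphism and hence an isomorphism of bundles. You instead start from the differential: unramified inertia of $B\mathbf{S}_d$ forces the stabilizer $\mathrm{Stab}_v \subset (G_d)_S$ to be unramified over $S$, its Lie algebra is the kernel of $x\mapsto xv$, and a rank count closes the gap. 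That is a legitimate alternative route. The one imprecise step is ``injectivity then forces the map to be an isomorphism'': a monomorphism of vector bundles of equal rank need not be an isomorphism (e.g.\ $\mathcal{O}_{\mathbb{A}^1}\xrightarrow{t}\mathcal{O}_{\mathbb{A}^1}$). What saves you is that unramifiedness of $\mathrm{Stab}_v$ over $S$ is a \emph{fiberwise} condition: for each point $s\in S$, the fiber $(\mathrm{Stab}_v)_s$ is unramified over $k(s)$, so its Lie algebra over $k(s)$ vanishes, so $\mathfrak{g}_d\otimes k(s) \to V_d\otimes k(s)$ is injective, hence an isomorphism by the dimension count. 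Fiberwise isomorphism of a bundle map then gives a global isomorphism. If you insert that fiberwise step the argument is airtight. Your subsequent passage from ``differential is an isomorphism at $e$'' to ``$\mu_v$ \'{e}tale'' via equivariance and smoothness of both source and target over $S$ is fine, and is a nice self-contained alternative to the paper's Cartesian square.
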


\begin{proof}
Because \'{e}tale schemes over a field are Gorenstein, the \'{e}tale locus $B\mathbf{S}_d \subset \Covers_d$ is contained in $\Covers_d^{\Gor}$. By \Cref{proposition:disc_agrees}, the preimage $\phi_d^{-1}(B\mathbf{S}_d)$ is the complement of the zero locus of $\Delta_d$, which is $\mathcal{E}_d$. By \Cref{theorem:lvw_gorenstein}(a), $\phi_d$ restricts to an isomorphism $\mathcal{E}_d \xrightarrow[]{\sim} B\mathbf{S}_d$.

The fact that $(G_d)_S \to (V_d)_S$ is \'{e}tale follows because it is the top map in the following Cartesian square, where the morphism $S \to (B\mathbf{S}_d)_S$ is the composite $S \xrightarrow[]{v} (V_d^{ss})_S \to [(V_d^{ss})_S/(G_d)_S] \xrightarrow[]{\sim} B\mathbf{S}_d$:
% https://tikzcd.yichuanshen.de/#N4Igdg9gJgpgziAXAbVABwnAlgFyxMJZABgBpiBdUkANwEMAbAVxiRAAoBxAfSgEpuAZRABfUuky58hFAEZSsqrUYs2ydgDVeAPWBw4IgYID0XXkYqjxIDNjwEi8ytXrNWiDlqi79hoVYk7aUdSACYlV1UPACEAHViAWzocAAsAIwAzYEERXgCbSXsZEjCIlXcQYRElGCgAc3giUAyAJwgEpAAWahwIJABmMWa2jsQyEF6uoZBW9qRxycRQ6dnR0J6+xFkVkaR5Cc3+l3K2eOwO6pEgA
\begin{center}
\begin{tikzcd}
(G_d)_S \arrow[dd] \arrow[r] & (V_d^{ss})_S \arrow[d]                     \\
                             & {[(V_d^{ss})_S/(G_d)_S]} \arrow[d, "\sim"] \\
S \arrow[r]                  & (B\mathbf{S}_d)_S                          
\end{tikzcd}
\end{center}
Then the differential $(\mathfrak{g}_d)_S \to (V_d)_S$ is a map of vector bundles on $S$ that is an isomorphism on each fiber by the \'{e}taleness of $(G_d)_S \to (V_d)_S$, so $(\mathfrak{g}_d)_S \to (V_d)_S$ is an isomorphism.
\end{proof}

\begin{corollary}\label{corollary:gorenstein_quartic_unique}
If $Q \to S$ is a quartic Gorenstein cover, then there is a cubic resolvent $(Q,C,\phi,\delta)$, unique up to unique isomorphism inducing the identity on $Q$.
\end{corollary}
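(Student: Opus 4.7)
The plan is to deduce this directly from Theorem~\ref{theorem:lvw_gorenstein}(a) combined with Wood's parametrization recalled in Section~\ref{section:prehomog_cov_res}. Wood's theorem identifies $\mathcal{X}_4$ with the stack of quartic covers with cubic resolvent $(Q,C,\phi,\delta)$, and under this identification the morphism $\phi_4:\mathcal{X}_4\to\Covers_4$ is the forgetful map $(Q,C,\phi,\delta)\mapsto Q$. Consequently, giving a cubic resolvent of a fixed quartic cover $Q\to S$ is the same as giving a lift through $\phi_4$ of the classifying morphism $\alpha_Q\colon S\to\Covers_4$ of $Q$, i.e. a morphism $S\to\mathcal{X}_4$ together with a $2$-isomorphism between its composite with $\phi_4$ and $\alpha_Q$. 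Under this dictionary, an isomorphism of cubic resolvents inducing the identity on $Q$ translates to a $2$-isomorphism of lifts compatible with the given identifications of their images in $\Covers_4$.

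Next I would invoke Theorem~\ref{theorem:lvw_gorenstein}(a), which asserts that $\phi_4$ restricts to an isomorphism $[U_4/G_4]\xrightarrow{\sim}\Covers_4^{\Gor}$; in particular, $[U_4/G_4]$ is the full preimage $\phi_4^{-1}(\Covers_4^{\Gor})$. Since $Q\to S$ is Gorenstein, $\alpha_Q$ factors as $S\to\Covers_4^{\Gor}\hookrightarrow\Covers_4$, and hence any cubic resolvent of $Q$ corresponds to a lift of $\alpha_Q$ through $[U_4/G_4]\hookrightarrow\mathcal{X}_4$.

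Finally, since $\phi_4|_{[U_4/G_4]}\colon[U_4/G_4]\to\Covers_4^{\Gor}$ is an isomorphism, a lift of $\alpha_Q$ to $[U_4/G_4]$ exists and is unique up to unique $2$-isomorphism. Translated back through Wood's parametrization, this yields both existence of a cubic resolvent $(Q,C,\phi,\delta)$ and uniqueness up to unique isomorphism inducing the identity on $Q$. There is no real obstacle here; the content of the corollary is essentially packaged into Theorem~\ref{theorem:lvw_gorenstein}(a), and the only thing to do is unwind Wood's identification of $\mathcal{X}_4$ with the stack of quartic covers with cubic resolvent.
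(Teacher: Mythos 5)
Your proposal is correct and is essentially the same argument the paper gives: both invoke Wood's identification of $\mathcal{X}_4$ with the stack of quartic covers with cubic resolvent (so that $\phi_4$ is the forgetful map $(Q,C,\phi,\delta)\mapsto Q$), and then apply Theorem~\ref{theorem:lvw_gorenstein}(a) to conclude the forgetful map is an isomorphism over $\Covers_4^{\Gor}$, so the lift of $\alpha_Q$ exists and is unique up to unique isomorphism. You spell out the moduli-theoretic unwinding in more detail than the paper, but the idea and the key inputs are identical.
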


\begin{proof}
By construction, the morphism $\phi_d: \mathcal{X}_4 \to \Covers_4$ factors as $\mathcal{X}_4 \xrightarrow[]{\sim} \CovRes_4 \to \Covers_4$, where the first morphism is Wood's isomorphism from \cite{w_quartic_arbitrary} and the second morphism forgets the cubic resolvent. By \Cref{theorem:lvw_gorenstein}(a), the second morphism becomes an isomorphism above $\Covers_4^{\Gor}$, proving the corollary.
\end{proof}

\section{Prehomogeneous vector spaces over local fields}\label{section:prehomog_local}
In this section, we will discuss certain integrals on prehomogeneous vector spaces over local fields related to Igusa zeta functions. In this paper, we will mainly work with local fields of positive characteristic $> d$, but the results below apply equally to $p$-adic fields with $p > d$. The first part of our discussion follows \cite[\S 7.1]{bsw_geometry}.

Fix a nonarchimedean local field $K$ of characteristic not dividing $d!$. Fix the following notation: \begin{itemize}
\item $\mathcal{O}_K$ is the ring of integers of $K$.

\item $k$ is the residue field of $K$.

\item $q$ is the cardinality of $k$.

\item $v_K$ is the valuation on $K$ with image $\mathbb{Z}$.

\item $|\cdot| = q^{-v_K(\cdot)}$ is the normalized absolute value on $K$.
\end{itemize}
We will use $G_d$ and $V_d$ to denote $(G_d)_K$ and $(V_d)_K$, respectively. Pick nonzero left-invariant top forms $\omega_{G_d}$ and $\omega_{V_d}$ on $G_d$ and $V_d$ giving rise to Haar measures $\mu_{G_d}$ and $\mu_{V_d}$, respectively. Because $G_d$ is reductive, $\omega_{G_d}$ is in fact bi-invariant; because $V_d$ is commutative, $\omega_{V_d}$ is bi-invariant as well. For any element $v \in V_d(\mathcal{O}_K)$ with $\Delta_d(v) \ne 0$, consider the map $\pi_v: G_d \to V_d$ given by $g \mapsto gv$, which is \'{e}tale by \Cref{corollary:vss_bsd}. The top form $g \mapsto (\pi_v^*\omega_V)(g)/\chi_d(g)^2$ on $G_d$ is a nonzero left-invariant form because $G_d$ acts on $\det V_d$ by $\chi_d^2$. Thus, $(\pi_v^*\omega_{V_d})(g) = J_d(v)\chi_d(g)^2\omega_{G_d}(g)$ for a rational function $J_d$ on $V_d$. We have $(\pi_{hv}^*\omega_{V_d})(g) = (\pi_v^*\omega_{V_d})(gh)$. Meanwhile, \begin{align*}
(\pi_{hv}^*\omega_{V_d})(g) &= J_d(hv)\chi_d(g)^2\omega_{G_d}(g) \\
(\pi_v^*\omega_{V_d})(gh) &= J_d(v)\chi_d(gh)^2\omega_{G_d}(gh) \\
&= \chi_d(h)^2J_d(v)\chi_d(g)^2\omega_{G_d}(g).
\end{align*}
Thus, $J_d(hv) = \chi_d(h)^2J_d(v)$, and $J_d$ must be equal to $\mathcal{J}_d\Delta_d$ for a nonzero constant $\mathcal{J}_d \in K^{\times}$. Thus, $$\pi_v^*\omega_{V_d} = \mathcal{J}_d\Delta_d(v)\chi_d^2\omega_{G_d}.$$

We pick $\omega_{G_d}$ so that $\omega_{G_d}(1) = e_1 \wedge \cdots \wedge e_r$ for a basis $e_1,\ldots,e_r$ of $\mathfrak{g}_d^*(\mathcal{O}_K)$ and do the same thing for $\omega_{V_d}$. This normalization makes it so that $d\mu_{V_d}(V_d(\mathcal{O}_K)) = 1$. Moreover, if we pick $v \in V_d(\mathcal{O}_K)$ such that $v_K(\Delta_d(v)) = 0$, then the differential $d\pi_v: \mathfrak{g}_d(\mathcal{O}_K) \to V_d(\mathcal{O}_K)$ is an isomorphism because it induces an isomorphism $\mathfrak{g}_d(k) \to V_d(k)$ by \Cref{corollary:vss_bsd}. Thus, with this normalization, $v_K(\mathcal{J}_d) = 0$.

For each $b \in \mathbb{N}$, let $V_d(\mathcal{O}_K)_b$ denote the open subset of $v \in V_d(\mathcal{O}_K)$ such that $v_K(\Delta_d(v)) = b$. Note that each $V_d(\mathcal{O}_K)_b$ is $G_d(\mathcal{O}_K)$-invariant. Moreover, each $G_d(\mathcal{O}_K)$-orbit in $V_d^{ss}(K)$ is open, and $V_d(\mathcal{O}_K)_b$ is compact, so $V_d(\mathcal{O}_K)_b$ is a finite union of $G_d(\mathcal{O}_K)$-orbits.

\begin{proposition}\label{proposition:counting_integral}
For $s \in \mathbb{C}$ with $\mathrm{Re}(s) \ge 1$, we have $$\sum_{b = 0}^{\infty}\#\left(V_d(\mathcal{O}_K)_b // G_d(\mathcal{O}_K)\right)q^{-bs} = \frac{1}{\mu_{G_d}(G_d(\mathcal{O}_K))}\int_{V_d(\mathcal{O}_K)}|\Delta_d(v)|^{s - 1}d\mu_{V_d}(v).$$
\end{proposition}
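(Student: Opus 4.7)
The plan is a direct computation: decompose $V_d(\mathcal{O}_K)$ into level sets of $|\Delta_d|$ and then further into $G_d(\mathcal{O}_K)$-orbits, and relate the measure of each orbit to $|\Delta_d(v)|$ and the size of its stabilizer via the Jacobian formula already established for $\pi_v$.

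First I would note that the zero locus of $\Delta_d$ in $V_d(\mathcal{O}_K)$ has $\mu_{V_d}$-measure zero (as $\Delta_d$ is a nonzero polynomial on $V_d$, so its zero set is a proper closed analytic subset), so $V_d(\mathcal{O}_K)$ is partitioned, up to a null set, by the open sets $V_d(\mathcal{O}_K)_b$ indexed by $b \in \mathbb{N}$. On each such piece $|\Delta_d(v)|^{s-1} = q^{-b(s-1)}$ is constant, so the integral in question decomposes as
\[
\int_{V_d(\mathcal{O}_K)}|\Delta_d(v)|^{s-1}d\mu_{V_d}(v) = \sum_{b=0}^{\infty} q^{-b(s-1)}\,\mu_{V_d}\!\left(V_d(\mathcal{O}_K)_b\right).
\]
Absolute convergence for $\realpart(s) \ge 1$ is automatic since $|\Delta_d(v)|^{s-1} \le 1$ on $V_d(\mathcal{O}_K)$ and $\mu_{V_d}(V_d(\mathcal{O}_K)) = 1$.

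The main step is to show that for a single orbit $G_d(\mathcal{O}_K) \cdot v \subset V_d(\mathcal{O}_K)_b$ one has
\[
\mu_{V_d}\bigl(G_d(\mathcal{O}_K) \cdot v\bigr) = \frac{q^{-b}\,\mu_{G_d}(G_d(\mathcal{O}_K))}{|\Stab_{G_d(\mathcal{O}_K)}(v)|}.
\]
For this I use the identity $\pi_v^*\omega_{V_d} = \mathcal{J}_d\Delta_d(v)\chi_d^2 \omega_{G_d}$ recalled just above the proposition. Since $|\mathcal{J}_d|=1$ by our normalization, and since $|\chi_d(g)|=1$ for $g \in G_d(\mathcal{O}_K)$ (as $\chi_d(g) \in \mathcal{O}_K^\times$), the Jacobian of $\pi_v$ with respect to the chosen Haar measures has constant absolute value $|\Delta_d(v)| = q^{-b}$ on all of $G_d(\mathcal{O}_K)$. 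The map $\pi_v\colon G_d(\mathcal{O}_K) \to G_d(\mathcal{O}_K)\cdot v$ is surjective with every fiber a coset of the finite group $\Stab_{G_d(\mathcal{O}_K)}(v)$, so the $p$-adic change of variables formula for étale maps gives
\[
q^{-b}\,\mu_{G_d}(G_d(\mathcal{O}_K)) = |\Stab_{G_d(\mathcal{O}_K)}(v)|\cdot \mu_{V_d}\bigl(G_d(\mathcal{O}_K)\cdot v\bigr),
\]
which is the claim.

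Finally, because $V_d(\mathcal{O}_K)_b$ is compact and each $G_d(\mathcal{O}_K)$-orbit in it is open (by the openness of orbits in $V_d^{ss}(K)$, which follows from \Cref{corollary:vss_bsd}), it is a finite disjoint union of such orbits. Summing the orbit measures yields
\[
\mu_{V_d}(V_d(\mathcal{O}_K)_b) = q^{-b}\,\mu_{G_d}(G_d(\mathcal{O}_K))\cdot \#\!\bigl(V_d(\mathcal{O}_K)_b/\!/G_d(\mathcal{O}_K)\bigr),
\]
and substituting into the displayed decomposition of the integral, then dividing by $\mu_{G_d}(G_d(\mathcal{O}_K))$, gives exactly the stated formula. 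The only subtle point is the bookkeeping of the stabilizer factor in the change of variables; everything else is routine.
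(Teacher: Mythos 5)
Your proposal is correct and follows essentially the same approach as the paper: decompose the integral over the level sets $V_d(\mathcal{O}_K)_b$, use the Jacobian identity $\pi_v^*\omega_{V_d} = \mathcal{J}_d\Delta_d(v)\chi_d^2\omega_{G_d}$ together with $|\mathcal{J}_d| = 1$ and $|\chi_d| = 1$ on $G_d(\mathcal{O}_K)$ to compute each orbit's measure as $q^{-b}\mu_{G_d}(G_d(\mathcal{O}_K))/\#\stab_{G_d(\mathcal{O}_K)}(v)$, and sum over the finitely many open orbits in each compact $V_d(\mathcal{O}_K)_b$. The only cosmetic difference is that you state the orbit-measure identity as a separate claim before summing, while the paper decomposes the integral first and computes orbit measures in place.
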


\begin{proof}
We can write $$\int_{V_d(\mathcal{O}_K)}|\Delta_d(v)|^{s - 1}d\mu_{V_d}(v) = \sum_{b = 0}^{\infty}\int_{V_d(\mathcal{O}_K)_b}q^{-b(s - 1)}d\mu_{V_d}(v).$$ Each $V_d(\mathcal{O}_K)_b$ is a finite union of $G_d(\mathcal{O}_K)$-orbits $V_d(\mathcal{O}_K)_b = \bigsqcup_uG_d(\mathcal{O}_K)u$, so $$\int_{V_d(\mathcal{O}_K)_b}q^{-b(s - 1)}d\mu_{V_d}(v) = q^{-b(s - 1)}\sum_u\int_{G_d(\mathcal{O}_K)u}d\mu_{V_d}.$$ Recall the formula $\pi_v^*\omega_{V_d} = \mathcal{J}_d\Delta_d(v)\chi_d^2\omega_{G_d}$. Since $|\mathcal{J}_d| = 1$ and $|\chi_d(G_d(\mathcal{O}_K))| = 1$, $$\int_{G_d(\mathcal{O}_K)u}d\mu_{V_d} = \frac{1}{\#\stab_{G_d(\mathcal{O}_K)}(u)}\int_{G_d(\mathcal{O}_K)}q^{-b}d\mu_{G_d} = \frac{q^{-b}\mu_{G_d}(G_d(\mathcal{O}_K))}{\#\stab_{G_d(\mathcal{O}_K)}(u)}.$$ Putting this all together, we see that \begin{align*}
\int_{V_d(\mathcal{O}_K)}|\Delta_d(v)|^{s - 1}d\mu_{V_d}(v) &= \mu_{G_d}(G_d(\mathcal{O}_K))\sum_{b = 0}^{\infty}\sum_u\frac{q^{-bs}}{\#\stab_{G_d(\mathcal{O}_K)}(u)} \\
&= \mu_{G_d}(G_d(\mathcal{O}_K))\sum_{b = 0}^{\infty}\#\left(V_d(\mathcal{O}_K)_b // G_d(\mathcal{O}_K)\right)q^{-bs}.
\end{align*}

Note that the right-hand side converges for $\mathrm{Re}(s) \ge 1$ because $V_d(\mathcal{O}_K)$ has finite measure and $|\Delta_d(v)| \le 1$ for all $v \in V_d(\mathcal{O}_K)$. Since all the terms on both sides are nonnegative, we have established convergence for the left-hand side as well for $\mathrm{Re}(s) \ge 1$.
\end{proof}

\begin{proposition}\label{proposition:measure_of_gd}
Let $(a)$ denote $1 - q^{-a}$. With $\mu_{G_d}$ and $\mu_{V_d}$ normalized as above,
\begin{align*}
\mu_{G_3}(G_3(\mathcal{O}_K)) &= (1)(2) \\
\mu_{G_4}(G_4(\mathcal{O}_K)) &= (1)(2)^2(3) \\
\mu_{G_5}(G_5(\mathcal{O}_K)) &= (1)(2)^2(3)^2(4)^2(5)
\end{align*}
\end{proposition}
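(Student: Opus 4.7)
The plan is to reduce the computation to counting $k$-points of $G_d$ together with standard point-count formulas for $\GL_n$. Since each $G_d$ is a smooth group scheme over $\mathcal{O}_K$ and $\omega_{G_d}$ is normalized via an integral basis of $\mathfrak{g}_d^*(\mathcal{O}_K)$, the standard identity for volumes of smooth $\mathcal{O}_K$-schemes yields
\begin{equation*}
\mu_{G_d}(G_d(\mathcal{O}_K)) = \frac{\#G_d(k)}{q^{\dim G_d}}.
\end{equation*}
For $d=3$ this already does it: $G_3=\GL_2$, $\dim G_3 = 4$, and $\#\GL_2(k)/q^4 = (1-q^{-1})(1-q^{-2}) = (1)(2)$.

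For $d\in\{4,5\}$, I would present $G_d$ as the kernel of a homomorphism to $\mathbb{G}_m$. Define
\begin{align*}
\psi_4\colon \GL_3\times\GL_2 &\to \mathbb{G}_m, \quad (g_3,g_2)\mapsto (\det g_3)(\det g_2)^{-1},\\
\psi_5\colon \GL_4\times\GL_5 &\to \mathbb{G}_m, \quad (g_4,g_5)\mapsto (\det g_4)^2(\det g_5)^{-1}.
\end{align*}
Each $\psi_d$ is smooth (it is a product of determinants composed with a multiplication map) and surjective on $k$-points (take, e.g., $(g_3,g_2)=(\diag(t,1,1),I_2)$ for $\psi_4$, and similarly for $\psi_5$). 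Hence its kernel $G_d$ has $\dim G_d = \dim\GL_a+\dim\GL_b-1$ and
\begin{equation*}
\#G_d(k) = \frac{\#\GL_a(k)\cdot \#\GL_b(k)}{\#\mathbb{G}_m(k)},
\end{equation*}
where $(a,b)=(3,2)$ for $d=4$ and $(a,b)=(4,5)$ for $d=5$.

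Combining this with the standard point count $\#\GL_n(k)/q^{n^2} = \prod_{i=1}^n (i)$ and $\#\mathbb{G}_m(k)/q = (1)$, one obtains
\begin{align*}
\mu_{G_4}(G_4(\mathcal{O}_K)) &= \frac{[(1)(2)(3)]\cdot[(1)(2)]}{(1)} = (1)(2)^2(3),\\
\mu_{G_5}(G_5(\mathcal{O}_K)) &= \frac{[(1)(2)(3)(4)]\cdot[(1)(2)(3)(4)(5)]}{(1)} = (1)(2)^2(3)^2(4)^2(5),
\end{align*}
as claimed. The only subtle point is the compatibility of the volume-equals-point-count formula with our integral top form: one must check that $\omega_{G_d}$ is a generator of $\det\mathfrak{g}_d^*$ over $\mathcal{O}_K$, which is exactly the normalization stated in the paragraph preceding \Cref{proposition:measure_of_gd}, and use smoothness of $G_d$ over $\mathcal{O}_K$ (valid since $\operatorname{char}k$ does not divide $d!$, so $G_d$ is reductive and its defining equation for $d\geq 4$ cuts out a smooth subgroup of $\GL_a\times\GL_b$). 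I expect this compatibility check to be the only place where care is needed; the remainder is bookkeeping.
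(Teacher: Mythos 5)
Your proof is correct, but it takes a genuinely different route from the one the paper actually carries out; in fact, it is exactly the "alternative" argument that the paper credits to Will Sawin in the remark immediately following the proposition ("\ldots can be proven by counting the $k$-points of $G_d$ and dividing by $q^{\dim G_d}$"). The paper's own proof instead rearranges the identity from the proof of \Cref{proposition:counting_integral} at $b = 0$ to get $\#(V_d(\mathcal{O}_K)_0 /\!/ G_d(\mathcal{O}_K)) = \mu_{V_d}(V_d(\mathcal{O}_K)_0)/\mu_{G_d}(G_d(\mathcal{O}_K))$, observes via \Cref{corollary:vss_bsd} that the left-hand side is the automorphism-weighted count of degree $d$ \'{e}tale $\mathcal{O}_K$-algebras, which is $1$, and hence identifies $\mu_{G_d}(G_d(\mathcal{O}_K))$ with $\mu_{V_d}(V_d(\mathcal{O}_K)_0)$, the density of $\overline{v} \in V_d(k)$ with $\Delta_d(\overline{v}) \neq 0$; these densities are then quoted from the arithmetic-statistics literature (Davenport--Heilbronn and Bhargava). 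Your approach is more direct and self-contained: it needs only smoothness of $G_d$ over $\mathcal{O}_K$, the standard volume-equals-point-count identity for the integral Haar normalization, and the elementary count $\#\GL_n(k) = q^{n^2}\prod_{i=1}^n(1-q^{-i})$. What the paper's route buys is a conceptual link between $\mu_{G_d}(G_d(\mathcal{O}_K))$, the GIT quotient $[V_d^{ss}/G_d] \cong B\mathbf{S}_d$, and the non-vanishing-discriminant densities that recur throughout the arithmetic-statistics applications, making the constant's provenance transparent to readers coming from that side; what your route buys is independence from those external density computations and a completely uniform treatment of all three $d$. One small point of care in your argument, which you do flag: the smoothness of $G_d$ over $\mathcal{O}_K$ for $d \in \{4,5\}$ should be justified by noting that $\psi_d$ is a homomorphism of smooth group schemes whose Lie algebra map $(X,Y) \mapsto \tr(X) - \tr(Y)$ (resp. $2\tr(X) - \tr(Y)$) is surjective fiberwise (using that $2$ is invertible), so the kernel is smooth of the expected dimension.
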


\begin{proof}
By the proof of \Cref{proposition:counting_integral}, $$\#(V_d(\mathcal{O}_K)_0 // G_d(\mathcal{O}_K)) = \frac{1}{\mu_{G_d}(
G_d(\mathcal{O}_K))}\int_{V_d(\mathcal{O}_K)_0}|\Delta_d(v)|^{-1}d\mu_{V_d}(v) = \frac{\mu_{V_d}(V_d(\mathcal{O}_K)_0)}{\mu_{G_d}(G_d(\mathcal{O}_K))}.$$ Because $V_d(\mathcal{O}_K)_0 = V_d^{ss}(\mathcal{O}_K)$ and $[V_d^{ss}/G_d] \cong B\mathbf{S}_d$ (\Cref{corollary:vss_bsd}), the left-hand side $\#(V_d(\mathcal{O}_K)_0 // G_d(\mathcal{O}_K))$ counts degree $d$ \'{e}tale $\mathcal{O}_K$-algebras inversely weighted by automorphisms, which is equal to the weighted count of degree $d$ \'{e}tale $k$-algebras. For any finite field, this is equal to 1. This can be proven either by direct counting in the cases $d \in \{3,4,5\}$ or by a generating function argument for any $d$. Hence, $\mu_{G_d}(G_d(\mathcal{O}_K)) = \mu_{V_d}(V_d(\mathcal{O}_K)_0)$.

Because of the normalization $\mu_{V_d}(V_d(\mathcal{O}_K)) = 1$ and the condition that $v_K(\Delta_d(v)) = 0$ only depends on the image of $v$ in $V_d(k)$, $\mu_{V_d}(V_d(\mathcal{O}_K)_0)$ is equal to the fraction of points $\overline{v} \in V_d(k)$ for which $\Delta_d(\overline{v}) \ne 0$. This density has been computed for degrees $d \in \{3,4,5\}$ in \cite[Lemma 1]{dh_density_discriminants_cubic}, \cite[Lemma 23]{b_hcl_iii}, and \cite[Lemma 20]{b_hcl_iv}, so we get the proposition. In each paper, the authors work with $\mathbb{F}_p$ because they are interested in counting extensions of $\mathbb{Q}$, but their methods extend to $\mathbb{F}_q$.
\end{proof}

Alternatively, as suggested to us by Will Sawin, \Cref{proposition:measure_of_gd} can be proven by counting the $k$-points of $G_d$ and dividing by $q^{\dim G_d}$.

\subsection{Igusa zeta functions}\label{section:igusa_zeta_functions}
For the rest of the paper, let $$I_d(s) = \frac{1}{\mu_{G_d}(G_d(\mathcal{O}_K))}\int_{V_d(\mathcal{O}_K)}|\Delta_d(v)|^{s - 1}d\mu_{V_d}(v)$$ with the normalizations above. In particular, $\mu_{V_d}(V_d(\mathcal{O}_K)) = 1$, and $\mu_{G_d}(G_d(\mathcal{O}_K))$ is the relevant constant from \Cref{proposition:measure_of_gd}.

We recall Igusa's computations of two local zeta functions relevant to us. The coefficients of these Igusa zeta functions, which count algebras over local fields, will be relevant later when we want to count algebras over global fields. Let $t = q^{-s}$ and $(a) = 1 - q^{-a}$. Let $f(q,t) = 1 + t^2 + t^3 + t^4 - 2t^5 + 2qt^6 + (q - 1)t^7 + qt^8 - qt^9 + (q - 1)qt^{10} - 2qt^{11} + 2q^2t^{12} - q^2t^{13} - q^2t^{14} - q^2t^{15} - q^2t^{17}$. We note that $f(q,t)$ is irreducible.

\begin{theorem}[{\cite{i_stationary_phase}}]\label{theorem:igusa_computation_3_and_4}
\begin{align*}
I_3(s) &= \frac{1 + t + t^2 + t^3 + t^4}{(1 - t^2)(1 - qt^6)} \\
I_4(s) &= \frac{f(q,t)}{(1 - t)(1 - t^2)(1 - qt^6)(1 - q^2t^8)(1 - q^3t^{12})}
\end{align*}
\end{theorem}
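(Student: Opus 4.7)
The plan is to carry out Igusa's stationary phase method. The key input is that $(G_d, V_d)$ is a prehomogeneous vector space, so the reduction $V_d(k)$ decomposes into finitely many $G_d(k)$-orbits, each corresponding to a different ``degeneracy type'' of $\Delta_d$. This finiteness is what makes an explicit closed form possible in the first place.

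First I would decompose
$$\mu_{G_d}(G_d(\mathcal{O}_K))\, I_d(s) = \sum_{\overline{v} \in V_d(k)} \int_{v \equiv \overline{v} \bmod \pi} |\Delta_d(v)|^{s-1}\, d\mu_{V_d}(v),$$
and observe that each summand depends only on the $G_d(k)$-orbit of $\overline{v}$, so the sum collapses to a finite one indexed by orbit representatives, weighted by orbit size. The open orbit $\mathcal{E}_d(k)$ contributes the constant term $1$ of $I_d(s)$ by \Cref{proposition:measure_of_gd}. For each remaining orbit with representative $\overline{v}_0$, I would pick a lift $v_0 \in V_d(\mathcal{O}_K)$, write $v = v_0 + \pi w$ with $w \in V_d(\mathcal{O}_K)$, and use a local normal form for $\Delta_d$ near $v_0$ to extract the leading $\pi$-adic behavior. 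When the residual data (i.e., the coordinates of $w$ in the directions where $\Delta_d$ still vanishes) still sits in the singular locus -- as happens for orbits deep in the discriminant -- the procedure iterates. The iteration on a given branch contributes a geometric series $1/(1 - q^a t^b)$, where $b$ is the order of vanishing of $\Delta_d$ along the branch and $q^{-a}$ is the measure-scaling factor from substituting $w \leftarrow \pi w$ in the transverse directions.

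For $d = 3$, the $G_3(k)$-orbit structure on singular binary cubics (given by the partitions $3 = 3$ and $3 = 2 + 1$) is simple enough that the recursion terminates quickly, yielding the two denominator factors $(1 - t^2)$ and $(1 - qt^6)$; the numerator $1 + t + t^2 + t^3 + t^4$ is then the finite ``interference'' polynomial recording the overlap of the two branches. In fact, for this case one can bypass the stationary phase machinery entirely by directly parametrizing cubic $\mathcal{O}_K$-algebras with prescribed discriminant valuation, as alluded to in \Cref{remark:direct_counting}.

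The main obstacle is $d = 4$: the $G_4(k)$-orbit stratification of pairs of ternary quadratic forms is genuinely intricate, and for each orbit one must (a) identify a normal form, (b) read off the local geometry of the discriminant hypersurface in transverse coordinates, and (c) track the $G_4(k)$-stabilizer to get the correct orbit weight. The five denominator factors $(1 - t)(1 - t^2)(1 - qt^6)(1 - q^2 t^8)(1 - q^3 t^{12})$ reflect five recursive branches of the resolution of $\Delta_4$, and the irreducible numerator $f(q, t)$ is the inclusion-exclusion polynomial recording how these branches overlap. Because the enumeration and bookkeeping for $d = 4$ are delicate, I would ultimately carry out the computation by following Igusa's explicit tabulation in \cite{i_stationary_phase} rather than reinventing it; the point I would emphasize is that all of the qualitative features of the answer -- the shape of the denominator, the degrees appearing, and the rationality in $q$ and $t$ -- follow structurally from the orbit stratification of the prehomogeneous vector space.
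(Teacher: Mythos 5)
This theorem has no proof in the paper: it is imported verbatim from Igusa's work, with the only original content being the normalization (the factor $1/\mu_{G_d}(G_d(\mathcal{O}_K))$ and the shift $s \mapsto s-1$) established in \Cref{proposition:counting_integral} and \Cref{proposition:measure_of_gd}. So there is nothing in the paper for your sketch to diverge from.

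As a description of Igusa's stationary phase method, your outline is accurate in structure: the decomposition over $V_d(k)$, the collapse to $G_d(k)$-orbits (valid because $G_d$ is smooth over $\mathcal{O}_K$, so $G_d(\mathcal{O}_K) \twoheadrightarrow G_d(k)$, and both $\Delta_d$ and the Haar measure are $G_d(\mathcal{O}_K)$-invariant up to units), the open orbit giving the constant term via \Cref{proposition:measure_of_gd}, and the recursion along the singular orbits producing the geometric-series denominators. You are also right that the serious difficulty is the orbit bookkeeping for $d=4$, and honest that at that point you would reproduce Igusa's tabulation rather than rederive it. One small caution: the assertion that ``all the qualitative features of the answer follow structurally from the orbit stratification'' is a heuristic rather than a proof; in particular, rationality in $q$ (uniformity across $K$) is itself a nontrivial output of Igusa's computation, not an a priori consequence of finiteness of orbits. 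But since the paper treats this as a black-box citation, your proposal is at the appropriate level of detail.
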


Note that for $d \in \{3,4\}$, $I_d(s)$ is dependent only on $q$ and $t$, so we will write $I_d(q,t)$ instead. By some computation, it can be shown that all the coefficients of the power series $I_d(q,t)$ are nonnegative integers.

By \Cref{proposition:counting_integral} and \Cref{proposition:measure_of_gd}, $I_d(q,t)$ is equal to $\sum_{b = 0}^{\infty}\#\left(V_d(\mathcal{O}_K)_b // G_d(\mathcal{O}_K)\right)t^b$, so we know exactly what the counts $\#\left(V_d(\mathcal{O}_K)_b // G_d(\mathcal{O}_K)\right)$ are for $d \in \{3,4\}$. From the discussion in \Cref{section:prehomog_cov_res}, we get the following:

\begin{corollary}\label{corollary:local_cover_count}
\begin{enumerate}[(a)]
\item Let $N_{3,\mathcal{O}_K,b}^{\loc}$ be the number of isomorphism classes of cubic $\mathcal{O}_K$-algebras of discriminant $q^b$, inversely weighted by automorphisms over $\mathcal{O}_K$. Then $$\sum_{b = 0}^{\infty}N_{3,\mathcal{O}_K,b}^{\loc}t^b = I_3(q,t) = \frac{1 + t + t^2 + t^3 + t^4}{(1 - t^2)(1 - qt^6)}.$$

\item Let $N_{4,\mathcal{O}_K,b}^{\loc}$ be the number of isomorphism classes of quartic $\mathcal{O}_K$-algebras with cubic resolvent of discriminant $q^b$, inversely weighted by automorphisms over $\mathcal{O}_K$. Then $$\sum_{b = 0}^{\infty}N_{4,\mathcal{O}_K,b}^{\loc}t^b = I_4(q,t) = \frac{f(q,t)}{(1 - t)(1 - t^2)(1 - qt^6)(1 - q^2t^8)(1 - q^3t^{12})},$$ where $f(q,t) = 1 + t^2 + t^3 + t^4 - 2t^5 + 2qt^6 + (q - 1)t^7 + qt^8 - qt^9 + (q - 1)qt^{10} - 2qt^{11} + 2q^2t^{12} - q^2t^{13} - q^2t^{14} - q^2t^{15} - q^2t^{17}$.
\end{enumerate}
\end{corollary}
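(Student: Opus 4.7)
The plan is to assemble this corollary directly from the ingredients already laid out in Sections 4 and 5, since it is essentially a dictionary translation of Igusa's computation (Theorem 5.3) into the language of algebras with fixed discriminant valuation.

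First, I would apply Proposition 5.1 with $s$ such that $q^{-s} = t$, which gives the identity
\[
\sum_{b=0}^{\infty} \#\bigl(V_d(\mathcal{O}_K)_b \,/\!\!/\, G_d(\mathcal{O}_K)\bigr)\, t^b \;=\; \frac{1}{\mu_{G_d}(G_d(\mathcal{O}_K))}\int_{V_d(\mathcal{O}_K)} |\Delta_d(v)|^{s-1}\, d\mu_{V_d}(v) \;=\; I_d(q,t).
\]
Then for $d = 3$ and $d = 4$, Theorem 5.3 gives explicit closed forms for $I_d(q,t)$, producing the two displayed rational functions. So the only thing left to do is to identify $\#(V_d(\mathcal{O}_K)_b \,/\!\!/\, G_d(\mathcal{O}_K))$ with $N_{d,\mathcal{O}_K,b}^{\loc}$.

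For this identification, I would invoke the parametrizations recalled in Section 4.1: for $d = 3$, Wood's result (also Deligne) that $V_3(\mathcal{O}_K)/G_3(\mathcal{O}_K)$ is in bijection with isomorphism classes of cubic $\mathcal{O}_K$-algebras, and for $d = 4$, Wood's theorem that $V_4(\mathcal{O}_K)/G_4(\mathcal{O}_K)$ is in bijection with isomorphism classes of quartic $\mathcal{O}_K$-algebras with cubic resolvent. These are bijections of groupoids, so automorphism groups match, and the groupoid-theoretic count $\#(V_d(\mathcal{O}_K)_b \,/\!\!/\, G_d(\mathcal{O}_K))$ (inverse-weighted by stabilizers in $G_d(\mathcal{O}_K)$, as used throughout Section 5) equals the count $N_{d,\mathcal{O}_K,b}^{\loc}$ of algebras inversely weighted by automorphisms. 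Finally, Proposition 4.4 ensures that the stratification by $b = v_K(\Delta_d(v))$ corresponds under this bijection to the stratification by algebra discriminant $q^b$, so the indexing matches on both sides.

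The only potentially subtle point, and what I would expect to be the main thing to state carefully, is that the parametrizing bijection identifies not just isomorphism classes but the automorphism groups, so that the weighted counts really do coincide — this is built into Wood's parametrization (the relevant stacks $\mathcal{X}_d$ are isomorphic to $\Covers_3$ respectively $\CovRes_4$ as stacks, not merely bijections of sets of isomorphism classes). Once this is cited, the proof is a one-line chain: apply Proposition 5.1, apply Proposition 4.4 to line up the gradings, apply the stacky parametrization to rewrite the orbit count as an algebra count, and apply Theorem 5.3 to read off the closed form.
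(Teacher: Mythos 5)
Your proposal is correct and matches the paper's own (quite terse) justification, which likewise chains \Cref{proposition:counting_integral}, \Cref{proposition:measure_of_gd} (implicit in your step via the normalization built into the definition of $I_d$), \Cref{theorem:igusa_computation_3_and_4}, the discriminant comparison \Cref{proposition:disc_agrees}, and the stack-level parametrizations of \Cref{section:prehomog_cov_res}. You have also correctly isolated the one point worth stating carefully — that Wood's parametrization is an isomorphism of stacks, not merely a bijection on isomorphism classes, so automorphism-weighted counts agree — which the paper leaves to the reader.
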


Unfortunately, the Igusa zeta function for $(G_5,V_5)$ has not been computed. Thus, many of the results of this paper will be for $d \in \{3,4\}$. However, as we will explain in \Cref{section:conj_igusa5}, computing the Igusa zeta function for the degree 5 prehomogeneous vector space would enable us to extend these results to $d = 5$.

\begin{remark}\label{remark:dimension_matching}
As we hinted towards in \Cref{section:nichols_bd}, there is a mysterious ``coincidence'' where the dimension of $V_d$ is the top degree of $\mathfrak{B}_d$. We will see in \Cref{theorem:inv_coh_bd_34} that for $d \in \{3,4\}$, $I_d(q^{-2},qt)$ is the generating function for the geometric $\mathbf{S}_d$-invariant cohomology of $\mathfrak{B}_d$, providing a connection between $(G_d,V_d)$ and $\mathfrak{B}_d$. The strange thing is that the dimension of $V_d$ and top degree of $\mathfrak{B}_d$ are not apparently related to the Igusa zeta function and cohomology, respectively.
\end{remark}

\section{Orbinodal curves}\label{section:orbinodal_curves}
Following \cite{d_compact}, we want our stacks of covers with resolvents $\mathcal{R}^d$ to include covers of orbinodal curves. In this section, we will recall some information about orbinodal curves and the relevant stacks. Some references for orbinodal curves include \cite[\S 4]{av_compact} and \cite{o_twisted}, where they are referred to as ``twisted curves.''

We first recall the definition of an orbinodal curve. Recall that a Deligne-Mumford stack $\mathcal{Y}$ is \emph{tame} if the automorphism group of any geometric point $x \in \mathcal{Y}(k)$ has order not divisible by $\mathrm{char}(k)$. Note that we require orbinodal curves to be balanced and do not require them to have geometrically connected fibers.

\begin{definition}\label{definition:orbinodal_curve}
Let $S$ be a scheme. An \emph{$n$-pointed orbinodal curve} over $S$ consists of the data $(\mathcal{C} \to C \to S;\sigma_1,\ldots,\sigma_n)$, where \begin{enumerate}[(1)]
\item $C \to S$ is a nodal curve, and the $\sigma_i$ are pairwise disjoint sections $S \to C$.

\item $\mathcal{C} \to S$ is a tame Deligne-Mumford stack with coarse moduli space $\mathcal{C} \to C$. The morphism $\mathcal{C} \to C$ is an isomorphism over the open subset $C^{\gen} \subset C$ defined as the complement of the nodes and the marked points.

\item Let $c$ be a geometric point of $\mathcal{C}$ mapping to a node of $C \to S$, and let $s$ be its image in $S$. Then there exists an open neighborhood $T \subset S$ of $s$, an \'{e}tale neighborhood $U \to \mathcal{C}$ of $c$ over $T$, a function $t \in \mathcal{O}_T$, and an integer $n \ge 1$ for which we have the following Cartesian diagram:
% https://tikzcd.yichuanshen.de/#N4Igdg9gJgpgziAXAbVABwnAlgFyxMJZABgBpiBdUkANwEMAbAVxiRAB12BbOnACwDGjYAGEAvgAJOeLvAD6IiQFUQY0uky58hFAEZyVWoxZsVajdjwEi+3YfrNWiDt158ATl2ABlNDAFinDz8QgzAAPJicgAqyAAepACeFAD0ABRxiQC0OAB6YACUquogGJbaRGR21A4mzshBbp4+fgFpjSHCkTHxSakZ2XmFBSlBTHKEYoYwUADm8ESgAGbuEFxIZCA4EEi65iAra0gAzNTbSABM+4friPpbO4gXNcZOLgDkwDBiOMXLq7dNudEKcjI42JxPt9flMxEA
\begin{center}
\begin{tikzcd}
\mathcal{C} \times_C U \arrow[r] \arrow[d, "\text{\'{e}t}"]          & U \arrow[d, "\text{\'{e}t}"]                      \\
{[\mathrm{Spec}\left(\frac{\mathcal{O}_T[u,v]}{uv-t}\right)/\mu_n]} \arrow[r] & {\mathrm{Spec}\left(\frac{\mathcal{O}_T[x,y]}{xy-t^n}\right)}
\end{tikzcd}
\end{center}
Here, the $\mu_n$-action on $\mathrm{Spec}\mathcal{O}_T[u,v]/(uv-t)$ is given by $(\zeta,u) \mapsto \zeta u$ and $(\zeta,v) \mapsto \zeta^{-1}v$, and the bottom morphism is induced by the the map sending $x \mapsto u^n$ and $y \mapsto v^n$.

\item Let $s$ be a geometric point of $S$, and let $c = \sigma_i(s)$. Then there exists an \'{e}tale neighborhood $U \to C$ of $c$ and an integer $n \ge 1$ for which we have the following Cartesian diagram:
% https://tikzcd.yichuanshen.de/#N4Igdg9gJgpgziAXAbVABwnAlgFyxMJZABgBpiBdUkANwEMAbAVxiRAB12BbOnACwDGjYAGEAvgAJOeLvAD6IiQFUQY0uky58hFAEZyVWoxZsVajdjwEi+3YfrNWiDt158ATl2ABlNDAFinDz8QgzAAPJict7IAB4UquogGJbaRGR21A4mzshBbp4+fgH5IcKR0chMFAD0QUxyYAlihjBQAObwRKAAZu4QXEhkIDgQSADMWcZOLjgwsTjAnADkwDBiOGKJvf2DiMOjSLrmIH0DR9SHiABMU45s0vOLK2sbWydne5MjYzctYkA
\begin{center}
\begin{tikzcd}
\mathcal{C} \times_C U \arrow[d, "\text{\'{e}t}"] \arrow[r] & U \arrow[d, "\text{\'{e}t}"]    \\
{[\mathrm{Spec}\mathcal{O}_S[u]/\mu_n]} \arrow[r]           & {\mathrm{Spec}\mathcal{O}_S[x]}
\end{tikzcd}
\end{center}
Here, the $\mu_n$-action on $\mathrm{Spec}\mathcal{O}_S[u]$ is given by $(\zeta,u) \mapsto \zeta u$, and the bottom morphism is induced by the map sending $x \mapsto u^n$.
\end{enumerate}
We will abbreviate $(\mathcal{C} \to C \to S;\sigma_1,\ldots,\sigma_n)$ as $(\mathcal{C} \to C \to S;\sigma)$ or $(\mathcal{C} \to C;\sigma)$. We will often use the term ``smooth orbicurve'' or ``orbicurve'' when $\mathcal{C} \to S$ is smooth.

A 1-morphism $(\mathcal{C}_1 \to C_1;\{\sigma_{1j}\}) \to (\mathcal{C}_2 \to C_2;\{\sigma_{2j}\})$ is a 1-morphism $\mathcal{C}_1 \to \mathcal{C}_2$ over $S$ that takes $\sigma_{1j}$ to $\sigma_{2j}$. A 2-morphism of two 1-morphisms $(\mathcal{C}_1 \to C_1;\{\sigma_{1j}\}) \rightrightarrows (\mathcal{C}_2 \to C_2;\{\sigma_{2j}\})$ is a 2-morphism of the underlying 1-morphisms $\mathcal{C}_1 \rightrightarrows \mathcal{C}_2$.
\end{definition}

\begin{remark}\label{remark:no_2_morphisms}
Conveniently, \cite[Lemma 4.2.2]{av_compact} states that the 2-category of orbinodal curves is equivalent to a 1-category, i.e. all 2-automorphism groups of a 1-morphism are trivial. Thus, from now on, we will use ``morphism'' to refer to an isomorphism class of 1-morphisms, and we will make no reference to 2-morphisms.
\end{remark}

Because the morphism $\mathcal{C} \to C$ is an isomorphism over $C^{\gen}$, we will often write $C^{\gen}$ to denote its preimage in $\mathcal{C}$. We wil use this convention for subsets of $C^{\gen}$ as well, such as the $U_i$ not containing any nodes or marked points in \Cref{section:rd_smooth}.

\subsection{Stacks of nodal and orbinodal curves}\label{section:stacks_nodal_orbinodal}
Our constructions of stacks of covers with resolvents will lie over the following stacks of nodal and orbinodal curves.

\begin{definition}[{\cite[Definition 3.1]{d_compact}}]\label{definition:node_stack}
Define the stack $\mathcal{M}^{*,n}$ of divisorially marked, $n$-pointed nodal curves as the stack over $\mathbb{Z}$ whose objects over $S$ are tuples $(C \to S;\Sigma;\sigma_1,\ldots,\sigma_n)$, where \begin{enumerate}[(1)]
\item $C$ is an algebraic space and $C \to S$ is a nodal curve with geometrically connected fibers.

\item $\Sigma \subset C$ is a relative effective Cartier divisor over $S$ lying in the smooth locus of $C \to S$.

\item $\sigma_j: S \to C$ are relatively pairwise disjoint sections lying in the smooth locus of $C \to S$ that do not intersect $\Sigma$.
\end{enumerate}
We will abbreviate $(C \to S;\Sigma;\sigma_1,\ldots,\sigma_n)$ as $(C \to S;\Sigma;\sigma)$. Let $\mathcal{M}$ denote the disjoint union of all the $\mathcal{M}^{*,n}$, and let $\mathcal{M}^{\sm} \subset \mathcal{M}$ denote the open substack parametrizing smooth curves. Let $\mathcal{M}^{b,n}$ denote the open and closed substack of $\mathcal{M}$ where the degree of the marked divisor is $b$ and there are $n$ marked points.
\end{definition}

Deopurkar's proof of the following result occurs over $\mathbb{C}$, but it works exactly the same over $\mathbb{Z}$.

\begin{proposition}[{\cite[Proposition 3.2]{d_compact}}]\label{proposition:deopurkar_m_smooth}
$\mathcal{M}$ is a smooth algebraic stack over $\mathbb{Z}$.
\end{proposition}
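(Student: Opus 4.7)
The plan is to present $\mathcal{M}$ as a smooth relative Hilbert-scheme construction over the well-known smooth Artin stack of pointed nodal curves. Fix integers $b,n \geq 0$ and consider the stack $\mathcal{M}^{0,n}$, whose $S$-points are $(C \to S; \sigma_1,\ldots,\sigma_n)$ with $C \to S$ a nodal curve with geometrically connected fibers and the $\sigma_j$ disjoint sections in the smooth locus. This is the stack of $n$-pointed prestable curves of arbitrary genus; its algebraicity and smoothness over $\mathbb{Z}$ are standard and follow from the fact that local deformations of a node, together with smooth disjoint sections, are unobstructed (see, e.g., the treatment in Behrend--Manin or the Stacks Project). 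So I may take this for granted.

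Next, I plan to show that the forgetful morphism $\pi \colon \mathcal{M}^{b,n} \to \mathcal{M}^{0,n}$ dropping the divisor $\Sigma$ is representable and smooth of relative dimension $b$. Given an $S$-family $(C \to S; \sigma)$ in $\mathcal{M}^{0,n}$, the fiber of $\pi$ parametrizes relative effective Cartier divisors of degree $b$ on $C$ that are contained in the open subscheme $C^{\circ} := C^{\mathrm{sm}} \setminus \bigcup_j \sigma_j(S)$. Since $C^{\circ} \to S$ is smooth and separated of relative dimension $1$, the functor of relative length-$b$ subschemes of $C^{\circ}/S$ is representable by the symmetric product $\mathrm{Sym}^b_S(C^{\circ})$, which is smooth over $S$ of relative dimension $b$. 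Smoothness and representability descend along a smooth presentation of $\mathcal{M}^{0,n}$, giving the claim for $\pi$.

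Combining these two inputs, $\mathcal{M}^{b,n}$ is smooth over $\mathbb{Z}$ of relative dimension $(\dim_{\mathbb{Z}} \mathcal{M}^{0,n}) + b$; taking the disjoint union over $b,n \geq 0$ yields the statement for $\mathcal{M}$. The only place where work is hidden is the smoothness of $\mathcal{M}^{0,n}$ itself — this is the genuine content, but it is classical, so the proof really reduces to a clean application of the relative Hilbert-scheme/symmetric-product construction on top of it. I do not anticipate a serious obstacle, since the marked divisor is restricted to the smooth locus and avoids the sections by hypothesis, which is exactly what makes the Hilbert-scheme layer smooth.
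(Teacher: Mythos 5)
Your proposal is correct and matches the approach used in the reference the paper cites (Deopurkar's Proposition 3.2): factor the forgetful map through the smooth stack $\mathcal{M}^{0,n}$ of $n$-pointed prestable curves, and observe that forgetting the divisorial marking $\Sigma$ is a smooth, representable morphism whose fibers are (open subspaces of) relative symmetric powers of the smooth, unmarked locus of the curve. The only points worth a word of caution are that $C$ is allowed to be an algebraic space rather than a scheme, so $\mathrm{Sym}^b_S(C^\circ)$ should be taken as an algebraic space (its existence and smoothness over $S$ still hold), and that one should note the argument makes no use of the base field, which is exactly the observation the paper records in passing to extend Deopurkar's statement from $\mathbb{C}$ to $\mathbb{Z}$.
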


\begin{definition}\label{definition:orbi_stack}
Define the stack $\mathcal{M}^{\orb;*,n}$ of divisorially marked, $n$-pointed orbinodal curves as the stack over $\mathbb{Z}$ whose objects over $S$ are tuples $(\mathcal{C} \to C \to S;\Sigma;\sigma_1,\ldots,\sigma_n))$, where \begin{enumerate}[(1)]
\item $(\mathcal{C} \to C \to S;\sigma)$ is an $n$-pointed orbinodal curve with geometrically connected fibers.

\item $\Sigma \subset C$ is a relative effective Cartier divisor over $S$ lying in the smooth locus of $C \to S$ and disjoint from the sections $\sigma$.
\end{enumerate}
Let $\mathcal{M}^{\orb}$ denote the disjoint union of all the $\mathcal{M}^{\orb;*,n}$. For $N \in \mathbb{N}_{> 0}$, let $\mathcal{M}^{\orb\le N}$ denote the full subcategory of $\mathcal{M}^{\orb}$ where all the automorphism groups of $\mathcal{C}$ have order bounded above by $N$. Given an $n$-tuple of positive integers $\underline{a} = (a_1,\ldots,a_n)$, let $\mathcal{M}^{\orb;*,n}(\underline{a})$ denote the open and closed substack of $\mathcal{M}^{\orb;*,n}$ where the automorphism groups of the geometric points of $\mathcal{C}$ mapping to $\sigma_i$ have order $a_i$.
\end{definition}

\begin{remark}\label{remark:convention_geo_conn}
For convenience, from now on, whenever we write a curve $C \to S$ or an orbinodal curve $\mathcal{C} \to C \to S$, we will implicitly require the fibers to be geometrically connected unless specified otherwise. This is because many of the stacks we will construct will be over $\mathcal{M}$.
\end{remark}

Given any stack $\mathcal{Y}$ over $\mathcal{M}$, we write $\mathcal{Y}^{b,n}$ to denote the preimage of $\mathcal{M}^{b,n}$ and $\mathcal{Y}^{\sm}$ to denote the preimage of $\mathcal{M}^{\sm}$. If $\mathcal{Y}$ is over $\mathcal{M}^{\orb}$, we write $\mathcal{Y}^{*,n}(\underline{a})$ to denote the preimage of $\mathcal{M}^{\orb;*,n}(\underline{a})$. We have a morphism $\mathcal{M} \to \mathcal{M}^{0,*}$ forgetting the marked divisor. For the stacks of orbinodal curves, we have a morphism $\mathcal{M}^{\orb} \to \mathcal{M}^{\orb;0,*}$ forgetting the marked divisor and a morphism $\mathcal{M}^{\orb} \to \mathcal{M}$ sending $(\mathcal{C} \to C \to S;\Sigma;\sigma) \to (C \to S;\Sigma;\sigma)$. We observe that $\mathcal{M}^{\orb}$ is the open substack of the 2-fiber product $\mathcal{M}^{\orb;0,*} \times_{\mathcal{M}^{0,*}} \mathcal{M}$ where the marked divisor does not intersect the marked points. Similarly, given any stack $\mathcal{Y} \to \mathcal{M}^{\orb}$, we write $\mathcal{Y}^{\le N}$ for the preimage of $\mathcal{M}^{\orb\le N}$.

The following result of Olsson will be very useful when we prove algebraicity and boundedness for our stacks of covers with resolvents in \Cref{section:covers_resolvents}.

\begin{theorem}[{\cite[Theorem 1.9, Corollary 1.11]{o_twisted}}]\label{theorem:olsson_twisted}
\begin{enumerate}[(a)]
\item $\mathcal{M}^{\orb;0,*}$ is a smooth algebraic stack over $\mathbb{Z}$. The morphism $\mathcal{M}^{\orb;0,*} \to \mathcal{M}^{0,*}$ is representable by Deligne-Mumford stacks.

\item The morphism $\mathcal{M}^{\orb\le N;0,*} \to \mathcal{M}^{0,*}$ is of finite type.
\end{enumerate}
\end{theorem}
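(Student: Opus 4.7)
The plan is to work étale-locally on $\mathcal{M}^{0,*}$, exploiting the fact that an orbinodal enrichment of a nodal curve $C \to S$ is purely local data at the nodes: étale-locally near a node with equation $xy = t$, orbinodal enrichments are classified by a positive integer $n$ together with an $n$th root $s$ of $t$, producing the chart $[\Spec \mathcal{O}_T[u,v]/(uv-s)/\mu_n]$ of item (3) of \Cref{definition:orbinodal_curve}. Concretely, I would stratify $\mathcal{M}^{\orb;0,*}$ by the tuple $\underline{n}$ of twist orders at the nodes and present each stratum as a fiber product of root stacks over the locally closed substack of $\mathcal{M}^{0,*}$ where the number of nodes is fixed. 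Algebraicity then follows either from the algebraicity of root stacks or, more uniformly, from Artin's criterion applied to the fppf stack of twisted curves, as in Olsson's original treatment.

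For smoothness in (a), using \Cref{proposition:deopurkar_m_smooth} it suffices to verify formal smoothness of the relative morphism $\mathcal{M}^{\orb;0,*} \to \mathcal{M}^{0,*}$. Given a square-zero thickening $S \hookrightarrow S'$, a twisted curve $\mathcal{C} \to C$ over $S$, and a flat deformation $C'$ of $C$ over $S'$, the obstruction to lifting $\mathcal{C}$ to $\mathcal{C}'$ is purely local at the nodes; on each standard chart $[\Spec \mathcal{O}_T[u,v]/(uv-s)/\mu_n]$ the lift is unobstructed because $uv - s$ is a regular parameter, and tameness forces compatible local lifts to glue uniquely. Representability by Deligne-Mumford stacks reduces to checking that geometric fibers of $\mathcal{M}^{\orb;0,*} \to \mathcal{M}^{0,*}$ have étale diagonal: the inertia of an orbinodal curve is a product over nodes of cyclic groups $\mu_{n_i}$, which is finite and étale by the tameness condition.

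For (b), bounding all automorphism groups by $N$ restricts each local twist integer to $n_i \le N$, giving only finitely many possible tuples $\underline{n}$ for each fixed topological type of the nodal curve; combined with the fact that the number of nodes of a curve of fixed arithmetic genus is bounded, each stratum is of finite type over its image in $\mathcal{M}^{0,*}$, and finitely many strata cover $\mathcal{M}^{\orb\le N;0,*}$ above any finite type open. The main obstacle I expect is the careful bookkeeping at the interfaces between strata of $\mathcal{M}^{0,*}$ with different numbers of nodes: nodes can smooth or collide in flat families, and one must check that the root-stack description of the orbinodal structure interacts well with these degenerations. This is where Olsson's use of log structures really pays off, as it provides a single global formalism in which node-smoothing is encoded as a variation in the log structure rather than as a jump between strata, and one obtains both the limit-preservation hypothesis of Artin's criterion and the required effectivity of formal deformations.
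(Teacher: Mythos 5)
The paper does not prove this statement: it is quoted directly as \cite[Theorem 1.9, Corollary 1.11]{o_twisted} and used as a black box throughout, so your text should be read as a recap of Olsson's proof rather than as an independent argument. Judged on those terms, the overall shape is right (deformations are local at the twisted points, algebraicity via Artin's criterion, tameness controls both formal smoothness and \'{e}taleness of the inertia), and you correctly identify that Olsson's log-structure formalism is what makes the node-smoothing degenerations tractable rather than a wall between strata.

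The genuine gap is that you treat the orbinodal enrichment as living only at the nodes: you stratify by twist orders at nodes, you attribute the relative inertia entirely to nodes, and for (b) you bound only the node twist integers. But \Cref{definition:orbinodal_curve} also allows $\mu_a$-stackiness at marked points (condition (4)), and this is the dominant source of non-finiteness in part (b). Indeed, by \Cref{theorem:cadman_root_equivalence}, the fiber of $\mathcal{M}^{\orb;0,*} \to \mathcal{M}^{0,*}$ over a smooth $n$-pointed curve is already an infinite disjoint union indexed by $\underline{a} \in \mathbb{N}_{>0}^n$, with no nodes involved at all; this is exactly why (b) is only stated after imposing $\le N$. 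Your finiteness argument must therefore also bound the marked-point twist orders (which it does, since the automorphism group of a geometric point over the $i$th marking is $\mu_{a_i}$, but you never say so), and your stratification and algebraicity arguments need to carry that data along. Your inertia computation is actually fine as stated, since by \Cref{remark:smooth_no_autos} ghost automorphisms live only at nodes, not at marked points. A smaller imprecision: the unobstructedness of the node lift is not because ``$uv - s$ is a regular parameter'' but because a first-order deformation of the coarse node $xy = t^n$ has the form $xy = t'$, and one must produce an $n$th root of $t'$ \'{e}tale-locally, which is possible because $n$ is invertible by tameness.
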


\begin{corollary}\label{corollary:orb_finite}
\begin{enumerate}[(a)]
\item $\mathcal{M}^{\orb}$ is a smooth algebraic stack over $\mathbb{Z}$. The morphism $\mathcal{M}^{\orb} \to \mathcal{M}$ is representable by Deligne-Mumford stacks.

\item For any $N$, the morphism $\mathcal{M}^{\orb\le N} \to \mathcal{M}$ is of finite type.
\end{enumerate}
\end{corollary}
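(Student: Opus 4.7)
The plan is to derive both parts from Olsson's theorem (\Cref{theorem:olsson_twisted}) via the base change picture already recorded in the paragraph preceding the corollary: $\mathcal{M}^{\orb}$ is the open substack of $\mathcal{M}^{\orb;0,*} \times_{\mathcal{M}^{0,*}} \mathcal{M}$ cut out by disjointness of the marked divisor from the marked points, and similarly $\mathcal{M}^{\orb\le N}$ is the corresponding open substack of $\mathcal{M}^{\orb\le N;0,*} \times_{\mathcal{M}^{0,*}} \mathcal{M}$. Disjointness is visibly an open condition, so the only non-formal input I need is that the forgetful morphism $\pi\colon \mathcal{M} \to \mathcal{M}^{0,*}$ dropping the marked divisor is smooth.

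For this, fix a scheme $S$ and a point $(C \to S;\sigma) \in \mathcal{M}^{0,*}(S)$, and let $U \subset C$ denote the complement of the nodes and the marked sections. Then $U \to S$ is smooth of relative dimension one, and the fiber of $\pi$ over $(C \to S;\sigma)$ parametrizes relative effective Cartier divisors on $U$, so it is the disjoint union $\bigsqcup_b \Sym^b(U/S)$ of relative symmetric powers. Each $\Sym^b(U/S)$ is smooth over $S$, yielding smoothness of $\pi$.

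Given this, part (a) is immediate. The morphism $\mathcal{M}^{\orb;0,*} \to \mathcal{M}^{0,*}$ is representable by Deligne--Mumford stacks, a property stable under base change, so $\mathcal{M}^{\orb;0,*} \times_{\mathcal{M}^{0,*}} \mathcal{M} \to \mathcal{M}$ and its open substack $\mathcal{M}^{\orb} \to \mathcal{M}$ are Deligne--Mumford representable. Meanwhile, pulling $\pi$ back along $\mathcal{M}^{\orb;0,*} \to \mathcal{M}^{0,*}$ produces a smooth morphism $\mathcal{M}^{\orb;0,*} \times_{\mathcal{M}^{0,*}} \mathcal{M} \to \mathcal{M}^{\orb;0,*}$, and composing with the smooth structure morphism $\mathcal{M}^{\orb;0,*} \to \Spec\mathbb{Z}$ provided by Olsson shows that the fiber product, and hence its open substack $\mathcal{M}^{\orb}$, is smooth over $\mathbb{Z}$. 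Algebraicity is preserved by 2-fiber products and open immersions. For part (b), Olsson's assertion that $\mathcal{M}^{\orb\le N;0,*} \to \mathcal{M}^{0,*}$ is of finite type pulls back along $\mathcal{M} \to \mathcal{M}^{0,*}$ to a finite-type morphism $\mathcal{M}^{\orb\le N;0,*} \times_{\mathcal{M}^{0,*}} \mathcal{M} \to \mathcal{M}$, and restricting to the open substack $\mathcal{M}^{\orb\le N}$ preserves finite type.

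The argument is essentially formal once smoothness of $\pi$ is verified, so there is no real obstacle; the only content lies in unpacking the base change observation and the elementary fact that adding an effective Cartier divisor supported in a smooth open subscheme of a relative curve is a smooth operation.
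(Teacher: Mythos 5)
Your proposal is correct and follows essentially the same route as the paper's proof: both hinge on the observation (recorded just before the corollary in the paper) that $\mathcal{M}^{\orb}$ is the open substack of $\mathcal{M}^{\orb;0,*} \times_{\mathcal{M}^{0,*}} \mathcal{M}$ where the marked divisor avoids the marked points, and then import representability, finite type, and smoothness from Olsson's theorem. The one place where you add detail beyond the paper is the smoothness step: the paper simply says deformations of the marked divisor are unobstructed because it lies in the smooth locus, whereas you make this explicit by identifying the base change of $\pi\colon \mathcal{M} \to \mathcal{M}^{0,*}$ along any $S \to \mathcal{M}^{0,*}$ with $\bigsqcup_b \Sym^b(U/S)$, which is smooth over $S$ since $U/S$ is a smooth relative curve. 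These are two faces of the same fact (the local model $\Sym^b(\mathbb{A}^1) \cong \mathbb{A}^b$ is precisely why the deformation problem is unobstructed), so there is no real divergence in method; your version is just a bit more explicit about why the forgetful map is smooth.
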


\begin{proof}
The smoothness in (a) follows from the smoothness of $\mathcal{M}^{\orb;0,*}$ and the fact that the marked divisor must remain in the smooth locus, so that its deformations are unobstructed. Each remaining statement follows from the corresponding part of \Cref{theorem:olsson_twisted} and the fact that $\mathcal{M}^{\orb}$ is an open substack of $\mathcal{M}^{\orb;0,*} \times_{\mathcal{M}^{0,*}} \mathcal{M}$.
\end{proof}

\subsection{Stacky structures on smooth curves}\label{section:stacky_smooth}
Smooth orbicurves admit a simple description: a smooth orbicurve is characterized by its underlying $n$-pointed smooth curve and $n$ positive integers corresponding to the automorphism groups above the marked points. The construction that lets us recover the smooth orbicurve from these data is the root stack, introduced in \cite{c_using_stacks}. We will only discuss root stacks in the context of smooth orbicurves, but the construction works in much more general contexts.

We describe the construction. Fix an $n$-tuple $\underline{a} = (a_1,\ldots,a_n)$ of positive integers, and let $N = a_1\cdots a_n$. Suppose we have a smooth $n$-pointed curve $(C \to S;\sigma)$, where $S$ is a scheme with $N$ invertible. The $n$ relative effective Cartier divisors $D_i = \sigma_i(S)$ induce a morphism $C \to [\mathbb{A}^1/\mathbb{G}_m]^n \cong [\mathbb{A}^n/\mathbb{G}_m^n]$, where the $n$th morphism $C \to [\mathbb{A}^1/\mathbb{G}_m]$ corresponds to the line bundle $\mathcal{O}_C(D_i)$ with section dual to the inclusion $\mathcal{O}_C(-D_i) \to \mathcal{O}_C$. There is an $\underline{a}$th power map $\theta_{\underline{a}}: [\mathbb{A}^n/\mathbb{G}_m^n] \to [\mathbb{A}^n/\mathbb{G}_m^n]$, which is the $a_i$th power map $\theta_{a_i}: [\mathbb{A}^1/\mathbb{G}_m] \to [\mathbb{A}^1/\mathbb{G}_m]$ on the $i$th coordinate. The \emph{$\underline{a}$th root stack associated to $(C \to S;\sigma)$} is the 2-fiber product $\mathcal{C}_{\underline{a}} \coloneqq C \times_{[\mathbb{A}^n/\mathbb{G}_m^n],\theta_{\underline{a}}} [\mathbb{A}^n/\mathbb{G}_m^n]$.

\begin{theorem}[{\cite[Theorem 4.1]{c_using_stacks}}]\label{theorem:cadman_root_equivalence}
The $\underline{a}$th root stack $\mathcal{C}_{\underline{a}}$ is a smooth orbicurve over $S$ with coarse moduli space $\mathcal{C}_{\underline{a}} \to C$. The functor $C \mapsto \underline{C}_{\underline{a}}$ induces an isomorphism $\mathcal{M}_{\mathbb{Z}\left[\frac{1}{N}\right]}^{\sm;0,*} \xrightarrow[]{\sim} \mathcal{M}_{\mathbb{Z}\left[\frac{1}{N}\right]}^{\orb,\sm;0,*}(\underline{a})$ inverse to the map $\mathcal{M}_{\mathbb{Z}\left[\frac{1}{N}\right]}^{\orb,\sm;0,*}(\underline{a}) \to \mathcal{M}_{\mathbb{Z}\left[\frac{1}{N}\right]}^{\sm;0,*}$.
\end{theorem}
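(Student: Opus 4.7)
The plan is to verify this essentially by matching local models on both sides and then invoking the universal property of the root stack to construct an inverse functor. First I would check locally that $\mathcal{C}_{\underline{a}}$ is a smooth orbicurve of the required type. Away from the marked points, each component $C\to[\mathbb{A}^1/\mathbb{G}_m]$ lands in $[\mathbb{G}_m/\mathbb{G}_m]\simeq \mathrm{pt}$, on which $\theta_{a_i}$ is an isomorphism, so the fiber product is unchanged and $\mathcal{C}_{\underline{a}}\to C$ is an isomorphism over $C^{\gen}$. Near $\sigma_i$, after \'etale localization on $C$ we may trivialize $\mathcal{O}_C(D_i)$ and write $D_i=V(x)$ for a local coordinate $x$, so the map to $[\mathbb{A}^1/\mathbb{G}_m]$ is $[x]$; then the $a_i$th root factor of $\mathcal{C}_{\underline{a}}$ is \'etale-locally the fiber product giving exactly $[\mathrm{Spec}\,\mathcal{O}_S[u]/\mu_{a_i}]\to \mathrm{Spec}\,\mathcal{O}_S[x]$ with $x=u^{a_i}$ and the standard $\mu_{a_i}$-action. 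This is the local model required in item (4) of \Cref{definition:orbinodal_curve}, and since $a_i$ is invertible on $S$ the resulting stack is tame, smooth, and Deligne--Mumford with $C$ as coarse moduli space and with prescribed automorphism group $\mu_{a_i}$ at $\sigma_i$.

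Second, I would make the assignment functorial: a morphism $(C;\sigma)\to(C';\sigma')$ in $\mathcal{M}^{\sm;0,*}_{\mathbb{Z}[1/N]}$ pulls back the divisors $D'_i$ to $D_i$ and hence induces a canonical morphism on the fiber product defining the root stack, giving a 1-morphism $\mathcal{C}_{\underline{a}}\to\mathcal{C}'_{\underline{a}}$ over it; by \Cref{remark:no_2_morphisms} there are no issues with 2-morphisms. This produces the functor $R\colon \mathcal{M}^{\sm;0,*}_{\mathbb{Z}[1/N]}\to \mathcal{M}^{\orb,\sm;0,*}_{\mathbb{Z}[1/N]}(\underline{a})$, and it clearly composes to the identity with the coarse moduli functor $K$ in one direction: the coarse moduli space of $\mathcal{C}_{\underline{a}}$ is $C$ by the local model, and the sections $\sigma_i$ lift canonically.

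Third, and this is the technical heart of the argument, I need to show $R\circ K\cong \mathrm{id}$, i.e.\ that every object $(\mathcal{C}\to C;\sigma)$ of $\mathcal{M}^{\orb,\sm;0,*}_{\mathbb{Z}[1/N]}(\underline{a})$ is canonically isomorphic to the root stack of its coarse moduli data. The universal property of $\mathcal{C}_{\underline{a}}$ (as in Cadman~\cite{c_using_stacks}) is that an $S$-point is a curve $C/S$ together with, for each $i$, a pair $(\mathcal{L}_i,s_i)$ where $\mathcal{L}_i$ is a line bundle on $\mathcal{C}$ equipped with an isomorphism $\mathcal{L}_i^{\otimes a_i}\xrightarrow{\sim}\pi^*\mathcal{O}_C(D_i)$ identifying $s_i^{\otimes a_i}$ with the canonical section. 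Working in the \'etale chart $[\mathrm{Spec}\,\mathcal{O}_S[u]/\mu_{a_i}]\to \mathrm{Spec}\,\mathcal{O}_S[x]$ from item~(4) of \Cref{definition:orbinodal_curve}, the sheaf generated by $u$ gives such a line bundle with $u^{a_i}=x$ pulling back the defining section of $D_i$; because the local models are unique up to unique isomorphism and the $\mu_{a_i}$-weight of $u$ is canonical, these local data glue to a global $(\mathcal{L}_i,s_i)$ on $\mathcal{C}$. This produces the comparison map $\mathcal{C}\to\mathcal{C}_{\underline{a}}$ over $C$.

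Finally, I would check that this comparison map is an isomorphism by verifying it \'etale-locally: over $C^{\gen}$ both stacks agree with $C$, while over a neighborhood of $\sigma_i$ both are identified with $[\mathrm{Spec}\,\mathcal{O}_S[u]/\mu_{a_i}]$ by the same formula $u\leftrightarrow s_i$. The main obstacle is precisely this last gluing/uniqueness step: one must ensure that the locally defined line bundles patch without ambiguity, which relies crucially on the tameness hypothesis ($N$ invertible on $S$) so that the local $\mu_{a_i}$-gerbe structure is rigid and the section $s_i$ picks out a canonical $a_i$th root. Granting this, $R$ and $K$ are mutually inverse, proving the asserted equivalence; the smoothness of $\mathcal{C}_{\underline{a}}$ and representability/finite-type statements of \Cref{theorem:olsson_twisted} and \Cref{corollary:orb_finite} are consistent with and subsumed by this local-model description.
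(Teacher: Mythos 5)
Your sketch is a reasonable reconstruction of the argument, and the individual steps (local model at $\sigma_i$, functoriality, universal property of the root stack giving the comparison map, étale-local verification) all align with how Cadman proves his Theorem 4.1. However, the paper's ``proof'' is not a from-scratch argument at all: it simply cites \cite[Theorem 4.1]{c_using_stacks} and adds a single remark, namely that Cadman works over a Noetherian base and that to remove this hypothesis one should replace \cite[Lemma 5.1]{c_using_stacks} with \cite[Tag 062Y]{stacks_project} in his proof. That Noetherian-to-arbitrary-base subtlety is the only content the paper contributes beyond the citation, and it is precisely what your proposal does not engage with.

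Concretely, the stacks $\mathcal{M}_{\mathbb{Z}[1/N]}^{\sm;0,*}$ and $\mathcal{M}_{\mathbb{Z}[1/N]}^{\orb,\sm;0,*}(\underline{a})$ are fibered over all $\mathbb{Z}[1/N]$-schemes $S$, not just Noetherian ones, so one cannot directly invoke a theorem stated with a Noetherian hypothesis. The spot in Cadman's proof that uses Noetherianness is a flatness/relative-Cartier-divisor lemma, and the fix is to use the Stacks Project result that a closed subscheme which is fiberwise an effective Cartier divisor and flat over the base is a relative effective Cartier divisor, without Noetherian hypotheses. Your sketch implicitly assumes this kind of statement when you produce the local line bundles $(\mathcal{L}_i, s_i)$ and check compatibility with $\mathcal{O}_C(D_i)$ after pullback, but you do not justify why the passage works over a general $S$. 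Your gluing heuristic---``the local models are unique up to unique isomorphism and the $\mu_{a_i}$-weight of $u$ is canonical''---is also worth tightening: the cleanest formulation is to define $\mathcal{L}_i$ globally as the ideal sheaf of the reduced $\mu_{a_i}$-gerbe $\sigma_i^{-1}(\mathcal{C})\subset\mathcal{C}$, which makes the gluing automatic. Neither of these is a fatal flaw, but they are the places where a careful version of your argument would need to do real work, and they explain why the paper felt a comment on Cadman's proof was needed rather than a bare citation.
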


\begin{proof}
We note that \cite[Theorem 4.1]{c_using_stacks} is stated only for a Noetherian base $S$. However, we can generalize to arbitrary $S$ by using \cite[Tag 062Y]{stacks_project} in place of \cite[Lemma 5.1]{c_using_stacks} in the proof.
\end{proof}

Because the marked divisor of an orbinodal curve is disjoint from the stacky part, we get the following corollary.

\begin{corollary}\label{corollary:root_equivalence_divisor}
The map $\mathcal{M}_{\mathbb{Z}\left[\frac{1}{N}\right]}^{\orb,\sm}(\underline{a}) \to \mathcal{M}_{\mathbb{Z}\left[\frac{1}{N}\right]}^{\sm}$ is an isomorphism.
\end{corollary}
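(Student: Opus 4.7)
The plan is to deduce this from Cadman's equivalence (\Cref{theorem:cadman_root_equivalence}) together with the description of $\mathcal{M}^{\orb}$ as an open substack of $\mathcal{M}^{\orb;0,*} \times_{\mathcal{M}^{0,*}} \mathcal{M}$ given in the paragraph right after \Cref{definition:orbi_stack}. Throughout I would work over $\mathbb{Z}\left[\frac{1}{N}\right]$ so that Cadman's theorem applies.

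First I would restrict to the smooth locus and to curves with stabilizer type $\underline{a}$ to obtain that $\mathcal{M}^{\orb,\sm}(\underline{a})$ is the open substack of $\mathcal{M}^{\orb,\sm;0,*}(\underline{a}) \times_{\mathcal{M}^{\sm;0,*}} \mathcal{M}^{\sm}$ cut out by the condition that the marked divisor does not meet the marked sections. By \Cref{theorem:cadman_root_equivalence}, the morphism $\mathcal{M}^{\orb,\sm;0,*}(\underline{a}) \to \mathcal{M}^{\sm;0,*}$ is an isomorphism, so this 2-fiber product collapses to $\mathcal{M}^{\sm}$, and the resulting map to $\mathcal{M}^{\sm}$ is the forgetful map sending $(\mathcal{C} \to C \to S;\Sigma;\sigma)$ to $(C \to S;\Sigma;\sigma)$.

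Finally I would verify that the ``open substack'' is the whole fiber product. For any $(C \to S;\Sigma;\sigma) \in \mathcal{M}^{\sm}$, the divisor $\Sigma$ is already required to be disjoint from the sections $\sigma$ by \Cref{definition:node_stack}; under Cadman's equivalence, the stacky locus of the root stack $\mathcal{C}_{\underline{a}}$ lies over $\sigma(S)$, so the disjointness condition on the orbinodal side is automatic. Hence the forgetful morphism is an isomorphism, and an explicit inverse is given by applying the $\underline{a}$th root stack construction to $(C \to S;\Sigma;\sigma)$ while keeping the marked divisor on $C$ unchanged.

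There is essentially no substantive obstacle here; the statement is a bookkeeping consequence of \Cref{theorem:cadman_root_equivalence}, and the only thing to watch is that the ``$\Sigma$ disjoint from stacky locus'' condition on $\mathcal{M}^{\orb,\sm}(\underline{a})$ matches the ``$\Sigma$ disjoint from the sections'' condition already baked into the definition of $\mathcal{M}$.
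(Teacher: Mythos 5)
Your proof is correct and follows essentially the same route as the paper: observe that $\mathcal{M}^{\orb}$ sits as an open substack of $\mathcal{M}^{\orb;0,*}\times_{\mathcal{M}^{0,*}}\mathcal{M}$ cut out by disjointness of $\Sigma$ from the stacky locus, apply \Cref{theorem:cadman_root_equivalence}, and note that the disjointness is automatic since $\Sigma$ is already required to avoid $\sigma$ in \Cref{definition:node_stack}. The paper compresses this into one sentence, but the content is identical.
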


\begin{remark}\label{remark:smooth_no_autos}
Implicit in \Cref{theorem:cadman_root_equivalence} is the fact that a smooth orbicurve $\mathcal{C} \to S$ has no nontrivial automorphisms that induce the identity on the coarse moduli space $C \to S$. This can also be seen from \cite[Proposition 7.1.1]{acv_twisted_bundles} and the fact that $\mathcal{M}^{\orb;0,*} \to \mathcal{M}^{0,*}$ is representable by Deligne-Mumford stacks. In later sections, we will make use of this fact without explicit reference.
\end{remark}

\subsubsection{Framings for smooth orbicurves}\label{section:framings_smooth}
We finish this section by introducing stacks of smooth orbicurves/curves with framings of the normal bundles to the marked points. These stacks will be useful when we construct stacks of marked covers with resolvents and marked Hurwitz spaces in \Cref{section:related_stacks}.

\begin{definition}\label{definition:fr_orbi_stack}
Define the stack $\mathcal{M}^{\fr;*,n}$ as the stack over $\mathcal{M}^{*,n}$ whose objects over $S$ consist of the following data: \begin{enumerate}[(1)]
\item An object $(C \to S;\Sigma;\sigma) \in \mathcal{M}^{*,n}(S)$.

\item For each $i$, a trivialization $\alpha_i: \sigma_i^{-1}\mathcal{O}_C(D_i) \xrightarrow[]{\sim} \mathcal{O}_S$ on $S$, where $D_i = \sigma_i(S)$.
\end{enumerate}
We will abbreviate these data as $(C \to S;\Sigma;\sigma;\alpha)$. A morphism in $\mathcal{M}^{\fr;*,n}$ is a morphism of objects of $\mathcal{M}^{*,n}$ compatible with the framings $\alpha$. Let $\mathcal{M}^{\fr}$ denote the disjoint union of all the $\mathcal{M}^{\fr;*,n}$.
\end{definition}

Note that $\mathcal{M}^{\fr;*,n} \to \mathcal{M}^{*,n}$ is the principal $\mathbb{G}_m^n$-bundle associated to the direct sum of the $n$ universal line bundles $\sigma_i^{-1}\mathcal{O}_C(D_i)$. Thus, $\mathcal{M}^{\fr;*,n}$ is a smooth algebraic stack over $\mathbb{Z}$ by \Cref{proposition:deopurkar_m_smooth}.

Let $\underline{a} = (a_1,\ldots,a_n)$, and let $S$ be a scheme on which $N = a_1\cdots a_n$ is invertible. The reason we consider $\mathcal{M}^{\fr}$ is that for an object $(C \to S;\Sigma;\sigma;\alpha) \in \mathcal{M}^{\fr,\sm;*,n}(S)$, the framing data gives us $n$ natural sections $S \to \mathcal{C}_{\underline{a}}$, which are defined as follows. Consider the two composites \begin{align*}
S \xrightarrow[]{\sigma_i} C &\to [\mathbb{A}^n/\mathbb{G}_m^n] \\
S \to [\mathbb{A}^n/\mathbb{G}_m^n] &\xrightarrow[]{\theta_{\underline{a}}} [\mathbb{A}^n/\mathbb{G}_m^n]
\end{align*}
Here, the map $C \to [\mathbb{A}^n/\mathbb{G}_m^n]$ is the one considered above and the map $S \to [\mathbb{A}^n/\mathbb{G}_m^n]$ in the second composite classifies $n$ trivial line bundle with the zero section in the $i$th bundle and the identity sections in the other bundles. The first composite classifies the $n$-tuple of line bundles $(\mathcal{O}_S,\ldots,\mathcal{O}_S,\sigma_i^{-1}\mathcal{O}_C(D_i),\mathcal{O}_S,\ldots,\mathcal{O}_S)$ where $\sigma_i^{-1}\mathcal{O}_C(D_i)$ is in the $i$th position, along with the zero section in the $i$th bundle and the identity sections in the other bundles. The second composite classifies $(\mathcal{O}_S,\ldots,\mathcal{O}_S)$ with the zero section in the $i$th bundle and the identity sections in the other bundles. Thus, $\alpha$ provides a 2-isomorphism of the two composites and gives rise to a section $S \to \mathcal{C}_{\underline{a}_i} = C \times_{[\mathbb{A}^n/\mathbb{G}_m^n]} [\mathbb{A}^n/\mathbb{G}_m^n]$. This construction yields a canonical morphism $\psi_{i,\underline{a}}^{\univ}: \mathcal{M}_{\mathbb{Z}\left[\frac{1}{N}\right]}^{\fr,\sm;*,n} \to \mathcal{C}_{\underline{a},\mathbb{Z}\left[\frac{1}{N}\right]}^{\univ}$, where $\mathcal{C}_{\underline{a}}^{\univ}$ is the universal orbicurve over $\mathcal{M}^{\orb,\sm;*,n}(\underline{a})$.

\section{The stacks of covers with resolvents $\mathcal{R}^d$}\label{section:covers_resolvents}
In this section, we construct the main geometric objects of this paper $\mathcal{R}^d$ and prove some important geometric properties about them. Unless specified otherwise, for the rest of the paper, we will work over $\mathbb{S}_d \coloneqq \Spec\mathbb{Z}\left[\frac{1}{d!}\right]$, as the invertibility of $d!$ will ensure that we only need to deal with tame ramification and tame stacks. The following definition is nearly identical to \cite[Definition 3.3]{d_compact}, except that we replace $\Covers_d$ with $\mathcal{X}_d$.

\begin{definition}\label{definition:cov_res}
Define the \emph{big stack of degree $d$ covers with resolvents} $\mathcal{R}^d$ as the disjoint union of the stacks $\mathcal{R}^{d;*,n}$ over $\mathbb{S}_d$ whose objects over $S$ are $$\mathcal{R}^{d;*,n}(S) \coloneqq \{(\mathcal{C} \to C \to S;\sigma_1,\ldots,\sigma_n;\chi: \mathcal{C} \to \mathcal{X}_d)\},$$ where \begin{enumerate}[(1)]
\item $(\mathcal{C} \to C \to S;\sigma)$ is an $n$-pointed orbinodal curve.

\item $\chi: \mathcal{C} \to \mathcal{X}_d$ is a representable morphism that maps the following to $\mathcal{E}_d$ for each fiber $\mathcal{C}_s$ of $\mathcal{C} \to S$: the generic points of the components of $\mathcal{C}_s$, the nodes of $\mathcal{C}_s$, and the preimages of the marked points in $\mathcal{C}_s$.
\end{enumerate}

A morphism between $(\mathcal{C}_1 \to P_1 \to S_1;\{\sigma_{1j}\};\chi_1: \mathcal{C}_1 \to \mathcal{X}_d)$ and $(\mathcal{C}_2 \to P_2 \to S_2;\{\sigma_{2j}\};\chi_2: \mathcal{C}_2 \to \mathcal{X}_d)$ over a morphism $S_1 \to S_2$ consists of a pair $(F,\alpha)$, where \begin{enumerate}[(1)]
\item $F$ is a morphism of pointed orbinodal curves: $F: \mathcal{C}_1 \to \mathcal{C}_2$ such that we get a Cartesian diagram

\begin{center}
% https://tikzcd.yichuanshen.de/#N4Igdg9gJgpgziAXAbVABwnAlgFyxMJZABgBpiBdUkANwEMAbAVxiRAB12BbOnACwDGjYAAUAvgH0AjCDGl0mXPkIoyUqrUYs2AZWmz5IDNjwEiU0uur1mrRCD0AmAwpPLz5DTe33OPfkIMopLOYhowUADm8ESgAGYAThBcSGQgOBBIAMzWWnYgAGIuIInJ2dQZSI5y8UkpiGmViFI1JXVIFumZiNUUYkA
\begin{tikzcd}
\mathcal{C}_1 \arrow[r, "F"] \arrow[d] & \mathcal{C}_2 \arrow[d] \\
S_1 \arrow[r]                          & S_2                    
\end{tikzcd}
\end{center}

\item $\alpha: \chi_1 \to \chi_2 \circ F$ is a 2-morphism.
\end{enumerate}
We abbreviate $(\mathcal{C} \to C \to S;\sigma_1,\ldots,\sigma_n;\chi: \mathcal{C} \to \mathcal{X}_d)$ by $(\mathcal{C} \to C;\sigma;\chi)$.
\end{definition}

Given an object $(\mathcal{C} \to C \to S;\sigma;\chi)$ of $\mathcal{R}^{d;*,n}(S)$, the branch locus $\chi^{-1}(\Sigma_d)$ is disjoint from the stacky points in $\mathcal{C}$, so we can identify it with its image in $C$, which we will denote $\br\chi$. Because $\br\chi$ is the zero locus of a section of a line bundle and its intersection with each fiber $P_s$ is an effective Cartier divisor, $\br\chi$ is a relative effective Cartier divisor in $C \to S$ by \cite[Tag 062Y]{stacks_project}. Thus, we have a \emph{branch morphism} $\br: \mathcal{R}^d \to \mathcal{M}$ defined by $$(\mathcal{C} \to C \to S;\sigma;\chi) \mapsto (C \to S;\br\chi;\sigma).$$ We will also refer to the map $\mathcal{R}^d \to \mathcal{M}^{\orb}$ as $\br$ as an abuse of notation.

\begin{remark}\label{remark:deopurkar_d=3}
The stack $\mathcal{X}_3$ is equivalent to the stack of degree 3 covers $\mathcal{A}_3$ considered in \cite{d_compact} and \cite{p_commalg}, so our construction is nothing new when $d = 3$ (other than extending Deopurkar's construction from $\mathbb{Q}$ to $\mathbb{Z}\left[\frac{1}{6}\right]$). However, we do get a new proof of smoothness for Deopurkar's degree 3 big Hurwitz stack $\mathcal{H}^3$ that also works for $\mathcal{R}^4$ and $\mathcal{R}^5$.
\end{remark}

\begin{remark}\label{remark:marked_rep}
Later in this paper, we will consider points $p \in \mathcal{M}(k)$ corresponding to curves of the form $(\mathbb{A}_k^1;\Sigma;\infty)$, i.e. $\infty$ will be the unique marked point and $\Sigma \in \Sym^n(\mathbb{A}^1)(k)$ will be some marked divisor. The fiber $\br^{-1}(p)$ will parametrize covers $D \to \mathbb{A}^1$ (if $d = 3$) or covers with resolvents $(D,R) \to \mathbb{A}^1$ (if $d \in \{4,5\}$) such that $\pi: D \to \mathbb{A}^1$ is \'{e}tale outside $\pi^{-1}(\Sigma)$.
\end{remark}

In the rest of \Cref{section:covers_resolvents}, we will prove the following theorem concerning $\mathcal{R}^d$ and the branch morphism $\br: \mathcal{R}^d \to \mathcal{M}$. The proof is very similar to that of \cite[Theorem 3.8]{d_compact}. However, Deopurkar works over a field of characteristic 0, so we will rewrite everything over $\mathbb{S}_d$.

\begin{theorem}\label{theorem:rd_nice}
\begin{enumerate}[(a)]
\item $\mathcal{R}^d$ is a smooth algebraic stack over $\mathbb{S}_d$.

\item The morphism $\br: \mathcal{R}^d \to \mathcal{M}$ is proper and representable by Deligne-Mumford stacks.

\item The composite $\mathcal{R}^d \xrightarrow[]{\br} \mathcal{M}^{\orb} \to \mathcal{M}^{\orb;0,*}$ is smooth. The restriction $\mathcal{R}^{d;b,*} \to \mathcal{M}^{\orb;0,*}$ has relative dimension $b$.
\end{enumerate}
\end{theorem}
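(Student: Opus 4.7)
My plan is to mirror the argument of \cite[Theorem 3.8]{d_compact} for the $d=3$ case, substituting the smooth target $\mathcal{X}_d$ for Deopurkar's $\Covers_d$ (cf.\ \Cref{remark:why_prehomog}) and carrying everything out over $\mathbb{S}_d$. For algebraicity in (a), the key boundedness input is that representability of $\chi$ together with the fact that every stacky point of $\mathcal{C}$ lies over a marked point or node---and therefore maps to $\mathcal{E}_d \cong B\mathbf{S}_d$ by \Cref{corollary:vss_bsd}---forces each stabilizer group scheme of $\mathcal{C}$ to embed into $\mathbf{S}_d$, hence to have order dividing $d!$. Thus $\br: \mathcal{R}^d \to \mathcal{M}^{\orb}$ factors through $\mathcal{M}^{\orb\le d!}$, which is of finite type over $\mathcal{M}$ by \Cref{corollary:orb_finite}, and $\mathcal{R}^d$ can be cut out of the relative Hom-stack $\underline{\Hom}_{\mathcal{M}^{\orb\le d!}}(\mathcal{C}^{\univ}, \mathcal{X}_d)$ by the open conditions of representability and of generic/marked/nodal points landing in $\mathcal{E}_d$; Olsson's Hom-stack theorem then gives algebraicity and local finite-typeness.

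For the smoothness assertions and the relative dimension in (c), I would analyze the tangent complex $T_{\mathcal{X}_d} = [\mathfrak{g}_d \otimes \mathcal{O}_{V_d} \xrightarrow{\mathrm{act}} TV_d]$ (with $\mathfrak{g}_d$ in degree $-1$), whose cohomology sheaves are both supported on $\Sigma_d$ because by \Cref{corollary:vss_bsd} the action map is an isomorphism over $V_d^{ss}$. Consequently $\chi^* T_{\mathcal{X}_d}$ is supported on the zero-dimensional subscheme $\chi^{-1}(\Sigma_d) \subset \mathcal{C}$, and the hypercohomology spectral sequence collapses to give $\mathbb{H}^1(\mathcal{C}, \chi^* T_{\mathcal{X}_d}) = 0$, killing the obstruction to deforming $\chi$ along any deformation of $(\mathcal{C}, \sigma)$; combined with the smoothness of $\mathcal{M}^{\orb;0,*}$ (\Cref{corollary:orb_finite}), this gives both smoothness of $\mathcal{R}^d \to \mathcal{M}^{\orb;0,*}$ and smoothness of $\mathcal{R}^d$. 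For the relative dimension, a direct weight-sum computation in each of the three cases shows that $G_d$ acts on $\det V_d$ by $\chi_d^2$ and trivially on $\det \mathfrak{g}_d$ (the adjoint determinant is trivial on any $\GL_n$ factor), so $\det T_{\mathcal{X}_d} = \mathcal{L}_d$; since $\dim V_d = \dim G_d$ in all three cases, $T_{\mathcal{X}_d}$ has virtual rank $0$, and Riemann--Roch yields
$$\chi(\mathcal{C}, \chi^* T_{\mathcal{X}_d}) \;=\; \deg \chi^*\mathcal{L}_d \;=\; \deg(\br\chi) \;=\; b.$$
Combined with $\mathbb{H}^1 = 0$ and $\mathbb{H}^{-1} = 0$ (the latter being the Lie algebra of the finite étale $2$-automorphism group of $\chi$, which embeds into $\mathbf{S}_d$ over the generic locus), this gives $\dim \mathbb{H}^0 = b$, hence the claimed relative dimension.

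For (b), representability by DM stacks reduces to showing that the relative inertia of $\br$ is finite and unramified: both the automorphism scheme of the orbinodal structure $\mathcal{C} \to C$ over $C$ (finite étale over $\mathbb{S}_d$ by tameness, since all stabilizers have order dividing $d!$) and the $2$-automorphism scheme of $\chi$ (finite étale as above) are finite and unramified. For properness I would verify the valuative criterion over a DVR $R$ with fraction field $K$: given $(\mathcal{C}_K, \chi_K) \in \mathcal{R}^d(\Spec K)$, first extend $\mathcal{C}_K$ to a twisted stable orbinodal curve $\mathcal{C}_R / R$ (possibly after a finite ramified base change, as permitted) using the results of \cite{av_compact,o_twisted}, then extend $\chi_K$ across the special fiber by taking a limit in the Hom-stack. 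The hard part I expect will be arranging the twisted stable model of $\mathcal{C}_R$ so that the limiting $\chi_R$ still satisfies the condition that generic, marked, and nodal points of the special fiber land in $\mathcal{E}_d$---concretely, one must control the specialization of the branch divisor so that no new component of branching is absorbed into a node or marked point; the remaining pieces are standard deformation theory powered by the two structural inputs that $\mathcal{X}_d$ is smooth and that $T_{\mathcal{X}_d}$ is supported on $\Sigma_d$.
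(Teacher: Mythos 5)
Your tangent-complex approach to part (c) is a genuinely different (and cleaner) route than the paper's argument. The paper proves smoothness and computes the relative dimension locally: after choosing an adapted affine cover it reduces to deforming $\chi$ on each chart containing a branch point and shows (\Cref{proposition:dim_def_chi_i}, \Cref{lemma:lie_alg_fact}) via an explicit Haar-measure/Jacobian calculation that the deformation space over $k[[t]]$ has length $v_t(\Delta_d(v))$. You instead observe globally that $\chi^*T_{\mathcal{X}_d}$ is a two-term complex of vector bundles of equal rank which is generically an isomorphism on $\mathcal{C}$ (by \Cref{corollary:vss_bsd}), so $\mathcal{H}^{-1}=0$, $\mathcal{H}^0$ is a torsion sheaf on $\br\chi$, $\mathbb{H}^{\pm 1}=0$, and Riemann--Roch gives $\dim\mathbb{H}^0 = \deg\chi^*\mathcal{L}_d = b$; the weight checks $\det T_{\mathcal{X}_d}\cong\mathcal{L}_d$ and $\dim\mathfrak{g}_d=\dim V_d$ hold in all three cases. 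This is a real simplification of (c) and is worth recording. One small quibble: your claim that $\mathbb{H}^{-1}=0$ ``because the $2$-automorphism group is finite \'{e}tale'' is circular at that point in the proof (unramified inertia is exactly what (b) requires); the noncircular reason is the vector-bundle injectivity just described, which is also what the paper uses for its DM argument (\Cref{proposition:br_dm}).

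The serious gap is quasi-compactness of $\br$. You deduce only that $\mathcal{R}^d$ is algebraic and \emph{locally} of finite type over $\mathcal{M}^{\orb\le d!}$, and then invoke the valuative criterion. But the valuative criterion yields properness only once $\br$ is already known to be of finite type and quasi-separated; local finite-typeness is not enough, and for Hom-type stacks the source has infinitely many connected components of unbounded complexity. The paper devotes an entire subsection (\Cref{section:br_fin}, culminating in \Cref{proposition:br_fin_type}) to this: one has to bound the degrees and $h^0$ of the vector bundles appearing in the $G_d$-bundle, and the nontrivial input is a boundedness lemma for $G_d$-bundles on curves à la Ciocan-Fontanine--Kim--Maulik (\Cref{proposition:bound_deg_h0}, which uses a Borel reduction and positivity of certain characters $\theta_i$) combined with a generating-sheaf/Quot-scheme argument (\Cref{proposition:deopurkar_4.12}, \Cref{lemma:bounded_rank_deg_h0}). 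Nothing in your sketch produces such bounds, and without them properness cannot be concluded. Relatedly, the Hom-stack theorem you cite (Olsson's) requires the target to have finite diagonal, which $\mathcal{X}_d=[V_d/G_d]$ does not; you would need the more general Hall--Rydh algebraicity theorem for Hom-stacks with targets having affine stabilizers, whereas the paper sidesteps the issue entirely by building $\mathcal{R}^d$ out of the stacks $\mathcal{V}ect^r$ and $\mathcal{S}ect^r$ (\Cref{section:alg_locfin}), which also happens to set up the machinery needed for the boundedness argument. Finally, you flag the existence part of the valuative criterion as ``the hard part'' and leave it at that; this is where the paper's content actually lies (Abhyankar's lemma for the orbinodal model, Horrocks's theorem to extend the $G_d$-bundle across the closed point of the two-dimensional regular local ring, \Cref{theorem:horrocks_reg_dim2} and \Cref{corollary:dim2_gd}), and it is not filled in by your remarks.
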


From \Cref{theorem:cadman_root_equivalence} and \Cref{theorem:rd_nice}(c), we get the following.

\begin{corollary}\label{corollary:sm_smooth}
The composite $\mathcal{R}^{d,\sm} \xrightarrow[]{\br} \mathcal{M}^{\sm} \to \mathcal{M}^{\sm;0,*}$ is smooth. The restriction $\mathcal{R}^{d,\sm;b,*} \to \mathcal{M}^{\sm;0,*}$ has relative dimension $b$.
\end{corollary}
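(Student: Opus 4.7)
The plan is to deduce this corollary from Theorem \ref{theorem:rd_nice}(c) by using the root stack equivalence (Theorem \ref{theorem:cadman_root_equivalence}) to reduce smoothness questions over $\mathcal{M}^{\sm;0,*}$ to smoothness questions over $\mathcal{M}^{\orb,\sm;0,*}$. I would first observe that since the marked divisor forgetting morphism and the coarse-moduli morphism $\mathcal{M}^{\orb} \to \mathcal{M}$ commute in the obvious way, the composite in the statement factors as
\[
\mathcal{R}^{d,\sm} \xrightarrow{\br} \mathcal{M}^{\orb,\sm;*,n} \to \mathcal{M}^{\orb,\sm;0,*} \to \mathcal{M}^{\sm;0,*},
\]
where the left-hand arrow is the restriction of $\br$ and the right-hand arrow is the coarse-moduli morphism. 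By Theorem \ref{theorem:rd_nice}(c), the composite of the first two arrows is smooth, and restricted to the $b$-piece $\mathcal{R}^{d,\sm;b,*}$ it has relative dimension $b$.

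Next I would show that the coarse-moduli arrow $\mathcal{M}^{\orb,\sm;0,*} \to \mathcal{M}^{\sm;0,*}$ is \'etale of relative dimension $0$ on those components that can appear in the image of $\mathcal{R}^{d,\sm}$. By construction, $\mathcal{M}^{\orb,\sm;0,*}$ decomposes as a disjoint union $\bigsqcup_{\underline{a}} \mathcal{M}^{\orb,\sm;0,*}(\underline{a})$ over tuples of positive integers $\underline{a} = (a_1,\ldots,a_n)$, recording the orders of the cyclic stabilizers at the marked gerbes. Theorem \ref{theorem:cadman_root_equivalence} asserts that, after inverting $N = a_1 \cdots a_n$, the coarse-moduli morphism $\mathcal{M}^{\orb,\sm;0,*}(\underline{a}) \to \mathcal{M}^{\sm;0,*}$ is an isomorphism with inverse given by the root-stack construction.

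The key check is therefore that every tuple $\underline{a}$ appearing in the image of $\mathcal{R}^{d,\sm}$ satisfies $a_i \mid d!$. This comes directly from the representability requirement in Definition \ref{definition:cov_res}: over a marked point, $\chi$ maps into $\mathcal{E}_d \cong B\mathbf{S}_d$, so the inertia $\mu_{a_i}$ at the corresponding gerbe embeds into $\mathbf{S}_d$, forcing $a_i \mid d!$. Since $d!$ is invertible on $\mathbb{S}_d$, so is each $N = \prod a_i$ that occurs, and Theorem \ref{theorem:cadman_root_equivalence} applies on the nose. Consequently, the restriction of $\mathcal{M}^{\orb,\sm;0,*} \to \mathcal{M}^{\sm;0,*}$ to the relevant components is a disjoint union of isomorphisms.

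Putting the two steps together, the composite $\mathcal{R}^{d,\sm} \to \mathcal{M}^{\sm;0,*}$ is the composition of a smooth morphism (of relative dimension $b$ on the $b$-piece) with an \'etale morphism of relative dimension $0$, hence is smooth of relative dimension $b$ on $\mathcal{R}^{d,\sm;b,*}$. There is no substantive obstacle; the only point that requires a moment of thought is the bound $a_i \mid d!$, and that is forced by representability into $\mathcal{X}_d$ combined with $\mathcal{E}_d \cong B\mathbf{S}_d$ (Corollary \ref{corollary:vss_bsd}).
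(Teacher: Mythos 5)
Your proof is correct and follows the paper's own route: the paper deduces \Cref{corollary:sm_smooth} directly from \Cref{theorem:rd_nice}(c) and the root-stack equivalence \Cref{theorem:cadman_root_equivalence}, exactly as you do. You have simply made explicit the one small check the paper leaves implicit — that representability of $\chi$ into $\mathcal{X}_d$ over the marked points forces the stabilizer orders $a_i$ to divide $d!$ (so that the relevant $N=\prod a_i$ is invertible on $\mathbb{S}_d$ and the root-stack isomorphism applies) — which is a welcome addition.
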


The proof of \Cref{theorem:rd_nice} is broken down into parts.

\subsection{Generating sheaves for tame Deligne-Mumford stacks}\label{section:gen_sheaf}
A large part of our proof of \Cref{theorem:rd_nice}(b) is based on Deopurkar's proof of \cite[Theorem 3.8]{d_compact} for his stacks of degree $d$ covers. Deopurkar works over a field of characteristic 0, whereas we will work over $\mathbb{S}_d$. We will show that much of his proof works the same way for our situation.

We will need some results from \cite{n_moduli} concerning moduli of sheaves on tame Deligne-Mumford stacks. We recall the notion of a generating sheaf, as generating sheaves are a necessary part of Deopurkar's proof of \cite[Proposition 4.12]{d_compact}, which appears in our paper as \Cref{proposition:deopurkar_4.12}. For more information, see \cite[\S 5]{os_quot} or \cite[\S 5.2]{k_geometry}.

\begin{definition}\label{definition:generating_sheaf}
Let $\mathcal{X}$ be a tame Deligne-Mumford stack with coarse moduli space $\rho: \mathcal{X} \to X$. A locally free sheaf $\mathcal{E}$ on $\mathcal{X}$ is a \emph{generating sheaf} for $\mathcal{X}$ if for any quasi-coherent sheaf $\mathcal{F}$ on $\mathcal{X}$, the morphism $$\rho^*\rho_*(\mathcal{H}om_{\mathcal{O}_{\mathcal{X}}}(\mathcal{E},\mathcal{F})) \otimes_{\mathcal{O}_{\mathcal{X}}} \mathcal{E} \to \mathcal{F}$$ is surjective.
\end{definition}

By \cite[Theorem 5.2]{os_quot}, a locally free sheaf $\mathcal{E}$ is a generating sheaf for $\mathcal{X}$ if and only if for every geometric point $x \in \mathcal{X}(k)$, the representation $\mathcal{E}_x$ of the automorphism group $G_x$ contains every irreducible representation of $G$.

We recall some helpful facts about families of nodal and orbinodal curves.

\begin{lemma}[{\cite[Proposition 2.1]{h_moduli}}]\label{lemma:hall_proj}
Let $S$ be a Noetherian scheme, and let $C \to S$ be a nodal curve. Then there exists an \'{e}tale cover $T \to S$ such that $C \times_S T \to T$ is projective.
\end{lemma}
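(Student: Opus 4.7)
The plan is to reduce the problem to a local one via étale descent and then produce a relatively ample line bundle on $C$. Projectivity being étale-local on the target and $S$ being Noetherian, it suffices to produce, for each geometric point of $S$, an étale neighborhood $T \to S$ over which $C_T \to T$ admits a relatively ample line bundle; projectivity then follows from the standard fact that a proper flat morphism of algebraic spaces equipped with a relatively ample line bundle is a projective scheme (classical in the scheme case, extended to algebraic spaces via Knutson's book or the Stacks Project). One may therefore assume $S = \Spec A$ with $A$ strictly Henselian local, with closed point $s$.

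In this local setting the closed fiber $C_s$ is a projective nodal curve over a field. The first step is to pick finitely many smooth closed points $p_1, \ldots, p_n \in C_s^{\mathrm{sm}}$ meeting every irreducible component of $C_s$. Smoothness of $C^{\mathrm{sm}} \to S$ together with strict Henselianness of $A$ lifts each $p_i$ uniquely to a section $\sigma_i \colon S \to C^{\mathrm{sm}}$, and the images $\sigma_i(S)$ are automatically pairwise disjoint since their values at $s$ are distinct. Taking $D \coloneqq \bigsqcup_i \sigma_i(S)$ yields a relative effective Cartier divisor supported in $C^{\mathrm{sm}}$, and one sets $\mathcal{L} \coloneqq \mathcal{O}_C(D)$.

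By construction $\mathcal{L}|_{C_s}$ has strictly positive degree on each irreducible component of $C_s$ and is therefore ample. Openness of ampleness for proper flat morphisms of finite presentation (EGA IV.9.6.4, extended to algebraic spaces by étale-local reduction to the scheme case) then ensures that $\mathcal{L}|_{C_t}$ remains ample on an open neighborhood of $s$ in $S$; since $S$ is local, this neighborhood is all of $S$, so $\mathcal{L}$ is fiberwise ample and hence relatively ample for the proper flat morphism $C \to S$. The main obstacle is this last step: openness of ampleness is classical for proper flat families of schemes, but its extension to algebraic spaces requires some care, and the passage from fiberwise to relative ampleness uses cohomology and base change. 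Once it is in hand, the local étale neighborhoods assemble into a single étale cover $T \to S$ over which $C_T \to T$ is projective, by Noetherianness of $S$.
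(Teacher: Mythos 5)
The paper cites this result from Hall without reproving it, so I am judging your argument on its own terms; the route you take (reduce to a Henselian local base, lift smooth points of the closed fiber to disjoint sections, take $\mathcal{L} = \mathcal{O}_C(D)$, and propagate fiberwise ampleness to projectivity) is the standard one and is correct in outline.

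Three places need sharpening. The word ``uniquely'' in the section-lifting step is wrong: Hensel's lemma for a smooth morphism of positive relative dimension gives existence of a section through a prescribed smooth point of the closed fiber, but not uniqueness; you only need existence, so this is harmless but should be corrected. Next, $\Spec A^{\mathrm{sh}}$ is a filtered limit of \'{e}tale neighborhoods, not itself an \'{e}tale neighborhood, so once $D$ and $\mathcal{L}$ are built over the strict Henselization you still owe a Noetherian-approximation step descending the divisor, the fact that it is a relative effective Cartier divisor contained in the smooth locus, and the ampleness of $\mathcal{L}$ on the closed fiber to an honest affine \'{e}tale neighborhood of the chosen point; this is routine but it is precisely where the $T$ of the statement is produced, and your final sentence elides it. Finally, the parenthetical ``extended to algebraic spaces by \'{e}tale-local reduction to the scheme case'' misdescribes the one genuinely delicate point you then flag: ampleness is not \'{e}tale-local on the source, and $C$ is not known to be a scheme until after an ample sheaf is in hand, so there is no scheme case to reduce to. The correct inputs are the openness-of-relative-ampleness theorem for proper (or finite type, separated) morphisms of algebraic spaces over a Noetherian base, together with the fact that a separated, finite type algebraic space over a scheme carrying a relatively ample invertible sheaf is itself a scheme; both are in the Stacks Project. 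With those references in place your proof closes.
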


\begin{lemma}[{\cite[Proposition 4.3]{d_compact}}]\label{lemma:deopurkar_proj}
Let $S$ be a Noetherian scheme and $(\mathcal{C} \to C \to S;\sigma)$ a tame pointed orbinodal curve. Then there exists an \'{e}tale cover $T \to S$ such that \begin{enumerate}[(1)]
\item $\mathcal{C}_T \coloneqq \mathcal{C} \times_S T$ admits a finite flat morphism from a projective $T$-scheme $Z$.

\item $\mathcal{C}_T$ is the quotient of a quasi-projective scheme by $\GL_{n,T}$.

\item $\mathcal{C}_T$ admits a generating sheaf.
\end{enumerate}
\end{lemma}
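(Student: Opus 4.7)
The plan is to construct a well-chosen generating sheaf on $\mathcal{C}$ and derive all three statements from its properties, after first reducing to the case of projective coarse moduli via \Cref{lemma:hall_proj}. Applying that lemma to $C \to S$ yields an étale cover $T_0 \to S$ over which $C_{T_0} \to T_0$ is projective; I will shrink $T_0$ further to various étale refinements $T \to S$ as needed. Throughout, let $N$ denote a common multiple of the orders of the stabilizer groups at the stacky points of $\mathcal{C}_T$; tameness of $\mathcal{C}$ ensures that $N$ is invertible on $T$.

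The central step is to produce, after a suitable further étale base change, a line bundle $\mathcal{L}$ on $\mathcal{C}_T$ together with an isomorphism $\mathcal{L}^{\otimes N} \cong \mathcal{O}_{\mathcal{C}_T}$, such that at each stacky point $p$ with stabilizer $\mu_{n_p}$ the induced $\mu_{n_p}$-character on the fiber $\mathcal{L}|_p$ is primitive. The existence of such an $\mathcal{L}$ reduces to analyzing the Picard group of $\mathcal{C}_T$ and its restriction to the residual gerbes at the stacky points: after étale localization any prescribed character lifts to a line bundle on $\mathcal{C}_T$, and after a further finite étale base change that line bundle admits an $N$-th root trivialization. Granting $\mathcal{L}$, set $\mathcal{A} := \bigoplus_{i=0}^{N-1}\mathcal{L}^{\otimes i}$, which is an $\mathcal{O}_{\mathcal{C}_T}$-algebra by means of the trivialization of $\mathcal{L}^{\otimes N}$. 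Then $Z := \underline{\Spec}_{\mathcal{C}_T}(\mathcal{A})$ is a $\mu_N$-torsor over $\mathcal{C}_T$; primitivity of the character of $\mathcal{L}|_p$ forces the induced $\mu_{n_p}$-action on the local fiber of $Z$ to be free, so $Z$ is an algebraic space, and the composite $Z \to \mathcal{C}_T \to C_T$ is finite into a projective $T$-scheme, making $Z$ a projective $T$-scheme and giving (1). Viewed just as a locally free $\mathcal{O}_{\mathcal{C}_T}$-module, $\mathcal{E} := \mathcal{A}$ is a generating sheaf because the summands $\mathcal{L}^{\otimes i}|_p$ for $0 \le i \le N - 1$ exhaust every character of $\mu_{n_p}$; the criterion of \cite{os_quot} recalled in \Cref{section:gen_sheaf} then yields (3). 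Finally, for (2), the frame bundle $P := \underline{\mathrm{Isom}}_{\mathcal{C}_T}(\mathcal{O}_{\mathcal{C}_T}^{\oplus N}, \mathcal{E})$ is a $\GL_N$-torsor presenting $\mathcal{C}_T \simeq [P/\GL_N]$; replacing $\mathcal{E}$ by a sufficiently positive twist by the pullback of an ample line bundle from $C_T$ embeds $P$ into a Grassmannian bundle over $C_T$, so $P$ is quasi-projective over $T$.

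The main obstacle is the construction of $\mathcal{L}$ with primitive characters at every stacky point, controllable by a finite étale base change on $S$. At marked points this follows cleanly from the root-stack description of \Cref{theorem:cadman_root_equivalence}. At nodes, however, the local model $[\Spec(\mathcal{O}_T[u,v]/(uv-t))/\mu_n]$ places two branches of $\mathcal{C}$ in contact, and the required $\mu_n$-character must be compatible along both branches in order to assemble into a global line bundle. The natural route is to pull back to the partial normalization along the nodes, build $\mathcal{L}$ branch-by-branch using the smooth theory, and glue via the descent data at each node; this gluing datum is a torsor for a finite group scheme and may force the further étale base change alluded to above. Once $\mathcal{L}$ is in place, (1), (2), and (3) emerge uniformly from its associated algebra and generating sheaf.
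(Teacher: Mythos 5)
Your plan diverges substantially from the paper's, which simply cites Deopurkar's proof of \cite[Proposition 4.3]{d_compact} and explains that the three inputs it depends on (\cite[Theorem 1.13]{o_twisted}, \cite[Theorem 2.14]{ehkv_brauer}, and \cite[Remark 4.3]{k_geometry}) already hold over an arbitrary Noetherian base, with (3) then following from (2) via \cite[Theorem 5.5]{os_quot}. You instead attempt a direct construction via a cyclic cover, and the central step --- the one you flag as ``the main obstacle'' --- has a genuine gap.

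The problem is that a line bundle $\mathcal{L}$ on $\mathcal{C}_T$ with $\mathcal{L}^{\otimes N}\cong\mathcal{O}_{\mathcal{C}_T}$ and \emph{primitive} stabilizer character at every stacky point need not exist, and no amount of \'etale or finite \'etale base change on $S$ can repair this, because the obstruction lives inside a single geometric fiber $\mathcal{C}_s$, which is unchanged by base change on $S$. Concretely, take $\mathcal{C}$ to be the root stack $\sqrt[n]{[0]/\mathbb{P}^1_k}$ with a single stacky point of order $n$. Its Picard group is $\mathbb{Z}$, generated by the tautological bundle $\mathcal{L}_0$ with $\deg\mathcal{L}_0 = 1/n$, and the pullback $\Pic(\mathbb{P}^1)\to\Pic(\mathcal{C})$ is multiplication by $n$. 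Any $\mathcal{L}$ with primitive character at the stacky point differs from $\mathcal{L}_0^{\pm 1}$ by a pullback, so $\deg\mathcal{L}\in\pm\tfrac{1}{n}+\mathbb{Z}$; then $\deg\mathcal{L}^{\otimes N} = N\deg\mathcal{L}\neq 0$ because $N/n\in\mathbb{Z}_{>0}$, so $\mathcal{L}^{\otimes N}$ is never trivial. More generally, the required condition is that $\mathcal{L}_0^{\otimes N}\cong\pi^*M$ become an $N$-th power after twisting by pullbacks, i.e.\ that $[M]=0$ in $\Pic(C)/N$, which is a nontrivial arithmetic condition on the curve, not something controllable by base change on $S$. (When there are several stacky points the obstructions can sometimes cancel, as for $\sqrt[n]{[0]/\mathbb{P}^1}\times_{\mathbb{P}^1}\sqrt[n]{[\infty]/\mathbb{P}^1}$, but not in general.) Since your construction of $Z$ requires the $\mathcal{O}_{\mathcal{C}_T}$-algebra structure on $\bigoplus_i\mathcal{L}^{\otimes i}$, which exists only given a trivialization of $\mathcal{L}^{\otimes N}$, part (1) does not go through as written.

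It is worth noting what does survive: to show that $\mathcal{E}=\bigoplus_{i=0}^{N-1}\mathcal{L}^{\otimes i}$ is a generating sheaf one only needs $\mathcal{L}$ to have primitive character at each stacky point, not the triviality of $\mathcal{L}^{\otimes N}$, so (3) could potentially be argued directly (modulo carefully constructing $\mathcal{L}$ on a nodal orbicurve, where gluing at nodes is itself nontrivial). Likewise the frame-bundle idea for (2) is in the right direction. But (1) cannot be obtained from the cyclic-cover construction; the sources the paper cites obtain the finite flat cover by a different route, essentially \cite[Theorem 1.13]{o_twisted}, and that is the tool you would need to substitute in.
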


\begin{proof}
Deopurkar's proof of \Cref{lemma:deopurkar_proj} is for $S$ over a field of characteristic 0, so we comment on the modifications needed to prove the statement for arbitrary $S$. The facts needed for Deopurkar's proofs of (1) and (2) are \cite[Theorem 1.13]{o_twisted}, \cite[Theorem 2.14]{ehkv_brauer}, and \cite[Remark 4.3]{k_geometry}. The first two statements take place over an arbitrary Noetherian base. For the last statement, we note that Kresch works over a field, sometimes of characteristic 0. However, the reasoning in \cite[Remark 4.3]{k_geometry} works verbatim over an arbitrary Noetherian base. This is enough to prove (1) and (2). Now (3) follows from (2) and \cite[Theorem 5.5]{os_quot}.
\end{proof}

\subsection{$\mathcal{R}^d$ is algebraic and locally of finite type}\label{section:alg_locfin}
For each $d$, a morphism $\mathcal{C} \to \mathcal{X}_d = [V_d/G_d]$ from an orbinodal curve $\mathcal{C} \to S$ is given by a $G_d$-bundle over $\mathcal{C}$ along with a section of the associated vector bundle with fiber $V_d$. Moreover, for each $d$, a $G_d$-bundle is the same thing as the following data over $\mathcal{C}$: \begin{itemize}
\item $d = 3$: A rank 2 vector bundle $\mathcal{F}_2$.

\item $d = 4$: A rank 3 vector bundle $\mathcal{F}_3$, a rank 2 vector bundle $\mathcal{F}_2$, and a nonvanishing section $s \in H^0(\mathcal{C},\det\mathcal{F}_3 \otimes \det\mathcal{F}_2^{\vee})$.

\item $d = 5$: A rank 4 vector bundle $\mathcal{F}_4$, a rank 5 vector bundle $\mathcal{F}_5$, and a nonvanishing section $s \in H^0(\mathcal{C},(\det\mathcal{F}_4)^2 \otimes \det\mathcal{F}_5^{\vee})$.
\end{itemize}
Hence, in each case, a morphism $\mathcal{C} \to \mathcal{X}_d$ is given by some vector bundles on $\mathcal{C}$ and some sections of associated vector bundles.

Following \cite[\S 4.1]{d_compact}, for every $r \in \mathbb{N}_{> 0}$, we define the stack $\mathcal{V}ect^r$ over $\mathcal{M}^{\orb}$ whose objects over $S$ are $$\mathcal{V}ect^r(S) = \{(\mathcal{C} \to C \to S;\Sigma;\sigma;\mathcal{V})\},$$ where $\mathcal{V}$ is a rank $r$ vector bundle on $\mathcal{C}$ and the stack $\mathcal{S}ect^r$ over $\mathcal{V}ect^r$ whose objects over $S$ are $$\mathcal{S}ect^r(S) = \{(\mathcal{C} \to C \to S;\Sigma;\sigma;\mathcal{V};v)\},$$ where $v \in H^0(\mathcal{C},\mathcal{V})$ is a global section. We then have maps $$\mathcal{S}ect^r \to \mathcal{V}ect^r \to \mathcal{M}^{\orb} \to \mathcal{M}$$ along with the following fully faithful embeddings of the $\mathcal{R}^d$: \begin{itemize}
\item $d = 3$: $\mathcal{R}^3 \xhookrightarrow{} \mathcal{W}^3 \coloneqq \mathcal{V}ect^2 \times_{\mathcal{V}ect^4} \mathcal{S}ect^4$, where the morphism $\mathcal{V}ect^2 \to \mathcal{V}ect^4$ sends $\mathcal{F}_2 \mapsto \Sym^3\mathcal{F}_2 \otimes \det\mathcal{F}_2^{\vee}$.

\item $d = 4$: $\mathcal{R}^4 \xhookrightarrow{} \mathcal{W}^4 \coloneqq (\mathcal{V}ect^3 \times_{\mathcal{M}^{\orb}} \mathcal{V}ect^2) \times_{\mathcal{V}ect^1 \times_{\mathcal{M}^{\orb}} \mathcal{V}ect^{12}} (\mathcal{S}ect^1 \times \mathcal{S}ect^{12})$, where the morphism $\mathcal{V}ect^3 \times_{\mathcal{M}^{\orb}} \mathcal{V}ect^2 \to \mathcal{V}ect^1 \times_{\mathcal{M}^{\orb}} \mathcal{V}ect^{12}$ sends $(\mathcal{F}_3,\mathcal{F}_2) \mapsto (\det\mathcal{F}_3 \otimes \det\mathcal{F}_2^{\vee},\Sym^2\mathcal{F}_3 \otimes \mathcal{F}_2^{\vee})$.

\item $d = 5$: $\mathcal{R}^5 \xhookrightarrow{} \mathcal{W}^5 \coloneqq (\mathcal{V}ect^4 \times_{\mathcal{M}^{\orb}} \mathcal{V}ect^5) \times_{\mathcal{V}ect^1 \times_{\mathcal{M}^{\orb}} \mathcal{V}ect^{40}} (\mathcal{S}ect^1 \times \mathcal{S}ect^{40})$, where the morphism $\mathcal{V}ect^4 \times_{\mathcal{M}^{\orb}} \mathcal{V}ect^5 \to \mathcal{V}ect^1 \times_{\mathcal{M}^{\orb}} \mathcal{V}ect^{40}$ sends $(\mathcal{F}_4,\mathcal{F}_5) \mapsto ((\det\mathcal{F}_4)^2 \otimes \det\mathcal{F}_5^{\vee},\mathcal{F}_4 \otimes \det\mathcal{F}_4^{\vee} \otimes \wedge^2\mathcal{F}_5)$.
\end{itemize}

\begin{proposition}[{\cite[Proposition 4.4]
{d_compact}}]\label{proposition:vect_morb}
$\mathcal{V}ect^{r;\le d!}$ is an algebraic stack, locally of finite type over $\mathcal{M}^{\orb\le d!}$.
\end{proposition}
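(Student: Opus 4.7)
The plan is to reduce the statement to a moduli problem of coherent sheaves on a family of tame Deligne--Mumford stacks, for which the algebraicity results of Nironi \cite{n_moduli} apply. Since $\mathcal{M}^{\orb\le d!}$ is algebraic by \Cref{corollary:orb_finite}, and since algebraicity and the property of being locally of finite type are étale-local on the target, it is enough to show that for every scheme $S$ equipped with a morphism $S \to \mathcal{M}^{\orb\le d!}$, classifying a family $(\mathcal{C} \to C \to S;\Sigma;\sigma)$ of orbinodal curves whose automorphism groups have order at most $d!$, the fiber product $\mathcal{V}ect^{r;\le d!} \times_{\mathcal{M}^{\orb\le d!}} S$ is algebraic and locally of finite type over $S$.

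First, I would apply \Cref{lemma:deopurkar_proj} to pass to an étale cover $T \to S$ such that $\mathcal{C}_T \coloneqq \mathcal{C} \times_S T$ admits a generating sheaf $\mathcal{E}$ and a finite flat cover by a projective $T$-scheme. Next, Nironi's moduli theory then yields, for each numerical invariant $P$, an algebraic stack of finite type over $T$ parametrizing coherent sheaves on $\mathcal{C}_T/T$ whose modified Hilbert polynomial with respect to $\mathcal{E}$ equals $P$; taking the disjoint union over $P$ produces an algebraic stack locally of finite type over $T$ parametrizing all coherent sheaves on $\mathcal{C}_T/T$. Being rank $r$ and locally free is an open condition on a flat family of coherent sheaves, so this cuts out an open substack representing $\mathcal{V}ect^r \times_{\mathcal{M}^{\orb\le d!}} T$.

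Finally, algebraicity and local finite type descend along the étale cover $T \to S$, which completes the reduction. The main obstacle will be checking that the inputs of Nironi's theorem are available in our relative setting — in particular, that a generating sheaf and the finite flat cover by a projective scheme produced by \Cref{lemma:deopurkar_proj} are sufficient to trigger Nironi's algebraicity and local-finiteness statements over an arbitrary Noetherian base. The $\le d!$ bound on automorphism orders is precisely what guarantees that $\mathcal{M}^{\orb\le d!}$ is of finite type over $\mathcal{M}$ by \Cref{corollary:orb_finite}(b), ensuring that one may work over Noetherian bases $S$ and hence apply \Cref{lemma:deopurkar_proj} uniformly.
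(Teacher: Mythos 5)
Your proposal follows essentially the same route as the paper's proof (which in turn mirrors \cite[Proposition 4.4]{d_compact}): reduce étale-locally to a fixed family $\mathcal{C}/S$ with $S$ Noetherian (using \Cref{corollary:orb_finite} to justify Noetherian bases), pass to an étale cover via \Cref{lemma:deopurkar_proj} so that $\mathcal{C}$ has the projectivity and generating-sheaf structures Nironi's framework requires, cite \cite[Corollary 2.27]{n_moduli} (valid because $\mathcal{C}$ is tame) for algebraicity of the stack of coherent sheaves, and then cut out $\mathcal{V}ect^r$ as the open substack of locally free rank-$r$ sheaves. The approach is correct and matches the paper.
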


\begin{proof}
The proof is basically the same as that of \cite[Proposition 4.4]{d_compact}, but we will write it here for convenience.

By \Cref{corollary:orb_finite}, $\mathcal{M}^{\orb\le d!}$ is locally of finite type over $\mathbb{S}_d$. Hence, we just need to show that for a finite type affine $\mathbb{S}_d$-scheme $S$ and a morphism $S \to \mathcal{M}^{\orb\le d!}$, the 2-fiber product $\mathcal{V}ect^{r;\le d!} \times_{\mathcal{M}^{\orb\le d!}} S \to S$ is algebraic and locally of finite type. In other words, we want to prove that the stack $\mathcal{V}ect_{\mathcal{C}/S}^r$ whose objects over $T \to S$ are rank $d$ vector bundles on $\mathcal{C} \times_S T$ is algebraic and locally of finite type. For this purpose, we are allowed to replace $S$ by any \'{e}tale cover.

We have an open immersion $\mathcal{V}ect_{\mathcal{C}/S}^r \xhookrightarrow{} \mathcal{C}oh_{\mathcal{C}/S}$, where $\mathcal{C}oh_{\mathcal{C}/S}$ is the stack of coherent sheaves on $\mathcal{C}/S$. By \Cref{lemma:deopurkar_4.6} and \Cref{lemma:deopurkar_4.7}, after passing to an \'{e}tale cover of $S$, we can ensure that $C \to S$ is projective and $\mathcal{C} \to S$ is a global quotient. Then we can conclude using \cite[Corollary 2.27]{n_moduli} because $\mathcal{C}$ is tame.
\end{proof}

\begin{proposition}\label{proposition:sect_vect}
The morphism $\mathcal{S}ect^r \to \mathcal{V}ect^r$ is representable by algebraic spaces of finite type.
\end{proposition}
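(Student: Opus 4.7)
The plan is as follows. Fix a test scheme $S$ and a morphism $S \to \mathcal{V}ect^r$, classifying the data $(\mathcal{C} \to C \to S;\Sigma;\sigma;\mathcal{V})$. We need to show that the base change $F \coloneqq \mathcal{S}ect^r \times_{\mathcal{V}ect^r} S$ is representable by an algebraic space of finite type over $S$. By definition, $F$ is the functor $T \mapsto H^0(\mathcal{C}_T,\mathcal{V}_T)$ on $S$-schemes. Since representability and finite-type-ness for morphisms can both be checked étale-locally on $S$, and by a standard limit argument reducing to finite type bases, we may assume $S$ is Noetherian affine.

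The first main step is to pass to the coarse moduli space. Apply \Cref{lemma:deopurkar_proj} to replace $S$ by an étale cover so that $C \to S$ is projective and $\mathcal{C}$ admits a generating sheaf. Because $\mathcal{C}$ is tame, the coarse moduli map $\rho: \mathcal{C} \to C$ is exact on quasi-coherent sheaves and commutes with arbitrary base change on $S$. Consequently $\rho_*\mathcal{V}$ is a coherent $\mathcal{O}_C$-module, flat over $S$, and for every $S$-scheme $T$ we have $H^0(\mathcal{C}_T,\mathcal{V}_T) = H^0(C_T,(\rho_*\mathcal{V})_T)$ functorially in $T$. Thus $F$ is identified with the analogous functor of sections of the coherent $S$-flat sheaf $\rho_*\mathcal{V}$ on the projective $S$-scheme $C$.

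The second main step is the classical representability of the section functor in the projective setting. Because $C \to S$ is proper and flat and $\rho_*\mathcal{V}$ is $S$-flat and of finite presentation, EGA III 7.7.8 (or, equivalently, cohomology and base change) produces a two-term complex of finite rank locally free sheaves $[K^0 \to K^1]$ on $S$ such that for every $T \to S$,
\[
H^0(C_T,(\rho_*\mathcal{V})_T) \;=\; \ker\!\left(K^0_T \to K^1_T\right)
\]
functorially in $T$. This kernel is represented by a closed linear subscheme of the geometric vector bundle $\operatorname{Spec}(\operatorname{Sym}_S(K^{0,\vee}))$, and is therefore an affine $S$-scheme of finite type. Étale descent then yields an algebraic space of finite type representing $F$ over the original $S$.

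The main step requiring care is the interaction between $\rho_*$ and base change in the stacky setting, i.e. showing that $\rho_*\mathcal{V}$ is $S$-flat and that the natural base change map for $\rho_*$ is an isomorphism. This follows from the tameness of $\mathcal{C}$, which guarantees exactness of $\rho_*$ and compatibility of coarse moduli with arbitrary base change; combined with flatness of $\mathcal{V}$ over $S$, flatness of $\rho_*\mathcal{V}$ over $S$ is immediate. An alternative route, bypassing the coarse moduli space, is to invoke the stacky analogue of EGA III 7.7.8 for proper flat tame Deligne-Mumford stacks, which is implicit in Nitsure's treatment \cite{n_moduli} (via the Quot functor and the use of the generating sheaf $\mathcal{E}$) and in the framework of Olsson-Starr \cite{os_quot}; in either case the resulting representing object is the same affine linear scheme of finite type over $S$.
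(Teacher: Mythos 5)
Your proof is correct and takes essentially the same approach as the paper: pass through the coarse moduli space using tameness (to get exactness of $\rho_*$, compatibility with base change, and $S$-flatness of the pushforward), then apply the classical EGA III 7.7.8 representability result for sections of a flat coherent sheaf on a projective family. This is precisely what the paper's cited \Cref{lemma:deopurkar_4.6} and \Cref{lemma:deopurkar_4.7} encapsulate, and you spell out the intermediate reduction (replacing $S$ by an étale cover so that $C$ is projective) a bit more explicitly than the paper does.
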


\begin{proof}
By \Cref{corollary:orb_finite} and \Cref{proposition:vect_morb}, $\mathcal{V}ect^r$ is locally of finite type over $\mathbb{S}_d$. Hence, we just need to check that for a finite type affine $\mathbb{S}_d$-scheme $S$ and a morphism $S \to \mathcal{V}ect^r$, the 2-fiber product $\mathcal{S}ect^r \times_{\mathcal{V}ect^r} S$ is an algebraic space of finite type over $S$.

Fix a smooth chart $S \to \mathcal{V}ect^r$ with $S$ of finite type over $\mathbb{S}_d$, and let $(\mathcal{C} \to C \to S;\Sigma;\sigma;\mathcal{V})$ be the corresponding object. Now the proposition follows from the following two lemmas. \Cref{lemma:deopurkar_4.6} is a generalization of \cite[Lemma 4.6]{d_compact} to arbitrary characteristic. \Cref{lemma:deopurkar_4.7} is \cite[Lemma 4.7]{d_compact}.

\begin{lemma}[{\cite[Lemma 4.6]{d_compact}} in arbitrary characteristic]\label{lemma:deopurkar_4.6}
Let $S$ be a Noetherian affine scheme and $\mathcal{X} \to S$ be a proper tame Deligne-Mumford stack with coarse moduli space $\rho: \mathcal{X} \to X$, where $X$ is a scheme. Let $\mathcal{F}$ be a coherent sheaf on $\mathcal{X}$, flat over $S$. Then there is a finite complex $M_{\bullet}$ of locally free sheaves on $S$ $$M_0 \to M_1 \to \cdots \to M_n$$ such that for every $f: T \to S$, there are natural isomorphisms $$H^i(T,f^*M_{\bullet}) \xrightarrow[]{\sim} H^i(\mathcal{X}_T,\mathcal{F}_T).$$
\end{lemma}

\begin{proof}
Let $F = \rho_*\mathcal{F}$. Then $F$ is a coherent sheaf on $X$. Once we show that $F$ is flat over $S$, the proof goes exactly the same way as the proof of \cite[Lemma 4.6]{d_compact}.

We will use the tameness to show that $F$ is flat. Because we can work locally, we may assume that $S = \Spec R$ and $X = \Spec A$ are affine. By \cite[Theorem 3.2]{aov_tame_stacks}, because $\mathcal{X}$ is tame, there exists an fppf cover $X^{\prime} \to X$ such $\mathcal{X} \times_X X^{\prime} \cong [U/G]$ for a finite flat linearly reductive group scheme $G \to X^{\prime}$ acting on a finite and finitely presented scheme $U \to X^{\prime}$. Hence, by replacing $X$ with $X^{\prime}$, we may assume that $\mathcal{X}$ is of the form $[U/G]$ as above. Because $U \to U/G \cong X$ is finite, $U$ is affine, and we can write $U = \Spec B$.

The sheaf $\mathcal{F}$ corresponds to a $G$-equivariant $B$-module $M$ flat over $R$. Then $F = \rho_*\mathcal{F}$ corresponds to the $A$-module $M^G$. For any $R$-module $N$, we have $M^G \otimes_R N \cong (M \otimes_R N)^G$, where the $G$-action on $M \otimes_R N$ is trivial on the $N$ factor. Because $M$ is flat over $R$, tensoring with $M$ is exact. Because $G$ is linearly reductive, taking $G$-invariants is exact. Hence, tensoring with $M^G$ is exact, and we conclude that $F$ is flat.
\end{proof}

\begin{lemma}[{\cite[Lemma 4.7]{d_compact}}]\label{lemma:deopurkar_4.7}
Let $\mathcal{X} \to S$ and $\mathcal{F}$ be as in \Cref{lemma:deopurkar_4.6}. Then the contravariant functor $\mathbf{Schemes}_S \to \mathbf{Sets}$ defined by $$(f: T \to S) \mapsto H^0(\mathcal{X}_T,\mathcal{F}_T)$$ is representable by an affine scheme of finite type over $S$.
\end{lemma}
\end{proof}

\begin{proposition}\label{proposition:open_imm}
The morphism $\mathcal{R}^d \to \mathcal{W}^d$ is an open immersion.
\end{proposition}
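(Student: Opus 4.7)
The plan is to exhibit $\mathcal{R}^d$ as an open substack of $\mathcal{W}^d$, cut out by a short list of open conditions on the universal family. Full faithfulness of $\mathcal{R}^d \to \mathcal{W}^d$ is immediate from the definitions: by the universal property of the quotient stack $\mathcal{X}_d = [V_d/G_d]$ and the identification of $G_d$-bundles with the tuple data recalled at the beginning of \Cref{section:alg_locfin}, a morphism in $\mathcal{W}^d$ (a morphism of pointed orbinodal curves together with an isomorphism of the underlying bundle/section data) is the same data as a morphism in $\mathcal{R}^d$ (a morphism of pointed orbinodal curves together with a $2$-morphism between the maps to $\mathcal{X}_d$). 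So $\mathcal{R}^d \to \mathcal{W}^d$ is a monomorphism, and it remains to show that its essential image is open.

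Unpacking \Cref{definition:cov_res}, an $S$-point of $\mathcal{W}^d$ lies in $\mathcal{R}^d$ if and only if (i) for $d \in \{4,5\}$, the orientation section $\alpha$ is nowhere-vanishing, so that the data defines a genuine $G_d$-bundle and hence a morphism $\chi: \mathcal{C} \to \mathcal{X}_d$; (ii) $\chi$ is representable; and (iii) in each geometric fiber of $\mathcal{C} \to S$, the preimage $\chi^{-1}(\Sigma_d)$ avoids the generic points of the components, the nodes, and the preimages of the marked sections $\sigma_i$. Condition (i) is classical: non-vanishing of a section of a line bundle is an open condition. For (iii), the first part says that $\chi^*\Delta_d$, viewed as a section of $\chi^*\mathcal{L}_d$, restricts to a non-zero-divisor on every component of every fiber of $\mathcal{C} \to S$, which is open on $S$ by \cite[Tag 062Y]{stacks_project}. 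Once this part holds, $\chi^{-1}(\Sigma_d)$ is a relative effective Cartier divisor disjoint from the stacky locus of $\mathcal{C}$ and descends to a relative Cartier divisor $\br\chi$ on $C$; the remaining parts of (iii) then say that $\br\chi$ is disjoint from the nodal locus of $C \to S$ and from the sections $\sigma_i$, which is open since these are closed subschemes of $C$ proper over $S$.

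The main obstacle is condition (ii). Since $\mathcal{C}$ is Deligne-Mumford, representability of $\chi$ is equivalent to injectivity of the induced map $\Aut_{\mathcal{C}}(c) \to \Aut_{\mathcal{X}_d}(\chi(c))$ at every geometric point $c$ of $\mathcal{C}$. The non-trivial automorphism groups of $\mathcal{C}$ are concentrated on the closed substack consisting of the nodes together with the preimages of the sections $\sigma_i$, and along this locus the automorphism group scheme of $\mathcal{C}$ is locally constant, so the failure locus for representability is a closed subscheme of $\mathcal{C}$. Its image in $S$ is then closed by properness of $\mathcal{C} \to S$, so representability is an open condition on $S$. Combining conditions (i), (ii), and (iii) realizes $\mathcal{R}^d$ as an open substack of $\mathcal{W}^d$.
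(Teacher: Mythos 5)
Your overall structure matches the paper's: you identify $\mathcal{R}^d$ as cut out of $\mathcal{W}^d$ by the conditions (nowhere-vanishing of the orientation section when $d \in \{4,5\}$, representability of $\chi$, and the étale-locus conditions at generic points, nodes, and marked points) and argue each is open. Your treatment of the first and third conditions is fine — indeed somewhat more explicit than the paper's, which simply appeals to openness of $\mathcal{E}_d$. The issue is your handling of representability.

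You claim the failure locus for representability is closed because the automorphism group scheme of $\mathcal{C}$ is locally constant along the stacky locus. But $\Aut_{\mathcal{C}}(c)$ is only the \emph{source} of the map $\Aut_{\mathcal{C}}(c) \to \Aut_{\mathcal{X}_d}(\chi(c))$, and representability is governed by the \emph{kernel}. Local constancy of the source does not by itself control the kernel, which depends on $\chi$ and can a priori vary across the stacky locus. As written, your argument jumps from the former to the latter without justification. The paper closes this gap by introducing the relative inertia stack $\mathcal{I}_\chi \to \mathcal{C}$: this is exactly the kernel group scheme, and it is representable and finite over $\mathcal{C}$ (being a closed subgroup of the finite inertia $\mathcal{I}_{\mathcal{C}/S}$ of the separated tame DM stack $\mathcal{C}$). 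Upper semicontinuity of fiber degree for a finite morphism then makes the representable locus — where the fiber degree is $1$ — open in $\mathcal{C}$, and properness of $\mathcal{C} \to S$ pushes this to an open condition on $S$. To repair your proof, either invoke $\mathcal{I}_\chi$ as above, or argue separately that the restriction of the $\mathbf{S}_d$-torsor to each connected component of the stacky locus is locally constant in the family (since $\mathrm{Hom}(\mu_n, \mathbf{S}_d)/\mathbf{S}_d$ is finite and discrete), so that the kernel itself is locally constant.
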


\begin{proof}
The conditions that certain points map to $\mathcal{E}_d$ is open because $\mathcal{E}_d$ is open in $\mathcal{X}_d$, and if $d \in \{4,5\}$, the condition that the section of the line bundle is nowhere vanishing is open. It remains to show that representability is an open condition.

For a morphism $S \to \mathcal{W}^d$ landing in the open substack where the section of the line bundle is nonvanishing, we get a morphism $\chi: \mathcal{C} \to \mathcal{X}_d$, where $\mathcal{C} \to S$ be the corresponding orbicurve. Let $\mathcal{I}_{\chi} \to \mathcal{C}$ be the relative inertia stack of $\chi$. Then $\mathcal{I}_{\chi} \to \mathcal{C}$ is representable and finite. The representable locus of $\chi$ is precisely the open subset of $\mathcal{C}$ above which $\mathcal{I}_{\chi} \to \mathcal{C}$ has fiber of degree 1. Because $\mathcal{C} \to S$ is proper, we have proven that representability is an open condition.
\end{proof}

\begin{proposition}\label{proposition:rd_alg_locfin}
$\mathcal{R}^d$ is algebraic and locally of finite type over $\mathbb{S}_d$.
\end{proposition}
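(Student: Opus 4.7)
The plan is to combine the open immersion $\mathcal{R}^d \hookrightarrow \mathcal{W}^d$ (\Cref{proposition:open_imm}) with the algebraicity results for $\mathcal{V}ect^r$ and $\mathcal{S}ect^r$ (\Cref{proposition:vect_morb} and \Cref{proposition:sect_vect}). The main subtlety is that \Cref{proposition:vect_morb} applies only over the bounded substack $\mathcal{M}^{\orb\le d!}$, so the first task is to verify that every object of $\mathcal{R}^d$ actually maps into this bounded piece.

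First, I would show that $\mathcal{R}^d$ factors through $\mathcal{W}^{d;\le d!}$. Let $(\mathcal{C} \to C \to S;\sigma;\chi) \in \mathcal{R}^d(S)$. Since $\chi: \mathcal{C} \to \mathcal{X}_d$ is representable, the map on automorphism groups at any geometric point is injective. The stackiness of an orbinodal curve is concentrated at preimages of nodes and marked points, and by definition of $\mathcal{R}^d$ these are sent into the \'{e}tale locus $\mathcal{E}_d \cong B\mathbf{S}_d$ (\Cref{corollary:vss_bsd}), whose automorphism groups have order $d!$. Hence every stabilizer in $\mathcal{C}$ has order dividing $d!$, so the classifying map to $\mathcal{M}^{\orb}$ factors through $\mathcal{M}^{\orb\le d!}$.

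Second, I would assemble the algebraicity of the bounded version $\mathcal{W}^{d;\le d!}$. By \Cref{corollary:orb_finite}(b), $\mathcal{M}^{\orb\le d!} \to \mathcal{M}$ is of finite type, and $\mathcal{M}$ is a smooth algebraic stack over $\mathbb{Z}$ by \Cref{proposition:deopurkar_m_smooth}; thus $\mathcal{M}^{\orb\le d!}$ is algebraic and locally of finite type over $\mathbb{S}_d$. By \Cref{proposition:vect_morb}, $\mathcal{V}ect^{r;\le d!}$ is algebraic and locally of finite type over $\mathcal{M}^{\orb\le d!}$, hence over $\mathbb{S}_d$. By \Cref{proposition:sect_vect}, $\mathcal{S}ect^r \to \mathcal{V}ect^r$ is representable by algebraic spaces of finite type, so its restriction $\mathcal{S}ect^{r;\le d!} \to \mathcal{V}ect^{r;\le d!}$ inherits the same property. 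Since fiber products of algebraic stacks locally of finite type over a common algebraic base remain algebraic and locally of finite type, the bounded version $\mathcal{W}^{d;\le d!}$, built from such fiber products in each of the three cases $d \in \{3,4,5\}$, is algebraic and locally of finite type over $\mathbb{S}_d$.

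Finally, I would conclude by applying the open immersion. By the first step, $\mathcal{R}^d \hookrightarrow \mathcal{W}^d$ lands inside $\mathcal{W}^{d;\le d!}$, and by \Cref{proposition:open_imm} this inclusion is an open immersion. Since open immersions preserve algebraicity and local finite type, $\mathcal{R}^d$ is algebraic and locally of finite type over $\mathbb{S}_d$. The only non-formal step is the first one, identifying the correct $N = d!$ that bounds the stackiness; everything else is an assembly of results proved earlier in this section.
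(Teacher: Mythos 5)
Your proof is correct and follows essentially the same route as the paper: cite \Cref{proposition:vect_morb}, \Cref{proposition:sect_vect}, and \Cref{proposition:open_imm}. The one place where you do more than the paper is worth flagging as a genuine clarification rather than extra work: the paper's proof simply asserts that ``$\mathcal{W}^d$ is algebraic and locally of finite type'' by citing \Cref{proposition:vect_morb}, but that proposition is stated only for the bounded stack $\mathcal{V}ect^{r;\le d!}$, so strictly it does not directly give the algebraicity of the unbounded $\mathcal{W}^d$. Your first step — using representability of $\chi$ together with the fact that the stacky locus maps into $\mathcal{E}_d \cong B\mathbf{S}_d$ (\Cref{corollary:vss_bsd}) to conclude every stabilizer embeds in $\mathbf{S}_d$, hence $\mathcal{R}^d \hookrightarrow \mathcal{W}^{d;\le d!}$ — is precisely what makes the citation of \Cref{proposition:vect_morb} airtight. (Alternatively one could argue that $\mathcal{V}ect^r$ is covered by the open substacks $\mathcal{V}ect^{r;\le N}$, each algebraic, but your route is more direct and consistent with the factorization used later in the paper's \S on finite type of $\br$.) Everything else — that $\mathcal{S}ect^r \to \mathcal{V}ect^r$ is representable of finite type, that fiber products preserve algebraicity and local finite type, and that the open immersion closes the argument — is assembled correctly.
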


\begin{proof}
By \Cref{proposition:vect_morb} and \Cref{proposition:sect_vect}, $\mathcal{W}^d$ is algebraic and locally of finite type over $\mathbb{S}_d$. We then conclude by \Cref{proposition:open_imm}.
\end{proof}

\subsection{$\br$ is of finite type}\label{section:br_fin}
The morphism $\br$ factors as \begin{itemize}
\item $d = 3$: $\mathcal{R}^3 \to \mathcal{V}ect^{2;\le 3} \to \mathcal{M}^{\orb\le 3} \to \mathcal{M}$

\item $d = 4$: $\mathcal{R}^4 \to \mathcal{V}ect^{3;\le 4} \times_{\mathcal{M}^{\orb\le 4}} \mathcal{V}ect^{2;\le 4} \to \mathcal{M}^{\orb\le 4} \to \mathcal{M}$

\item $d = 5$: $\mathcal{R}^5 \to \mathcal{V}ect^{4;\le 5} \times_{\mathcal{M}^{\orb\le 5}} \mathcal{V}ect^{5;\le 5} \to \mathcal{M}^{\orb\le 5} \to \mathcal{M}$
\end{itemize}
In each case, the first morphism is of finite type by \Cref{proposition:sect_vect} and \Cref{proposition:open_imm}, and the third morphism is of finite type by \Cref{corollary:orb_finite}(b). The second morphism is not of finite type in any of these cases, so we will show that $\br$ factors through a finite type substack of $\mathcal{V}ect$ or $\mathcal{V}ect \times \mathcal{V}ect$. As in \cite[\S 4.2]{d_compact}, the idea will be to find a uniform bound for the degree and $h^0$ of vector bundles that can occur. The degree of a vector bundle on an orbinodal curve is defined using the pullback to a smooth curve mapping to the normalization as in \cite[\S 7.2]{agv_gw_theory}, so degrees of vector bundles are in $\frac{1}{d!}\mathbb{Z}$. Let $\mathcal{V}ect^r_{l,N}$ denote the open substack of $\mathcal{V}ect^r$ parametrizing vector bundles with fiberwise degree $l$ and $h^0 \le N$. The key fact is the following proposition:

\begin{proposition}[{\cite[Proposition 4.12]{d_compact}}]\label{proposition:deopurkar_4.12}
The morphism $\mathcal{V}ect^{r;\le d!}_{l,N} \to \mathcal{M}^{\orb\le d!}$ is of finite type.
\end{proposition}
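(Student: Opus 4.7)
The plan is to adapt Deopurkar's argument to the arbitrary-characteristic setting by reducing to Nironi's finite-type results for stacks of coherent sheaves with bounded modified Hilbert polynomial on tame projective Deligne-Mumford stacks. Since $\mathcal{V}ect^{r;\le d!}\to \mathcal{M}^{\orb\le d!}$ is already known to be algebraic and locally of finite type (Proposition \ref{proposition:vect_morb}), and the target is locally of finite type over $\mathbb{S}_d$ by Corollary \ref{corollary:orb_finite}, it suffices to work étale locally on the base: pick a finite type affine chart $S\to \mathcal{M}^{\orb\le d!}$ classifying $(\mathcal{C}\to C\to S;\Sigma;\sigma)$, and show that $\mathcal{V}ect^r_{l,N}\times_{\mathcal{M}^{\orb\le d!}} S\to S$ is of finite type. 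By Lemma \ref{lemma:hall_proj} and Lemma \ref{lemma:deopurkar_proj} I may, after an étale base change on $S$, assume $C\to S$ is projective with a fixed relatively ample $\mathcal{O}_C(1)$, and $\mathcal{C}$ is a global quotient admitting a locally free generating sheaf $\mathcal{E}$.

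Following Nironi, for a rank $r$ vector bundle $\mathcal{V}$ on a geometric fiber $\mathcal{C}_s$, define the modified Hilbert polynomial
\[
P_{\mathcal{E}}(\mathcal{V},m) \;=\; \chi\!\bigl(\mathcal{C}_s,\;\mathcal{H}om(\mathcal{E},\mathcal{V})\otimes \rho^*\mathcal{O}_C(m)\bigr).
\]
Next I would show that, under the standing hypotheses $\rk\mathcal{V}=r$, $\deg\mathcal{V}=l$, $h^0(\mathcal{V})\le N$, and automorphism orders bounded by $d!$, only finitely many polynomials $P_{\mathcal{E}}(\mathcal{V},m)$ occur. The leading coefficient is controlled by $r$ and $\rk\mathcal{E}$; the subleading coefficients are controlled via stacky Riemann--Roch on $\mathcal{C}_s$ by the degrees $l$, by the intersection with $\mathcal{O}_C(1)$, and by the local representation type of $\mathcal{V}$ at each stacky point. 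The crucial finiteness input is that because $|\Aut(x)|$ divides $d!$ for every stacky geometric point $x$, and $\rk\mathcal{V}=r$, there are only finitely many isomorphism classes of $\Aut(x)$-representations that can occur as the fiber of $\mathcal{V}$ at $x$; hence finitely many local types, hence finitely many possible $P_{\mathcal{E}}(\mathcal{V},m)$.

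Given a fixed modified Hilbert polynomial $P$, the stack of coherent sheaves on $\mathcal{C}/S$ with modified Hilbert polynomial $P$ is of finite type over $S$ by Nironi's boundedness theorem (applied to the tame projective DM stack $\mathcal{C}\to S$ with generating sheaf $\mathcal{E}$); locally freeness of rank $r$ cuts out an open substack, still of finite type. Here the bound $h^0(\mathcal{V})\le N$ enters to ensure that the Harder--Narasimhan pieces of $\mathcal{V}$ cannot have arbitrarily large positive slope (otherwise $h^0$ would grow without bound, as already happens for $\mathcal{O}(n)\oplus \mathcal{O}(-n)$ on $\mathbb{P}^1$), and therefore that the associated modified Castelnuovo--Mumford regularity is uniformly bounded—this is what forces the family of polynomials $P$ to be finite rather than merely enumerable. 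Taking the union over the finitely many possible $P$ gives the desired finite type substack.

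The main obstacle is the step where the combined hypotheses (fixed $l$ and $h^0\le N$) are converted into a uniform regularity bound on $\mathcal{V}$. On a smooth curve this is a standard consequence of Clifford-type inequalities bounding slopes of subsheaves in terms of $h^0$; on a nodal curve this requires passing to the normalization, and on an orbinodal curve one has to combine this with the finite list of local representation types, using the generating sheaf $\mathcal{E}$ to translate everything back to a statement on the coarse projective curve $C$. This is the only place where tameness (and hence $\mathbb{S}_d$-base) is genuinely used, via exactness of $\rho_*$ on $\Aut$-invariants, and it is exactly the content that needs to be checked to promote Deopurkar's characteristic-zero argument to arbitrary characteristic prime to $d!$.
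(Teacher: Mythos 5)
Your overall strategy—pass to the coarse curve via a generating sheaf and then invoke a boundedness result for sheaves on the coarse projective curve—is aligned with the paper, which cites Deopurkar's proof of \cite[Proposition 4.12]{d_compact} and only supplies a characteristic-free proof of the component boundedness statement (here \Cref{lemma:bounded_rank_deg_h0}). But one step in your outline is wrong as stated, and it matters.

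You assert: ``Given a fixed modified Hilbert polynomial $P$, the stack of coherent sheaves on $\mathcal{C}/S$ with modified Hilbert polynomial $P$ is of finite type over $S$ by Nironi's boundedness theorem.'' This is false. Fixing the (modified) Hilbert polynomial does not bound a family of vector bundles on a curve: $\mathcal{O}(n)\oplus\mathcal{O}(-n)$ on $\mathbb{P}^1$ all share a Hilbert polynomial yet form an unbounded family, as you yourself observe two sentences later. Nironi's boundedness (like Maruyama's) requires either a (semi)stability hypothesis or a presentation as quotients of a fixed sheaf; neither is available here a priori. So the appeal to ``Nironi's boundedness theorem'' cannot carry the argument, and the $h^0 \le N$ bound is not merely a side condition to cut down the set of polynomials---it is the entire source of boundedness, which has to be routed through a Quot-scheme presentation after establishing a \emph{uniform} twist $m$ at which all fibers become globally generated with vanishing $h^1$.

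That uniform twist is exactly what \Cref{lemma:bounded_rank_deg_h0} supplies, and the paper's argument for it is considerably simpler than the Harder--Narasimhan/Clifford route you sketch in your last paragraph. One does not need slope bounds on HN pieces or a Clifford inequality on the normalization: since rank, degree, and genus bound the Hilbert polynomial, and $h^0$ is bounded, $h^1 = h^0 - \chi$ is bounded; Newstead's observation (\cite[Theorem 5.3]{n_construction}) that $h^1(\mathcal{F}_u(m))$ strictly decreases in $m$ until it hits $0$ then gives a uniform $m$ immediately, and \cite[Lemma 5.19(c)]{n_construction} upgrades $h^1=0$ to global generation. Boundedness then follows from finitely many Quot schemes. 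The tameness of $\mathcal{C}$ enters only to guarantee the generating sheaf exists and that pushforward along $\rho:\mathcal{C}\to C$ is exact (cf. \Cref{lemma:deopurkar_proj}, \Cref{lemma:deopurkar_4.6}), so the reduction to the coarse nodal curve is harmless---you do not need to track local representation types to finiteness of the Hilbert polynomial at all; they are already absorbed into the degree and rank of $\rho_*\mathcal{H}om(\mathcal{E},\mathcal{V})$. In short: your instinct that the $h^0$ bound is the crux is correct, but the mechanism is the direct $h^1$-decreasing argument on the coarse curve, not Nironi-for-fixed-Hilbert-polynomial nor HN filtrations on the normalization.
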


The proof of \cite[Proposition 4.12]{d_compact} uses the existence of generating sheaves\footnote{This was the entire reason for our discussion of generating sheaves in \Cref{section:gen_sheaf}.}, along with \cite[Lemma 4.13]{d_compact} and \cite[Remark 4.14]{d_compact}. We write a proof of \cite[Remark 4.14]{d_compact} for the reader's convenience.

\begin{lemma}[{\cite[Remark 4.14]{d_compact}}]\label{lemma:bounded_rank_deg_h0}
For a nodal curve $C \to S$ with $S$ Noetherian, a family of sheaves $(P_U,\mathcal{F})$ over an $S$-scheme $U$ is bounded if the degree, rank, and $h^0$ of each fiber $(P_u,\mathcal{F}_u)$ is bounded for all $u$ (by rank, we mean the maximum rank on a component.) More precisely, there exists a finite type $S$-scheme $T$ and a family of sheaves $(P_T,\mathcal{G})$ such that every geometric fiber $(P_{\overline{u}},\mathcal{F}_{\overline{u}})$ for $\overline{u} \to U$ occurs as a geometric fiber of $(P_T,\mathcal{G})$.
\end{lemma}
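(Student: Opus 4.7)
The first step is to reduce to the case where $C \to S$ is projective. Since boundedness of a family is preserved under and detected by \'etale base change, Lemma \ref{lemma:hall_proj} allows us to refine $S$ by an \'etale cover so that $C \to S$ is projective; fix a relatively very ample line bundle $\mathcal{O}_C(1)$. Because $S$ is Noetherian and $C \to S$ is flat and proper, the arithmetic genera of its geometric fibers take only finitely many values, so together with the hypothesized bounds on the rank $r$ and degree $d$ of each $\mathcal{F}_u$, the fiberwise Hilbert polynomials $\chi(\mathcal{F}_u(n)) = rn\cdot \deg \mathcal{O}(1)|_{C_u} + d + r(1-g(C_u))$ lie in a finite set $\{P_1,\dots,P_k\}$. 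It therefore suffices, for each fixed Hilbert polynomial $P$, to parametrize by a finite-type $S$-scheme the family of coherent sheaves on fibers with Hilbert polynomial $P$ and $h^0 \le H$.

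The key step, and the main obstacle, is to deduce from the bounds on $r$, $d$, and $h^0$ a uniform bound on the Castelnuovo--Mumford regularity of the fibers $\mathcal{F}_u$ with respect to $\mathcal{O}(1)|_{C_u}$. The model case is $\mathbb{P}^1$: a sheaf $\bigoplus \mathcal{O}(a_i)$ of rank $r$ and degree $d$ satisfies $h^0 \ge \max(a_i+1,0)$, so bounded $h^0$ forces each $a_i \le H-1$, and then $\sum a_i = d$ forces each $a_i \ge d - (r-1)(H-1)$, bounding the regularity. On a projective nodal curve of bounded arithmetic genus, I would run this argument after pulling $\mathcal{F}_u$ back to the normalization and passing to the Harder--Narasimhan filtration of the torsion-free quotient: the maximal slope is controlled by $h^0$ (via bounds coming from Riemann--Roch on the normalization's components), and then the minimal slope is controlled by the fixed total degree. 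The torsion part of $\mathcal{F}_u$ has length bounded by $h^0 - \chi(\mathcal{F}_u)$ and so contributes a bounded amount to the regularity as well. The nodal structure itself is a finite (and bounded) piece of data, so the gluing conditions between components only affect the bound by a bounded amount.

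Once the regularity is uniformly bounded by some integer $N$, each fiber $\mathcal{F}_u(N)$ is globally generated with $h^0(\mathcal{F}_u(N)) = P(N)$, so $\mathcal{F}_u$ is realized as a quotient of $\mathcal{O}_{C_u}^{P(N)}(-N)$ with Hilbert polynomial $P$. The relative Quot scheme $T_P := \mathrm{Quot}^{P}_{\mathcal{O}_C^{P(N)}(-N)/C/S}$ is of finite type over $S$, and I would take $T = \bigsqcup_P T_P$ equipped with its universal quotient $\mathcal{G}$ on $P_T = C \times_S T$. By construction, every geometric fiber $(P_{\overline{u}},\mathcal{F}_{\overline{u}})$ appears as a geometric fiber of $(P_T,\mathcal{G})$, giving the desired finite-type parametrization.
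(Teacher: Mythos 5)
Your outline — reduce to the projective case via an étale refinement, observe that bounded rank, degree, and genus bound the Hilbert polynomial, establish a uniform bound making all the $\mathcal{F}_u(m)$ globally generated with bounded $h^0$, and then bound the family by a finite disjoint union of relative Quot schemes — is exactly the paper's strategy. The opening and closing steps match Deopurkar's argument verbatim.

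The difference, and the place where your proposal has a real gap, is the middle step. You try to bound the Castelnuovo--Mumford regularity directly by passing to the normalization and running a Harder--Narasimhan analysis, bounding the maximal slope via $h^0$ and the minimal slope via the fixed total degree, while treating the torsion and nodal gluing as ``bounded extra data.'' The issue is that the nodal case is genuinely more delicate than you allow for: $h^0$ of the pullback to the normalization is \emph{not} the same as $h^0$ of the original sheaf (there is a kernel and cokernel controlled by the nodes), and the assertion that ``the gluing conditions between components only affect the bound by a bounded amount'' is a conclusion, not an argument. It is plausible that with substantial additional bookkeeping one could push this through, but as written the key inequality on regularity is asserted, not derived, in the only case that isn't the model case $\mathbb{P}^1$.

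The paper sidesteps all of this by invoking Nitsure's results. By \cite[Lemma 5.19(c)]{n_construction}, the vanishing of $H^1(P_u,\mathcal{F}_u(m))$ already implies global generation of $\mathcal{F}_u(m)$, so there is no need to discuss regularity at all --- one only needs a uniform $m$ with $h^1(P_u,\mathcal{F}_u(m)) = 0$. For this, the proof of \cite[Theorem 5.3]{n_construction} shows $h^1(P_u,\mathcal{F}_u(m))$ decreases strictly in $m$ until hitting $0$, and the starting value $h^1(P_u,\mathcal{F}_u) = h^0(P_u,\mathcal{F}_u) - \chi(\mathcal{F}_u)$ is uniformly bounded (from the $h^0$ bound and the finitely many Hilbert polynomials). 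So there is a uniform $m$ after which $h^1 = 0$, with no normalizations, no HN filtrations, and no case analysis on the gluing data. If you want to salvage your approach, you would need to carefully set up regularity for sheaves on projective nodal curves (tracking torsion, the kernel/cokernel of $\mathcal{F}_u \to \nu_*\nu^*\mathcal{F}_u$, and the component-by-component slopes); but you should compare with Nitsure's argument, which achieves the same goal much more economically and in the precise generality needed here.
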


\begin{proof}
By \Cref{lemma:deopurkar_4.6}, we may replace $S$ with an \'{e}tale cover such that $C$ has an $S$-relatively very ample line bundle $\mathcal{O}_P(1)$. By taking $S$ connected, we may also assume that all the curves have the same arithmetic genus $g$.

After we fix $\mathcal{O}_P(1)$, bounding the degree, rank, and genus bounds the Hilbert polynomial. By the boundedness of Quot schemes of fixed Hilbert polynomial, it suffices to show that there exists a uniform $m$ such that for each $u \in U$, $\mathcal{F}_u(m)$ is globally generated and $h^1(P_u,\mathcal{F}_u(m)) = 0$. Indeed, every $\mathcal{F}_u$ is then be a quotient of $\mathcal{O}_{P_u}(-m) \otimes H^0(P_u,\mathcal{F}_u(m))$, and $h^0(P_u,\mathcal{F}_u(m))$ is bounded because $h^1(P_u,\mathcal{F}_u(m)) = 0$. We can then bound $(P_U,\mathcal{F})$ by the universal family on $\bigsqcup_{\Phi,h}C \times_S \Quot_{\mathcal{O}_P(-m)^{\oplus h}/C/S}^{\Phi,\mathcal{O}_P(1)}$, where $\Phi$ ranges over a bounded set of Hilbert polynomials and $h$ ranges over a bounded set of positive integers.

By \cite[Lemma 5.19(c)]{n_construction}, the vanishing of $H^1(P_u,\mathcal{F}_u(m))$ implies the global generation of $\mathcal{F}_u(m)$, so we actually just need to find an $m$ for which $h^1(P_u,\mathcal{F}_u(m)) = 0$. As explained in the proof of \cite[Theorem 5.3]{n_construction}, $h^1(P_u,\mathcal{F}_u(m))$ is strictly decreasing in $m$ until it reaches 0. Because we have a bound on the Hilbert polynomial of $\mathcal{F}_u$ and on $h^0(\mathcal{F}_u)$, we have a bound on $h^1(P_u,\mathcal{F}_u)$. Thus, there exists a uniform $m$ for which $h^1(P_u,\mathcal{F}_u(m)) = 0$, proving boundedness for $(P_U,\mathcal{F})$.
\end{proof}

\begin{proposition}\label{proposition:bound_deg_h0}
Fix a geometric point $\Spec k \to \mathcal{R}^{d;b,n}$ corresponding to $(\mathcal{C} \to C;\sigma;\chi)$, where $C$ has arithmetic genus $g$ and the normalization of $C$ has $c$ components. Then there are bounds on the degrees and $h^0$ of the associated bundles $\mathcal{F}_2$ (if $d = 3$), $\mathcal{F}_3$ and $\mathcal{F}_2$ (if $d = 4$), or $\mathcal{F}_4$ and $\mathcal{F}_5$ (if $d = 5$) depending only on $b$, $n$, $g$, and $c$.
\end{proposition}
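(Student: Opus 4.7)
The plan is to proceed in two stages: first bound the degrees of the $\mathcal{F}_i$ using the discriminant line bundle, then bound $h^0$ by associating a cover to $\chi$ and invoking the orbinodal analogue of \Cref{lemma:bounded_rank_deg_h0} (via the generating-sheaf argument of \cite{n_moduli} already used in the proof of \Cref{proposition:vect_morb}). For the degree bounds, recall from \Cref{section:discs_agree} that $\Delta_d$ is a section of $\mathcal{L}_d = (\det \mathcal{F}_{d-1}^{\univ})^{\otimes 2}$, whose pullback along $\chi$ vanishes along $\br\chi$ of degree $b$; hence $\deg \mathcal{F}_{d-1} = b/2$. The orientations built into $G_d$ then pin down the remaining degrees: the isomorphism $\det \mathcal{F}_3 \cong \det \mathcal{F}_2$ for $d = 4$ gives $\deg \mathcal{F}_2 = b/2$, and $(\det \mathcal{F}_4)^{\otimes 2} \cong \det \mathcal{F}_5$ for $d = 5$ gives $\deg \mathcal{F}_5 = b$.

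For the $h^0$ bound on $\mathcal{F}_{d-1}$, I use the degree $d$ cover $\pi \colon \mathcal{D} \to \mathcal{C}$ obtained by composing $\chi$ with $\phi_d \colon \mathcal{X}_d \to \Covers_d$: by construction $\mathcal{F}_{d-1}^\vee \cong \pi_* \mathcal{O}_{\mathcal{D}} / \mathcal{O}_{\mathcal{C}}$, so the short exact sequence $0 \to \mathcal{O}_{\mathcal{C}} \to \pi_* \mathcal{O}_{\mathcal{D}} \to \mathcal{F}_{d-1}^\vee \to 0$ together with the fact that $\mathcal{D}$ has at most $d$ connected components (since $\mathcal{C}$ is geometrically connected) yields $h^0(\mathcal{F}_{d-1}^\vee) \le d - 1$. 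For $d = 4$, the cubic resolvent cover $\mathcal{R} \to \mathcal{C}$ produced by Wood's parametrization gives the analogous bound $h^0(\mathcal{F}_2^\vee) \le 2$ via the same argument. Combining these with the degree and rank bounds, the boundedness argument places $\mathcal{F}_{d-1}^\vee$ and $\mathcal{F}_2^\vee$ in a bounded family over the bounded family of source curves determined by $g$, $b$, $n$, $c$, and the orbifold data; since dualization preserves boundedness, the same holds for $\mathcal{F}_{d-1}$ and $\mathcal{F}_2$, yielding uniform $h^0$ bounds.

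The main obstacle is $\mathcal{F}_5$ in the case $d = 5$, which is not the traceless part of any cover's structure sheaf, so the short exact sequence argument does not apply directly. The plan is to exploit the Pfaffian resolution \eqref{equation:complex_d=5}: once $\mathcal{F}_4$ has been bounded by the argument above (applied to the quintic cover $Q \to \mathcal{C}$), the fact that $\chi$ generically lands in $\mathcal{E}_5$ forces the section $\Phi_5(\eta)$ defining $\beta_2$ to have maximal rank at the generic points of the components of $\mathcal{C}$, and derived pushforward of appropriate pieces of \eqref{equation:complex_d=5} along $\pi \colon \mathbb{P} \mathcal{F}_4 \to \mathcal{C}$ should express $\mathcal{F}_5$ (up to twists by already-bounded line bundles) in terms of $\mathcal{F}_4$ and the structure sheaf of $Q$. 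This will produce the $h^0$ bound on $\mathcal{F}_5^\vee$ needed to invoke the boundedness argument one final time.
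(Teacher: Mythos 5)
Your degree bounds are correct and agree with the paper's. Your $h^0$ bound for $\mathcal{F}_{d-1}$ (and $\mathcal{F}_2$ when $d=4$) takes a genuinely different and more elementary route than the paper does: you exploit the cover-theoretic interpretation $\mathcal{F}_{d-1}^\vee \cong \mathcal{O}_{\mathcal{D}}/\mathcal{O}_{\mathcal{C}}$ and the long exact sequence, whereas the paper first reduces to a smooth projective non-stacky curve (via a normalization of the completed $\mathbf{S}_d$-torsor) and then uses a representation-theoretic argument on the $G_d$-torsor --- a Borel reduction followed by \cite[Lemma~3.2.8]{ckm_git}, which produces a basis $\{\theta_i\}$ of $X^*(T_d)\otimes\mathbb{Q}$ of ``semi-positive'' weights whose associated line bundles have degree bounded in terms of $\chi_d$-degree. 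Two minor corrections to your version: the long exact sequence gives $h^0(\mathcal{F}_{d-1}^\vee) \le h^0(\mathcal{O}_{\mathcal{D}}) - 1 + h^1(\mathcal{O}_{\mathcal{C}})$, so the bound is actually something like $cd - 1 + g$ rather than $d-1$ (the cover $\mathcal{D}$ can have up to $cd$ components when $C$ is reducible, and the $h^1(\mathcal{O}_{\mathcal{C}}) = g$ term must be kept); both are allowed dependencies, so the argument survives.

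The genuine gap is $\mathcal{F}_5$ in the case $d=5$, and you are right to flag it as the obstacle, but you leave it unresolved. The plan you propose --- pushing forward pieces of the Pfaffian resolution \eqref{equation:complex_d=5} to express $\mathcal{F}_5$ in terms of $\mathcal{F}_4$ and $\mathcal{O}_Q$ --- is speculative and not worked out; it is not clear that any twist of a truncation of that complex pushes forward to something whose $h^0$ is controlled by the degree data alone, because the maps in the complex only interact well with $R\pi_*$ after carefully chosen twists, and the key middle term $\pi^*\mathcal{F}_5(-2)$ itself has vanishing pushforward. This is precisely the place where the paper's uniform approach earns its keep: the Borel-reduction argument applies simultaneously to every associated bundle $P \times_{G_d} W$ for any fixed representation $W$ of $G_d$, since any such bundle is filtered by line bundles $\mathcal{L}_\theta$ attached to the $T_d$-weights of $W$, and each such $\mathcal{L}_\theta$ has degree (hence $h^0$) bounded via the positivity of the $a_i$ in $\chi_d|_{T_d} = \sum a_i\theta_i$. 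Your proof of the $d \le 4$ cases is therefore a valid alternative, but to close the $d=5$ case you should either complete the Pfaffian pushforward computation in detail or switch to the paper's representation-theoretic argument, which makes $\mathcal{F}_5$ no harder than $\mathcal{F}_4$.
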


\begin{proof}
We start by bounding the degree. We consider the cases separately: \begin{itemize}
\item $d = 3$: The discriminant is a section of $(\det\mathcal{F}_2)^2$, so $\deg\mathcal{F}_2 = \frac{b}{2}$.

\item $d = 4$: The discriminant is a section of $(\det\mathcal{F}_3)^2$, so $\deg\mathcal{F}_3 = \frac{b}{2}$ and $\deg\mathcal{F}_2 = \frac{b}{2}$.

\item $d = 5$: The discriminant is a section of $(\det\mathcal{F}_4)^2$, so $\deg\mathcal{F}_4 = \frac{b}{2}$ and $\deg\mathcal{F}_5 = b$.
\end{itemize}
Thus, the degrees are fixed by $b$.

To bound $h^0$, we first reduce to the case where $\mathcal{C}$ is a smooth projective curve. Let $\mathcal{C}^{\circ} \subset \mathcal{C}$ be the preimage $\chi^{-1}(\mathcal{E}_d)$. The map $\mathcal{C}^{\circ} \to \mathcal{E}_d$ corresponds to an $\mathbf{S}_d$-torsor $D^{\circ} \to \mathcal{C}^{\circ}$, where $D^{\circ}$ is a nodal curve because $\chi$ is representable. In a unique way, we can complete $D^{\circ}$ to a proper nodal curve $D$ and extend $D^{\circ} \to \mathcal{C}^{\circ}$ to a morphism $D \to \mathcal{C}$. Letting $\overline{D}$ be the normalization of $D$, we can consider the composite $\chi^{\prime}: \overline{D} \to D \to \mathcal{C} \xrightarrow[]{\chi} \mathcal{X}_d$. Denote the map $\overline{D} \to \mathcal{C}$ by $\nu$. For any of the vector bundles $\mathcal{F}$ on $\mathcal{C}$ whose $h^0$ we want to bound, we have an injection $\mathcal{F} \xhookrightarrow{} \nu_*\nu^*\mathcal{F}$, so $$h^0(\mathcal{C},\mathcal{F}) \le h^0(\overline{D},\nu^*\mathcal{F}) = \sum_ih^0(D_i,\nu^*\mathcal{F}|_{D_i}),$$ where $D_i$ ranges over the components of $\overline{D}$. Each $D_i$ has a generically \'{e}tale degree $\le d!$ map to a component $C_i$ of the normalization $\overline{C}$ whose genus is bounded above by $g$. This map is ramified at most above the preimages of the marked points, nodes, and branch divisor. The ramification above the marked points and nodes is bounded by the size of the automorphism group ($\le d!$), and the ramification above the branch divisor is bounded by $b$. Thus, by the Riemann-Hurwitz formula, the genera of the $D_i$ are uniformly bounded in terms of $b$, $n$, and $g$. The branch divisor for $\chi^{\prime}$ is just the pullback of the branch divisor for $\chi$, so it has degree $\le bd!$ on each $C_i$. Finally, there are at most $cd!$ components of $\overline{D}$, so $h^0(\mathcal{C},\mathcal{F}) \le cd!\max_ih^0(D_i,\nu^*\mathcal{F}|_{D_i})$. Putting this together, we see that bounds for the smooth non-stacky case imply bounds for the general case.

Now assume that $\mathcal{C} = C$ is a smooth projective curve. For this proof, we will use $G_d$ and $V_d$ to refer to $(G_d)_k$ and $(V_d)_k$, respectively. Let $P \to C$ be the principal $G_d$-bundle given by $\chi$. Let $B_d$ and $T_d$ be the standard Borel of (pairs of) upper triangular matrices and the standard maximal torus of (pairs of) diagonal matrices, respectively. Let $B_{d,u}$ denote the unipotent radical of $B_d$, so that there is a natural isomorphism $B_d/B_{d,u} \cong T_d$. Let $X^*(T_d)$ denote the character group of $T_d$, and let $X^*(G_d) \subset X^*(T_d)$ denote the character group of $G_d$. Note that $X^*(T_d)$ and $X^*(G_d)$ are independent of $k$, in the sense that there are natural identifications between them for different $k$.

By a standard result \cite[\S 2, Remark b)]{ds_b_structures} about the Galois cohomology of a function field, $P$ has a $B_d$-reduction $P^{\prime} \to C$, from which we get a principal $T_d$-bundle $P^{\prime}/B_{d,u} \to C$. By \cite[Lemma 3.2.8]{ckm_git}, there exists a uniform (i.e. independent of $P^{\prime}$, $P$, $C$, and $k$) $\mathbb{Q}$-basis $\{\theta_i\}$ of $X^*(T_d) \otimes \mathbb{Q}$ with $\{\theta_i\} \subset X^*(T_d)$ such that \begin{enumerate}[(1)]
\item For each $i$, the line bundle $$P^{\prime} \times_{B_d} \mathbb{A}_{\theta_i}^1 = P^{\prime}/B_{d,u} \times_{T_d} \mathbb{A}_{\theta_i}^1$$ on $C$ has nonnegative degree.

\item $\chi_d|_{T_d} = \sum_ia_i\theta_i$ for some positive rational numbers $a_i$.
\end{enumerate}
By (2), $$\frac{b}{2} = \deg(P \times_{G_d} \mathbb{A}_{\chi_d}^1) = \deg(P^{\prime} \times_{B_d} \mathbb{A}_{\sum_ia_i\theta_i}^1) = \sum_ia_i\deg(P^{\prime} \times_{B_d} \mathbb{A}_{\theta_i}^1).$$ Because all the terms are nonnegative, we have $\deg(P^{\prime} \times_{B_d} \mathbb{A}_{\theta_i}^1) \le \frac{b}{2a_i}$. Because the $a_i$ are uniform, we see that the $\deg(P^{\prime} \times_{B_d} \mathbb{A}_{\theta_i}^1)$ are uniformly bounded in terms of $b$. Because the $\theta_i$ are a $\mathbb{Q}$-basis of $X^*(T_d) \otimes \mathbb{Q}$, we have a uniform bound on $\deg(P^{\prime} \times_{B_d} \mathbb{A}_{\theta}^1)$ for any fixed character $\theta \in X^*(T_d)$ depending only on $b$. If we write $\mathcal{L}_{\theta}$ for the invertible sheaf corresponding to the line bundle $P^{\prime} \times_{B_d} \mathbb{A}_{\theta}^1$, we have a uniform bound on $h^0(C,\mathcal{L}_{\theta})$ in terms of $b$ and $g$.

For a fixed representation $W$ of $G_d$ defined over $\mathbb{S}_d$, since $P \times_{G_d} W \cong P^{\prime} \times_{B_d} W$, we can write the locally free sheaf $\mathcal{F}$ on $C$ corresponding to $P \times_{G_d} W$ as a successive extension of invertible sheaves $\mathcal{L}_{\theta}$, where the $\theta$ are the $T_d$-weights appearing in the representation $W$. We have uniform bounds on $h^0(C,\mathcal{L}_{\theta})$, so we have a uniform bound on $h^0(C,\mathcal{F})$ in terms of $b$ and $g$. All the associated bundles in the proposition arise in this way for standard $G_d$-representations $W$, so we have bounded their $h^0$.
\end{proof}

\begin{remark}\label{remark:ckm_3.2.8}
We comment on the use of \cite[Lemma 3.2.8]{ckm_git} in the proof of \Cref{proposition:bound_deg_h0}. The proof of \cite[Lemma 3.2.8]{ckm_git} assumes we are in the setup of \cite[Theorem 3.2.5]{ckm_git}, which is exactly the situation we are in. In \cite{ckm_git}, the authors work over $\mathbb{C}$. However, the proof of \cite[Lemma 3.2.8]{ckm_git} works over an arbitrary algebraically closed field $k$ and produces $\theta_i$ independent of $k$, assuming we start with split $G$ and $V$ defined over a open subscheme $\mathbb{S} \subset \Spec\mathbb{Z}$. The key point of the proof of \cite[Lemma 3.2.8]{ckm_git} is that for a given character $\theta \in X^*(T)$, there exists an open cone $A \subset X^*(T) \otimes \mathbb{Q}$ containing $\theta$ such that for all $\xi \in A$, $V^s(T,\theta) \subset V^{ss}(T,\xi)$. By a spreading out argument, the cone constructed by the proof in the characteristic 0 case works for all but finitely many characteristics. However, for each of these characteristics, we can find an open cone, and then we can pick $\theta_i$ in the intersection of these finitely many cones that work for any characteristic.

There is actually a typo in the statement of \cite[Lemma 3.2.8]{ckm_git}: in condition (2), the $a_i$ should be positive, not just nonnegative. This change is necessary for its application in the proof of \cite[Theorem 3.2.5]{ckm_git}. Thankfully, assuming we have $\theta_i$ with nonnegative coefficients $a_i$, we can use the following argument to get new $\theta_i$ such that the $a_i$ become positive. If some $a_i$ are 0, pick some positive $a_j$ and replace $\theta_j$ with $\theta_j - \epsilon\sum_i\theta_i$ for some small rational $\epsilon > 0$. Then scale the new $\theta_i$ so that they all become integral.
\end{remark}

\begin{proposition}\label{proposition:br_fin_type}
The morphism $\br: \mathcal{R}^d \to \mathcal{M}$ is of finite type.
\end{proposition}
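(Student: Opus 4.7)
The plan is to reduce the statement to the finite-type results already established for the intermediate $\mathcal{V}ect$ and $\mathcal{S}ect$ stacks. The factorizations of \Cref{section:alg_locfin} together with \Cref{proposition:vect_morb}, \Cref{proposition:sect_vect}, \Cref{proposition:open_imm}, and \Cref{corollary:orb_finite}(a) already exhibit $\br$ as locally of finite type, so the only thing left to verify is quasi-compactness. Because $\mathcal{M}$ is locally of finite type over $\mathbb{S}_d$ by \Cref{proposition:deopurkar_m_smooth}, it suffices to show that for every quasi-compact open substack $\mathcal{U}\subset\mathcal{M}$ the preimage $\br^{-1}(\mathcal{U})$ is quasi-compact.

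On such a $\mathcal{U}$, only finitely many of the components $\mathcal{M}^{b,n}$ are met, and one can uniformly bound both the arithmetic genus $g$ of the fibers of $C\to S$ and the maximum number $c$ of geometric components of their normalizations (this is automatic on a quasi-compact open of the locally-of-finite-type stack $\mathcal{M}$). With $b$, $n$, $g$, $c$ bounded, \Cref{proposition:bound_deg_h0} produces fixed values for the degrees (they are already determined by $b$) and a uniform constant $N$ bounding the $h^0$ of all associated bundles $\mathcal{F}_r$ that can appear over $\br^{-1}(\mathcal{U})$.

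Consequently $\br^{-1}(\mathcal{U})$ factors through the substack $\mathcal{V}ect^{r;\le d!}_{l,N}$ (or, for $d\in\{4,5\}$, the appropriate fiber product of two such stacks over $\mathcal{M}^{\orb\le d!}$) for finitely many fixed pairs $(l,N)$. By \Cref{proposition:deopurkar_4.12} the latter is of finite type over $\mathcal{M}^{\orb\le d!}$, which is in turn of finite type over $\mathcal{M}$ by \Cref{corollary:orb_finite}(b). Combining this with the finite-type morphism $\mathcal{S}ect^r\to\mathcal{V}ect^r$ (\Cref{proposition:sect_vect}) and the open immersion $\mathcal{R}^d\hookrightarrow\mathcal{W}^d$ (\Cref{proposition:open_imm}), one sees that $\br^{-1}(\mathcal{U})$ is of finite type, hence quasi-compact, over $\mathcal{U}$. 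This will conclude the proof.

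The real content is carried entirely by \Cref{proposition:bound_deg_h0}: once the degrees and $h^0$ of the underlying vector bundles are a priori controlled in terms of the invariants of the target, everything else is a routine assembly of the previously established algebraicity, representability, and finite-type statements. No serious further obstacle should appear beyond verifying that the bound on $c$ (rather than just on $b$, $n$, $g$) is indeed available on a quasi-compact piece of $\mathcal{M}$, which follows from generalities on families of nodal curves.
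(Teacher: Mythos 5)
Your proposal is correct and follows essentially the same route as the paper: reduce to a quasi-compact (resp.\ Noetherian) piece of $\mathcal{M}$, use \Cref{proposition:bound_deg_h0} to bound degrees and $h^0$ of the associated bundles, and then conclude via \Cref{proposition:deopurkar_4.12} together with the factorization through $\mathcal{V}ect$ and $\mathcal{S}ect$. The only cosmetic difference is that you work with quasi-compact open substacks $\mathcal{U}\subset\mathcal{M}$ whereas the paper pulls back along a Noetherian affine chart $S\to\mathcal{M}$; these are interchangeable here.
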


\begin{proof}
Because $\mathcal{M}$ is locally of finite type over $\mathbb{S}_d$, it suffices to show that for $S$ a Noetherian scheme with a morphism $S \to \mathcal{M}$ classifying a family of nodal curves $C \to S$, the morphism $S \times_{\mathcal{M}} \mathcal{R}^d \to S$ is of finite type. Because $S$ is Noetherian, we have bounds on the quantities $b$, $n$, $g$, and $c$ appearing in \Cref{proposition:bound_deg_h0}, so we have bounds on the degree and $h^0$ of the bundles appearing in $S \times_{\mathcal{M}} \mathcal{R}^d$. Thus, we conclude by \Cref{proposition:deopurkar_4.12} and the discussion at the beginning of this section.
\end{proof}

\subsection{$\br$ is Deligne-Mumford}\label{section:br_dm}
\begin{proposition}\label{proposition:br_dm}
The morphism $\br: \mathcal{R}^d \to \mathcal{M}$ is of Deligne-Mumford type.
\end{proposition}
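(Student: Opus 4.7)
The plan is to factor $\br$ as $\mathcal{R}^d \to \mathcal{M}^{\orb} \to \mathcal{M}$ and exploit that the second morphism is already representable by Deligne-Mumford stacks by \Cref{corollary:orb_finite}(a). This reduces the task to showing that $\mathcal{R}^d \to \mathcal{M}^{\orb}$ is itself of Deligne-Mumford type. In view of the finite-type statement \Cref{proposition:br_fin_type}, it suffices to check the infinitesimal criterion: every first-order $2$-automorphism of a $\chi: \mathcal{C} \to \mathcal{X}_d$ with the orbinodal curve $\mathcal{C}$ held fixed must be trivial.

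To carry this out I would translate through the quotient presentation $\mathcal{X}_d = [V_d/G_d]$: the morphism $\chi$ corresponds to a principal $G_d$-bundle $P \to \mathcal{C}$ together with a section $\bar{s}$ of the associated bundle $P \times_{G_d} V_d$, and a $2$-automorphism of $\chi$ is an automorphism of $P$ as a $G_d$-bundle preserving $\bar{s}$. A standard unwinding identifies a first-order $2$-automorphism (restricting to the identity over the original base) with a global section on $\mathcal{C}$ of the subsheaf
$$\mathcal{S} \subset \mathcal{F} \coloneqq P \times_{G_d,\Ad} \mathfrak{g}_d$$
cut out as the kernel of the infinitesimal action map $\mathcal{F} \to P \times_{G_d} V_d$, $x \mapsto x \cdot \bar{s}$. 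The crucial input is \Cref{corollary:vss_bsd}: applied fiberwise, it asserts that over $V_d^{ss}$ the differential $\mathfrak{g}_d \to V_d$, $x \mapsto x\cdot v$, is an isomorphism of vector spaces of the common dimension $\dim G_d = \dim V_d$. Consequently $\mathcal{S}$ vanishes on the open $U \coloneqq \chi^{-1}(\mathcal{E}_d) \subset \mathcal{C}$, which is dense in $\mathcal{C}$ since the definition of $\mathcal{R}^d$ forces $\chi$ to land in $\mathcal{E}_d$ at the generic points of all components of every fiber.

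The step requiring the most care is passing from the generic vanishing of $\mathcal{S}$ to its global vanishing, in particular at the nodes of $\mathcal{C}$ where the local ring $k[x,y]/(xy)$ fails to be a domain. The point is that $\mathcal{F}$ is locally free and $\mathcal{C}$ is reduced, so any subsheaf of $\mathcal{F}$ injects into the direct sum of its localizations at the (finitely many) generic points of $\mathcal{C}$; combined with $\mathcal{S}|_U = 0$, this forces $\mathcal{S} = 0$ and hence $H^0(\mathcal{C},\mathcal{S}) = 0$, completing the verification of the infinitesimal criterion.
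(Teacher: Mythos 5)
Your proposal is correct and takes essentially the same route as the paper: factor through $\mathcal{M}^{\orb}$, reduce to checking that the relative inertia of $\mathcal{R}^d \to \mathcal{M}^{\orb}$ has trivial Lie algebra at geometric points, identify first-order $2$-automorphisms with sections of $\ker\bigl(P \times_{G_d} \mathfrak{g}_d \to P \times_{G_d} V_d\bigr)$, and invoke \Cref{corollary:vss_bsd} to get vanishing over the dense open $\chi^{-1}(\mathcal{E}_d)$. The one place you are more explicit than the paper is the final step: the paper simply asserts that a map of vector bundles injective over a dense open is injective, while you spell out that this is because $\mathcal{C}$ is reduced, $\mathcal{F}$ is locally free (hence torsion-free), and so a subsheaf vanishing at all generic points vanishes identically — a worthwhile clarification, since the nodal local rings are not domains. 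One minor quibble: citing \Cref{proposition:br_fin_type} to justify the reduction to the Lie-algebra criterion is not quite the right reference; what you actually need is that $\mathcal{R}^d \to \mathcal{M}^{\orb}$ is locally of finite type (so the inertia group schemes are of finite type over their residue fields), which comes from \Cref{proposition:rd_alg_locfin} and \Cref{corollary:orb_finite}, not from the finite-type statement for $\br$.
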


\begin{proof}
By \Cref{theorem:olsson_twisted}(a), it suffices to show that $\mathcal{R}^d \to \mathcal{M}^{\orb\le d!}$ is of Deligne-Mumford type. This is equivalent to saying that the morphism has unramified inertia, which can be checked on the level of points. Let $(\mathcal{C} \to C \to \Spec k;\sigma;\chi)$ be a geometric point of $\mathcal{R}^d$. In this proof, we will write $\mathcal{X}_d$ to denote $(\mathcal{X}_d)_k$ and do the same thing for $G_d$, $V_d$, etc.

We will show that the morphism $\chi: \mathcal{C} \to \mathcal{X}_d$ has no infinitesimal automorphisms by showing that it has no first-order infinitesimal automorphisms (these correspond to the Lie algebra of the automorphism group scheme). The morphism $\chi: \mathcal{C} \to \mathcal{X}_d$ classifies a principal $G_d$-bundle $P \to \mathcal{C}$ and a section $s: \mathcal{C} \to P \times_{G_d} V_d$ corresponding to a $G_d$-equivariant morphism $f: P \to V_d$, and a second-order infinitesimal automorphism of $\chi$ is a first-order infinitesimal automorphism of $P$ that preserves $s$. First-order infinitesimal automorphisms of $P$ are classified by $H^0(\mathcal{C},P \times_{G_d} \mathfrak{g}_d)$, and the ones that preserve $s$ are given by $\ker(H^0(\mathcal{C},P \times_{G_d} \mathfrak{g}_d) \to H^0(\mathcal{C},P \times_{G_d} V_d))$, where the map $P \times_{G_d} \mathfrak{g}_d \to P \times_{G_d} V_d$ sends $(p,x) \mapsto (p,xf(p))$.

Recall that $\chi$ maps an open dense subset of $\mathcal{C}$ to $\mathcal{E}_d = [V_d^{ss}/G_d]$ and that for any point $v \in V_d^{ss}(k)$, the map $\mathfrak{g}_d \to V_d$ given by $x \mapsto xv$ is an isomorphism (\Cref{corollary:vss_bsd}). This means that the morphism of vector bundles $P \times_{G_d} \mathfrak{g}_d \to P \times_{G_d} V_d$ is injective, so the map on $H^0$ is injective. We conclude that there are no infinitesimal automorphisms of $\chi$ and that $\br$ is of Deligne-Mumford type.
\end{proof}

\subsection{$\br$ is proper}\label{section:br_proper}
Because $\br$ is of finite type, we can use the valuative criterion for properness \cite[Tag 0CLZ]{stacks_project}. Let $R$ be a strictly Henselian DVR over $\mathcal{M}$ with fraction field $K$ and residue field $k$. Denote the special and generic points of $\Spec R$ by 0 and $\eta$, respectively.

\begin{proposition}\label{proposition:br_uniqueness_part}
$\br$ satisfies the uniqueness part of the valuative criterion.
\end{proposition}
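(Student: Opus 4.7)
The plan is to verify the uniqueness part of the valuative criterion in two stages: first produce a canonical isomorphism $\mathcal{C}_1 \cong \mathcal{C}_2$ between the underlying orbinodal curves extending the given generic isomorphism, and then show that under this identification the maps $\chi_1, \chi_2: \mathcal{C} \to \mathcal{X}_d$ are canonically $2$-isomorphic. Throughout, I use that $R$ is strictly Henselian to reduce to étale-local computations on the common coarse moduli $C$, and that both objects lie over the same $(C \to \Spec R; \Sigma; \sigma) \in \mathcal{M}(R)$.

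For the first stage, the stacky structure of $\mathcal{C}_i$ is concentrated at the marked points and nodes of $C$. At a marked point, and at a node of $C_0$ persisting in $C_\eta$, the orbinodal integer $n_i$ is visible on the generic fiber and so agrees for $i = 1, 2$. The subtle case is a node $c \in C_0$ that does not persist to $C_\eta$, locally modelled by $\Spec \mathcal{O}_R[x,y]/(xy - \pi^m)$: the integer $n_i$ is a priori only constrained to divide $m$. Here representability of $\chi_i$ forces the stabilizer $\mu_{n_i}$ at $c$ to embed into the stabilizer $\mathbf{S}_d$ of $\chi_i(c) \in \mathcal{E}_d \cong B\mathbf{S}_d$, and the image of a generator is pinned down as the local monodromy of the associated $\mathbf{S}_d$-torsor around a punctured tubular neighborhood of $c$. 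Such a neighborhood is smooth over the generic fiber (where locally $C \cong \mathbb{G}_m$ over $K$), so the monodromy is detectable from $\chi_i|_\eta$ alone; hence $n_1 = n_2$, and the local models glue to a canonical isomorphism $\mathcal{C}_1 \cong \mathcal{C}_2$ extending the one on $\eta$.

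Setting $\mathcal{C} := \mathcal{C}_1 = \mathcal{C}_2$, the second stage unpacks a $2$-isomorphism as a section of the sheaf $\underline{\Isom}_s(\chi_1, \chi_2)$ on $\mathcal{C}$ of $G_d$-equivariant isomorphisms between the associated bundles $P_1 \cong P_2$ that intertwine the sections $s_1, s_2$. Over the common dense open $\mathcal{C}^\circ := \chi_i^{-1}(\mathcal{E}_d)$ (independent of $i$ since the branch divisor is prescribed by $\mathcal{M}$), this sheaf is a torsor under the stabilizer of $s_i$, which by \Cref{corollary:vss_bsd} is the finite constant group $\mathbf{S}_d$, hence finite étale over $\mathcal{C}^\circ$; therefore $\phi_\eta$ extends uniquely over $\mathcal{C}^\circ$ by the separatedness of $B\mathbf{S}_d$. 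The main obstacle is propagating this extension across the branch locus $\mathcal{C} \setminus \mathcal{C}^\circ$. For $d \in \{3,4\}$, I plan to translate the problem via Wood's parametrization into an isomorphism of a finite cover $\mathcal{D}_i \to \mathcal{C}$ (together with a cubic resolvent when $d = 4$, canonical on the Gorenstein locus by \Cref{corollary:gorenstein_quartic_unique}), and then invoke separatedness of the Hilbert scheme parametrizing degree $d$ finite covers of the fixed orbinodal curve $\mathcal{C}$ (the discriminant bound from $\mathcal{M}$ keeps us inside a bounded family). For $d = 5$, where an LVW-type global parametrization is not available, I will argue directly with the Casnati–Ekedahl–LVW complex \eqref{equation:complex_d=5}: the generic agreement of the bundles $(\mathcal{F}_4, \mathcal{F}_5)$ and the alternating section determining $\beta_2$ extends uniquely to $\mathcal{C}$ by the flatness of the whole complex and the fact that $\mathcal{D}_i$ embeds into the projective bundle $\mathbb{P}\mathcal{F}_4$, allowing us to conclude as in the lower-degree cases.
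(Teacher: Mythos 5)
Your decomposition (first identify the orbinodal curves $\mathcal{C}_1 \cong \mathcal{C}_2$, then extend the $2$-isomorphism of $\chi_i$) is a legitimate alternative to the paper's (first extend over the non-stacky locus $C^{\gen}$, then delegate the nodes and marked points to Deopurkar's Steps 2--5 of \cite[Proposition 4.18]{d_compact}). Your Stage 1 monodromy argument is morally the right idea --- it is what Abhyankar's lemma packages in Deopurkar's proof --- and your Stage 2 extension over $\mathcal{C}^\circ$ via the finite \'{e}tale $\underline{\Isom}$-torsor is sound (the paper's version of this is the normalization argument for the $\mathbf{S}_d$-torsors $D_i$ over $C^{\gen\circ}$).

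The genuine gap is in your handling of the branch locus. The set you must extend across is finite of codimension $2$ in the $2$-dimensional normal scheme $C^{\gen}$, and the paper's key observation is that an object of $\mathcal{X}_d$ over $C^{\gen}$ is just a tuple of vector bundles and sections of associated bundles, so the extension of the isomorphism follows immediately from Hartogs. Your plan instead goes through covers and Hilbert schemes, and this introduces two problems. For $d = 4$, you lean on \Cref{corollary:gorenstein_quartic_unique} to argue that the resolvent data comes for free; but that corollary applies only on the Gorenstein locus, and the codimension-$2$ points you need to cross can support non-Gorenstein quartic covers (e.g.\ the fiber $k[x,y,z]/(x,y,z)^2$), so uniqueness of resolvents is exactly what fails there. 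For $d = 5$, invoking the Casnati--Ekedahl--LVW complex \eqref{equation:complex_d=5} has the same defect --- \Cref{theorem:lvw_gorenstein}(b) guarantees it is a resolution only for sections of the right codimension, i.e.\ on the Gorenstein locus --- and ``flatness of the whole complex'' does not by itself recover an isomorphism of $(\mathcal{F}_4, \mathcal{F}_5, \eta)$ from an isomorphism of covers, since $\mathcal{F}_5$ is not functorially determined by the quintic cover alone. In both cases the fix is to not translate to covers at all: just apply Hartogs directly to the $G_d$-bundle-plus-section data defining a map to $[V_d/G_d]$, exactly as the paper does.
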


\begin{proof}
Let $(\mathcal{C}_i \to C \to \Spec R;\sigma;\chi_i)$ (for $i \in \{1,2\}$) be two objects of $\mathcal{R}^d(R)$ over an object $(C \to \Spec R;\Sigma;\sigma)$ of $\mathcal{M}(R)$. We have an isomorphism $(\mathcal{C}_1)_{\eta} 
 \xrightarrow[]{\sim} (\mathcal{C}_2)_{\eta}$ over $C$ such that the composite $(\mathcal{C}_1)_{\eta} \xrightarrow[]{\sim} (\mathcal{C}_2)_{\eta} \xrightarrow[]{(\chi_2)_{\eta}} \mathcal{X}_d$ is isomorphic to $(\chi_1)_{\eta}$, and we want to show that this isomorphism extends to an isomorphism $\mathcal{C}_1 \xrightarrow[]{\sim} \mathcal{C}_2$ such that the composite $\mathcal{C}_1 \xrightarrow[]{\sim} \mathcal{C}_2 \xrightarrow[]{\chi_2} \mathcal{X}_d$ is isomorphic to $\chi_1$.
 
Once we find such an extension of the isomorphism over $C^{\gen}$, we can find an extension over all of $C$ by copying Steps 2-5 of the proof of \cite[Proposition 4.18]{d_compact}. Indeed, the complements $\mathcal{C}_i - C^{\gen}$ map to $\mathcal{E}_d \cong B\mathbf{S}_d$ (\Cref{corollary:vss_bsd}), which puts us in the same situation as \cite[Proposition 4.18]{d_compact}.

We will find the extension to $C^{\gen}$ in two steps. The morphisms $\mathcal{C}_i \to C$ restrict to isomorphisms over $C^{\gen}$. Hence, we just want to show that two objects $\chi_i: C^{\gen} \to \mathcal{X}_d$ that restrict to isomorphic objects $C_{\eta}^{\gen} \to \mathcal{X}_d$ are isomorphic. An object $\chi_i: C^{\gen} \to \mathcal{X}_d$ consists of a $G_d$-bundle $P_i \to C^{\gen}$ and a section $s_i \in H^0(C^{\gen},P_i \times_{G_d} V_d)$. The restrictions $\chi_i: C^{\gen\circ} \coloneqq C^{\gen} - \chi_1^{-1}(\Sigma_d) - \chi_2^{-1}(\Sigma_d) \to \mathcal{X}_d$ land in $\mathcal{E}_d \cong B\mathbf{S}_d$, so they classify $\mathbf{S}_d$-torsors $D_i \to C^{\gen\circ}$. When we pass to the generic point, the $D_i \to C^{\gen\circ}$ become isomorphic as $\mathbf{S}_d$-torsors $(D_i)_{\eta} \to C_{\eta}^{\gen\circ}$. Because $C^{\gen\circ}$ is regular, $D_i$ is the normalization of $C^{\gen\circ}$ in $(D_i)_{\eta}$; in particular, $D_1$ and $D_2$ are isomorphic over $C^{\gen\circ}$. Thus, we have an extension of the isomorphism $(\chi_1)_{\eta} \xrightarrow[]{\sim} (\chi_2)_{\eta}$ to $C_{\eta}^{\gen} \cup C^{\gen\circ}$, and it remains to extend the isomorphism to the finite set $C^{\gen} - (C_{\eta}^{\gen} \cup C^{\gen\circ})$.

For each $d$, an object $C^{\gen} \to \mathcal{X}_d$ is the data of one or two vector bundles over $C^{\gen}$ and one or two sections of some associated bundles. Because $C^{\gen}$ is normal and $C^{\gen} - (C_{\eta}^{\gen} \cup C^{\gen\circ})$ has codimension 2, Hartogs's theorem implies that any isomorphism of such data over $C_{\eta}^{\gen} \cup C^{\gen\circ}$ extends to an isomorphism over $C^{\gen}$. Thus, we can find an extension from $C_{\eta}^{\gen}$ to $C^{\gen}$.
\end{proof}

For finishing the proof of properness, we need the following theorem of Horrocks.

\begin{theorem}[{\cite[Corollary 4.1.1]{h_vector_bundles}}]\label{theorem:horrocks_reg_dim2}
Let $S = \Spec R$ for a regular local ring $R$ of dimension 2, and let $S^{\circ}$ be the punctured spectrum ($S$ minus its closed point). Then every vector bundle on $S^{\circ}$ is trivial.
\end{theorem}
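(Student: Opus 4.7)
The approach is to apply the Auslander--Buchsbaum formula to a canonical coherent extension of the vector bundle across the closed point. Let $j: S^{\circ} \hookrightarrow S$ denote the open immersion, and let $E$ be a vector bundle of rank $r$ on $S^{\circ}$.

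First I would show that $\mathcal{F} := j_{*}E$ is a coherent reflexive sheaf on $S$, equivalently that $M := H^{0}(S^{\circ},E)$ is a finitely generated reflexive $R$-module. This is a codimension-two Hartogs-type extension: since $S$ is regular (hence Cohen--Macaulay) and the complement $S \setminus S^{\circ} = \{\mathfrak{m}\}$ has codimension $2$, the pushforward of a reflexive coherent sheaf across this inclusion remains reflexive and coherent. One verifies this by choosing any coherent extension of $E$ to $S$ and analyzing the local cohomology exact sequence, using that $H^{i}_{\mathfrak{m}}$ is finitely generated in low degrees for modules of finite projective dimension over a Cohen--Macaulay ring.

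With $M$ so produced, the conclusion is essentially algebra. Over a regular local ring of dimension $2$, reflexivity of a finitely generated module $M$ is equivalent to Serre's condition $S_{2}$, which forces $\operatorname{depth}_{R}(M) = 2$. Since $R$ is regular, $M$ has finite projective dimension, so the Auslander--Buchsbaum formula yields
\[
\operatorname{pd}_{R}(M) + \operatorname{depth}_{R}(M) = \operatorname{depth}(R) = 2,
\]
giving $\operatorname{pd}_{R}(M) = 0$ and hence $M \cong R^{\oplus r}$. The natural map $\widetilde{M}|_{S^{\circ}} \to E$ is an isomorphism of rank-$r$ vector bundles (it is an isomorphism on sections over every affine open in $S^{\circ}$ by the very definition of $M$), so $E \cong \mathcal{O}_{S^{\circ}}^{\oplus r}$.

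The main obstacle I expect is the Hartogs-type coherence step: one must argue that the sections of $E$ on $S^{\circ}$ form a finitely generated $R$-module, even though the individual restrictions $E|_{D(x)}$ and $E|_{D(y)}$ for $(x,y)$ a regular system of parameters are not finitely generated over $R$ (only over $R_{x}$ and $R_{y}$). Once coherence and reflexivity of $j_{*}E$ are established, the proof collapses to a single application of Auslander--Buchsbaum, and the dimension-$2$ hypothesis on $R$ enters only through the numerical identity $\operatorname{depth}(R) = 2$ which matches the depth forced by $S_{2}$.
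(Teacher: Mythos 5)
The paper does not prove this statement; it is quoted directly from Horrocks's paper \cite[Corollary~4.1.1]{h_vector_bundles} and used as a black box (to prove \Cref{corollary:dim2_gd} and ultimately the existence part of the valuative criterion). So there is no ``paper's proof'' to compare against, only a citation.

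That said, your argument is correct and is the standard modern proof of this classical fact, and it is worth recording. The structure --- push forward to a coherent reflexive extension, deduce depth two from reflexivity, conclude projective dimension zero by Auslander--Buchsbaum --- is exactly right. Two small refinements. First, for the coherence-and-reflexivity of $j_*E$, the cleanest route is not a bare local cohomology computation on an arbitrary extension (whose $H^1_{\mathfrak m}$ need not have finite length): instead, pick any coherent extension $\widetilde E$ of $E$ and pass to its double dual $\widetilde E^{\vee\vee}$, which is coherent, reflexive, and restricts to $E^{\vee\vee}\cong E$ on $S^\circ$; since a reflexive coherent sheaf on a normal Noetherian scheme is determined in codimension one, $\widetilde E^{\vee\vee}\to j_*E$ is an isomorphism, so $j_*E$ is coherent and reflexive in one stroke. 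Second, the phrase ``reflexivity is equivalent to $S_2$'' is slightly loose (reflexivity also encodes torsion-freeness, and the equivalence needs a hypothesis such as normality of $R$), but the implication you actually use --- a nonzero reflexive module over a two-dimensional regular local ring has depth exactly two --- follows directly from the lemma that $\operatorname{Hom}_R(N,R)$ has depth $\ge\min(2,\operatorname{depth}R)$ for any finitely generated $N$, applied to $M = (M^*)^*$. With those clarifications, Auslander--Buchsbaum gives $\operatorname{pd}_R(M)=0$, hence $M$ free, and restriction back to $S^\circ$ recovers $E\cong\mathcal O_{S^\circ}^{\oplus r}$, completing the proof.
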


\begin{corollary}\label{corollary:dim2_gd}
With the same notation, every principal $G_d$-bundle on $S^{\circ}$ is trivial.
\end{corollary}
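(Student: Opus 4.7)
The plan is to reduce the corollary to Horrocks's theorem by unpacking the definition of a $G_d$-bundle. Recall that a principal $G_d$-bundle on a scheme $T$ is equivalently:
\begin{itemize}
\item For $d=3$: a rank $2$ vector bundle $\mathcal{F}_2$ on $T$;
\item For $d=4$: a pair of vector bundles $(\mathcal{F}_3,\mathcal{F}_2)$ of ranks $3$ and $2$, together with an isomorphism $\alpha\colon\det\mathcal{F}_3\xrightarrow{\sim}\det\mathcal{F}_2$;
\item For $d=5$: a pair of vector bundles $(\mathcal{F}_4,\mathcal{F}_5)$ of ranks $4$ and $5$, together with an isomorphism $\alpha\colon(\det\mathcal{F}_4)^{\otimes 2}\xrightarrow{\sim}\det\mathcal{F}_5$.
\end{itemize}

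First I would apply \Cref{theorem:horrocks_reg_dim2} directly to each of the vector bundles appearing above, on $S^{\circ}$. This immediately settles the case $d=3$, since the bundle data is just $\mathcal{F}_2$, which becomes trivial.

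For $d\in\{4,5\}$ it remains to trivialize the determinant identification $\alpha$. After choosing trivializations of $\mathcal{F}_3,\mathcal{F}_2$ (resp.\ $\mathcal{F}_4,\mathcal{F}_5$) via Horrocks, the map $\alpha$ becomes an isomorphism of trivial line bundles on $S^{\circ}$, hence is given by multiplication by some unit $u\in\Gamma(S^{\circ},\mathcal{O}_{S^{\circ}})^{\times}$. Since $R$ is a regular local ring of dimension $2$, it is normal and the closed point has codimension $2$, so by algebraic Hartogs (\cite[Tag 031T]{stacks_project}) the restriction $R=\Gamma(S,\mathcal{O}_S)\to\Gamma(S^{\circ},\mathcal{O}_{S^{\circ}})$ is an isomorphism; in particular $u\in R^{\times}$. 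We may therefore absorb $u$ into the trivialization of one of the two determinant line bundles (e.g.\ rescaling the chosen trivialization of $\det\mathcal{F}_2$ in the $d=4$ case, or of $\det\mathcal{F}_5$ in the $d=5$ case), which turns $\alpha$ into the identity. This exhibits an isomorphism of the given $G_d$-bundle with the trivial one.

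There is no real obstacle; the only thing to be careful about is that the data of a $G_d$-bundle really is what is listed above, and that the compatibility $\alpha$ is the only extra piece beyond the vector bundles. Once this is correctly unpacked, both Horrocks and Hartogs apply without further input.
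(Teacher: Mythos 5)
Your proof is correct and follows essentially the same route as the paper: unpack the $G_d$-bundle into vector bundle data, trivialize via Horrocks, and absorb the resulting unit scalar into the trivialization by rescaling one of the bundles. The invocation of algebraic Hartogs is harmless but unnecessary — the automorphism $\diag(u^{-1},1,\ldots,1)$ of the trivial bundle already lives over $S^{\circ}$, so one does not need $u$ to extend to $R^{\times}$; the paper simply scales by an automorphism of $\mathcal{F}'$ over $S^{\circ}$ directly.
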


\begin{proof}
For $d = 3$, there is nothing to do because $G_3 = \GL_2$. For $d \in \{4,5\}$, a principal $G_d$-bundle on $S^{\circ}$ is the same as the data of two vector bundles $\mathcal{F},\mathcal{F}^{\prime}$ on $S^{\circ}$ and a nonvanishing section $s \in H^0(S^{\circ},(\det\mathcal{F})^i \otimes \det\mathcal{F}^{\prime\vee})$ for some $i \in \mathbb{Z}$. By \Cref{theorem:horrocks_reg_dim2}, $\mathcal{F}$ and $\mathcal{F}^{\prime}$ are trivial, and we can pick an automorphism of $\mathcal{F}^{\prime}$ that scales $s$ so that we get a trivial $G_d$-bundle.
\end{proof}

\begin{proposition}\label{proposition:br_existence_part}
$\br$ satisfies the existence part of the valuative criterion.
\end{proposition}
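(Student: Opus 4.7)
The plan is to construct an extension of the generic object $(\mathcal{C}_\eta \to C_\eta;\sigma_\eta;\chi_\eta) \in \mathcal{R}^d(K)$ to an object of $\mathcal{R}^d(R)$ over the given $(C \to \Spec R;\Sigma;\sigma) \in \mathcal{M}(R)$, possibly after a tame finite extension of $R$. Since $R$ is strictly Henselian and all relevant ramification is tame ($d!$ is invertible on $\mathbb{S}_d$), such a base change is permitted by the valuative criterion for DM stacks, and the extension will descend thanks to the uniqueness statement of \Cref{proposition:br_uniqueness_part}.

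First, I would extend over the smooth, unmarked, unramified part of the total space. Let $Z \subset C$ be the finite union of the nodes of $C_0$, the specializations $\sigma_i(0)$, and the support of $\Sigma \cap C_0$, and put $U = C \setminus Z$. This $U$ is a regular 2-dimensional scheme whose complement in $C$ has pure codimension two. On the generic fiber, $\mathcal{C}_\eta|_{U_\eta}$ coincides with $U_\eta$, and $\chi_\eta|_{U_\eta}$ amounts to a principal $G_d$-bundle $P_\eta$ with a section $s_\eta$ of its associated $V_d$-bundle. Applying \Cref{corollary:dim2_gd} at the completions of the local rings at the generic points of $C_0 \cap U$ extends $P_\eta$ uniquely to a bundle $P$ on $U$, and $s_\eta$ then extends to a section $s$ by Hartogs since $V_d$ is affine and $Z$ has codimension two. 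This produces a morphism $\chi^U\colon U \to \mathcal{X}_d$ extending $\chi_\eta|_{U_\eta}$.

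Next, I would complete the construction at each $p \in Z$ by imitating steps 2--5 of the proof of \cite[Proposition 4.18]{d_compact}. The local shape of $C$ at $p$ falls into one of three standard cases: a node of $C_0$, a marked section meeting the special fiber, or an isolated branch point on the special fiber. In each case I would perform a suitable tame ramified extension $R \to R'$ dictated by the monodromy of $\chi_\eta$ around $p$, introduce an appropriate $\mu_n$-twist to build the orbinodal chart of $\mathcal{C}$, and extend $\chi^U$ across $p$ so that, when $p$ is a node or a marked point, the extension carries $p$ into $\mathcal{E}_d$ and is representable there. Because $\mathcal{E}_d \cong B\mathbf{S}_d$ by \Cref{corollary:vss_bsd}, this reduces the local analysis to exactly the setting treated by Deopurkar, so his arguments apply verbatim. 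Finally, the equality $\br\chi = \Sigma$ holds because both are relative effective Cartier divisors on $C/\Spec R$ with common generic fiber $\Sigma_\eta$, and such an extension is unique by flatness of $C$ over $R$.

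The main obstacle will be the local extension step at points of $Z$ lying over the closed point of $\Spec R$: one must determine the correct tame base change $R \to R'$ and the correct $\mu_n$-twisting simultaneously, and then verify that the resulting $\chi$ is representable and sends nodes and marked points into $\mathcal{E}_d$. It is precisely smoothness of $\mathcal{X}_d$ (\Cref{remark:why_prehomog}) that makes this feasible, since it allows the section $s$ to be deformed transversely near $p$; this is the feature that fails for the analogous problem with $\Covers_d$ in degrees $d \geq 4$.
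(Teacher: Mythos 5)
Your overall plan---tame base change via Abhyankar's lemma, extension of the $G_d$-bundle and its section over a dense open, and local completion at finitely many points---has the right shape, and it matches what the paper does: the paper's proof simply refers to Deopurkar's \cite[Proposition 4.20]{d_compact}, substituting \Cref{corollary:dim2_gd} for Horrocks's theorem in his Step 4 when $d \in \{4,5\}$.

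However, there are two concrete errors in the details. First, you invoke \Cref{corollary:dim2_gd} ``at the completions of the local rings at the generic points of $C_0 \cap U$,'' but those are codimension-one points of $U$, whose local rings are $1$-dimensional DVRs. \Cref{corollary:dim2_gd} concerns punctured spectra of \emph{$2$-dimensional} regular local rings, so it should be applied at the isolated closed points of $Z$ (this is precisely what Deopurkar's Step 4 does), not at the generic points of the special fiber. Likewise, Hartogs does not extend $s_\eta$ from $U_\eta$ to $U$ as you claim, because the complement $U \smallsetminus U_\eta = C_0 \smallsetminus Z$ has codimension one in $U$, not two. Extending from the generic fiber $U_\eta$ across a codimension-one divisor and extending across the isolated points of $Z$ are genuinely different steps requiring different arguments, and conflating them leaves the construction on $U$ unjustified.

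Second, your closing claim that ``it is precisely smoothness of $\mathcal{X}_d$ that makes this feasible \dots this is the feature that fails for the analogous problem with $\Covers_d$ in degrees $d \geq 4$'' is not correct. Deopurkar's \cite[Proposition 4.20]{d_compact} proves properness of the branch morphism for his stacks of covers to $\Covers_d$ for all $d$, even though $\Covers_d$ fails to be smooth for $d \geq 4$. Smoothness of $\mathcal{X}_d$ is the key input for the smoothness of $\mathcal{R}^d$ itself (\Cref{proposition:rd_smooth}, via the deformation argument) and for the smallness results later, not for the existence part of the valuative criterion.
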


\begin{proof}
The proof is identical to that of \cite[Proposition 4.20]{d_compact}, except that we use \Cref{corollary:dim2_gd} in place of \Cref{theorem:horrocks_reg_dim2} in Step 4 when $d \in \{4,5\}$. We can use Abhyankar's lemma as in Step 1 of Deopurkar's proof because we are assuming that $d!$ is invertible.
\end{proof}

\begin{corollary}\label{corollary:br_proper}
The morphism $\br: \mathcal{R}^d \to \mathcal{M}$ is proper.
\end{corollary}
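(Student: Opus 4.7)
The plan is to deduce \Cref{corollary:br_proper} immediately from the three preceding propositions by invoking the valuative criterion of properness for morphisms of algebraic stacks, in the form of \cite[Tag 0CLZ]{stacks_project}. The setup in \Cref{section:br_proper} (a strictly Henselian DVR $R$ with fraction field $K$ and residue field $k$, mapping to $\mathcal{M}$) is exactly the setup to which this criterion applies, once one knows that $\br$ is quasi-separated. Quasi-separatedness follows from \Cref{proposition:br_dm}, which shows that $\br$ is representable by Deligne-Mumford stacks, and finite typeness follows from \Cref{proposition:br_fin_type}.

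With those prerequisites in hand, the uniqueness half of the valuative criterion is exactly the content of \Cref{proposition:br_uniqueness_part} and the existence half is exactly the content of \Cref{proposition:br_existence_part}. Combined with finite type, this gives that $\br$ is proper. There is essentially no obstacle remaining at this step: the substantive work, namely extending $G_d$-bundles and their sections across the special fiber (using Horrocks's theorem via \Cref{corollary:dim2_gd}), and extending isomorphisms of the orbinodal enhancements across codimension-2 loci via Hartogs, has already been packaged into the two valuative criterion propositions.

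The only real piece of bookkeeping is to make sure one cites a version of the valuative criterion which permits testing on \emph{strictly Henselian} DVRs rather than arbitrary DVRs; \cite[Tag 0CLZ]{stacks_project} is precisely such a version, which is why \Cref{section:br_proper} was set up with that choice of $R$ from the start. With that reference in place, the corollary is then a one-line deduction and can be written out in a single sentence.
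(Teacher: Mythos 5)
Your proposal is correct and matches the paper's approach exactly: the paper sets up the valuative criterion \cite[Tag 0CLZ]{stacks_project} at the start of \Cref{section:br_proper}, proves the uniqueness and existence halves in \Cref{proposition:br_uniqueness_part} and \Cref{proposition:br_existence_part}, and then states \Cref{corollary:br_proper} as an immediate consequence. Your additional note that quasi-separatedness comes from the Deligne-Mumford representability of \Cref{proposition:br_dm} is a correct and useful bit of bookkeeping that the paper leaves implicit.
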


This concludes the proof of \Cref{theorem:rd_nice}(b).

\subsection{$\mathcal{R}^d \to \mathcal{M}^{\orb;0,*}$ is smooth}\label{section:rd_smooth}
\subsubsection{Deformation theory}\label{section:rd_defo_theory}
Because $\mathcal{R}^d$ and $\mathcal{M}^{\orb;0,*}$ are locally of finite type over $\mathbb{S}_d$, it suffices to show that $\mathcal{R}^d \to \mathcal{M}^{\orb;0,*}$ satisfies the infinitesimal lifting criterion with respect to an extension of Artinian local rings $A^{\prime} \to A$ with algebraically closed residue field $k$. We may assume that this is a small extension, i.e. $J = \ker(A^{\prime} \to A)$ is killed by the maximal ideal $\mathfrak{m}^{\prime}$ of $A^{\prime}$. Given a pointed orbinodal curve $(\mathcal{C}_{A^{\prime}} \to C_{A^{\prime}} \to \Spec A^{\prime};\sigma_{A^{\prime}})$ and a morphism $\chi_A: \mathcal{C}_A \coloneqq (\mathcal{C}_{A^{\prime}})_A \to \mathcal{X}_d$, we want to extend $\chi_A$ to a morphism $\chi_{A^{\prime}}: \mathcal{C}_{A^{\prime}} \to \mathcal{X}_d$.

We will simplify the task of studying deformations over a nodal curve $C$ by picking an adapted affine open cover $\{U_i\}$ of $C$ in the sense of \cite[\S 3.3]{f_moduli_hyperelliptic}. This means that each $U_i$ contains at most one point that is a node, marked point, or branch point and each such point is contained in at most one $U_i$. We will also assume that each $U_i$ is connected. We will denote the preimage of $U_i$ in $\mathcal{C}$ by $\mathcal{U}_i$ and the preimage of $U_i$ in some thickening $C_A \to C$ as $U_{i,A}$. We will denote the intersections of the $U_i$ by $U_{ij}$, $U_{ijk}$, etc. We require the indices $i,j,k$ to be pairwise distinct when we do this. The key point is that each $U_{ij,A}$ is a scheme mapping to $\mathcal{E}_d \cong B\mathbf{S}_d$ under $\chi_A$. Because deformations of schemes lift uniquely to deformations of their finite \'{e}tale covers, we can deform each $\chi_A|_{U_{i,A}}$ independently. We will make this precise below.

\begin{proposition}\label{proposition:rd_smooth}
The morphism $\mathcal{R}^d \to \mathcal{M}^{\orb;0,*}$ is smooth.
\end{proposition}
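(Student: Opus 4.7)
The plan is to verify the infinitesimal lifting criterion exactly as set up in the paragraphs preceding the proposition. Given a small extension $A' \twoheadrightarrow A$ of Artinian local rings with ideal $J$ satisfying $\mathfrak{m}' \cdot J = 0$, a pointed orbinodal curve $(\mathcal{C}_{A'} \to C_{A'} \to \Spec A'; \sigma_{A'})$, and a morphism $\chi_A: \mathcal{C}_A \to \mathcal{X}_d$, I build $\chi_{A'}: \mathcal{C}_{A'} \to \mathcal{X}_d$ by patching local deformations along an adapted affine cover $\{U_i\}$ of $C$ and using that intersections $U_{ij}$ avoid all special points, so $\chi_A|_{\mathcal{U}_{ij,A}}$ factors through $\mathcal{E}_d \cong B\mathbf{S}_d$.

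On each $\mathcal{U}_i$ separately, I deform $\chi_A|_{\mathcal{U}_{i,A}}$ to $\mathcal{U}_{i,A'}$. A morphism to $\mathcal{X}_d = [V_d/G_d]$ is the data of a principal $G_d$-bundle $P_A$ on $\mathcal{U}_{i,A}$ together with a section $s_A$ of $P_A \times_{G_d} V_d$. Since every $\mathcal{U}_i$ is, étale-locally on its coarse moduli $U_i$, the quotient of an affine scheme by a finite linearly reductive group scheme (coming from the $\mu_n$ charts at nodes and marked points, which are linearly reductive because $d!$ is invertible), the higher cohomology of any quasi-coherent sheaf on $\mathcal{U}_{i,A}$ vanishes. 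Consequently the obstruction in $H^2(\mathcal{U}_{i,A}, \ad(P_A) \otimes J)$ to lifting $P_A$ to a $G_d$-bundle $P_{A'}$ vanishes, and having chosen any such $P_{A'}$, the obstruction in $H^1(\mathcal{U}_{i,A}, P_A \times_{G_d} V_d \otimes J)$ to extending $s_A$ to a section of $P_{A'} \times_{G_d} V_d$ vanishes as well. This produces a local lift $\chi_{i,A'}: \mathcal{U}_{i,A'} \to \mathcal{X}_d$ of $\chi_A|_{\mathcal{U}_{i,A}}$.

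To glue, I use that on every overlap $\mathcal{U}_{ij}$ the morphism $\chi_A|_{\mathcal{U}_{ij,A}}$ factors through $\mathcal{E}_d$, which is étale over $\mathbb{S}_d$ by \Cref{corollary:vss_bsd}. Therefore any deformation of this restriction to $\mathcal{U}_{ij,A'}$ is unique up to unique isomorphism, giving canonical identifications $\chi_{i,A'}|_{\mathcal{U}_{ij,A'}} \cong \chi_{j,A'}|_{\mathcal{U}_{ij,A'}}$; on triple overlaps $\mathcal{U}_{ijk}$ the uniqueness forces the cocycle condition. The glued morphism $\chi_{A'}$ automatically satisfies the conditions in \Cref{definition:cov_res}: the images of generic, nodal, and marked points lie in $\mathcal{E}_d$ by the same property for $\chi_A$ together with the closed nature of $\Sigma_d$, and representability is open on the source by \Cref{proposition:open_imm}, so it is inherited from $\chi_A$.

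The main obstacle is the cohomological vanishing on $\mathcal{U}_{i,A}$ at the stacky locus: one must check that a neighborhood of a node, which étale-locally looks like $[\Spec(A'[u,v]/(uv-t))/\mu_n]$, really does behave like an affine scheme for quasi-coherent cohomology purposes, and similarly at a stacky marked point. This is precisely where the running assumption that we work over $\mathbb{S}_d$ (making $d!$ invertible and the automorphism groups linearly reductive) is essential; once tameness is in hand, the vanishing follows from the exactness of taking invariants under a linearly reductive finite group scheme. Everything else—the existence of adapted covers, the gluing via uniqueness of étale deformations, and the preservation of the conditions in \Cref{definition:cov_res}—is formal.
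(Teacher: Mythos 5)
Your proof is correct, but it takes a somewhat different route from the paper's. The paper splits the local analysis into two cases: if $U_i$ contains a branch point, then by the adapted-cover condition $\mathcal{U}_i = U_i$ is a non-stacky affine curve, and the lift of the $G_d$-bundle is handled very concretely using the description of $G_d$-bundles as collections of vector bundles (so the obstruction is $H^2$ of an endomorphism sheaf on an affine scheme, trivially zero); if $U_i$ contains no branch point, then $\chi_A$ factors through $\mathcal{E}_d \cong B\mathbf{S}_d$, and the paper avoids $G_d$-bundle deformation theory on the stacky curve altogether by instead uniquely lifting the associated $\mathbf{S}_d$-torsor (i.e.\ finite \'etale cover) along the nilpotent thickening, citing \cite{gr_sga1} and \cite{acv_twisted_bundles}. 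You instead treat all $\mathcal{U}_i$ uniformly, lifting the $G_d$-torsor and then the section everywhere, with the obstruction groups killed by the vanishing of higher quasi-coherent cohomology on the tame Deligne--Mumford stack $\mathcal{U}_{i,A}$ with affine coarse moduli, and only use the openness of $\mathcal{E}_d$ at the end to see that the conditions in \Cref{definition:cov_res} are inherited. This is a valid and arguably more streamlined argument, at the cost of invoking general torsor deformation theory on stacks where the paper keeps things hands-on.

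Two small points worth tightening. First, your justification for the cohomological vanishing (``\'etale-locally a quotient by a finite linearly reductive group scheme'') is a local statement, while the vanishing you need is global on $\mathcal{U}_{i,A}$; the correct argument is that tameness makes $\rho_*$ exact so $H^j(\mathcal{U}_{i,A},\mathcal{F}) \cong H^j(U_{i,A},\rho_*\mathcal{F})$, and the latter vanishes for $j > 0$ because $U_{i,A}$ is affine. Second, for the gluing step what you actually need is that morphisms to $B\mathbf{S}_d$ (equivalently, finite \'etale covers) lift uniquely along nilpotent thickenings of tame orbicurves; phrasing this as ``$\mathcal{E}_d$ is \'etale over $\mathbb{S}_d$'' is defensible since $\mathbf{S}_d$ is finite \'etale over $\mathbb{S}_d$, but it is cleaner to cite the unique-lifting statement directly, as the paper does.
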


\begin{proof}
Given a pointed orbinodal curve $(\mathcal{C}_{A^{\prime}} \to C_{A^{\prime}} \to \Spec A^{\prime};\sigma_{A^{\prime}})$ and a morphism $\chi_A: \mathcal{C}_A \coloneqq (\mathcal{C}_{A^{\prime}})_A \to \mathcal{X}_d$, we want to extend $\chi_A$ to a morphism $\chi_{A^{\prime}}: \mathcal{C}_{A^{\prime}} \to \mathcal{X}_d$. Pick an adapted affine cover $\{U_i\}$ of $C$. We will first find an extension $\chi_{i,A^{\prime}}: \mathcal{U}_{i,A^{\prime}} \to \mathcal{X}_d$ for each $i$. For a given $i$, we are in one of the following cases: \begin{itemize}
\item $U_i$ contains a branch point: In this case, $\mathcal{U}_i$ contains no stacky points, so $\mathcal{U}_i = U_i$ is an affine curve. From $\chi_A$, we have a $G_d$-bundle $P_A$ on $U_{i,A}$ and a section $s_A \in H^0(U_{i,A},P_A \times_{G_d} V_d)$ that we want to extend to $U_{i,A^{\prime}}$. Recall that a $G_d$-bundle has a description in terms of vector bundles and sections of associated bundles. We can extend a vector bundle $\mathcal{F}_A$ on $U_{i,A}$ to a vector bundle $\mathcal{F}_A^{\prime}$ on $U_{i,A^{\prime}}$ because obstructions to extending $\mathcal{F}_A$ lie in $H^2(U_i,\mathcal{E}nd(\mathcal{F})) \otimes_k J \cong 0$. We can then extend any section $s_A \in H^0(U_{i,A},\mathcal{F}_A) \cong H^0(U_{i,A^{\prime}},\mathcal{F}_A)$ to a section $s_A^{\prime} \in H^0(U_{i,A^{\prime}},\mathcal{F}_{A^{\prime}})$ because $U_{i,A^{\prime}}$ is affine and $\mathcal{F}_{A^{\prime}} \to \mathcal{F}_A$ is surjective.

\item $U_i$ does not contain a branch point: Because $\chi_A$ maps $U_{i,A}$ to $\mathcal{E}_d$, we just want to extend an $\mathbf{S}_d$-torsor $V_{i,A} \to \mathcal{U}_{i,A}$ to an $\mathbf{S}_d$-torsor $V_{i,A^{\prime}} \to \mathcal{U}_{i,A^{\prime}}$. Because $\mathcal{U}_{i,A}$ is cut out of $\mathcal{U}_{i,A^{\prime}}$ by a nilpotent sheaf of ideals, there exists a unique such extension. This fact for schemes is \cite[IX, Proposition 1.7]{gr_sga1}. For the statement for orbinodal curves, see the proof of \cite[Theorem 3.0.2]{acv_twisted_bundles}.
\end{itemize}
We now have extensions $\chi_{i,A^{\prime}}: \mathcal{U}_{i,A^{\prime}} \to \mathcal{X}_d$ of $\chi_A$ that we want to glue together. We have isomorphisms $\psi_{ij,A}: \chi_{i,A^{\prime}}|_{U_{ij,A}} \xrightarrow[]{\sim} \chi_A|_{U_{ij,A}} \xleftarrow[]{\sim} \chi_{j,A^{\prime}}|_{U_{ij,A}}$. Because each branch point is contained in at most one $U_i$, $\chi_A$ maps every $U_{ij,A}$ to $\mathcal{E}_d$. By the above discussion of extending \'{e}tale covers along nilpotent ideal sheaves, there exists a unique isomorphism $\psi_{ij,A^{\prime}}: \chi_{i,A^{\prime}}|_{U_{ij,A^{\prime}}} \xrightarrow[]{\sim} \chi_{j,A^{\prime}}|_{U_{ij,A^{\prime}}}$ extending $\psi_{ij,A}$. Because the $\psi_{ij,A}$ satisfy the cocycle condition, the $\psi_{ij,A^{\prime}}$ satisfy the cocycle condition by their uniqueness. Thus, we can glue the $\chi_{i,A^{\prime}}$ to an extension $\chi_{A^{\prime}}: \mathcal{C}_{A^{\prime}} \to \mathcal{X}_d$ of $\chi_A$.
\end{proof}

We have proven the first part of \Cref{theorem:rd_nice}(c). We also deduce \Cref{theorem:rd_nice}(a) from \Cref{theorem:olsson_twisted}(a) and \Cref{proposition:rd_smooth}.

\subsubsection{Dimension}\label{section:rd_dimension}
To prove \Cref{theorem:rd_nice}, it remains to prove that the morphism $\mathcal{R}^{d;b,*} \to \mathcal{M}^{\orb;0,*}$ is smooth of relative dimension $b$. Fix a geometric point of $\mathcal{R}^d$ corresponding to $(\mathcal{C} \to C \to \Spec k;\sigma;\chi)$. We want to study the deformations of $\chi$ over the trivial deformation $(\mathcal{C}_{k[\epsilon]} \to C_{k[\epsilon]} \to \Spec k[\epsilon];\sigma_{k[\epsilon]})$. Pick an adapted affine cover $\{U_i\}$ of $C$ as in the proof of \Cref{proposition:rd_smooth}. For all the $U_i$ not containing a branch point, there exists a unique extension $\chi_{i,k[\epsilon]}: \mathcal{U}_{i,k[\epsilon]} \to \mathcal{X}_d$ of $\chi|_{\mathcal{U}_i}$ because $\chi$ maps $\mathcal{U}_i$ to $\mathcal{E}_d$. Once we pick extensions $\chi_{i,k[\epsilon]}$ for the $U_i$ containing branch points, because the intersections $U_{ij}$ contain no branch points, there are unique isomorphisms $\chi_{i,k[\epsilon]}|_{U_{ij,k[\epsilon]}} \xrightarrow[]{\sim} \chi_{j,k[\epsilon]}|_{U_{ij,k[\epsilon]}}$ such that the $\chi_{i,k[\epsilon]}$ glue to an extension $\chi_{k[\epsilon]}$ of $\chi$. Hence, the dimension of the deformation space of $\chi$ is the sum of the dimensions of the deformation spaces of the $\chi_i$ for the $U_i$ containing branch points. We have reduced the second part of \Cref{theorem:rd_nice}(c) to the following proposition.

\begin{proposition}\label{proposition:dim_def_chi_i}
Suppose $U_i$ contains a branch point $p_i$. Then the dimension of the deformation space of $\chi_i$ is the degree of the divisor $\chi_i^{-1}(\Sigma_d)$ supported at $p_i$.
\end{proposition}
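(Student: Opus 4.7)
The plan is to identify the space of first-order deformations of $\chi_i$ with the hypercohomology of the pullback of the tangent complex of $\mathcal{X}_d$, and then relate its dimension to the vanishing order of the pulled-back discriminant. Since $U_i$ contains a branch point, the adapted cover condition rules out any node or marked point in $U_i$, so $\mathcal{U}_i = U_i$ is a smooth affine scheme, and the entire discussion is schematic.

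Because $\mathcal{X}_d = [V_d/G_d]$ is a smooth quotient stack, its tangent complex is $T_{\mathcal{X}_d} = [\mathfrak{g}_d \xrightarrow{\alpha} V_d]$ in cohomological degrees $[-1,0]$, where $\alpha$ is the infinitesimal action map $(p,x) \mapsto xf(p)$ used in the proof of \Cref{proposition:br_dm}. Standard deformation theory identifies the space of first-order deformations of $\chi_i$ (fixing the source) with $\mathbb{H}^0(U_i, K)$, where $K = \chi_i^* T_{\mathcal{X}_d} = [\chi_i^*\mathfrak{g}_d \to \chi_i^* V_d]$. Since $\dim G_d = \dim V_d$ in each of the three cases, this is a map of vector bundles of the same rank, and by \Cref{corollary:vss_bsd} it is an isomorphism away from $\chi_i^{-1}(\Sigma_d) = b \cdot p_i$. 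Hence it is injective on the smooth curve $U_i$, and its cokernel $Q$ is a torsion sheaf supported at $p_i$. Because $U_i$ is affine we have $H^1(U_i, \chi_i^*\mathfrak{g}_d) = 0$, so the hypercohomology long exact sequence of $K$ collapses to $\mathbb{H}^0(U_i, K) \cong H^0(U_i, Q) = \length_{\mathcal{O}_{U_i,p_i}}(Q_{p_i})$.

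It remains to show $\length Q_{p_i} = b$. For a generically injective map of vector bundles of equal rank on a smooth curve, this length at $p_i$ equals the valuation at $p_i$ of the determinant, viewed as a section of $\det(\chi_i^* V_d) \otimes \det(\chi_i^*\mathfrak{g}_d)^{\vee}$. The group $G_d$ acts on $\det V_d$ by $\chi_d^2$ (as recalled in \Cref{section:prehomog_covs}) and on $\det \mathfrak{g}_d$ trivially---the adjoint representation of $\GL_n$ has trivial determinant, and the same persists for the codimension-one trace-equality subgroups defining $G_4$ and $G_5$. Thus $\det(V_d) \otimes \det(\mathfrak{g}_d)^{\vee}$ is canonically the discriminant line bundle $\mathcal{L}_d$, and the computation $\pi_v^*\omega_{V_d} = \mathcal{J}_d \Delta_d(v)\chi_d^2 \omega_{G_d}$ preceding \Cref{proposition:counting_integral} shows that under this identification the determinant of $\alpha$ equals $\mathcal{J}_d\Delta_d$ up to a global unit. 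Hence its vanishing order at $p_i$ agrees with that of $\chi_i^*\Delta_d$, which is $b$ by construction of the branch morphism. The main technical subtlety is the global identification of the determinant with the discriminant (and the fact that $\mathcal{J}_d$ is a unit on the base); once that is in hand, the dimension count falls out.
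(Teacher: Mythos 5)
Your proof is correct and follows essentially the same route as the paper: both identify the first-order deformation space with the cokernel of the infinitesimal action map $\chi_i^*\mathfrak{g}_d \to \chi_i^*V_d$ on the affine curve $U_i$, and both conclude by observing that the determinant of this map is $\mathcal{J}_d\Delta_d(v)$ with $\mathcal{J}_d$ a unit, via the identity $\pi_v^*\omega_{V_d} = \mathcal{J}_d\Delta_d(v)\chi_d^2\omega_{G_d}$. The only packaging difference is that you invoke the tangent complex of $[V_d/G_d]$ and the intrinsic identification $\det V_d \otimes \det\mathfrak{g}_d^{\vee} \cong \mathcal{L}_d$ (correctly using that $G_d$ acts trivially on $\det\mathfrak{g}_d$) to stay global on $U_i$, whereas the paper shrinks $U_i$ to trivialize the $G_d$-bundle, identifies the deformation space explicitly as $V_d(R)/\mathfrak{g}_d(R)v$, and then computes the length after completing at $p_i$ (\Cref{lemma:lie_alg_fact}).
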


\begin{proof}
By the above discussion, the deformation space of $\chi_i$ does not change if we shrink $U_i = \Spec R$ around the branch point $p_i$. We can therefore assume that there exists a function $f$ on $U_i$ cutting out $p_i$ and that the $G_d$-bundle classified by $\chi_i$ is trivial on $U_i$ (a $G_d$-bundle is Zariski locally trivial because of the description of $G_d$-bundles in terms of vector bundles and sections). Since we can lift sections and frames to $k[\epsilon]$, any deformation of this $G_d$-bundle to $U_{i,k[\epsilon]}$ is trivial.

Let $v \in H^0(U_i,U_i \times V_d) \cong V_d(R)$ be the section classified by $\chi_i$. The first order deformations of $\chi_i$ are identified with the set of $G_d(R[\epsilon])$-equivalence classes of sections of $U_{i,k[\epsilon]} \times V_d$ whose restriction to $U_i \times V_d$ is equivalent to $v \in V_d(R)$ under the $G_d(R)$-action. Because $G_d(R[\epsilon]) \to G_d(R)$ is surjective, this is the same as the set of $\ker(G_d(R[\epsilon]) \to G_d(R))$-equivalence classes of sections $v_{k[\epsilon]} \in H^0(U_{i,k[\epsilon]},U_{i,k[\epsilon]} \times V_d) \cong V_d(R[\epsilon])$ such that $v_{k[\epsilon]}|_{U_i} = v \in V_d(R)$. We can write $v_{k[\epsilon]} = v + \epsilon u$ for some $u \in V_d(R)$. Under the identification $\mathfrak{g}_d(R) \cong \ker(G_d(R[\epsilon]) \to G_d(R))$, $x \in \mathfrak{g}_d(R)$ acts on $v + \epsilon u$ by $x \cdot (v + \epsilon u) = v + \epsilon(u + xv)$. Thus, the first order deformations of $\chi_i$ are identified with $V_d(R)/\mathfrak{g}_d(R)v$.

The natural map $V_d(R)/\mathfrak{g}_d(R)v \to V_d(\widehat{\mathcal{O}}_{U_i,p_i})/\mathfrak{g}_d(\widehat{\mathcal{O}}_{U_i,p_i})v$ is an isomorphism because as a coherent sheaf on $U_i$, $V_d(R)/\mathfrak{g}_d(R)v$ is supported at $p_i$ since the deformation space of $\chi_i$ does not change when we shrink $U_i$ around $p_i$. This reduces the proposition to the following lemma.

\begin{lemma}\label{lemma:lie_alg_fact}
Suppose $v \in V_d(k[[t]])$ such that $\Delta_d(v) \ne 0$. Then $$\dim_k(V_d(k[[t]])/\mathfrak{g}_d(k[[t]])v) = v_t(\Delta_d(v)).$$
\end{lemma}

\begin{proof}
Everything in \Cref{section:prehomog_local} makes sense for arbitrary $k[[t]]$ except the discussion of Haar measures. As in \Cref{section:prehomog_local}, pick left-invariant top forms $\omega_{G_d}$ and $\omega_{V_d}$ on $(G_d)_{k((t))}$ and $(V_d)_{k((t))}$ normalized so that their values at the identities are wedges of bases of $\mathfrak{g}_d^*(k[[t]])$ and $V_d^*(k[[t]])$, respectively. We know that $\pi_v^*\omega_{V_d} = \mathcal{J}_d\Delta_d(v)\chi_d^2\omega_{G_d}$, where $v_t(\mathcal{J}_d) = 0$. Evaluating both sides at $1 \in G_d(k[[t]])$ shows that the image of $d\pi_v: \det\mathfrak{g}_d(k[[t]]) \to \det V_d(k[[t]])$ is $\Delta_d(v)\det V_d(k[[t]])$. This lets us conclude that $\dim_k(V_d(k[[t]])/\mathfrak{g}_d(k[[t]])v) = v_t(\Delta_d(v))$.
\end{proof}

\end{proof}

We have completed the proof of \Cref{theorem:rd_nice}.

\begin{remark}\label{remark:quotient_impossible}
At first glance, the proofs of properness and smoothness above do not seem specific to our situation. As long as we have a quotient stack $[X/G]$, where \begin{itemize}
\item $X$ is smooth and affine,

\item $G$ is reductive,

\item $G$ acts on $X$ with semistable locus $X^{ss}$ such that $X^{ss}/G \cong BH$ for an \'{e}tale group scheme $H$,
\end{itemize}
the space of quasimaps from a smooth non-stacky curve to $BH \subset [X/G]$ should be smooth and proper. Based on this, it seems like we can try to get finite groups $H$ as generic stabilizers besides the ones we can get from prehomogeneous vector spaces so that we can compactify Hurwitz spaces of $H$-covers.

However, a short argument the author learned from Andres Fernandez Herrero that uses the Luna \'{e}tale slice theorem \cite{ahr_a_luna} shows that any such $[X/G]$ is actually isomorphic to a quotient of a prehomogeneous vector space by its group action. Therefore, we cannot get any new Hurwitz space compactifications besides the ones we can get from prehomogeneous vector spaces with reductive groups (which are the ones we consider in this paper). We recall the argument here.

We'll work over an algebraically closed field $k$ of characteristic 0. Suppose the pair $(G,X)$ has the properties above. Then the affine quotient $X//G$ is $\Spec k$, and by standard facts from GIT, there is a unique closed orbit $Gx \subset X$ corresponding to a unique closed $k$-point of $[X/G]$. Let $G_x \subset G$ be the stabilizer of $p$. Because $G/G_x \cong Gx$ is affine, Matsushima's criterion implies that $G_x$ is reductive. By \cite[Theorem 4.12, Remark 4.15]{ahr_a_luna}, because $[X/G]$ is smooth, we have an isomorphism $[X/G] \cong [N_x/G_x]$, where $N_x \coloneqq T_xX/T_x(Gx)$ is the tangent space to $[X/G]$ at $x$. Because there is an open dense $k$-point of $[X/G]$, there is an open dense $k$-point of $[N_x/G_x]$, and $(G_x,N_x)$ is a prehomogeneous vector space.
\end{remark}

\section{The fibers of $\br$}\label{section:fibers_of_br}
Fix a (not necessarily algebraically closed) base field $k$ over $\mathbb{S}_d$, and consider a point $p \in \mathcal{M}^{b,n}(k)$ classifying an object $(C;\Sigma;\sigma)$. For convenience, we will assume that $C$ is smooth (see \Cref{section:stacky_smooth}), but a similar analysis should be possible for nodal $C$. The main result of this section is \Cref{proposition:fiber_bound_34}, which bounds the dimension of the fiber $p \times_{\mathcal{M}} \mathcal{R}^d$ for $d \in \{3,4\}$ using the Igusa zeta functions from \Cref{section:igusa_zeta_functions} and the Lang-Weil bounds. We will work over a base field $k$ throughout this section, unless specified otherwise.

\begin{lemma}\label{lemma:etale_orbi}
Specifying an orbicurve $\mathcal{C} \to C$ and an \'{e}tale degree $d$ cover $\overline{\mathcal{D}} \to \mathcal{C} - \Sigma$ such that the corresponding morphism $\mathcal{C} - \Sigma \to B\mathbf{S}_d$ is representable is equivalent to specifying an \'{e}tale degree $d$ cover $D \to C^{\gen} - \Sigma$.
\end{lemma}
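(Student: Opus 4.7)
The assertion unpacks into two functorial constructions that I will show are mutually inverse. Since in this section $C$ is assumed smooth, $C\setminus C^{\gen}$ consists of the marked points $\sigma$, so I only have to handle stacky structures at marked points (no nodes).

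\emph{Forward direction.} Given a pair $(\mathcal{C}\to C,\overline{\mathcal{D}}\to \mathcal{C}-\Sigma)$ with the classifying morphism $\mathcal{C}-\Sigma\to B\mathbf{S}_d$ representable, I restrict $\overline{\mathcal{D}}$ along the open immersion $C^{\gen}-\Sigma\hookrightarrow \mathcal{C}-\Sigma$ (which exists because $\mathcal{C}\to C$ is an isomorphism over $C^{\gen}$) to obtain an \'etale degree $d$ cover $D\to C^{\gen}-\Sigma$.

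\emph{Backward direction.} Given $D\to C^{\gen}-\Sigma$, I build $\mathcal{C}$ and $\overline{\mathcal{D}}$ locally around each marked point $p\in\sigma$. Passing to the strict henselization $\Spec R$ of $C$ at $p$ (so $R$ is a strictly Henselian DVR with uniformizer $t$), the restriction of $D$ to $\Spec R-\{p\}$ is classified by a continuous homomorphism from the tame fundamental group $\widehat{\mathbb{Z}}{}'(1)\to\mathbf{S}_d$; tameness is available because $d!$ is invertible on the base $\mathbb{S}_d$. The image is cyclic, generated by some $\sigma_p\in\mathbf{S}_d$ of order $n_p$. Define $\mathcal{C}$ locally as the $n_p$-th root stack along $p$, i.e.\ $[\Spec R[u]/(u^{n_p}-t)\,/\,\mu_{n_p}]$. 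By Abhyankar's lemma the pullback of $D$ along the tame cover $\Spec R[u]/(u^{n_p}-t)\to\Spec R-\{p\}$ extends to an \'etale cover of $\Spec R[u]/(u^{n_p}-t)$, which is $\mu_{n_p}$-equivariant and descends to an \'etale cover $\overline{\mathcal{D}}$ of $\mathcal{C}$ locally. The inertia action at $p$ is the map $\mu_{n_p}\to\mathbf{S}_d$ sending a generator to $\sigma_p$, which is injective by the choice $n_p=\mathrm{ord}(\sigma_p)$, so the local classifying morphism $\mathcal{C}-\Sigma\to B\mathbf{S}_d$ is representable. Since the construction is canonical at each $p$ and leaves $C^{\gen}$ untouched, the local pieces glue canonically to a global pair $(\mathcal{C},\overline{\mathcal{D}})$.

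\emph{Mutual inverses.} Starting from $(\mathcal{C},\overline{\mathcal{D}})$, restricting to $C^{\gen}-\Sigma$ and reapplying the backward construction recovers the original orbi-structure because representability of the map to $B\mathbf{S}_d$ forces the inertia order at each stacky point of $\mathcal{C}$ to equal $\mathrm{ord}(\sigma_p)$, and the \'etale extension of a cover across a codimension-one stacky point is unique when it exists. The other direction of mutual inversion is immediate from the construction. Functoriality on morphisms is similarly formal. The main technical obstacle is the local analysis at a marked point: carefully setting up the tame fundamental group and invoking Abhyankar's lemma to verify that the root-stack base change is \'etale and that representability pins down $n_p$ to be exactly $\mathrm{ord}(\sigma_p)$ (not a larger multiple). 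Everything else is bookkeeping.
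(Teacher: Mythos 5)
Your proof is correct but takes a genuinely different route from the paper's. The paper works globally: pass to the associated $\mathbf{S}_d$-torsor $E'\to C^{\gen}-\Sigma$, take its normalization $\overline{E}'$ in $C-\Sigma$ (a smooth curve), and form the quotient stacks $[\overline{E}'/\mathbf{S}_d]$ and $[\overline{E}'/\mathbf{S}_{d-1}]$; the mutual-inverse check then reduces to the observation that a representable map $\mathcal{C}-\Sigma\to B\mathbf{S}_d$ classifies a torsor $\overline{E}$ that is étale over the smooth stack $\mathcal{C}-\Sigma$, finite over $C-\Sigma$, and hence is \emph{the} smooth curve finite over $C-\Sigma$ restricting to $E$, forcing $\overline{E}\cong\overline{E}'$. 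You instead work locally at each marked point: strictly henselize, read off the local tame monodromy $\sigma_p$, take the $\mathrm{ord}(\sigma_p)$-th root stack, and invoke Abhyankar's lemma to extend the cover. Both are valid. The paper's argument is shorter and avoids explicit mention of the root stack structure or the tame fundamental group, deriving everything from "normalization is unique"; your argument makes the root stack structure of $\mathcal{C}$ (and the fact that the inertia order at $p$ must be exactly $\mathrm{ord}(\sigma_p)$, via injectivity of the inertia map characterizing representability) explicit, which is arguably more informative but requires more bookkeeping — in particular you need to justify that the $\mu_{n_p}$-equivariant extension over the root cover \emph{is} an étale cover of the quotient stack (not "descends," which is a slight misnomer), and that the local constructions glue, which follows because both the root stack and the étale extension are unique over the overlaps by normality.
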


\begin{proof}
It is clear how to go from the former to the latter. To go the other way, consider the associated $\mathbf{S}_d$-torsor $E^{\prime} \to C^{\gen} - \Sigma$, and complete it to a finite map $\overline{E}^{\prime} \to C - \Sigma$ with $\overline{E}$ a smooth curve. Then take $\mathcal{C}^{\prime} - \Sigma = [\overline{E}^{\prime}/\mathbf{S}_d]$ and $\overline{\mathcal{D}}^{\prime} = [\overline{E}^{\prime}/\mathbf{S}_{d - 1}]$; we can glue $\mathcal{C}^{\prime} - \Sigma$ with $C^{\gen}$ to get an orbicurve $\mathcal{C}^{\prime}$. We want to check that $\mathcal{C} \cong \mathcal{C}^{\prime}$ and $\mathcal{D} \cong \mathcal{D}^{\prime}$. Note that the representable morphism $\mathcal{C} - \Sigma \to B\mathbf{S}_d$ classifies an \'{e}tale $\mathbf{S}_d$-torsor $\overline{E} \to \mathcal{C} - \Sigma$, where $\overline{E}$ is an algebraic space. Because $\overline{E} \to C - \Sigma$ is finite, $\overline{E}$ must actually be a scheme. In fact, since $\overline{E} \to \mathcal{C} - \Sigma$ is \'{e}tale, $\overline{E}$ is a smooth curve. There is only one smooth curve finite over $C - \Sigma$ that restricts to $E \to C^{\gen} - \Sigma$, namely $\overline{E}^{\prime} \to C - \Sigma$, so we have $\overline{E} \cong \overline{E}^{\prime}$, and we conclude that $\mathcal{C} \cong \mathcal{C}^{\prime}$ and $\mathcal{D} \cong \mathcal{D}^{\prime}$, proving the lemma.
\end{proof}

Now fix an \'{e}tale degree $d$ cover $D \to C^{\gen} - \Sigma$. Suppose $q \in (p \times_{\mathcal{M}} \mathcal{R}^d)(k)$ classifies a morphism $\chi: \mathcal{C} \to \mathcal{X}_d$ such that the restriction $C^{\gen} - \Sigma \to \mathcal{E}_d \simeq B\mathbf{S}_d$ classifies $D \to C^{\gen} - \Sigma$. To proceed, we need to fix some notation. Let $c_1,\ldots,c_r \in C$ be the points in the support of $\Sigma$, and let $b_1,\ldots,b_r$ and $d_1,\ldots,d_r$ be their multiplicities in $\Sigma$ and degrees over $k$, respectively, so that $b = \sum_ib_id_i$. Let $G_D$ be the automorphism group of $D$ over $C^{\gen} - \Sigma$. For each $1 \le i \le r$, let $\widehat{C}_i$ denote the formal neighborhood $\Spec\widehat{\mathcal{O}}_{C,c_i}$ of $c_i$, and let $\widehat{C}_i^{\circ} = \Spec K(\widehat{\mathcal{O}}_{C,c_i})$. Let $\widehat{D}_i = D \times_C \widehat{C}_i^{\circ}$, and let $G_{\widehat{D}_i}$ denote the automorphism group of $\widehat{D}_i$ over $\widehat{C}_i^{\circ}$. For each $i$, let $V_d(\widehat{C}_i)_{\widehat{D}_i,b_i} \subset V_d(\widehat{C}_i)_{b_i}$ denote the subset of $v$ such that the resulting object of $V_d^{ss}(\widehat{C}_i^{\circ}) // G_d(\widehat{C}_i^{\circ}) \simeq \mathcal{E}_d(\widehat{C}_i^{\circ})$ is isomorphic to $\widehat{D}_i$.

\begin{proposition}\label{proposition:local_description_fibers}
There is an equivalence of groupoids \begin{align*}
&(p \times_{\mathcal{M}} \mathcal{R}^d)(k) \\ &\simeq \bigsqcup_D\left(BG_D \times_{BG_{\widehat{D}_1}} V_d(\widehat{C}_1)_{\widehat{D}_1,b_1} // G_d(\widehat{C}_1) \times_{BG_{\widehat{D}_2}} \cdots \times_{BG_{\widehat{D}_r}} V_d(\widehat{C}_r)_{\widehat{D}_r,b_r} // G_d(\widehat{C}_r)\right),
\end{align*}
where $D$ ranges over the isomorphism classes of \'{e}tale degree $d$ covers of $C^{\gen} - \Sigma$ and the 2-fiber products come from the morphisms of groupoids $BG_D \to BG_{\widehat{D}_i}$ and $V_d(\widehat{C}_i)_{\widehat{D}_i,b_i} // G_d(\widehat{C}_i) \to BG_{\widehat{D}_i} \subset V_d(\widehat{C}_i^{\circ}) // G_d(\widehat{C}_i^{\circ})$.
\end{proposition}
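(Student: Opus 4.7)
The plan is to construct inverse equivalences between the two groupoids by combining the ``Lemma 8.1 picture'' away from $\Sigma$ with formal gluing at each branch point $c_i$.

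First I would define a functor from left to right. Given an object $\chi \colon \mathcal{C} \to \mathcal{X}_d$ of $(p \times_{\mathcal{M}} \mathcal{R}^d)(k)$, restrict $\chi$ to $\mathcal{C} - \Sigma$ to obtain a representable morphism $\mathcal{C} - \Sigma \to \mathcal{E}_d \simeq B\mathbf{S}_d$, i.e. an \'{e}tale degree $d$ cover of $\mathcal{C} - \Sigma$. By \Cref{lemma:etale_orbi} this is equivalent to an \'{e}tale degree $d$ cover $D \to C^{\gen} - \Sigma$, and this determines the component of the disjoint union in which the image lies. For each $i$, pull $\chi$ back along the natural map $\widehat{C}_i \to \mathcal{C}$ (note $\widehat{C}_i$ is a formal disc in the \emph{smooth non-stacky} locus $C^{\gen}$ since $\Sigma$ is disjoint from $\sigma$). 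The resulting morphism $\widehat{C}_i \to \mathcal{X}_d$ corresponds to a $G_d(\widehat{C}_i)$-orbit in $V_d(\widehat{C}_i)$; by \Cref{proposition:disc_agrees} the multiplicity of $\chi^* \Delta_d$ at $c_i$ is $b_i$, so this orbit lies in $V_d(\widehat{C}_i)_{b_i}$, and its further restriction to $\widehat{C}_i^{\circ}$ is the class of $\widehat{D}_i = D \times_C \widehat{C}_i^{\circ}$ in $\mathcal{E}_d(\widehat{C}_i^{\circ})$, so the orbit in fact lies in $V_d(\widehat{C}_i)_{\widehat{D}_i, b_i}$. The isomorphisms of $D|_{\widehat{C}_i^{\circ}}$ with $\widehat{D}_i$ (and the canonical comparison on the automorphism side) are exactly the data needed to promote the tuple $(D; \chi|_{\widehat{C}_i})$ to an object of the 2-fiber product on the right.

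Second, I would construct the inverse. Given $D$ together with classes $[v_i] \in V_d(\widehat{C}_i)_{\widehat{D}_i, b_i} // G_d(\widehat{C}_i)$ and chosen isomorphisms $D|_{\widehat{C}_i^{\circ}} \xrightarrow[]{\sim} \widehat{D}_i$ compatible with $[v_i]|_{\widehat{C}_i^{\circ}}$, I would assemble a morphism $\chi \colon \mathcal{C} \to \mathcal{X}_d$ by formal (Beauville-Laszlo style) gluing along the open cover consisting of the complement $\mathcal{C} - \{c_1, \ldots, c_r\}$ and the formal neighborhoods $\widehat{C}_i$, which overlap in the punctured formal discs $\widehat{C}_i^{\circ}$. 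On $\mathcal{C} - \{c_1, \ldots, c_r\} = (C^{\gen} - \Sigma) \cup (\mathcal{C} - \Sigma)_{\text{marked points}}$ we use \Cref{lemma:etale_orbi} to produce a representable morphism to $\mathcal{E}_d$ from $D$; on each $\widehat{C}_i$ we use the class $[v_i]$; and the fiber-product gluing isomorphisms provide the required descent datum on $\widehat{C}_i^{\circ}$. Because $\mathcal{X}_d = [V_d/G_d]$ with $V_d$ affine and $G_d$ smooth affine, this reduces to the standard formal gluing of a $G_d$-bundle together with a $G_d$-equivariant map to $V_d$, which is effective.

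Third, I would check that the two functors are inverse equivalences: starting from $\chi$, restriction followed by reconstruction recovers $\chi$ up to a unique 2-isomorphism since the restrictions $\chi|_{\mathcal{C} - \{c_i\}}$ and $\chi|_{\widehat{C}_i}$ with their prescribed gluing are unique by construction; starting from the tuple $(D, \{[v_i]\}, \text{gluing})$, restricting the reconstructed $\chi$ gives back the same data tautologically. Morphisms and 2-morphisms are tracked by the same mechanism, the only subtle point being that the groupoid-theoretic automorphisms of $D$ and of each $\widehat{D}_i$ appear as $BG_D$ and $BG_{\widehat{D}_i}$, producing exactly the fiber-product presentation stated in the proposition.

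The main obstacle will be rigorously justifying the formal gluing step in the second paragraph. One has to ensure that gluing $G_d$-bundles on $\mathcal{C} - \{c_1, \ldots, c_r\}$ with trivial $G_d$-bundles on $\widehat{C}_i$ along a specified isomorphism over $\widehat{C}_i^{\circ}$ produces a $G_d$-bundle on $\mathcal{C}$, and similarly for equivariant sections landing in $V_d$. This is Beauville-Laszlo gluing for a smooth affine group scheme acting on an affine scheme; it works because the $c_i$ are non-stacky regular points of $\mathcal{C}$, and the section-level gluing is controlled by the affine fiber $V_d$. Once gluing is in hand, the functor on the nose lands in $\mathcal{R}^d$ (representability and the branch-locus conditions are checked locally, and they hold by construction on each piece).
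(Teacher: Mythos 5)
Your argument is correct and follows the same route as the paper's: use \Cref{lemma:etale_orbi} to pass between the orbicurve structure and \'etale covers of $C^{\gen}-\Sigma$, then glue formally at each branch point $c_i$ via Beauville--Laszlo. The one detail you should spell out is why $\chi|_{\widehat{C}_i}$ ``corresponds to a $G_d(\widehat{C}_i)$-orbit in $V_d(\widehat{C}_i)$,'' which requires knowing the pulled-back $G_d$-bundle on $\widehat{C}_i$ is trivial; the paper supplies this via the explicit vector-bundle description of $G_d$-bundles (hence Zariski-local triviality), and that same description is also how it reduces to the classical Beauville--Laszlo theorem for modules rather than the more general descent statement for $G_d$-torsors that you invoke directly.
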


\begin{proof}
A point of $(p \times_{\mathcal{M}} \mathcal{R}^d)(k)$ corresponds to a $G_d$-bundle on an orbicurve $\mathcal{C} \to C$ with stackiness only above $\sigma$ and a global section of some associated bundle such that the branch divisor is $\Sigma$. As noted above, a finite \'{e}tale cover of $C^{\gen} - \Sigma$ extends to a unique finite \'{e}tale cover of $\mathcal{C} - \Sigma$ for a unique orbicurve $\mathcal{C} \to C$ such that the corresponding morphism $\mathcal{C} - \Sigma \to [V_d^{ss}/G_d] \simeq B\mathbf{S}_d$ is representable. Thus, the data of an orbicurve $\mathcal{C} \to C$ and a bundle and section on $\mathcal{C}$ are equivalent to the data of a bundle and section on $C^{\gen}$.

Recall that the data of a $G_d$-bundle is equivalent to the data of a vector bundle ($d = 3$) or the data of two vector bundles and a section of an associated bundle ($d \in \{4,5\}$). Because of this vector bundle description of $G_d$-bundles, we may apply the Beauville-Laszlo theorem \cite{bl_un_lemme} to deduce the following: the data of a bundle and section on $C^{\gen}$ are equivalent to the data of bundles and sections on $C^{\gen} - \Sigma,\widehat{C}_1,\ldots,\widehat{C}_r$ equipped with isomorphisms on $\widehat{C}_1^{\circ},\ldots,\widehat{C}_r^{\circ}$. Because $C^{\gen} - \Sigma$ is required to map to $\mathcal{E}_d$, the data of a bundle and section on $C^{\gen}$ are equivalent to the datum of an \'{e}tale degree $d$ cover $D$ of $C^{\gen} - \Sigma$. Similarly, the data of a bundle and section on $\widehat{C}_i^{\circ}$ are equivalent to the datum of an \'{e}tale degree $d$ cover $\widehat{D}_i$ of $\widehat{C}^{\gen} - \Sigma$. Because $G_d$-bundles are Zariski locally trivial, they are automatically trivial on any $\widehat{C}_i$. Thus, the groupoid of bundles and section on $\widehat{C}_i$ with the appropriate branch divisor is equivalent to the groupoid quotient $V_d(\widehat{C}_i)_{\widehat{D}_i,b_i} // G_d(\widehat{C}_i)$. Combining these observations, we get the desired equivalence of groupoids.
\end{proof}

\begin{corollary}\label{corollary:fiber_finite_field}
If $k$ is finite, then as $k^{\prime}$ ranges over finite extensions of $k$,  we have a bound on the cardinality of the fiber $$\#(p \times_{\mathcal{M}} \mathcal{R}^d)(k^{\prime}) \le \alpha\prod_i\#\left(V_d((\widehat{C}_i)_{k^{\prime}})_{b_i} // G_d((\widehat{C}_i)_{k^{\prime}})\right),$$ where $\alpha$ is a positive constant depending only on $p$.
\end{corollary}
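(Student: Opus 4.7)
The plan is to apply \Cref{proposition:local_description_fibers} after base change to $k'$ and compute the cardinality of each resulting 2-fiber product of finite groupoids. Using the standard cardinality identity $\#(X \times_Z Y) = \#X \cdot \#Y / \#Z$ for finite groupoids, applied to the multi-leg product $BG_D \times_{\prod_i BG_{\widehat{D}_i}} \prod_i V_d((\widehat{C}_i)_{k'})_{\widehat{D}_i, b_i} // G_d((\widehat{C}_i)_{k'})$, each summand evaluates to
$$\frac{\prod_i \#G_{\widehat{D}_i(D)}}{\#G_D} \cdot \prod_i \#\bigl(V_d((\widehat{C}_i)_{k'})_{\widehat{D}_i(D), b_i} // G_d((\widehat{C}_i)_{k'})\bigr),$$
where $\widehat{D}_i(D)$ denotes the restriction of $D$ to the $i$-th formal punctured neighborhood.

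Next I would establish two uniform (in $k'$) bounds. First, since the automorphism group of any degree $d$ étale cover embeds into $\mathbf{S}_d$, each $\#G_{\widehat{D}_i(D)}$ is at most $d!$, so $\prod_i \#G_{\widehat{D}_i(D)} / \#G_D \le (d!)^r$, where $r$ is the number of formal disks appearing in $(\widehat{C}_i)_{k'}$ (bounded by $b$ independently of $k'$, since base change to $k'$ may split each $c_i$ but only into at most $b_i d_i$ closed $k'$-points in total). Second, the number $N$ of isomorphism classes of degree $d$ étale covers of $(C^{\gen}-\Sigma)_{k'}$ is bounded independently of $k'$: each corresponds to a conjugacy class of continuous homomorphisms $\pi_1^{\mathrm{\acute{e}t}}((C^{\gen}-\Sigma)_{k'}) \to \mathbf{S}_d$; restricting to the geometric fundamental group produces one of finitely many geometric covers (by topological finite generation of $\pi_1^{\mathrm{geom}}((C^{\gen}-\Sigma)_{\bar{k}})$), and each geometric cover has at most $|\mathrm{Aut}(\bar D)| \le d!$ Galois descents to $k'$, controlled via $H^1(\widehat{\mathbb{Z}}, \mathrm{Aut}(\bar D))$.

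Finally, I would regroup the sum over $D$ by the induced tuple $(\widehat{D}_i(D))_i$ of local covers. Since each such tuple arises from at most $N$ global covers and $V_d((\widehat{C}_i)_{k'})_{b_i} = \bigsqcup_{\widehat{D}_i} V_d((\widehat{C}_i)_{k'})_{\widehat{D}_i, b_i}$, I obtain
$$\sum_{D/k'} \prod_i \#\bigl(V_d((\widehat{C}_i)_{k'})_{\widehat{D}_i(D), b_i} // G_d\bigr) \le N \cdot \prod_i \#\bigl(V_d((\widehat{C}_i)_{k'})_{b_i} // G_d((\widehat{C}_i)_{k'})\bigr),$$
so that $\alpha = (d!)^r \cdot N$ gives the claimed bound. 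The main obstacle is verifying the uniform bound on $N$, which reduces to the standard finiteness of $H^1(\widehat{\mathbb{Z}}, G)$ for a finite group $G$; everything else is routine groupoid arithmetic together with the disjoint-union decomposition of $V_d((\widehat{C}_i)_{k'})_{b_i}$.
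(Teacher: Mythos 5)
Your argument is correct and follows the paper's overall strategy: apply \Cref{proposition:local_description_fibers} over $k'$, control the groupoid cardinality of each summand using $|G_D|,|G_{\widehat{D}_i}| \le d!$ together with the subgroupoid inclusion, and exploit the uniform-in-$k'$ finiteness of the set of \'{e}tale degree $d$ covers of $(C^{\gen}-\Sigma)_{k'}$. The genuine divergence is in how that finiteness is established: the paper simply cites that the relevant Hurwitz stacks are finite \'{e}tale over configuration spaces (pointing to \Cref{proposition:hurwitz_etale} together with properness of $\br$), whereas you reprove it from scratch via topological finite generation of $\pi_1^{\mathrm{geom}}((C^{\gen}-\Sigma)_{\overline{k}})$ plus finiteness of nonabelian $H^1(\widehat{\mathbb{Z}},\Aut(\bar D))$. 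Your route is self-contained; the paper's is a one-line citation; either is fine. Two small cautions. The cardinality identity $\#(X\times_Z Y)=\#X\cdot\#Y/\#Z$ you invoke requires $Z$ to be connected (it fails for disconnected $Z$), a hypothesis satisfied here because $Z=\prod_i BG_{\widehat{D}_i}$ is a product of classifying groupoids, but it should be stated. And base change to $k'$ splits each $c_i$ into at most $d_i$, not $b_i d_i$, closed points; the total is still $\le \sum_i d_i \le b$, so your bound on the number of formal disks is unaffected. Finally, your regrouping over tuples $(\widehat{D}_i(D))_i$ via the disjoint-union decomposition of $V_d((\widehat{C}_i)_{k'})_{b_i}$ is a marginally cleaner piece of bookkeeping than the paper's ``bound by $\beta$ times the max over $D$'' step, and it lands on an equivalent constant $\alpha$.
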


\begin{proof}
Because $\mathrm{char}(k)$ does not divide $d!$, the number of \'{e}tale degree $d$ covers $D_{k^{\prime}} \to (C^{\gen} - \Sigma)_{k^{\prime}}$ has an upper bound depending only on $C^{\gen} - \Sigma$ and thus only on $p$. This can be seen from the fact that the corresponding Hurwitz spaces are finite \'{e}tale over the corresponding configuration spaces of points in $C$ (e.g. \cite[Theorem 4.11]{rw_hur} or \Cref{proposition:hurwitz_etale} below). Let this upper bound be $\beta$. All the $G_D,G_{\widehat{D}_i}$ have order bounded above by $d!$, and each $V_d(\widehat{C}_i)_{\widehat{D}_i} // G_d(\widehat{C}_i)$ is a subgroupoid of $V_d(\widehat{C}_i)_{b_i} // G_d(\widehat{C}_i)$. We thus have an upper bound \begin{align*}
&\#(p \times_{\mathcal{M}} \mathcal{R}^d)(k) \\
&\le \#\left(\bigsqcup_D\left(BG_D \times_{BG_{\widehat{D}_1}} V_d(\widehat{C}_1)_{\widehat{D}_1,b_1} // G_d(\widehat{C}_1) \times_{BG_{\widehat{D}_2}} \cdots \times_{BG_{\widehat{D}_r}} V_d(\widehat{C}_r)_{\widehat{D}_r,b_r} // G_d(\widehat{C}_r)\right)\right) \\
&\le \beta\max_D\#\left(BG_D \times_{BG_{\widehat{D}_1}} V_d(\widehat{C}_1)_{\widehat{D}_1,b_1} // G_d(\widehat{C}_1) \times_{BG_{\widehat{D}_2}} \cdots \times_{BG_{\widehat{D}_r}} V_d(\widehat{C}_r)_{\widehat{D}_r,b_r} // G_d(\widehat{C}_r)\right) \\
&\le \beta(d!)^b\prod_i\#\left(V_d(\widehat{C}_i)_{b_i} // G_d(\widehat{C}_i)\right).
\end{align*}
Note that if we replace $\widehat{C}_i$ with $(\widehat{C}_i)_{k^{\prime}} = \bigsqcup_j\widehat{C}_{ij}^{\prime}$, then $$\prod_jV_d(\widehat{C}_{ij})_{b_i} // G_d(\widehat{C}_{ij}) \simeq V_d((\widehat{C}_i)_{k^{\prime}})_{b_i} // G_d((\widehat{C}_i)_{k^{\prime}}),$$ and so \begin{align*}
\#(p \times_{\mathcal{M}} \mathcal{R}^d)(k^{\prime}) &\le \beta(d!)^b\prod_{i,j}\#\left(V_d(\widehat{C}_{ij})_{b_i} // G_d(\widehat{C}_{ij})\right) \\
&= \beta(d!)^b\prod_i\#\left(V_d((\widehat{C}_i)_{k^{\prime}})_{b_i} // G_d((\widehat{C}_i)_{k^{\prime}})\right)
\end{align*}
Taking $\alpha = \beta(d!)^b$, we are done.
\end{proof}

We would like to apply the Lang-Weil bounds \cite{lw_number_of_points} to bound the dimension of the fiber $p \times_{\mathcal{M}} \mathcal{R}^d$. However, we could not find the statement for Deligne-Mumford stacks like $p \times_{\mathcal{M}} \mathcal{R}^d$ in the literature. Note that all the automorphism groups appearing in $p \times_{\mathcal{M}} \mathcal{R}^d$ have order dividing $d!$, as they embed into $\mathbf{S}_d$; this means that $p \times_{\mathcal{M}} \mathcal{R}^d$ is tame. Hence, for our purposes, it suffices to prove the Lang-Weil bounds for tame Deligne-Mumford stacks.

\begin{lemma}\label{lemma:lang_weil_dm}
Let $\mathcal{X}$ be a tame separated Deligne-Mumford stack of finite type over $\mathbb{F}_q$. In all the asymptotics below, we take $r \to \infty$. \begin{enumerate}[(a)]
\item If $\#\mathcal{X}(\mathbb{F}_{q^r}) = O(q^{rn})$ for some $n \in \mathbb{N}$, then $\dim\mathcal{X} \le n$.

\item If there exist some $s,C \in \mathbb{N}_{> 0}$ and $n \in \mathbb{N}$ such that $\#\mathcal{X}(\mathbb{F}_{q^{rs}}) = Cq^{rsn} + O\left(q^{rs\left(n - \frac{1}{2}\right)}\right)$, then $\dim\mathcal{X} = n$.
\end{enumerate}
\end{lemma}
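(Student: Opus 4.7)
The plan is to reduce to the classical Lang--Weil bounds for schemes by passing to the coarse moduli space and showing that the weighted point count of $\mathcal{X}$ agrees with the unweighted point count of its coarse space. By Keel--Mori, since $\mathcal{X}$ is separated Deligne--Mumford of finite type over $\mathbb{F}_q$, it admits a coarse moduli space $\rho: \mathcal{X} \to X$ with $X$ a separated algebraic space of finite type over $\mathbb{F}_q$; since $\rho$ is proper with finite fibers, $\dim X = \dim \mathcal{X}$.

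The key step is to establish $\#\mathcal{X}(\mathbb{F}_{q^r}) = \#X(\mathbb{F}_{q^r})$ for every $r$. For each $x \in X(\mathbb{F}_{q^r})$, the fiber $\mathcal{X}_x \coloneqq \mathcal{X} \times_X \Spec(\mathbb{F}_{q^r})$ is a gerbe over $\Spec(\mathbb{F}_{q^r})$ banded by a finite étale group scheme $G_x$ (étale because $\mathcal{X}$ is tame, so $|G_x|$ is coprime to $\mathrm{char}(\mathbb{F}_q)$). Since $\mathbb{F}_{q^r}$ has cohomological dimension one, every such gerbe is neutral, and hence $\mathcal{X}_x \cong BG_x$. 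A direct groupoid cardinality computation gives $\#(BG_x)(\mathbb{F}_{q^r}) = 1$: isomorphism classes of $G_x$-torsors correspond to Frobenius-conjugacy classes $[g]$ in $G_x(\overline{\mathbb{F}_q})$, each with automorphism group of order $|Z_{G_x}(g)|$, and $\sum_{[g]} 1/|Z_{G_x}(g)| = 1$ by the class equation. Summing over $x \in X(\mathbb{F}_{q^r})$ then yields the desired equality.

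Next, I would invoke Lang--Weil for the algebraic space $X$. Every quasi-separated algebraic space of finite type over a field admits a dense open subscheme, so the scheme-theoretic Lang--Weil bounds upgrade to algebraic spaces by Noetherian induction on dimension: apply Lang--Weil to the open scheme locus and handle the strictly lower-dimensional complement inductively. This yields the upper bound $\#X(\mathbb{F}_{q^r}) = O(q^{r \dim X})$, and if $\dim X = m$ then there exist $c, s > 0$ such that $\#X(\mathbb{F}_{q^{rs}}) \ge c\, q^{rsm}$ for all sufficiently large $r$ (by choosing a top-dimensional geometrically irreducible component defined over $\mathbb{F}_{q^s}$).

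Part (a) then follows by contraposition: if $\dim \mathcal{X} = m > n$, then $\#\mathcal{X}(\mathbb{F}_{q^{rs}}) = \#X(\mathbb{F}_{q^{rs}}) \ge c\, q^{rsm}$ along the subsequence $\{rs\}$, which contradicts $\#\mathcal{X}(\mathbb{F}_{q^r}) = O(q^{rn})$ since $m > n$ and $rs \to \infty$. For part (b), applying (a) along the subsequence $\{rs\}$ gives $\dim \mathcal{X} \le n$, while the lower bound $\#\mathcal{X}(\mathbb{F}_{q^{rs}}) \ge \tfrac{C}{2} q^{rsn}$ combined with the algebraic-space upper bound $\#X(\mathbb{F}_{q^r}) = O(q^{r \dim X})$ forces $\dim \mathcal{X} \ge n$, so $\dim \mathcal{X} = n$. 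The main technical subtlety is the gerbe-triviality step, which relies on the tameness hypothesis (making the inertia étale) together with the vanishing of higher Galois cohomology of finite fields; once this is in place the remaining ingredients are entirely standard.
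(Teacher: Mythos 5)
Your proposal is correct, and it reaches the same waypoint as the paper (equality of point counts of $\mathcal{X}$ and its coarse space $X$) by a genuinely different argument. The paper establishes $\#\mathcal{X}(\mathbb{F}_{q^r}) = \#X(\mathbb{F}_{q^r})$ cohomologically: tameness implies, via \cite[Proposition A.0.1]{acv_twisted_bundles}, that $\rho_*$ induces an isomorphism on $H_c^*$ with $\mathbb{Q}_\ell$-coefficients, and Sun's Lefschetz trace formula for stacks then converts this into the equality of weighted point counts. You instead work fiber by fiber: you use tameness so that the coarse moduli map commutes with base change and has \'{e}tale inertia, identify the fiber over each $\mathbb{F}_{q^r}$-point with a gerbe banded by a finite \'{e}tale group (strictly speaking the \emph{reduction} of the fiber is such a gerbe, since $\rho$ need not be flat, but passing to the reduction leaves the point count unchanged), invoke neutrality of gerbes over a field of cohomological dimension one, and compute $\#BG(\mathbb{F}_{q^r}) = 1$ via the class equation applied to Frobenius-conjugacy classes. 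Your route is more elementary and self-contained, avoiding the stacky trace formula entirely; the paper's is shorter given its citations, and the $H_c^*$-comparison it invokes is part of the same package used elsewhere. Both then reduce to Lang--Weil for algebraic spaces in the same way (dense open subscheme plus Noetherian induction on the complement), and your deductions of (a) and (b), including the base-change trick to apply (a) along the subsequence $rs$, are correct.
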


\begin{proof}
Consider the morphism to the coarse moduli space $\rho: \mathcal{X} \to X$. By \cite[Proposition A.0.1]{acv_twisted_bundles}, the Frobenius-equivariant maps $H_c^i(\mathcal{X}_{\overline{\mathbb{F}}_q},\mathbb{Q}_{\ell}) \to H_c^i(X_{\overline{\mathbb{F}}_q},\mathbb{Q}_{\ell})$ are isomorphisms because $\mathcal{X}$ is tame; these maps exist because $\rho$ is proper. By Sun's fixed point formula for stacks \cite[Theorem 1.1]{s_l_series}, the point counts of $\mathcal{X}$ and $X$ agree. Because $\dim\mathcal{X} = \dim X$, we can deduce both (a) and (b) using the Lang-Weil bounds for the algebraic space $X$ \cite[Lemma 5.1]{l_geometric_average}, which can be proven using the fact that a quasi-separated algebraic space has a dense open subset represented by a scheme.
\end{proof}

\begin{proposition}\label{proposition:fiber_bound_34}
Assume $k$ is algebraically closed. Suppose $p \in \mathcal{M}^{b,n}(k)$ classifies an object $(C;\Sigma;\sigma)$ such that $C$ is smooth and $\Sigma = \sum_ib_i[q_i]$ for pairwise distinct points $q_i \in C(k)$. Then we have the following bounds on fiber dimensions:
\begin{enumerate}[(a)]
\item $\dim p \times_{\mathcal{M}} \mathcal{R}^3 \le \sum_i\lfloor\frac{b_i}{6}\rfloor$.

\item $\dim p \times_{\mathcal{M}} \mathcal{R}^4 \le \sum_i\lfloor\frac{b_i}{4}\rfloor$.
\end{enumerate}
\end{proposition}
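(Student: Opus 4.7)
The plan is to bound $\dim(p \times_{\mathcal{M}} \mathcal{R}^d)$ via $\mathbb{F}_q$-point counts. After reducing to a finite field, we will apply \Cref{corollary:fiber_finite_field} to express an upper bound in terms of local counts of $G_d$-orbits on $V_d$, use \Cref{corollary:local_cover_count} to identify each local count with a coefficient of Igusa's zeta function, and finally invoke the Lang--Weil bound for tame Deligne--Mumford stacks (\Cref{lemma:lang_weil_dm}(a)).

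For the reduction, mark the support points of $\Sigma$ as additional disjoint sections and spread out $(C; \Sigma; \sigma; \{q_i\})$ to a finitely generated $\mathbb{Z}[1/d!]$-subalgebra $A \subset k$, obtaining a morphism $\mathcal{Y}_A \to \Spec A$ whose base change along $A \to k$ recovers $p \times_{\mathcal{M}} \mathcal{R}^d$. By upper semicontinuity of fiber dimension, the locus $\{s \in \Spec A : \dim(\mathcal{Y}_A)_s \le \dim(p \times_{\mathcal{M}} \mathcal{R}^d)\}$ is open and contains the generic point of $\Spec A$, hence contains a closed point whose residue field is some $\mathbb{F}_q$; the corresponding $p' \in \mathcal{M}(\mathbb{F}_q)$ satisfies $\dim(p' \times_{\mathcal{M}} \mathcal{R}^d) = \dim(p \times_{\mathcal{M}} \mathcal{R}^d)$, and each $q_i$ specializes to an $\mathbb{F}_q$-rational point, so $(\widehat{C}'_i)_{\mathbb{F}_{q^m}} \simeq \Spec \mathbb{F}_{q^m}[[t]]$ for every $m \ge 1$. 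Combining \Cref{corollary:fiber_finite_field} and \Cref{corollary:local_cover_count} then yields
\[
\#(p' \times_{\mathcal{M}} \mathcal{R}^d)(\mathbb{F}_{q^m}) \le \alpha \prod_i \big([t^{b_i}] I_d(q^m, t)\big),
\]
with $\alpha$ independent of $m$.

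The key remaining estimate is that for fixed $b$, $[t^b] I_3(q, t) = O(q^{\lfloor b/6 \rfloor})$ and $[t^b] I_4(q, t) = O(q^{\lfloor b/4 \rfloor})$ as $q \to \infty$. For $d = 3$, expanding $(1 - t^2)^{-1}(1 - q t^6)^{-1}$ as a double geometric series shows that $[t^b]$ of this product equals $\sum_n q^n$ summed over nonnegative integers $n$ with $6n \le b$ and $b - 6n$ even; this is bounded by $1 + q + \cdots + q^{\lfloor b/6 \rfloor}$, and multiplying by the numerator polynomial $1 + t + t^2 + t^3 + t^4$ only alters the implied constant. For $d = 4$, write $I_4(q, t) = f(q, t) / \prod_j (1 - q^{a_j} t^{c_j})$, where $(a_j, c_j)$ ranges over $(0, 1), (0, 2), (1, 6), (2, 8), (3, 12)$. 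Since $a_j / c_j \le 1/4$ for every $j$, optimizing $\max \sum_j n_j a_j$ subject to $\sum_j n_j c_j \le b'$ and $n_j \ge 0$ shows that $[t^{b'}] \prod_j (1 - q^{a_j} t^{c_j})^{-1}$ is bounded by a polynomial in $b'$ times $q^{\lfloor b'/4 \rfloor}$. A direct check on the sixteen monomials $\mu q^a t^c$ of $f(q, t)$ verifies that $4a \le c$ in each case; combining the two bounds gives $[t^b] I_4(q, t) = O(q^{\lfloor b/4 \rfloor})$.

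Substituting these estimates into the point-count inequality gives $\#(p' \times_{\mathcal{M}} \mathcal{R}^d)(\mathbb{F}_{q^m}) = O((q^m)^{\sum_i \lfloor b_i / c_d \rfloor})$ as $m \to \infty$, where $c_3 = 6$ and $c_4 = 4$, and \Cref{lemma:lang_weil_dm}(a) concludes the proof. The main obstacle is the combinatorial verification for $d = 4$: the numerator $f(q, t)$ has mixed signs, but since the Igusa coefficients count nonnegative quantities and we only need an upper bound, applying the triangle inequality termwise to each monomial of $f$ preserves the desired growth rate.
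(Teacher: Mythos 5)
Your proposal is correct and takes essentially the same approach as the paper's proof: bound $\mathbb{F}_q$-point counts of the fiber via \Cref{corollary:fiber_finite_field}, estimate the Igusa coefficients, and conclude with \Cref{lemma:lang_weil_dm}(a) plus a spreading-out step. The only substantive difference is organizational — the paper first proves the finite-field case, then spreads out and uses properness of $\br$ to conclude that all closed fibers bound the generic one, whereas you spread out first and locate a single closed point with the same fiber dimension; both versions rely on properness of $\br$ (from \Cref{theorem:rd_nice}(b)) for the upper semicontinuity on the target, which you should cite explicitly since non-proper morphisms can have fiber dimension drop at specializations. Your explicit verification that each monomial $\mu q^a t^c$ of $f(q,t)$ satisfies $4a \le c$, and the resulting linear-programming bound on the denominator contributions, usefully fills in what the paper states as a one-line ``observation.''
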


\begin{proof}
\begin{enumerate}[(a)]
\item We first assume that $k = \overline{\mathbb{F}}_q$ and that all the points in the support of $p$ are defined over $\mathbb{F}_q$. We observe that in \Cref{theorem:igusa_computation_3_and_4}, the coefficient $$\#(V_3(\mathbb{F}_{q^r}[[t]])_{b_i} // G_3(\mathbb{F}_{q^r}[[t]]))$$ of $t^{b_i}$ is $O\left(q^{r\lfloor\frac{b_i}{6}\rfloor}\right)$ as $r \to \infty$. Thus, \Cref{corollary:fiber_finite_field} and \Cref{lemma:lang_weil_dm}(a) imply that $\dim p \times_{\mathcal{M}} \mathcal{R}^3 \le \sum_i\lfloor\frac{b_i}{6}\rfloor$ in the case that $p$ is defined over a finite field.

For the general case, we can use the following spreading out argument. Because $\mathcal{M}$ is locally of finite type over $\mathbb{S}_3$, we can find some finitely generated $\mathbb{Z}$-algebra $A \subset k$ and a map $\Spec A \to \mathcal{M}^{b,n}$ such that $p$ is the composite $\Spec k \to \Spec A \to \mathcal{M}^{b,n}$. All the fibers of $\Spec A \times_{\mathcal{M}} \mathcal{R}^3 \to \Spec A$ over closed points have dimension $\le \sum_i\lfloor\frac{b_i}{6}\rfloor$ by the case $k = \overline{\mathbb{F}}_q$. Because $\br$ is proper, we conclude that the generic fiber $p \times_{\mathcal{M}} \mathcal{R}^3$ has dimension $\le \sum_i\lfloor\frac{b_i}{6}\rfloor$.

\item The exact same argument as in part (a) works, except that we use the bound $$\#(V_4(\mathbb{F}_{q^r}[[t]])_{b_i} // G_4(\mathbb{F}_{q^r}[[t]])) = O\left(q^{r\lfloor\frac{b_i}{4}\rfloor}\right)$$ from \Cref{theorem:igusa_computation_3_and_4} instead.
\end{enumerate}
\end{proof}

\section{Some stacks related to $\mathcal{R}^d$}\label{section:related_stacks}
In this section, we show how our construction of $\mathcal{R}^d$ leads to constructions of other related stacks, such as Hurwitz stacks and marked variants of $\mathcal{R}^d$.

\subsection{Hurwitz stacks}\label{section:hurwitz_stacks}
In this section, we construct Hurwitz stacks as open substacks of our $\mathcal{R}^d$.

Recall that for a smooth curve $C$ over a field, a finite separable cover $D \to C$ is \emph{simply branched} if the branch divisor $\Sigma_D \subset C$ is geometrically reduced. Then $D$ must be smooth by the following argument. Consider the normalization $D^{\nu} \to D \to C$, and let $c_1,\ldots,c_r \in C$ be the branch points of $D \to C$. Then by \cite[Proposition 7.3]{d_compact}, we have the following equation involving the branch divisors of $D$ and $D^{\nu}$: $$\Sigma_D = \Sigma_{D^{\nu}} + 2\sum_{i = 1}^r\length_{\mathcal{O}_{C,c_i}}(\mathcal{O}_{D^{\nu},c_i}/\mathcal{O}_{D,c_i}),$$ where $\mathcal{O}_{D,c_i} = \mathcal{O}_D \otimes_{\mathcal{O}_C} \mathcal{O}_{C,c_i}$ and $\mathcal{O}_{D^{\nu},c_i} = \mathcal{O}_{D^{\nu}} \otimes_{\mathcal{O}_C} \mathcal{O}_{C,c_i}$. Because $\Sigma_D$ is geometrically reduced, all the coefficients in $\Sigma_D$ are at most 1, so all the terms $\length_{\mathcal{O}_{C,c_i}}(\mathcal{O}_{D^{\nu},c_i}/\mathcal{O}_{D,c_i})$ are 0. Thus, $D^{\nu} = D$, and $D$ is smooth.

Let $\mathcal{M}^{\circ} \subset \mathcal{M}$ be the open substack parametrizing $(C \to S;\Sigma;\sigma)$ such that for every geometric point $\overline{s} \to S$, the geometric fiber $\Sigma_{\overline{s}}$ is reduced. Define the \emph{big Hurwitz stack of simply branched degree $d$ covers} $\mathcal{H}^d \coloneqq \br^{-1}(\mathcal{M}^{\circ}) \subset \mathcal{R}^d$. By \Cref{proposition:disc_agrees}, requiring the branch divisor of $\mathcal{C} \to \mathcal{X}_d$ to have geometrically reduced fibers is equivalent to requiring the cover of $C^{\gen}$ classified by $\mathcal{C} \to \mathcal{X}_d \to \Covers_d$ to be simply branched.

\begin{proposition}\label{proposition:hurwitz_moduli}
To give an object $(\mathcal{C} \to C \to S;\sigma;\chi) \in \mathcal{H}^d(S)$, it is equivalent to give data $(\mathcal{C} \to C \to S;\sigma;\varphi: \mathcal{D} \to \mathcal{C})$, where \begin{enumerate}[(1)]
\item $(\mathcal{C} \to C \to S;\sigma)$ is a pointed orbinodal curve.

\item $\varphi: \mathcal{D} \to \mathcal{C}$ is a degree $d$ cover such that for each geometric point $\overline{s} \to S$, $\varphi_{\overline{s}}: \mathcal{D}_{\overline{s}} \to \mathcal{C}_{\overline{s}}$ is \begin{itemize}
\item \'{E}tale over the generic points, orbinodes, and marked points of $\mathcal{C}_{\overline{s}}$.

\item Simply branched over $C_{\overline{s}}^{\gen}$.
\end{itemize}
\end{enumerate}
\end{proposition}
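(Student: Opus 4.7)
The plan is to build the equivalence by composing $\chi$ with the map $\phi_d: \mathcal{X}_d \to \Covers_d$ of \Cref{section:phi_d} in one direction, and inverting this using the Gorenstein parametrization theorem (\Cref{theorem:lvw_gorenstein}) in the other. For the forward direction, given $(\mathcal{C} \to C \to S;\sigma;\chi) \in \mathcal{H}^d(S)$, I would form $\mathcal{C} \xrightarrow{\chi} \mathcal{X}_d \xrightarrow{\phi_d} \Covers_d$, which classifies a degree $d$ cover $\varphi: \mathcal{D} \to \mathcal{C}$. The condition that $\chi$ sends generic, nodal, and marked points to $\mathcal{E}_d$ translates, via \Cref{corollary:vss_bsd} (which identifies $\mathcal{E}_d \cong B\mathbf{S}_d$), into \'etaleness of $\varphi$ at those points. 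Meanwhile, the condition that the branch divisor $\br\chi$ is geometrically reduced, combined with \Cref{proposition:disc_agrees} (the discriminants on $\mathcal{X}_d$ and $\Covers_d$ agree), shows $\varphi$ is simply branched over $C^{\gen}$.

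For the reverse direction I need to lift a classifying map $\mathcal{C} \to \Covers_d$ coming from such a $\varphi$ back along $\phi_d$ to $\mathcal{X}_d$, and check the lift is unique up to unique isomorphism. When $d=3$ this is automatic since $\phi_3$ is already an isomorphism. For $d \in \{4,5\}$ I would argue that $\varphi$ is actually a Gorenstein cover: at generic, nodal, and marked points $\varphi$ is \'etale, and over $C^{\gen}$ simple branching forces $\mathcal{D}$ to be smooth over the base in a neighborhood (as recalled in the paragraph introducing $\mathcal{M}^{\circ}$, a simply branched cover of a smooth curve is smooth). Hence each geometric fiber of $\mathcal{D} \to S$ has only at worst nodal singularities inherited from $\mathcal{C}$, which are Gorenstein. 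By \Cref{theorem:lvw_gorenstein}(a), $\phi_d$ restricts to an isomorphism $[U_d/G_d] \xrightarrow{\sim} \Covers_d^{\Gor}$, so the classifying map of $\varphi$ lifts uniquely to a map $\chi: \mathcal{C} \to [U_d/G_d] \subset \mathcal{X}_d$. (For $d=4$ this is exactly \Cref{corollary:gorenstein_quartic_unique}.) The image lands in $\mathcal{E}_d$ over generic, nodal, and marked points because $\phi_d$ restricts to an isomorphism $\mathcal{E}_d \xrightarrow{\sim} B\mathbf{S}_d$ by \Cref{corollary:vss_bsd} and $\varphi$ is \'etale there; representability of $\chi$ is automatic because the resulting map from the generic locus classifies an $\mathbf{S}_d$-torsor rather than merely an $\mathbf{S}_d$-cover.

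These two constructions are mutually inverse up to canonical isomorphism by the very definitions of $\phi_d$ and \Cref{theorem:lvw_gorenstein}, and they are manifestly functorial in $S$ and in morphisms of orbinodal curves, giving an equivalence of groupoids. The one subtlety that I would highlight as the main obstacle is the uniqueness of the resolvent lift for $d \in \{4,5\}$: a priori the map $\phi_d$ is not an isomorphism, and different resolvent data could produce the same cover. The essential input resolving this is \Cref{theorem:lvw_gorenstein}(a), which hinges on the fact that simply branched (hence Gorenstein) covers admit a unique resolvent — an input that is not available over the larger branch locus in $\mathcal{R}^d$ and is exactly why $\mathcal{R}^d$ genuinely carries more structure than $\mathcal{H}^d$.
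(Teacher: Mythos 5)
Your proposal takes essentially the same route as the paper's proof: both hinge on $\phi_d$ restricting to an isomorphism over the Gorenstein locus (\Cref{theorem:lvw_gorenstein}(a)), together with the observation that the covers in question are Gorenstein because they are \'etale at the orbinodes and marked points and simply branched (hence smooth) over $C^{\gen}$. The paper phrases this more tersely — it just observes that the composite $\mathcal{C} \to \mathcal{X}_d \to \Covers_d$ always lands in $\Covers_d^{\Gor}$ — but the content is the same.

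One quibble: your remark that ``representability of $\chi$ is automatic because the resulting map from the generic locus classifies an $\mathbf{S}_d$-torsor rather than merely an $\mathbf{S}_d$-cover'' is not really a proof of anything. Representability of $\chi$ fails precisely when the automorphism group at an orbinode or marked point of $\mathcal{C}$ does not act faithfully on the fiber of $\varphi$, and nothing in condition (2) rules this out (e.g.\ the trivial split cover over an orbinodal curve with nontrivial stacky structure). The condition is implicitly understood as part of the moduli data, and the paper's proof also glosses over it, so this is a shared omission rather than a flaw unique to your argument — but you should not assert that it is automatic when it is in fact an additional constraint on the data in (2).
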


\begin{proof}
Because $\phi_d: \mathcal{X}_d \to \Covers_d$ restricts to an isomorphism over the Gorenstein locus $\Covers_d^{\Gor} \subset \Covers_d$ (\Cref{theorem:lvw_gorenstein}), it suffices to show that for any morphism $\mathcal{C} \to \mathcal{X}_d$ classified by a geometric point $\overline{s} \in \mathcal{H}^d(k)$, the composite $\mathcal{C} \to \mathcal{X}_d \to \Covers_d$ lands in $\Covers_d^{\Gor}$. The orbinodes and marked points of $\mathcal{C}$ are sent to the \'{e}tale locus $B\mathbf{S}_d \subset \Covers_d^{\Gor}$, so we just have to check that $C^{\gen}$ is sent to $\Covers_d^{\Gor}$. However, for any simply branched cover $D \to C^{\gen}$, $D$ must be smooth. Thus, $D \to C^{\gen}$ is Gorenstein, and the composite $\mathcal{C} \to \mathcal{X}_d \to \Covers_d$ lands in $\Covers_d^{\Gor}$, as desired.
\end{proof}

\begin{proposition}\label{proposition:hurwitz_etale}
The morphism $\br: \mathcal{H}^{d,\sm} \to \mathcal{M}^{\sm}$ is \'{e}tale.
\end{proposition}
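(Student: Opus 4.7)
The plan is to show that $\br: \mathcal{H}^{d,\sm;b,*} \to \mathcal{M}^{\sm;b,*}$ is a finite, flat, unramified morphism between smooth Deligne-Mumford stacks of the same dimension. This reduces to three points.

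First, I would confirm that source and target are smooth of equal relative dimension over $\mathcal{M}^{\sm;0,*}$. By \Cref{corollary:sm_smooth}, $\mathcal{R}^{d,\sm;b,*}\to \mathcal{M}^{\sm;0,*}$ is smooth of relative dimension $b$, and this descends to the open substack $\mathcal{H}^{d,\sm;b,*}$. On the other hand, the fiber of $\mathcal{M}^{\sm;b,*}\to \mathcal{M}^{\sm;0,*}$ over $(C;\sigma)$ is an open subscheme of $\Sym^b(C^{\gen})$, so this morphism is also smooth of relative dimension $b$. Hence the two smooth stacks over $\mathcal{M}^{\sm;0,*}$ have the same dimension.

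Second, I would show that $\br$ is proper with finite geometric fibers, hence finite. Properness and representability by DM stacks come from \Cref{theorem:rd_nice}(b). For a geometric point $(C;\Sigma;\sigma)\in \mathcal{M}^{\sm;b,*}(k)$ with $\Sigma$ reduced, \Cref{proposition:hurwitz_moduli} identifies $\br^{-1}(C;\Sigma;\sigma)$ with isomorphism classes of degree $d$ covers $D\to C^{\gen}$ that are étale outside $\Sigma$ and simply branched over each point of $\Sigma$. By \Cref{lemma:etale_orbi}, these are in bijection with isomorphism classes of degree $d$ étale covers of $C^{\gen}-\Sigma$ satisfying the transposition monodromy condition at each point of $\Sigma$, and hence (by the classical correspondence with representations of $\pi_1^{\text{ét}}(C^{\gen}-\Sigma)$ into $\mathbf{S}_d$) form a finite set. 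Thus $\br$ is proper and quasi-finite, so finite. A finite morphism with smooth source and smooth target of the same dimension is automatically flat by miracle flatness.

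Finally, I would verify that $\br$ is unramified by showing the geometric fibers are reduced. Using the adapted affine cover of the proof of \Cref{proposition:rd_smooth}, a first-order deformation of $\chi$ over the trivial deformation of $(C;\Sigma;\sigma)$ decomposes into local contributions at each branch point $p_i$, so it suffices to check that at a simply branched point $p_i$ (where $b_i=1$), the only such local deformation preserving the branch divisor is the trivial one. By \Cref{proposition:dim_def_chi_i} and \Cref{lemma:lie_alg_fact}, the space of first-order deformations of $\chi_i$ is the $1$-dimensional quotient $V_d(\widehat{\mathcal{O}}_{U_i,p_i})/(\mathfrak{g}_d(\widehat{\mathcal{O}}_{U_i,p_i})\cdot v)$, while the local branch divisor also has a $1$-dimensional deformation space. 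The linearization of $\br$ sends $u\mapsto d\Delta_d|_v(u) \pmod{\Delta_d(v)}$, and injectivity of this map reduces, after passage to the closed fiber, to the nonvanishing of $d\Delta_d$ at $\bar v\in V_d(k)$ modulo $\mathfrak{g}_d(k)\cdot \bar v$.

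The main obstacle is this last nonvanishing statement, i.e.\ the smoothness of the discriminant hypersurface $\{\Delta_d=0\}\subset V_d$ at points corresponding to simply branched covers with transposition monodromy. For each of $d\in\{3,4,5\}$ this is a classical fact about $(G_d,V_d)$ and can be verified directly from the explicit formulas for $\Delta_d$ recorded in \Cref{section:prehomog_covs}; one can also deduce it from the Gorenstein parametrization (\Cref{theorem:lvw_gorenstein}) together with the fact that the local model $y^2=x$ for a simple branch has no nontrivial equisingular deformation when $\mathrm{char}(k)\nmid d!$.
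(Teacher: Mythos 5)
Your route is genuinely different from the paper's, though the two ultimately have to hit the same local issue. You establish \'{e}taleness as finite (proper $+$ quasi-finite) $+$ flat (miracle flatness on equidimensional smooth stacks) $+$ unramified (reduced geometric fibers). The paper instead verifies the infinitesimal lifting criterion directly: using the moduli description of $\mathcal{H}^d$ from \Cref{proposition:hurwitz_moduli} (simply branched $\Rightarrow$ Gorenstein $\Rightarrow$ no extra resolvent data) and an adapted affine cover as in \Cref{proposition:rd_smooth}, it localizes the lifting problem to the individual branch points and invokes Fulton's theorem on deformations of tamely ramified covers to get a unique local lift with prescribed branch divisor. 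That argument gives formal \'{e}taleness in one stroke; you get flatness for free via miracle flatness and then have to argue unramifiedness separately. Your framework is sound: the finiteness, the miracle-flatness step, and the reduction of unramifiedness to a local computation at each branch point are all correct, and the dimension bookkeeping (deformations of $\chi_i$ form the $b_i$-dimensional space $V_d(\widehat{\mathcal{O}}_{U_i,p_i})/\mathfrak{g}_d(\widehat{\mathcal{O}}_{U_i,p_i})v$, matching the $b_i$ deformation parameters of the local branch divisor) is the right comparison.

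Where you stop short is the step you flag yourself: you need $d\Delta_d|_{\bar v}\neq 0$ for $\bar v\in V_d(k)$ lying in the ``discriminant exactly $1$'' orbit, i.e.\ smoothness of the discriminant hypersurface along that orbit. You suggest two remedies. The second one --- translate across $\phi_d:\mathcal{X}_d^{\Gor}\xrightarrow{\sim}\Covers_d^{\Gor}$ and use that the local model $y^2=x$ of a tame simple branch admits no nontrivial deformation with fixed branch divisor --- is in substance exactly the paper's invocation of Fulton's Theorem 4.8, so that path closes the gap cleanly and converges with the paper. The first remedy (verify $d\Delta_d\neq 0$ from the explicit degree $4$, $12$, $40$ discriminant polynomials) would also work but is a nontrivial computation for $d=4$, and for $d=5$ the explicit formula for $\Delta_5$ is unwieldy enough that this is not really a check one can wave at; as stated it is an unverified assertion. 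So: correct architecture, one genuinely different ingredient (miracle flatness), but the crux is asserted rather than proved unless you take the Fulton/Gorenstein route, at which point the two proofs merge.
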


\begin{proof}
Recall that $\mathcal{H}^{d,\sm}$ and $\mathcal{M}^{\sm}$ are locally of finite type over $\mathbb{S}_d$ by \Cref{theorem:rd_nice}(a) and \Cref{proposition:deopurkar_m_smooth}, respectively. Thus, we can use the infinitesimal lifting criterion. We have the following setup: \begin{itemize}
\item $A^{\prime} \to A$ is a small extension of Artinian local rings with algebraically closed residue field $k$.

\item $(C_{A^{\prime}} \to \Spec A^{\prime};\Sigma_{A^{\prime}};\sigma_{A^{\prime}})$ is an object of $\mathcal{M}^{\sm}(A^{\prime})$.

\item $(\mathcal{C}_A \to C_A \to \Spec A;\sigma_A;\varphi: \mathcal{D}_A \to \mathcal{C}_A)$ is an object of $\mathcal{H}^{d,\sm}(A)$, where $C_A \coloneqq (C_{A^{\prime}})_A$ (here, we use the description of $\mathcal{H}^d$ from \Cref{proposition:hurwitz_moduli}).
\end{itemize}
To check the infinitesimal lifting criterion, we need to show that there is a unique object $(\mathcal{C}_{A^{\prime}} \to C_{A^{\prime}} \to \Spec A^{\prime};\sigma_{A^{\prime}};\varphi_{A^{\prime}}: \mathcal{D}_{A^{\prime}} \to \mathcal{C}_{A^{\prime}}) \in \mathcal{H}^{d,\sm}(A^{\prime})$ up to unique isomorphism that lifts $(\mathcal{C}_A \to C_A \to \Spec A;\sigma_A;\varphi: \mathcal{D}_A \to \mathcal{C}_A)$.

We first observe that by \Cref{theorem:cadman_root_equivalence}, there is a unique choice of $\mathcal{C}_{A^{\prime}} \to C_{A^{\prime}}$ up to unique isomorphism, namely the one with the same automorphism groups at the marked points as $\mathcal{C}_A$. We will let $\mathcal{C}_{A^{\prime}}$ be this unique choice. Hence, the only thing left to lift is the cover $\varphi_A: \mathcal{D}_A \to \mathcal{C}_A$.

Now as in the proof of \Cref{proposition:rd_smooth}, we can approach the problem locally. Let $\{U_i\}$ be an adapted affine open cover of $C \coloneqq (C_A)_k$. Recall that this means each marked point or branch point (of $D \to C$) on $C$ is contained in at most one $U_i$ and each such point is contained in at most one $U_i$. As in \Cref{section:rd_smooth}, let $\mathcal{U}_i$ denote the preimage of $U_i$ in $\mathcal{C} \coloneqq (\mathcal{C}_A)_k$, let $U_{i,A}$ denote the preimage of $U_i$ in $C_A$, etc. Let $V_i$ denote the preimage of $U_i$ in $D$; we will use the same conventions $\mathcal{V}_i,V_{i,A}$, etc.

As in the proof of \Cref{proposition:rd_smooth}, for each $\mathcal{U}_{i,A^{\prime}}$ not containing a branch point, there is a unique \'{e}tale cover $\mathcal{V}_{i,A^{\prime}} \to \mathcal{U}_{i,A^{\prime}}$ up to unique isomorphism that lifts the \'{e}tale cover $\mathcal{V}_{i,A} \to \mathcal{U}_{i,A}$. Similarly, for every $U_{ij,A^{\prime}}$, there is a unique \'{e}tale cover $V_{ij,A^{\prime}}$ up to unique isomorphism that lifts the \'{e}tale cover $V_{ij,A} \to U_{ij,A}$. By this uniqueness on overlaps, the groupoid of lifts of $\varphi_A: \mathcal{D}_A \to \mathcal{C}_A$ to $A^{\prime}$ is equivalent to the product over the $U_i$ containing branch points of the groupoids of lifts of $V_{i,A} \to U_{i,A}$ to $A^{\prime}$ with branch divisor $\Sigma_{A^{\prime}} \subset U_{i,A^{\prime}}$.

It remains to show that there is a unique lift of $V_{i,A} \to U_{i,A}$ to $A^{\prime}$ up to unique isomorphism with branch divisor $\Sigma_{A^{\prime}} \subset U_{i,A^{\prime}}$. Because 2 is invertible, the cover $V_i \to U_i$ is tamely ramified. Thus, \cite[Theorem 4.8]{f_hurwitz} implies that we get a unique deformation $V_{i,A^{\prime}} \to U_{i,A^{\prime}}$ up to unique isomorphism. This concludes the proof of the proposition.
\end{proof}

\subsection{Stacks of marked covers with resolvents}\label{section:stacks_marked}
Recall from \Cref{section:top_hurwitz} that from the topological and quantum algebraic point of view, it is more natural to study marked Hurwitz spaces, which parametrize $G$-covers of the closed unit disc with a marked point in the fiber above 1, than unmarked Hurwitz spaces, which are obtained by taking the quotient of the corresponding marked Hurwitz space by $G$. This leads to some awkwardness, as the Hurwitz spaces that algebraic geometers typically work with are unmarked.

In this section, we will provide an algebraic construction of marked Hurwitz spaces of degree $d$ covers as open substacks of stacks of marked covers with resolvents. The construction is a slight modification of Abramovich-Corti-Vistoli's construction of the stack of Teichm\"{u}ller structures in \cite[\S7.6]{acv_twisted_bundles}: for a family of $G$-covers $D \to C \to S$ with a marked point $\sigma: S \to C$ above which $D \to C$ is \'{e}tale (this corresponds to there being no stackiness on $\sigma$), Abramovich-Corti-Vistoli add the datum of a trivialization of the $G$-torsor on $S$ pulled back along $\sigma$, which serves as a marking. In our situation, we may not have any marked points $\sigma$ without stackiness, so instead, we trivialize a fiber ``infinitesimally close'' to $\sigma$. To do this, we need to pick a direction in which we travel an infinitesimal distance away from $\sigma$, so we work over the stack $\mathcal{M}^{\fr}$ from \Cref{section:framings_smooth}, which parametrizes pointed curves with a framing of the normal bundle of $\sigma \subset C$.

Recall from \Cref{section:framings_smooth} that for $\alpha = (a_1,\ldots,a_n)$ where all the $a_i$ divide $d!$, for each $i$, there is a canonical morphism\footnote{All stacks here are assumed to be over $\mathbb{S}_d$, so we omit the subscript $\mathbb{Z}\left[\frac{1}{d!}\right]$ from \Cref{section:framings_smooth}.} $\psi_{i,\underline{a}}^{\univ}: \mathcal{M}^{\fr,\sm;*,n} \to \mathcal{C}_{\underline{a}}^{\univ}$, where $\mathcal{C}_{\underline{a}}^{\univ} \to \mathcal{M}^{\orb,\sm;*,n}(\underline{a})$ is the universal orbicurve. Set $\mathcal{R}^{d,\fr} \coloneqq \mathcal{R}^d \times_{\mathcal{M}} \mathcal{M}^{\fr}$. For any morphism $S \to \mathcal{R}^{d,\fr,\sm;*,n}(\underline{a})$ corresponding to an object $(\mathcal{C} \to C \to S;\sigma;\chi)$ of $\mathcal{R}^d(S)$ and framings $\alpha_i: \sigma_i^{-1}\mathcal{O}_C(D_i) \xrightarrow[]{\sim} \mathcal{O}_S$, we get sections $\psi_i: S \to \mathcal{C}$ by pulling back $\psi_{i,\underline{a}}^{\univ}$. Because the sections $\psi_i$ are over the marked points $\sigma_i$, $\chi: \mathcal{C} \to \mathcal{X}_d$ sends the images of the $\psi_i$ to the \'{e}tale locus $\mathcal{E}_d \cong B\mathbf{S}_d$. Thus, $\chi \circ \psi_1,\ldots,\chi \circ \psi_n$ classify $n$ degree $d$ \'{e}tale covers $Y_i \to S$. Everything here is canonical, so we get a morphism $\mathcal{R}^{d,\fr,\sm;*,n}(\underline{a}) \to B\mathbf{S}_d^n$. We take the disjoint union over all $\underline{a}$ to get a morphism $\mathcal{R}^{d,\fr,\sm;*,n} \to B\mathbf{S}_d^n$.

\begin{definition}\label{definition:marked_stacks}
Define the \emph{big stack of marked degree $d$ covers with resolvents} $\mathcal{R}_M^{d,\fr,\sm}$ as the disjoint union of the stacks $\mathcal{R}_M^{d,\fr,\sm;*,n} \coloneqq \mathcal{R}^{d,\fr,\sm;*,n} \times_{B\mathbf{S}_d^n} \mathbb{S}_d$. Define the \emph{big Hurwitz stack of marked simply branched degree $d$ covers} $\mathcal{H}_M^{d,\fr,\sm} \coloneqq \mathcal{H}^{d,\fr,\sm} \times_{\mathcal{R}^{d,\fr,\sm}} \mathcal{R}_M^{d,\fr,\sm}$.
\end{definition}

In other words, $\mathcal{R}_M^{d,\fr,\sm}$ parametrizes objects of $\mathcal{R}^{d,\fr,\sm}$ with trivializations of the associated $\mathbf{S}_d$-torsors over the sections $\psi_i: S \to \mathcal{C}$. Equivalently, we can view trivializations of $\mathbf{S}_d$-torsors over $S$ as trivializations of the associated degree $d$ \'{e}tale covers, i.e. identifications of the covers with $\{1,\ldots,d\} \times S$. We will refer to such a trivialization over $\psi_i$ as a \emph{marking}.

By construction, $\mathcal{R}_M^{d,\fr,\sm;*,n} \to \mathcal{R}^{d,\fr,\sm;*,n}$ is a $\mathbf{S}_d^n$-torsor, so it inherits many of the nice geometric properties of $\mathcal{R}^d$. One additional useful geometric property of $\mathcal{R}_M^{d,\fr,\sm;*,n}$ is the following.

\begin{proposition}\label{proposition:marked_rep}
For $n \ge 1$, the composite $\br_M: \mathcal{R}_M^{d,\fr,\sm;*,n} \to \mathcal{R}^{d,\fr,\sm;*,n} \xrightarrow[]{\br} \mathcal{M}^{\fr,\sm;*,n}$ is representable.
\end{proposition}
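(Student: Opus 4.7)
Since $\br$ is of Deligne-Mumford type (\Cref{theorem:rd_nice}(b)), so is $\br_M$, and it suffices to show that the automorphism group of any geometric object of $\mathcal{R}_M^{d,\fr,\sm;*,n}$ over its image in $\mathcal{M}^{\fr,\sm;*,n}$ is trivial. Fix such an object $(\mathcal{C} \to C \to \Spec k;\sigma;\alpha;\chi;\tau_1,\ldots,\tau_n)$, where each $\tau_i$ trivializes the \'{e}tale $\mathbf{S}_d$-torsor obtained by pulling back $\chi$ along the canonical section $\psi_i: \Spec k \to \mathcal{C}$. An automorphism over $(C;\sigma;\alpha)$ consists of an automorphism $F: \mathcal{C} \to \mathcal{C}$ over $C$ together with a $2$-isomorphism $\beta: \chi \to \chi \circ F$ such that the induced automorphism of each $\psi_i^*\chi$ preserves $\tau_i$.

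By \Cref{remark:smooth_no_autos}, a smooth orbicurve has no nontrivial automorphisms over its coarse moduli space, so $F$ must be the identity. The task therefore reduces to showing that any $2$-automorphism $\beta$ of $\chi$ preserving all markings is trivial.

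Restrict attention to the open substack $\mathcal{C}^{\circ} \coloneqq \chi^{-1}(\mathcal{E}_d) \subset \mathcal{C}$. By \Cref{definition:cov_res}, $\mathcal{C}^{\circ}$ contains every generic point, node, and marked point of $\mathcal{C}$, and in particular the image of every $\psi_i$. By \Cref{corollary:vss_bsd}, $\chi|_{\mathcal{C}^{\circ}}$ factors through $\mathcal{E}_d \cong B\mathbf{S}_d$ and classifies an \'{e}tale $\mathbf{S}_d$-torsor $\widetilde{E} \to \mathcal{C}^{\circ}$, represented by a scheme by the representability of $\chi$. A $2$-automorphism of $\chi|_{\mathcal{C}^{\circ}}$ corresponds to an $\mathbf{S}_d$-equivariant automorphism of $\widetilde{E}$ over $\mathcal{C}^{\circ}$, i.e., a section of a finite \'{e}tale group scheme over $\mathcal{C}^{\circ}$. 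Since $\mathcal{C}$ is a smooth connected orbicurve over $\Spec k$ and $\mathcal{C}^{\circ}$ is obtained by removing a finite set, $\mathcal{C}^{\circ}$ is connected, so two sections of a finite \'{e}tale scheme over $\mathcal{C}^{\circ}$ that agree at a single geometric point agree everywhere. The hypothesis $n \geq 1$ supplies such a point: the marking $\tau_1$ forces $\beta$ to act as the identity on $\widetilde{E}|_{\psi_1(\Spec k)}$, and hence $\beta|_{\mathcal{C}^{\circ}} = \id$.

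To extend this conclusion across the branch divisor, view a $2$-automorphism of $\chi$ as a section of the separated (in fact finite and unramified) group scheme $\chi^*I_{\mathcal{X}_d} \to \mathcal{C}$ obtained by pulling back the inertia of $\mathcal{X}_d$. Any section agreeing with the identity on the schematically dense open $\mathcal{C}^{\circ}$ must agree with the identity globally, giving $\beta = \id$. The main technical obstacle is the connectedness of $\mathcal{C}^{\circ}$ and the identification of the automorphism sheaf over $\mathcal{C}^{\circ}$ as finite \'{e}tale; both rely on the smoothness of $\mathcal{C}$, the representability of $\chi$, and \Cref{corollary:vss_bsd}. A pleasing consequence of this connectedness is that a single marking ($n = 1$) already suffices to rigidify the object.
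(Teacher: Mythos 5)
Your proof is correct and follows essentially the same route as the paper: reduce via Deligne--Mumford-ness to triviality of automorphism groups at geometric points, use the marking to pin down the automorphism of the $\mathbf{S}_d$-torsor over the connected \'etale locus $\mathcal{C}^{\circ}$, and extend across the branch divisor by separatedness and density. One small caveat: the parenthetical claim that $\chi^*I_{\mathcal{X}_d}$ is finite and unramified over $\mathcal{C}$ need not hold fiberwise at the branch points (where the stabilizer in $G_d$ can a priori be positive-dimensional); the paper instead views a $2$-automorphism as a section of the adjoint group scheme of the $G_d$-bundle, but in either formulation only separatedness is actually used, so your argument is unaffected.
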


\begin{proof}
Because $\mathcal{R}_M^{d,\fr,\sm;*,n} \to \mathcal{R}^{d,\fr,\sm;*,n}$ is an $\mathbf{S}_d^n$-torsor (in particular, representable) and $\mathcal{R}^{d,\fr,\sm;*,n} \xrightarrow[]{\br} \mathcal{M}^{\fr,\sm;*,n}$ is representable by Deligne-Mumford stacks, the given morphism is representable by Deligne-Mumford stacks. Thus, it suffices to check representability on $k$-points, where $k$ is an algebraically closed field.

Let $(\mathcal{C} \to C \to \Spec k;\sigma;\chi: \mathcal{C} \to \mathcal{X}_d)$ be an object of $\mathcal{R}^{d,\sm;*,n}(k)$. Let $\mathcal{U} = \chi^{-1}(\mathcal{E}_d)$ with coarse moduli space $U$; recall that $\mathcal{U}$ is required to contain all the stacky points of $\mathcal{C}$. Let $W \to \mathcal{U}$ be the $\mathbf{S}_d$-torsor induced by $\chi$. Note that $W$ is an algebraic space because $\chi$ is representable. Because $W \to U$ is finite, $W$ is actually a scheme and thus a smooth curve.

Picking a framing of the normal bundle to each $p_i = \sigma_i(k) \subset C$ amounts to picking a tangent vector to $p_i$. Let $\overline{p}_i \in \mathcal{C}(k)$ be the geometric point mapping to $p_i \in C(k)$. Independent of what tangent vector we pick to $p_i$ (all these framings are uniquely isomorphic by scaling), picking a marking over $\overline{p}_i$ is equivalent to picking an identification of the geometric fiber $\beta_i: (\overline{p}_i \times_{\mathcal{U}} W)(k) \xrightarrow[]{\sim} \mathbf{S}_d$ as $\mathbf{S}_d$-sets.

The proposition will follow if we prove that any automorphism of the data $(\mathcal{C} \to C \to \Spec k;\sigma;\chi: \mathcal{C} \to \mathcal{X}_d)$ over $C$ that preserves the markings $\beta_i$ is trivial. First, recall that $\mathcal{C}$ has no nontrivial automorphisms over $C$. Thus, an automorphism of the above data is an automorphism of $\chi$. Next, note that any automorphism of $\chi$ is determined by its restriction $\chi|_{\mathcal{U}}$: an automorphism of $\chi$ is the same as an automorphism of the associated $G_d$-bundle fixing some section of the associated $V_d$-bundle, and such an automorphism is determined by what it does over the open dense substack $\mathcal{U}$ because $\mathcal{C}$ is separated. Thus, we just need to show that an $\mathbf{S}_d$-equivariant automorphism of $W$ over $\mathcal{U}$ that respects the markings $\beta_i$ is trivial.

To show that such an automorphism is trivial, we will only need one marking, say $\beta_1$. Let $\phi: W \xrightarrow[]{\sim} W$ be an $\mathbf{S}_d$-equivariant automorphism over $\mathcal{U}$. Then $\phi$ pulls back to an $\mathbf{S}_d$-equivariant automorphism of the trivial $\mathbf{S}_d$-torsor $W \times_{\mathcal{U}} W \cong \mathbf{S}_d \times W \to W$, so $\phi$ corresponds to some element of $\mathbf{S}_d$, which we'll denote $\widetilde{\phi}$. If we let $w \in W(k)$ be a preimage of $\overline{p}_1 \in \mathcal{U}(k)$, then because $\phi$ is required to act trivially on $(\overline{p}_1 \times_{\mathcal{U}} W)(k)$, $\widetilde{\phi}$ acts trivially on $(w \times_W (\mathbf{S}_d \times W))(k) \cong \mathbf{S}_d$. Thus, $\widetilde{\phi} = 1$, and $\phi$ is trivial. We conclude that there are no automorphisms of $(\mathcal{C} \to C \to \Spec k;\sigma;\chi: \mathcal{C} \to \mathcal{X}_d)$ preserving the marking $\beta_1$, so we have proven the proposition.
\end{proof}

\begin{remark}\label{remark:marked_arb_g}
This paper is only about $\mathbf{S}_d$-covers, so for our purposes, marked Hurwitz spaces of simply branched $\mathbf{S}_d$-covers are enough. However, we expect our construction of marked Hurwitz spaces to work for constructing marked Hurwitz spaces of $G$-covers for arbitrary $G$.
\end{remark}

\subsection{Fixing a curve}\label{section:fixing_a_curve}
Up to now, we have worked over the entire stack $\mathcal{M}$. Now let us fix a base field $k$ over $\mathbb{S}_d$ and a smooth $n$-pointed curve $(C \to \Spec k;\sigma)$. These data correspond to a point $m \in \mathcal{M}^{0,n}(k)$. Let $b$ be a nonnegative integer, and consider the pullback of the sequence of maps $\mathcal{R}^{d;b,n} \xrightarrow[]{\br} \mathcal{M}^{b,n} \to \mathcal{M}^{0,n}$ along $m$. The fiber of $\mathcal{M}^{b,n}$ parametrizes degree $b$ effective Cartier divisors on $C^{\gen}$, so it is identified with $\Sym^b(C^{\gen})$. We denote the fiber of $\mathcal{R}^{d;b,n} \to \mathcal{M}^{0,n}$ over $m$ by $\mathcal{R}_{(C;\sigma),b}^d$ and continue to use $\br: \mathcal{R}_{(C;\sigma),b}^d \to \Sym^b(C^{\gen})$ to denote the branch morphism. If $C$ has no marked points, we will simply write $\mathcal{R}_{C,b}^d$. By \Cref{theorem:rd_nice} and \Cref{corollary:sm_smooth}, $\mathcal{R}_{(C;\sigma),b}^d$ is a smooth Deligne-Mumford stack of dimension $b$ over $k$, and $\br$ is a proper morphism.

We also define $\mathcal{H}_{(C;\sigma),b}^d \subset \mathcal{R}_{(C;\sigma),b}^d$ as the preimage of $\mathcal{H}^d \subset \mathcal{R}^d$. Similarly, we can define the marked variants $\mathcal{H}_{M,(C;\sigma),b}^d \subset \mathcal{R}_{M,(C;\sigma),b}^d$. Here, we do not need to specify the framings on $(C;\sigma)$ because a choice of framing at $\sigma_i$ is equivalent to a choice of tangent vector and thus all framings are uniquely isomorphic to each other via scaling. These isomorphisms between framings induce isomorphisms between the different $\mathcal{R}_{M,(C;\sigma),b}^d$ we get from different framings.

\begin{proposition}\label{proposition:rm_hm_rep}
\begin{enumerate}[(a)]
\item $\mathcal{R}_{M,(C;\sigma),b}^d$ is an algebraic space.

\item $\mathcal{H}_{M,(C;\sigma),b}^d \to \Conf^b(C^{\gen})$ is finite \'{e}tale. In particular, $\mathcal{H}_{M,(C;\sigma),b}^d$ is represented by a scheme.
\end{enumerate}
\end{proposition}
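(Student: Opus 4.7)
For part (a), my plan is to apply \Cref{proposition:marked_rep} directly, where I implicitly assume $n \ge 1$ so that the marking construction is nontrivial. First I would choose any framing $\alpha$ of the normal bundles $\sigma_i^{-1}\mathcal{O}_C(D_i)$; since two framings differ by an element of $\mathbb{G}_m^n(k)$, all resulting fiber products are canonically identified, so the choice is harmless. The data $(C;\sigma;\alpha)$ together with the trivial marked divisor classify a morphism $\Spec k \to \mathcal{M}^{\fr,\sm;0,n}$, and unwinding definitions identifies $\mathcal{R}_{M,(C;\sigma),b}^d$ with the fiber product of $\br_M: \mathcal{R}_M^{d,\fr,\sm;b,n} \to \mathcal{M}^{\fr,\sm;b,n}$ against $\Sym^b(C^{\gen}) \to \mathcal{M}^{\fr,\sm;b,n}$. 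Since \Cref{proposition:marked_rep} asserts that $\br_M$ is representable, this fiber product is an algebraic space.

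For part (b), I plan to check representability, properness, and \'{e}taleness separately and then combine them. Representability is inherited from (a). Properness propagates as follows: $\br: \mathcal{R}^d \to \mathcal{M}$ is proper by \Cref{theorem:rd_nice}(b); base change to $\mathcal{M}^{\fr,\sm}$ preserves this, and the forgetful map $\mathcal{R}_M^{d,\fr,\sm} \to \mathcal{R}^{d,\fr,\sm}$ is an $\mathbf{S}_d^n$-torsor, hence finite, so the composite $\br_M$ remains proper. Pulling back along $\Spec k \to \mathcal{M}^{\fr,\sm;0,n}$ and then restricting along the open immersion $\Conf^b(C^{\gen}) \hookrightarrow \Sym^b(C^{\gen})$ keeps properness. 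For \'{e}taleness, I would start from \Cref{proposition:hurwitz_etale}, which gives $\br: \mathcal{H}^{d,\sm} \to \mathcal{M}^{\sm}$ \'{e}tale, base-change to $\mathcal{M}^{\fr,\sm}$, then note that $\mathcal{H}_M^{d,\fr,\sm} \to \mathcal{H}^{d,\fr,\sm}$ is an $\mathbf{S}_d^n$-torsor and hence \'{e}tale; composing and base-changing to our pointed curve gives $\mathcal{H}_{M,(C;\sigma),b}^d \to \Conf^b(C^{\gen})$ \'{e}tale. A proper, representable, \'{e}tale morphism of finite type is finite \'{e}tale.

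The final assertion that $\mathcal{H}_{M,(C;\sigma),b}^d$ is a scheme is then automatic: a finite morphism from an algebraic space to a scheme has scheme source (Stacks Project Tag 0ADD), so finiteness over $\Conf^b(C^{\gen})$ does the job.

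Essentially the whole proof is bookkeeping: the only content is \Cref{proposition:marked_rep} and \Cref{proposition:hurwitz_etale}, both established earlier. The one point that merits a line of care is verifying that $\mathcal{R}_{M,(C;\sigma),b}^d$ really is the named fiber product of $\mathcal{R}_M^{d,\fr,\sm;b,n}$ rather than something larger, i.e.\ that varying the framing $\alpha$ produces canonically isomorphic spaces so that the construction depends only on $(C;\sigma)$ and $b$; this is where the $\mathbb{G}_m^n$-transitivity on framings is needed.
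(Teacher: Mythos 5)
Your proof is correct and follows essentially the same route as the paper: part (a) is immediate from \Cref{proposition:marked_rep}, and part (b) combines properness of $\br$ (\Cref{theorem:rd_nice}(b)) with \'{e}taleness of $\br$ on the Hurwitz locus (\Cref{proposition:hurwitz_etale}) together with the fact that forgetting markings is an $\mathbf{S}_d^n$-torsor. Two of your remarks go slightly beyond the paper's terse proof and are worth keeping in mind: (i) you correctly flag that $n \ge 1$ is needed for \Cref{proposition:marked_rep} to apply (indeed for $n = 0$ there is nothing to mark and $\mathcal{R}_{M,(C;\sigma),b}^d = \mathcal{R}_{(C;\sigma),b}^d$, which is only a Deligne--Mumford stack), a hypothesis the paper leaves implicit since it always works with $(\mathbb{P}^1;\infty)$; and (ii) you spell out the $\mathbb{G}_m^n$-transitivity argument showing independence of the choice of framing, which the paper records in the surrounding prose rather than in the proof itself.
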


\begin{proof}
Part (a) follows from \Cref{proposition:marked_rep}. Part (b) then follows because the map $\mathcal{H}_{M,(C;\sigma),b}^d \to \Conf^b(C^{\gen})$ is proper (\Cref{theorem:rd_nice}(b)) and \'{e}tale (\Cref{proposition:hurwitz_etale}, along with the fact that $\mathcal{H}_{M,(C;\sigma),b}^d \to \mathcal{H}_{(C;\sigma),b}^d$ is an $\mathbf{S}_d^n$-torsor).
\end{proof}

In addition to smoothness, the key reason why our construction $\mathcal{R}^d$ is useful for studying intersection cohomology is the following result. Recall that a morphism of equidimensional finite type Deligne-Mumford stacks $\pi: X \to Y$ of the same dimension is \emph{ (stratified) small} if $\pi$ is proper and $Y$ admits a (finite) stratification by locally closed substacks $Y = \bigcup_iY_i$ such that \begin{itemize}
\item For $Y_i \subset Y$ open: The restriction $\pi^{-1}(Y_i) \to Y_i$ is quasi-finite.

\item For $Y_i \subset Y$ not open: For all geometric points $y \to Y_i$, we have $\dim Y_i + 2\dim\pi^{-1}(y) < \dim Y$.
\end{itemize}

\begin{theorem}\label{theorem:dense_small34}
For $d \in \{3,4\}$, \begin{enumerate}[(a)]
\item $\mathcal{H}_{(C;\sigma),b}^d$ is dense in $\mathcal{R}_{(C;\sigma),b}^d$.

\item $\br$ is small.
\end{enumerate}
\end{theorem}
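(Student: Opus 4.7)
The plan is to reduce both statements to the fiber dimension bound in \Cref{proposition:fiber_bound_34} via the stratification of $\Sym^b(C^{\gen})$ by partition type. For a partition $\lambda = (b_1,\ldots,b_r)$ of $b$, let $\Sym^\lambda(C^{\gen}) \subset \Sym^b(C^{\gen})$ denote the locally closed stratum parametrizing divisors $\sum_i b_i[q_i]$ with the $q_i$ pairwise distinct; this is smooth of dimension $r$. Setting $e = 6$ if $d = 3$ and $e = 4$ if $d = 4$, \Cref{proposition:fiber_bound_34} bounds the geometric fibers of $\br$ over $\Sym^\lambda(C^{\gen})$ by $\sum_i \lfloor b_i/e \rfloor$, so $\dim \br^{-1}(\Sym^\lambda(C^{\gen})) \le r + \sum_i \lfloor b_i/e \rfloor$.

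For part (a), since $\mathcal{R}^d_{(C;\sigma),b}$ is smooth of pure dimension $b$ by \Cref{theorem:rd_nice} and \Cref{corollary:sm_smooth}, it is enough to check that every non-open stratum has preimage of dimension strictly less than $b$. This amounts to verifying
\[
\sum_i \bigl(b_i - \lfloor b_i/e \rfloor - 1\bigr) > 0
\]
whenever $\lambda \ne (1,\ldots,1)$. The summand vanishes when $b_i = 1$ and is positive when $b_i \ge 2$ (since $e \ge 4$ forces $\lfloor b_i/e \rfloor \le b_i - 1$ in small cases and $b_i - b_i/e - 1 > 0$ otherwise), so the total is positive as soon as some $b_i \ge 2$.

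For part (b), I would take the same partition stratification, with open stratum $\Conf^b(C^{\gen}) = \Sym^{(1^b)}(C^{\gen})$. Over $\Conf^b(C^{\gen})$ the map $\br$ restricts to $\mathcal{H}^d_{(C;\sigma),b} \to \Conf^b(C^{\gen})$, which is \'{e}tale and in particular quasi-finite by \Cref{proposition:hurwitz_etale}. For a non-open stratum, smallness reduces to the inequality
\[
r + 2\sum_i \lfloor b_i/e \rfloor < b, \quad \text{i.e.,} \quad \sum_i\bigl(b_i - 2\lfloor b_i/e \rfloor - 1\bigr) > 0.
\]
Again the $b_i = 1$ contributions vanish, while for $b_i \ge 2$ the inequality $2\lfloor b_i/e \rfloor \le b_i/2$ (valid because $e \ge 4$) gives a positive summand, with a finite case check handling the small values $b_i \in \{2,3,4\}$ where the floor matters.

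The conceptual content of the argument is already absorbed into \Cref{proposition:fiber_bound_34}, which used Igusa's explicit values of $I_3$ and $I_4$ together with the Lang--Weil bounds to control the fibers of $\br$. What remains here is a routine stratum-by-stratum arithmetic check, so I do not expect any genuine obstacle beyond bookkeeping; the only delicate point is confirming that the same fiber bound governs the situation when several branch points collide (the partition stratification), which is handled by \Cref{proposition:local_description_fibers} decomposing the fiber as a product over the local contributions at each point of the divisor.
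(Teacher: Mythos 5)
Your proof is correct and follows the same route as the paper: stratify $\Sym^b(C^{\gen})$ by partition type, apply \Cref{proposition:fiber_bound_34} stratum by stratum, and do elementary arithmetic with the floor function. The paper phrases part (a) as a contradiction on irreducible components rather than directly comparing $\dim \br^{-1}(\Sym^\lambda)$ to $b$, but this is the same dimension count; and your concern about colliding branch points is already absorbed into the hypotheses of \Cref{proposition:fiber_bound_34}, which is stated for arbitrary multiplicities, so no separate appeal to \Cref{proposition:local_description_fibers} is needed.
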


\begin{proof}
We may assume that $k$ is algebraically closed. Consider the stratification $\Sym^b(C^{\gen}) = \bigcup_{\mu}\Sym^{\mu}(C^{\gen})$ by locally closed subvarieties indexed by partitions $\mu = (b_1,\ldots,b_{\ell})$ of $b$, where $\Sym^{\mu}(C^{\gen})$ parametrizes divisors of the form $\sum_ib_i[c_i]$, where $c_1,\ldots,c_{\ell} \in C^{\gen}(k)$ are pairwise distinct. Then for $p \in (\Sym^{\mu}(C^{\gen}))(k)$, we have the following bounds by \Cref{proposition:fiber_bound_34}: \begin{align*}
\dim p \times_{\Sym^b(C^{\gen})} \mathcal{R}_{(C;\sigma),b}^3 &\le \sum_i\left\lfloor\frac{b_i}{6}\right\rfloor \\
\dim p \times_{\Sym^b(C^{\gen})} \mathcal{R}_{(C;\sigma),b}^4 &\le \sum_i\left\lfloor\frac{b_i}{4}\right\rfloor
\end{align*}
Note that $\Conf^b(C^{\gen}) = \Sym^{1,\ldots,1}(C^{\gen})$ and that $\mathcal{H}_{(C;\sigma),b}^d = \br^{-1}(\Conf^b(C^{\gen}))$.

We now prove the theorem for both $d = 3$ and $d = 4$: \begin{enumerate}[(a)]
\item Let $Z$ be a connected component of $\mathcal{R}_{(C;\sigma),b}^d$. Then by \Cref{corollary:sm_smooth}, $Z$ is smooth of dimension $b$ and thus irreducible. For the sake of contradiction, suppose $\mathcal{H}_{(C;\sigma),b}^d \cap Z$ is empty. Then $Z$ is contained in the union of the $\br^{-1}(\Sym^{\mu}(C^{\gen}))$, where $\mu \ne (1,\ldots,1)$. However, by the bounds on the fiber dimension of $\br$, for $\mu = (b_1,\ldots,b_{\ell}) \ne (1,\ldots,1)$, we have \begin{align*}
\dim\br^{-1}(\Sym^{\mu}(C^{\gen})) &\le \dim\Sym^{\mu}(C^{\gen}) + \sum_i\left\lfloor\frac{b_i}{4}\right\rfloor = \ell + \sum_i\left\lfloor\frac{b_i}{4}\right\rfloor = \sum_i\left(\left\lfloor\frac{b_i}{4}\right\rfloor + 1\right) \\
&< \sum_ib_i = b,
\end{align*}
where we use the fact that $\mu \ne (1,\ldots,1)$ for the last inequality. Since $b = \dim Z \le \max_{\mu \ne (1,\ldots,1)}\dim\br^{-1}(\Sym^{\mu}(C^{\gen})) < b$, we have a contradiction. Thus, $\mathcal{H}_{(C;\sigma),b}^d \cap Z$ is nonempty and thus dense because $Z$ is irreducible.

\item By \Cref{proposition:hurwitz_etale} $\br$ is \'{e}tale over the open stratum $\Conf^n(C^{\gen}) = \Sym^{1,\ldots,1}(C^{\gen})$. In particular, because $\br$ is proper, $\br$ is also quasi-finite over this stratum. Now consider $\mu = (b_1,\ldots,b_{\ell}) \ne (1,\ldots,1)$. For any $p \in \Sym^{\mu}(C^{\gen})(k)$, we have \begin{align*}
\dim\Sym^{\mu}(C^{\gen}) + 2\dim\br^{-1}(p) &\le \ell + 2\sum_i\left\lfloor\frac{b_i}{4}\right\rfloor = \sum_i\left(2\left\lfloor\frac{b_i}{4}\right\rfloor + 1\right) \\
&< \sum_ib_i = b,
\end{align*}
where we again use the fact that $\mu \ne (1,\ldots,1)$ for the last inequality. We conclude that $\br$ is small.
\end{enumerate}
\end{proof}

\begin{corollary}\label{corollary:marked_dense_small34}
For $d \in \{3,4\}$, \begin{enumerate}[(a)]
\item $\mathcal{H}_{M,b}^d$ is dense in $\mathcal{R}_{M,(C;\sigma),b}^d$.

\item $\br_M: \mathcal{R}_{M,(C;\sigma),b}^d \to \Sym^b(C^{\gen})$ is small.

\item Let $j$ denote the open immersion $\Conf^b(C^{\gen}) \xhookrightarrow{} \Sym^b(C^{\gen})$, and let $\pi: \mathcal{H}_{M,(C;\sigma),b}^d \to \Conf^b(C^{\gen})$ be the restriction of $\br_M$. Then $$j_{!*}\pi_*\underline{\mathbb{Q}_{\ell}}_{\mathcal{H}_{M,(C;\sigma),b}^d} \simeq R\br_{M*}\underline{\mathbb{Q}_{\ell}}_{\mathcal{R}_{M,(C;\sigma),b}^d}.$$
\end{enumerate}
\end{corollary}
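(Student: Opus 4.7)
The plan is to bootstrap all three statements from Theorem \ref{theorem:dense_small34} using the single observation that the forgetful morphism $f\colon \mathcal{R}_{M,(C;\sigma),b}^d \to \mathcal{R}_{(C;\sigma),b}^d$ is an $\mathbf{S}_d^n$-torsor, hence finite étale—in particular open, surjective, and dimension-preserving—and that by construction $\mathcal{H}_{M,(C;\sigma),b}^d = f^{-1}(\mathcal{H}_{(C;\sigma),b}^d)$.

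Part (a) is then immediate: because $f$ is open and surjective, the preimage under $f$ of a dense open substack is dense, and applying this to the dense open $\mathcal{H}_{(C;\sigma),b}^d \subset \mathcal{R}_{(C;\sigma),b}^d$ guaranteed by Theorem \ref{theorem:dense_small34}(a) gives density of $\mathcal{H}_{M,(C;\sigma),b}^d$ in $\mathcal{R}_{M,(C;\sigma),b}^d$.

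For part (b), I will show that the very same partition stratification $\Sym^b(C^{\gen}) = \bigcup_{\mu}\Sym^{\mu}(C^{\gen})$ used in the proof of Theorem \ref{theorem:dense_small34}(b) witnesses smallness of $\br_M$. Properness of $\br_M$ follows from properness of $\br$ (Theorem \ref{theorem:rd_nice}(b)) composed with the finite $f$. Because $f$ is étale, $\dim\mathcal{R}_{M,(C;\sigma),b}^d = \dim\mathcal{R}_{(C;\sigma),b}^d = b = \dim\Sym^b(C^{\gen})$, and the fiber of $\br_M$ over any geometric point $p$ is a finite étale cover of the corresponding fiber of $\br$, so has the same dimension. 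The smallness inequality $\dim\Sym^{\mu}(C^{\gen}) + 2\dim\br_M^{-1}(p) < b$ for $\mu \neq (1,\ldots,1)$ therefore carries over verbatim, and quasi-finiteness of $\br_M$ over the open stratum $\Conf^b(C^{\gen})$ is precisely Proposition \ref{proposition:rm_hm_rep}(b).

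Part (c) follows from (a), (b), and the smoothness of $\mathcal{R}_{M,(C;\sigma),b}^d$ (inherited from smoothness of $\mathcal{R}_{(C;\sigma),b}^d$ in Corollary \ref{corollary:sm_smooth} via the étale $f$) by the standard description of the intermediate extension as the pushforward of the shifted constant sheaf under a proper small morphism with smooth source. Finite étaleness of $\pi$ makes $\pi_*\underline{\mathbb{Q}_{\ell}}$ a genuine local system, so $j_{!*}\pi_*\underline{\mathbb{Q}_{\ell}}$ is the intermediate extension in the sense of the paper's notational convention, and comparing it to $R\br_{M*}\underline{\mathbb{Q}_{\ell}}$ over the open stratum $\Conf^b(C^{\gen})$ (where $\br_M$ restricts to $\pi$) yields the quasi-isomorphism. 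In sum, the entire corollary is a formal transfer of Theorem \ref{theorem:dense_small34} along a finite étale base change of the source; I anticipate no genuine obstacle, and the only bookkeeping worth double-checking is that the fiber-dimension bounds used for smallness survive this base change, which they do because $f$ is étale and finite.
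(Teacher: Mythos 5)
Your proposal is correct and matches the paper's own argument: parts (a) and (b) are transferred from Theorem \ref{theorem:dense_small34} along the finite \'{e}tale torsor $\mathcal{R}_{M,(C;\sigma),b}^d \to \mathcal{R}_{(C;\sigma),b}^d$, and part (c) follows by invoking the standard fact that the pushforward of the constant sheaf under a proper small morphism with smooth source is the intermediate extension of its restriction to the dense open locus where the map is finite (the paper phrases this as $j_{Y!*}f_*\mathcal{L} \simeq Rf_*j_{X!*}\mathcal{L}$ for small $f$, citing the analogue of \cite[Lemma 4.3]{mv_geometric_langlands}, and uses Proposition \ref{proposition:rm_hm_rep} to ensure everything takes place in algebraic spaces). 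The only cosmetic difference is that you spell out the fiber-dimension bookkeeping along the torsor explicitly, which the paper leaves implicit.
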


\begin{proof}
Parts (a) and (b) follow immediately from \Cref{theorem:dense_small34} because $\mathcal{R}_{M,(C;\sigma),b}^d \to \mathcal{R}_{(C;\sigma),b}^d$ is an $\mathbf{S}_d$-torsor.

Part (c) follows from parts (a) and (b), the fact that $\mathcal{H}_{M,(C;\sigma),b)}^d$ is smooth, and the following claim: if $f: X \to Y$ is a small morphism of algebraic spaces with $f^{-1}(V) \to V$ finite over a dense open subset $V \subset Y$, then for a local system $\mathcal{L}$ on $f^{-1}(V)$, we have $$j_{Y!*}f_*\mathcal{L} \simeq Rf_*j_{X!*}\mathcal{L},$$ where $j_X: f^{-1}(V) \xhookrightarrow{} X$ and $j_Y: V \xhookrightarrow{} Y$ are the open immersions. This claim can be proven in exactly the same way as the corresponding claim for schemes (e.g. \cite[Lemma 4.3]{mv_geometric_langlands}) by using the fact\footnote{This fact can be proven using the corresponding fact for schemes and the fact that quasi-separated algebraic spaces have open dense subsets represented by schemes.} that proper algebraic spaces of dimension $r$ over an algebraically closed field have cohomology in degrees at most $2r$.
\end{proof}

\begin{remark}\label{remark:analytic_ic}
Note that if $k = \mathbb{C}$, then we can use the same proof to prove the analogue of \Cref{corollary:marked_dense_small34}(c) in the analytic category: for $d \in \{3,4\}$, $j_{!*}\pi_*\underline{\mathbb{Q}}_{\mathcal{H}_{M,(C;\sigma),b}^d(\mathbb{C})} \simeq R\br_{M*}\underline{\mathbb{Q}}_{\mathcal{R}_{M,(C;\sigma),b}^d(\mathbb{C})}$.
\end{remark}

\subsubsection{Application to smoothability of curves}\label{section:app_smoothability}
In addition to the above properties relevant to our study of intersection cohomology and arithmetic statistics, we get the following application of our construction to the theory of smoothing curve singularities. Recall that a finite type scheme $X_0$ over an algebraically closed field $k$ is \emph{smoothable} if there exists an integral variety $T$ over $k$ and a flat family $X \to T$ with smooth generic fiber such that $X_0$ appears as a fiber above a closed point of $T$.

\begin{proposition}\label{proposition:smoothable_34}
Fix $d \in \{3,4\}$. Over an algebraically closed field $k$ over $\mathbb{S}_d$, any reduced proper curve admitting a degree $d$ map to a smooth curve is smoothable.
\end{proposition}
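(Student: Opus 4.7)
The strategy is to realize $D_0$ as a special fiber of a flat family of degree $d$ covers of $C$ whose generic member is a simply branched cover, leveraging the smoothness of $\mathcal{R}_{C,b}^d$ and the density inside it of the simply branched locus $\mathcal{H}_{C,b}^d$. Let $b$ denote the degree of the branch divisor of $D_0 \to C$. If $b = 0$ then the cover is \'{e}tale and $D_0$ is already smooth, so I will assume $b > 0$.

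The first step is to produce a $k$-point $p_0 \in \mathcal{R}_{C,b}^d$ classifying $D_0 \to C$. Since $C$ is smooth, no stacky structure is needed and it suffices to build a morphism $\chi: C \to \mathcal{X}_d$ sending the generic point to $\mathcal{E}_d$ and inducing $D_0 \to C$ via $\phi_d$. The generic fiber of $D_0 \to C$ is a degree $d$ \'{e}tale algebra over $k(C)$ because $\mathrm{char}(k)$ is coprime to $d!$, so the generic point of $C$ automatically maps into $\mathcal{E}_d \cong B\mathbf{S}_d$. For $d = 3$, the isomorphism $\phi_3: \mathcal{X}_3 \xrightarrow{\sim} \Covers_3$ supplies $\chi$ directly from $D_0 \to C$. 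For $d = 4$, the cover gives only a morphism $C \to \Covers_4$, and to lift it to $\chi: C \to \mathcal{X}_4$ one invokes Wood's theorem \cite{w_quartic_arbitrary}, which guarantees that every quartic cover over an arbitrary base admits a cubic resolvent. Representability of the resulting $\chi$ is automatic since the source $C$ is a scheme.

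The second step uses the density and smoothness of $\mathcal{R}_{C,b}^d$. By \Cref{corollary:sm_smooth}, $\mathcal{R}_{C,b}^d$ is smooth of pure dimension $b$ over $k$, so its irreducible components coincide with its connected components. Let $Z$ denote the component containing $p_0$; by \Cref{theorem:dense_small34}(a), the open substack $\mathcal{H}_{C,b}^d \cap Z$ is nonempty and dense in $Z$. Choose a smooth atlas $U \to Z$ with $U$ an irreducible $k$-scheme containing a preimage $u_0$ of $p_0$, and let $V \subset U$ denote the preimage of $\mathcal{H}_{C,b}^d$, which is dense and open. Since $U$ is regular and irreducible, one can find a DVR $R$ over $k$ together with a morphism $\mathrm{Spec}\,R \to U$ sending the closed point to $u_0$ and the generic point into $V$ (e.g.\ by choosing a suitable valuation in the function field of $U$ centered at $u_0$ whose generic point lies in the dense open $V$).

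Pulling the universal degree $d$ cover of $C$ living over $\mathcal{R}_{C,b}^d$ back along the composite $\mathrm{Spec}\,R \to U \to Z \hookrightarrow \mathcal{R}_{C,b}^d$ produces a flat family $D_R \to C \times_k \mathrm{Spec}\,R \to \mathrm{Spec}\,R$ of degree $d$ covers of $C$ whose special fiber is $D_0$ and whose generic fiber is simply branched, hence smooth. The composite $D_R \to \mathrm{Spec}\,R$ is flat since covers are flat and $C$ is flat over $k$, so this is a smoothing of $D_0$. The main obstacle in this proof is the $d = 4$ case of the first step: because $\phi_4$ fails to be an isomorphism on the non-Gorenstein locus, one really needs Wood's existence result for cubic resolvents in order to promote an arbitrary quartic cover to an object of $\mathcal{R}^4$.
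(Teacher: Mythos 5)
Your proof correctly identifies the overall strategy, which matches the paper's: realize the given cover $D_0 \to C$ as a $k$-point of $\mathcal{R}_{C,b}^d$, take an integral \'{e}tale chart, and use the density of the simply-branched locus $\mathcal{H}_{C,b}^d$ (\Cref{theorem:dense_small34}(a)) plus smoothness of $\mathcal{R}_{C,b}^d$ to produce a smoothing. However, there is a genuine gap in the $d = 4$ step.

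You assert that ``one invokes Wood's theorem \cite{w_quartic_arbitrary}, which guarantees that every quartic cover over an arbitrary base admits a cubic resolvent.'' This is a misattribution, and moreover the attributed statement is false over a general base. Wood's theorem is a \emph{parametrization}: it establishes an isomorphism $\mathcal{X}_4 \xrightarrow{\sim} \CovRes_4$ between the quotient stack of pairs of ternary quadratic forms and the stack of quartic covers with cubic resolvent. It says nothing about whether the forgetful map $\CovRes_4 \to \Covers_4$ is essentially surjective, and indeed this map fails to be an isomorphism (or even surjective) on the non-Gorenstein locus over a general base. The result you actually need is that every quartic algebra over a \emph{Dedekind domain} admits a cubic resolvent, which is \cite[Theorem 1.4]{o_rings_small}, not Wood. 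And even this cannot be applied directly: the curve $C$ is proper, hence not affine, hence not the spectrum of a Dedekind domain. The paper's proof gets around this by deleting a point $c \in C(k)$ above which $\pi$ is \'{e}tale, applying O'Dorney's existence result to the affine Dedekind scheme $C - c$ to obtain a resolvent there, and then extending the resolvent uniquely over $c$ using \Cref{corollary:gorenstein_quartic_unique} (the quartic cover is \'{e}tale, hence Gorenstein, above $c$). Without this patching argument, the first step of your proof does not go through for $d = 4$.

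There is a second, minor issue in your final step: you pass to a DVR $R$ with $\mathrm{Spec}\,R \to U$. But the definition of smoothable in the paper requires $T$ to be an integral \emph{variety} (of finite type over $k$), and $\mathrm{Spec}\,R$ is typically not of finite type. The paper avoids this by using the integral \'{e}tale chart $U$ itself as the parameter space $T$: since $U$ is a smooth chart of a finite type Deligne-Mumford stack and a dense open of $U$ maps into $\mathcal{H}_{C,b}^d$, the generic fiber of the pulled-back family $\overline{D} \to U$ is already simply branched. Your DVR detour can be repaired by a spreading-out step, but it is cleaner to drop it.
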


\begin{proof}
Let $C$ be a smooth proper curve over $k$, and let $\pi: D \to C$ be a degree $d$ cover with $D$ reduced. Because $D$ is reduced and the characteristic of $k$ does not divide $d$, $\pi$ is generically \'{e}tale. Let $b$ be the degree of the branch divisor of $\pi$. If $d = 3$, we can find a point $q \in \mathcal{R}_{C,b}^3(k)$ corresponding to the degree 3 cover $\pi$. If $d = 4$, then because every quartic algebra over a Dedekind domain has a cubic resolvent \cite[Theorem 1.4]{o_rings_small}, we can find a cubic resolvent for $D \to C$ by deleting a point $c \in C(k)$ above which $\pi$ is \'{e}tale, finding a cubic resolvent for $\pi^{-1}(C - c) \to C - c$, and then adding $c$ back in (the cubic resolvent will be the unique extension \'{e}tale over $c$). This means that we can find some point $q \in \mathcal{R}_{C,b}^4(k)$ whose associated quartic cover is $D \to C$.

Let $U \to \mathcal{R}_{C,b}^d$ be some \'{e}tale chart around $q$. Then we can pick $U$ to be integral because $\mathcal{R}_{C,b}^d$ is smooth. Let $\overline{D} \to C \times U \to U$ be the associated degree $d$ cover. By \Cref{theorem:dense_small34}(a), the generic point of $U$ maps to $\mathcal{H}_{C,b}^d$, so the generic fiber of $\overline{D} \to U$ is a simply branched cover of $C$ and hence a smooth curve. We conclude that $D$ is smoothable.
\end{proof}

\begin{remark}\label{remark:smoothable_curves}
For $d = 3$, \Cref{proposition:smoothable_34} can be obtained using Schaps's theorem on the smoothability of codimension 2 Cohen-Macaulay subschemes in low-dimensional smooth affine varieties \cite[Theorem 2]{s_deformations_cm}, as in the proof of \cite[Theorem 5.5]{d_compact}. However, for an arbitrary finite cover $D \to C$ (as long as the degree is at least 3), $D$ can have non-Gorenstein singularities (e.g. spatial triple points). Thus, it does not seem possible to apply any special theorems about codimension 3 Gorenstein subschemes to the $d = 4$ case.

In the other direction, we are unsure what the minimal degree of a map from an unsmoothable reduced curve to a smooth curve is. The author is aware of two sources of unsmoothable curves. The first family of examples is due to Mumford \cite{m_pathologies_iv}, and it is more complicated than we would like to describe here. The second family of examples arises by taking $r$ general lines through the origin in $\mathbb{A}^n$ for certain pairs $(n,r)$, as studied in \cite{g_deformation,p_deformations,s_number_points}. By \cite[Proposition 10]{s_number_points}, the singularity given by 10 general lines in $\mathbb{A}^6$ is not smoothable. This means that there are degree 10 reduced curve singularities that are not smoothable.

We are very optimistic that we can extend our methods to show that degree 5 reduced curve singularities are smoothable (see below), but we do not know what happens between 6 and 9.
\end{remark}

\subsubsection{Conjectures about the $d = 5$ Igusa zeta function}\label{section:conj_igusa5}
We conclude this section by discussing the $d = 5$ situation. There are two things missing from our knowledge when $d = 5$ that would enable us to extend the results of this paper to this case: (1) the computation of the Igusa zeta function for $(G_5,V_5)$ (2) the isomorphism between $\mathcal{X}_5$ and a stack of quintic covers with sextic resolvents.

The more important one for all the geometry in this paper is (1). The results about density and smallness follow from the computations of the Igusa zeta functions for $(G_3,V_3)$ and $(G_4,V_4)$. Knowing certain properties of the Igusa zeta function for $(G_5,V_5)$ would allow us to prove versions of \Cref{proposition:fiber_bound_34} and thus \Cref{theorem:dense_small34} for $d = 5$.

Let $K$ be a local field of residue characteristic not dividing $d!$. Let $q$ be the size of the residue field, and let $t = q^{-s}$ and $(a) = 1 - q^{-a}$. Let $$I_5(s) = \frac{1}{(1)(2)^2(3)^2(4)^2(5)}\int_{V_5(\mathcal{O}_K)}|\Delta_5(v)|^{s - 1}d\mu_{V_5}(v).$$ We make two conjectures, the second stronger than the first.

\begin{conjecture}\label{conjecture:igusa5_value}
\begin{enumerate}[(a)]
\item $I_5(s)$ is a power series in $q$ and $t$ with positive integer coefficients independent of $K$ such that all nonzero terms $q^at^b$ with $b \ge 2$ have $a + 1 < b$.

\item $I_5(s)$ is a power series in $q$ and $t$ with positive integer coefficients independent of $K$ such that all nonzero terms $q^at^b$ with $b \ge 2$ have $2a + 1 < b$.
\end{enumerate}
\end{conjecture}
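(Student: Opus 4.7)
The plan is to compute $I_5(s)$ explicitly via Igusa's method of resolution of singularities, then to read off both the power-series structure and the slope bounds from the combinatorics of the resolution. First I would construct a $G_5$-equivariant log resolution $\pi: \widetilde{V}_5 \to V_5$ of the discriminant divisor $\{\Delta_5 = 0\}$, blowing up along the $G_5$-invariant strata appearing in the GIT stratification of Tajima--Yukie \cite{ty_git_strat_iii}. Let $E_1, \ldots, E_r$ be the irreducible components of $\pi^{-1}(\{\Delta_5 = 0\})$, write $N_i$ for the multiplicity of $\pi^* \Delta_5$ along $E_i$, and write $\nu_i - 1$ for the multiplicity of the relative canonical divisor along $E_i$. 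By Denef's formula, $I_5(s)$ is a rational function of $q$ and $t = q^{-s}$ whose denominator (after the shift $s \mapsto s - 1$ inherent in our definition of $I_d$) is a product of factors of the form $(1 - q^{N_i - \nu_i} t^{N_i})$. Because the resolution can be taken over $\mathbb{Z}\left[\frac{1}{d!}\right]$, the combinatorial data $(N_i, \nu_i)$ and the resulting closed-form expression are independent of $K$ provided its residue characteristic does not divide $d!$.

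Given such a closed form, the power-series expansion of $I_5$ in $q$ and $t$ has coefficients that are polynomials in $q$ with integer coefficients, and positivity follows from the counting interpretation of \Cref{proposition:counting_integral} identifying the coefficient of $t^b$ with $\#(V_5(\mathcal{O}_K)_b // G_5(\mathcal{O}_K))$. Integrality is less formal but can be reduced to showing that, after an appropriate stratification of $V_5(\mathcal{O}_K)_b$ by stabilizer type, the contribution from each stratum to $\#(V_5(\mathcal{O}_K)_b // G_5(\mathcal{O}_K))$ is itself an integer; the analogous arguments in the $d \in \{3,4\}$ cases already fall out of Igusa's explicit formulas. The slope bound $a_i + 1 < b_i$ of part (a) translates to $\nu_i > 1$ for each exceptional divisor $E_i$ (equivalently, $E_i$ lies over the singular locus of the discriminant rather than being a strict transform), while the stronger bound $2a_i + 1 < b_i$ of part (b) amounts to $N_i < 2\nu_i - 1$, i.e.\ the order of vanishing of $\Delta_5$ along each exceptional divisor is less than twice its log discrepancy.

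The main obstacle I expect is the sheer complexity of the discriminant $\Delta_5$, a degree-$40$ polynomial in $40$ variables whose singular locus has many strata of varying codimensions reflecting the numerous degeneration types of quintic covers with sextic resolvent. Part (b) in particular is delicate, because it requires a strict asymmetric control over the orders of vanishing of $\Delta_5$ against the log discrepancies along every stratum; a single unexpectedly deep singularity would violate it. A more tractable alternative avoids computing $I_5$ directly: by \Cref{proposition:counting_integral} and the Lang--Weil-type bound of \Cref{lemma:lang_weil_dm}, it suffices to establish dimension bounds on central fibers of the branch morphism of the appropriate shape, which could be approached in the spirit of Deopurkar's analysis of triple covers of the formal disc by classifying generically \'etale quintic covers of $\Spec\mathbb{F}_q[[t]]$ with sextic resolvent and directly computing the dimensions of their moduli.
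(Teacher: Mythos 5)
The statement you are addressing is labeled a conjecture in the paper, not a theorem: the paper does not prove it, and the only discussion is the heuristic evidence in \Cref{remark:igusa5_believable}. So there is no ``paper's own proof'' for your plan to be measured against. That said, it is worth assessing the plan on its merits, since the paper's Remark and your proposal pull on similar threads (both ultimately appeal to control of pole locations of $I_5$, which is governed by resolution data or, equivalently heuristically, by the $b$-function).

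Three concrete concerns. First, your translation of the slope bounds $a + 1 < b$ and $2a + 1 < b$ into conditions $\nu_i > 1$ and $N_i < 2\nu_i - 1$ on the numerical data of the exceptional divisors constrains only the \emph{denominator} factors $(1 - q^{N_i - \nu_i}t^{N_i})$. The conjecture is a statement about all nonzero terms in the power series expansion of $I_5$, and the numerator of Denef's formula contributes additional $q^a t^b$ with signs, so the slope bound on denominator factors alone does not imply the slope bound on power series coefficients — you would need an argument controlling the numerator as well (and showing cancellation works out in your favor). The paper's own remark acknowledges this gap by saying the $b$-function bound ``almost'' gives part (b). Second, Denef's rationality theorem and the attendant formula rest on embedded resolution of singularities, which is not available in positive residue characteristic; the paper explicitly flags that rationality of Igusa zeta functions is open in positive characteristic. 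Your assertion that a $G_5$-equivariant log resolution ``can be taken over $\mathbb{Z}[1/d!]$'' is precisely the kind of statement that would need to be established, and there is no known result guaranteeing it for a degree-$40$ discriminant in $40$ variables. Third, your integrality argument via stratification by stabilizer type does not obviously work: $\#(V_5(\mathcal{O}_K)_b \mathbin{//} G_5(\mathcal{O}_K))$ is an automorphism-weighted groupoid count, and each stratum's contribution need not be an integer — for $d \in \{3,4\}$ integrality is read off Igusa's explicit closed forms, not derived a priori stratum-by-stratum. In short: the resolution-theoretic outline is a sensible direction consistent with the paper's heuristics, but it is not a proof, and you correctly identify the computation of a resolution for $\Delta_5$ as the crux obstacle; the further gaps above (numerator control, positive characteristic, integrality) would also need to be closed.
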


We can prove the following theorem in exactly the same way as \Cref{theorem:dense_small34}.

\begin{theorem}\label{theorem:dense_small5}
\begin{enumerate}[(a)]
\item If \Cref{conjecture:igusa5_value}(a) is true, then $\mathcal{H}_{M,b}^5$ is dense in $\mathcal{R}_{M,(C;\sigma),b}^5$.

\item If \Cref{conjecture:igusa5_value}(b) is true, then the composite $$\mathcal{R}_{M,(C;\sigma),b}^5 \to \mathcal{R}_{(C;\sigma),b}^5 \xrightarrow[]{\br} \Sym^b(C^{\gen})$$ is small.

\item Let $j$ denote the open immersion $\Conf^b(C^{\gen}) \xhookrightarrow{} \Sym^b(C^{\gen})$, and let $\pi: \mathcal{H}_{M,(C;\sigma),b}^d \to \Conf^b(C^{\gen})$ be the restriction of $\br_M$. If \Cref{conjecture:igusa5_value} is true, then $$j_{!*}\pi_*\underline{\mathbb{Q}_{\ell}}_{\mathcal{H}_{M,(C;\sigma),b}^d} \simeq R\br_{M*}\underline{\mathbb{Q}_{\ell}}_{\mathcal{R}_{M,(C;\sigma),b}^d}.$$
\end{enumerate}
\end{theorem}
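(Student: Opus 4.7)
The plan is to follow the proof of \Cref{theorem:dense_small34} essentially verbatim, using Conjecture \ref{conjecture:igusa5_value} as a substitute for Igusa's explicit computation (\Cref{theorem:igusa_computation_3_and_4}). First I would establish a $d=5$ analogue of \Cref{proposition:fiber_bound_34}. By \Cref{corollary:fiber_finite_field}, the number of $k'$-points of a fiber $p \times_{\mathcal{M}} \mathcal{R}^5$ is bounded by a product over the branch points of the local orbit counts $\#(V_5(\mathcal{O}_{K'})_{b_i} // G_5(\mathcal{O}_{K'}))$, each of which is the coefficient of $t^{b_i}$ in $I_5(s)$ by \Cref{proposition:counting_integral}. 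Under Conjecture \ref{conjecture:igusa5_value}(a), this coefficient is a polynomial in $q$ of degree at most $b_i - 2$ when $b_i \geq 2$; for $b_i = 1$ the local fiber is $0$-dimensional because of the étaleness in \Cref{proposition:hurwitz_etale}. Invoking \Cref{lemma:lang_weil_dm}(a) and the spreading-out step used at the end of the proof of \Cref{proposition:fiber_bound_34}, I would conclude $\dim(p \times_{\mathcal{M}} \mathcal{R}^5) \leq \sum_i \max(0, b_i - 2)$; under the stronger Conjecture \ref{conjecture:igusa5_value}(b) the same procedure yields $\dim(p \times_{\mathcal{M}} \mathcal{R}^5) \leq \sum_i \max(0, \lfloor b_i/2 \rfloor - 1)$.

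With these fiber bounds in hand, (a) and (b) reduce to a bookkeeping exercise identical to the one in \Cref{theorem:dense_small34}. Stratify $\Sym^b(C^{\gen}) = \bigsqcup_\mu \Sym^\mu(C^{\gen})$ by partition type $\mu = (b_1,\ldots,b_\ell)$, and set $\ell_2 = \#\{i : b_i \geq 2\}$, so that $\mu \neq (1,\ldots,1)$ is equivalent to $\ell_2 \geq 1$. Under (a), writing $b = \#\{i : b_i = 1\} + \sum_{b_i \geq 2} b_i$ yields
\[
\dim \br^{-1}(\Sym^\mu(C^{\gen})) \leq \ell + \sum_{b_i \geq 2}(b_i - 2) = b - \ell_2 < b
\]
for every $\mu \neq (1,\ldots,1)$; as in \Cref{theorem:dense_small34}(a) this forces each $b$-dimensional irreducible component of $\mathcal{R}^5_{(C;\sigma),b}$ to meet $\mathcal{H}^5_{(C;\sigma),b}$, and density on the marked level follows because $\mathcal{R}^5_{M,(C;\sigma),b} \to \mathcal{R}^5_{(C;\sigma),b}$ is an $\mathbf{S}_5$-torsor. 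Under (b), the analogous inequality $\ell + 2\sum_{b_i \geq 2}\lfloor(b_i-2)/2\rfloor \leq b - \ell_2 < b$ combined with the quasi-finiteness of $\br_M$ over $\Conf^b(C^{\gen})$ (\Cref{proposition:hurwitz_etale} plus the torsor structure) verifies smallness. Part (c) is then formal: $\mathcal{R}^5_{M,(C;\sigma),b}$ is smooth, $\br_M$ is proper and small, and $\pi$ is finite étale, so the intermediate-extension identity follows from the same general lemma invoked at the end of the proof of \Cref{corollary:marked_dense_small34}(c).

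The main obstacle is of course Conjecture \ref{conjecture:igusa5_value} itself. Igusa's stationary phase techniques did not yield a closed form for $I_5$, and the $40$-dimensional complexity of $(G_5,V_5)$ makes a direct attack daunting; even establishing the qualitative shape of the $(a,b)$-support claimed in the conjecture seems to require genuinely new input, perhaps leveraging the GIT stratification of $V_5$ in Tajima-Yukie \cite{ty_git_strat_iii}. Once the conjecture is granted, the argument above is mechanical; the one point worth double-checking is that the constant $\alpha$ produced by \Cref{corollary:fiber_finite_field} is independent of the field extension $k'$, which is precisely what allows the Lang-Weil extraction in \Cref{lemma:lang_weil_dm}(a) to convert a polynomial bound on point counts into a bound on dimension.
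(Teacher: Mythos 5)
Your proposal is correct and mirrors the paper's argument: the paper itself says the theorem is proved ``in exactly the same way as'' \Cref{theorem:dense_small34}, and you fill in precisely the expected template (translate \Cref{conjecture:igusa5_value} into a fiber-dimension bound via \Cref{corollary:fiber_finite_field} and \Cref{lemma:lang_weil_dm}, stratify $\Sym^b(C^{\gen})$ by partition type, run the same arithmetic, and pass to the marked level via the torsor structure as in \Cref{corollary:marked_dense_small34}). Your bookkeeping checks out: under (a), $\ell + \sum_{b_i \ge 2}(b_i-2) = b - \ell_2 < b$ for $\mu \ne (1,\ldots,1)$, and under (b), $\ell + 2\sum_{b_i\ge 2}(\lfloor b_i/2\rfloor - 1) \le b - \ell_2 < b$, which are exactly the analogues of the $d\in\{3,4\}$ inequalities.
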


\begin{corollary}\label{proposition:smoothable_5}
Assume \Cref{conjecture:igusa5_value}(a) holds. Then over an algebraically closed field $k$ over $\mathbb{S}_5$, any reduced proper curve admitting a degree 5 map to a smooth curve is smoothable.
\end{corollary}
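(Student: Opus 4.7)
The plan is to mimic the proof of \Cref{proposition:smoothable_34} in the case $d=4$ essentially verbatim, replacing Osso's cubic-resolvent theorem by its quintic analogue from \cite{o_quintic_dedekind} (recalled in \Cref{section:prehomog_cov_res}) and invoking \Cref{theorem:dense_small5}(a) in place of \Cref{theorem:dense_small34}(a). Let $f\colon D\to C$ be a degree $5$ cover with $D$ reduced and $C$ smooth proper, and let $b$ be the degree of its branch divisor. Since $\mathrm{char}(k)$ does not divide $5$, $f$ is generically étale, so we may choose a closed point $c\in C(k)$ above which $f$ is étale.

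The first step is to construct a point $q\in\mathcal{R}^5_{C,b}(k)$ whose associated quintic cover is $f$. Since $C-c$ is affine and smooth, its coordinate ring $R=\Gamma(C-c,\mathcal{O}_C)$ is a Dedekind domain, so $f^{-1}(C-c)\to C-c$ corresponds to a quintic $R$-algebra. By Osso's theorem this algebra admits a sextic resolvent, and under the bijection of \cite{o_quintic_dedekind} this produces a morphism $C-c\to \mathcal{X}_5$. Because $f$ is étale above $c$, this morphism lands in $\mathcal{E}_5\simeq B\mathbf{S}_5$ on a punctured formal neighborhood of $c$; the corresponding étale $\mathbf{S}_5$-torsor extends uniquely across $c$, and hence the morphism extends uniquely to a morphism $C\to\mathcal{X}_5$ classifying $f$ together with a sextic resolvent. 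This gives the desired point $q\in\mathcal{R}^5_{C,b}(k)$.

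Now pick an étale chart $U\to\mathcal{R}^5_{C,b}$ through $q$, which we may take to be integral by the smoothness of $\mathcal{R}^5_{C,b}$ (\Cref{theorem:rd_nice}(a), \Cref{corollary:sm_smooth}). Assuming \Cref{conjecture:igusa5_value}(a), \Cref{theorem:dense_small5}(a) gives density of $\mathcal{H}^5_{M,(C;\sigma),b}$ in $\mathcal{R}^5_{M,(C;\sigma),b}$; since $\mathcal{R}^5_M\to\mathcal{R}^5$ is a surjective (hence open) $\mathbf{S}_5$-torsor, this descends to the density of $\mathcal{H}^5_{C,b}$ in $\mathcal{R}^5_{C,b}$. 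Thus the generic point of $U$ maps into $\mathcal{H}^5_{C,b}$, so the total space of the tautological family $\overline{D}\to C\times U\to U$ has smooth simply branched generic fiber, while $D$ appears as the fiber over the closed point of $U$ lying above $q$; this exhibits $D$ as smoothable. The only substantive obstacle beyond the $d\in\{3,4\}$ cases is the dependence on the conjectural dimension bound of \Cref{conjecture:igusa5_value}(a); the use of Osso's quintic parametrization in place of \cite{o_rings_small} and the gluing across $c$ are identical in character to the $d=4$ situation.
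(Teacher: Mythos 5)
Your proof is correct and follows the same approach as the paper's: modify the $d = 4$ argument by invoking O'Dorney's result over Dedekind domains from \cite{o_quintic_dedekind} (the author is O'Dorney, not ``Osso'') in place of Wood's global parametrization, construct the point $q \in \mathcal{R}_{C,b}^5(k)$ by deleting and re-adding an unramified point, and conclude via the conditional density \Cref{theorem:dense_small5}(a) and the smoothness of $\mathcal{R}_{C,b}^5$. The only superfluous step is your descent of density from the marked to the unmarked stack: since $C$ carries no marked points (so $n = 0$), the $\mathbf{S}_5^n$-torsor $\mathcal{R}_{M,C,b}^5 \to \mathcal{R}_{C,b}^5$ is an isomorphism and the two stacks coincide, so nothing needs to be descended.
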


\begin{proof}
We want to copy the proof of \Cref{proposition:smoothable_34} for $d = 4$. We do not have an isomorphism between $\mathcal{X}_d$ and a stack of quintic covers with sextic resolvents as in the $d = 4$ case. However, we do know that for a Dedekind domain $R$, the isomorphism classes $\mathcal{X}_d(R)/\sim$ are in natural bijection with the isomorphism classes of quintic $R$-algebras with sextic resolvent and that each quintic $R$-algebra has a sextic resolvent \cite[Theorem 1.2]{o_quintic_dedekind}. By the same reasoning as in the proof of \Cref{proposition:smoothable_34}, if we are given a generically \'{e}tale quintic cover of curves $D \to C$ over $k$ with branch divisor of degree $b$, where $C$ is smooth, then we can find some point of $\mathcal{R}_{C,b}^d(k)$ whose corresponding quintic cover is $D \to C$. We can then proceed as in the proof of \Cref{proposition:smoothable_34}.
\end{proof}

\begin{remark}\label{remark:igusa5_believable}
We present some evidence for \Cref{conjecture:igusa5_value}. It has been proven that Igusa zeta functions over $p$-adic fields are rational functions \cite{d_rationality,i_complex_powers}, but the question remains open in positive characteristic. If $I_5(s)$ is a rational function in $t$, it seems rather likely that \Cref{conjecture:igusa5_value}(a) holds. $I_5(s)$ converges for all $\mathrm{Re}(s) \ge 1$, so all the poles have $\mathrm{Re}(s) < 1$. If we assume rationality, all the terms $q^at^b$ (other than 1) coming from the denominator would have ``slope'' $\frac{a}{b} < 1$, which is almost what we want in \Cref{conjecture:igusa5_value}(a).

There is another conjecture \cite[\S2]{m_survey_igusa} that the poles of an Igusa zeta function, up to subtracting a natural number, are contained in the set of roots of the associated $b$-function. This has been established by Bernstein \cite{b_analytic_continuation} in the archimedean case. In our situation, this conjecture would imply that the poles of $I_5(s)$ have real part at most $\frac{1}{3}$, as we can check using the corresponding $b$-function\footnote{Here, we add 1 to all the zeros because we are integrating $|\Delta_5|^{s - 1}$ rather than $|\Delta_5|^s$.} \cite[Appendix, (11)]{k_prehomog}. This would translate to the terms $q^at^b$ from the denominator having slope $\frac{a}{b} \le \frac{1}{3}$, almost giving us \Cref{conjecture:igusa5_value}(b).

In the cases $d \in \{3,4\}$, adding 1 to all the zeros of the corresponding $b$-functions that are at least $-1$ give us the slopes appearing in the denominators: $0,\frac{1}{6}$ for $d = 3$ and $0,\frac{1}{6},\frac{1}{4}$ for $d = 4$. Assuming this pattern continues to hold for $d = 5$, we get the slopes $0,\frac{1}{10},\frac{1}{6},\frac{1}{5},\frac{1}{4},\frac{3}{10},\frac{1}{3}$ for $d = 5$. As we will see in \Cref{section:covers_of_a1}, these slopes correspond to the exponents of secondary terms in counts of $\mathbb{F}_q[t]$-algebras with resolvents and the bidegrees of generators of the cohomology of the corresponding Nichols algebra.
\end{remark}

\section{Counting covers of $\mathbb{A}^1$ with resolvents}\label{section:covers_of_a1}
In this section, we study the morphisms $\mathcal{R}_{M,(\mathbb{P}^1;\infty),b}^d \to \mathcal{R}_{(\mathbb{P}^1;\infty),b}^d \xrightarrow[]{\br} \Sym^b(\mathbb{A}^1)$.

We consider these objects over $\mathbb{S}_d$. To make sense of this, we need to provide a framing of the normal bundle to $\infty_{\mathbb{S}_d} \subset \mathbb{P}_{\mathbb{S}_d}^1$. For this, we can just take the tangent vector $\frac{\partial}{\partial x^{-1}}$. For each $r | d!$, the smooth orbicurve $\mathbb{P}_r^1$ over $\mathbb{S}_d$ with automorphism groups of order $r$ at $\infty$ can be constructed by gluing the stacky neighborhood $[\mathbb{A}^1/\mu_r]$ above $\infty$ with the neighborhood $\mathbb{A}^1$ of 0 using the isomorphism $[\mathbb{G}_m/\mu_r] \cong \mathbb{G}_m$ induced by the $(-r)$th power map $\mathbb{G}_m \to \mathbb{G}_m$. Our framing gives us the sections $$\psi_r: \mathbb{S}_d \xrightarrow[]{0} \mathbb{A}^1 \to [\mathbb{A}^1/\mu_r] \to \mathbb{P}_r^1$$ over $\infty$.

For us, there are two reasons why $(\mathbb{P}^1;\infty)$ is special. The first is that the objects $\mathcal{R}_{M,(\mathbb{P}^1;\infty),b}^d \to \mathcal{R}_{(\mathbb{P}^1;\infty),b}^d \to \Sym^b(\mathbb{A}^1)$ are all $\mathbb{G}_m$-equivariant, with the $\mathbb{G}_m$-action on $\Sym^b(\mathbb{A}^1)$ contracting everything to 0. The second is the following lemma, which connects our algebro-geometric constructions to the topological constructions of \Cref{section:top_hurwitz} and thus to the quantum algebra of \Cref{section:nichols_algs}.

\begin{lemma}\label{lemma:ag_top_comp}
There are identifications between the maps $\mathcal{H}_{M,(\mathbb{P}^1;\infty),b}^d(\mathbb{C}) \to \mathcal{H}_{(\mathbb{P}^1;\infty),b}^d(\mathbb{C})/\sim$ and $\Hur_{\mathbf{S}_d,b}^{\tau_d} \to \Hur_{\mathbf{S}_d,b}^{\tau_d}/\mathbf{S}_d$ over $\Conf^b(\mathbb{A}^1)(\mathbb{C})$.
\end{lemma}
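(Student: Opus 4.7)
The plan is to prove that both morphisms are finite covering spaces of $\Conf^b(\mathbb{A}^1)(\mathbb{C})$ classified by the same $\mathbf{B}_b$-set $\tau_d^{\times b}$, compatibly with the $\mathbf{S}_d$-action on markings. By \Cref{proposition:rm_hm_rep}(b), $\mathcal{H}_{M,(\mathbb{P}^1;\infty),b}^d \to \Conf^b(\mathbb{A}^1)$ is finite \'etale, so its analytification is a finite covering space; its quotient by the natural $\mathbf{S}_d$-action on markings recovers $\mathcal{H}_{(\mathbb{P}^1;\infty),b}^d(\mathbb{C})/\!\sim$ because $\mathcal{H}_M \to \mathcal{H}$ is an $\mathbf{S}_d$-torsor. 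On the topological side, $\Hur_{\mathbf{S}_d,b}^{\tau_d} \to \Conf^b(D) \cong \Conf^b(\mathbb{A}^1)(\mathbb{C})$ is a covering space by construction (\Cref{definition:hurwitz_disc}), with its quotient by the analogous $\mathbf{S}_d$-action on markings giving $\Hur_{\mathbf{S}_d,b}^{\tau_d}/\mathbf{S}_d$. Thus it suffices to identify the fibers over a common basepoint, compatibly with both the $\mathbf{B}_b$- and $\mathbf{S}_d$-actions.

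Next, I would fix $S = \{s_1,\ldots,s_b\} \in \Conf^b(\mathbb{A}^1)(\mathbb{C})$ together with an auxiliary basepoint $*$ in $\mathbb{A}^1 \setminus S$ chosen far out along the positive real ray, so that under the radial homeomorphism of \Cref{section:hurwitz_a1} it corresponds to $1 \in \partial \overline{D}$. By \Cref{proposition:tophur_moduli}, the topological fiber over $S$ is identified with the set of homomorphisms $\rho: \pi_1(\overline{D} \setminus S, 1) \to \mathbf{S}_d$ sending each standard counterclockwise loop $\gamma_i$ around $s_i$ to a transposition, together with a marking at $1$; choosing the $\gamma_i$ as loop generators yields a bijection with $\tau_d^{\times b} \times \mathbf{S}_d$. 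For the algebraic side, a point of the fiber corresponds to a simply branched degree $d$ cover $D \to \mathbb{A}^1$ with branch locus $S$, uniquely extended to an orbicurve cover $\overline{\mathcal{D}} \to \mathbb{P}^1_r$ \'etale above $\infty$ (as discussed in \Cref{section:ag_results}), together with a marking of the fiber above $\psi_r$. Applying the Riemann existence theorem to $\mathbb{A}^1 \setminus S$, such \'etale degree $d$ covers correspond to conjugacy classes of representations $\pi_1(\mathbb{A}^1 \setminus S, *) \to \mathbf{S}_d$ with transposition monodromy around each $s_i$; the marking above $\psi_r$ (which sits over $\infty$ with framing $\tfrac{\partial}{\partial x^{-1}}$, pointing toward $*$) pins down an honest representation. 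The homotopy equivalence $\overline{D} \setminus S \simeq \mathbb{A}^1 \setminus S$ induced by the radial homeomorphism identifies $\pi_1$ of both punctured surfaces (each deformation retracts to a wedge of $b$ circles) and sends the standard generators $\gamma_i$ to each other, giving an $\mathbf{S}_d$-equivariant bijection of fibers with $\tau_d^{\times b} \times \mathbf{S}_d$.

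Finally, I would check $\mathbf{B}_b$-equivariance. On both sides, the monodromy action of a loop in $\Conf^b(\mathbb{A}^1)$ is computed by transporting the system of standard generators $\gamma_i$ along the loop and reading off the resulting action on the tuple $(\rho(\gamma_1),\ldots,\rho(\gamma_b))$; since the braid generator $\sigma_i$ acts by the explicit isotopy described in \Cref{section:hurwitz_disc}, both sides yield the same formula. Taking the quotient by the $\mathbf{S}_d$-action on markings then identifies the maps in question over $\Conf^b(\mathbb{A}^1)(\mathbb{C})$.

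The main obstacle is the precise comparison of basepoints and framings across the two settings: one must verify that the framing $\tfrac{\partial}{\partial x^{-1}}$ used to define $\psi_r$ on the algebraic side corresponds, under the radial homeomorphism, to the boundary basepoint $1 \in \partial \overline{D}$ on the topological side, so that the trivializations of fibers match. Once this is pinned down, the rest of the argument is a dictionary translation between \'etale covers (Riemann existence) and monodromy representations, with all $\mathbf{B}_b$- and $\mathbf{S}_d$-equivariance being formal.
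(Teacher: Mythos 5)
Your high-level strategy — identify both covering spaces of $\Conf^b(\mathbb{A}^1)(\mathbb{C})$ by matching their classifying $\mathbf{B}_b$-sets together with the $\mathbf{S}_d$-actions on the fibers — is a legitimate alternative route. The paper instead constructs explicit mutually inverse continuous maps (restriction to $\overline{D}$ in one direction; Riemann existence plus orbicurve completion via \Cref{lemma:etale_orbi} in the other), which makes the $\mathbf{B}_b$-equivariance automatic rather than an extra check. That said, there is a small slip in your fiber identification: by \Cref{proposition:tophur_moduli}, the fiber of $\Hur_{\mathbf{S}_d,b}^{\tau_d}$ over a fixed configuration $S$ is the set of homomorphisms $\rho\colon \pi_1(\overline{D}-S,1)\to \mathbf{S}_d$ with transposition monodromy around each $s_i$, i.e.\ $\tau_d^{\times b}$ once generators are chosen, not $\tau_d^{\times b}\times\mathbf{S}_d$. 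A homomorphism $\rho$ already encodes the marking $\bullet\in p^{-1}(1)$; appending an extra $\mathbf{S}_d$-factor double-counts it. The same count applies to the fiber of $\mathcal{H}_{M,(\mathbb{P}^1;\infty),b}^d(\mathbb{C})$.

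The step you flag as the ``main obstacle'' is in fact the substantive content of the lemma, and it is not closed by your sketch. On the algebraic side the marking lives at $\psi_r$, the distinguished point above $\infty\in\mathbb{P}^1_r$ selected by the framing $\frac{\partial}{\partial x^{-1}}$, while on the topological side it lives at $1\in\partial\overline{D}$; these are different points, and the identification of the two markings is precisely what must be constructed to make the correspondence both well-defined and $\mathbf{S}_d$-equivariant. The paper does this by an explicit parallel transport: from a trivialization $\alpha$ of the fiber over $\psi_r$ (the image of $0\in\mathbb{A}^1$ in the stacky chart $[\mathbb{A}^1/\mu_r]$), transport along the straight segment $[0,1]$ in that chart to obtain a trivialization $\alpha'$ of the fiber over $\psi_r'$ (the image of $1$), which via the gluing $x=u^{-r}$ is exactly $\pi^{-1}(1)$ — the marked fiber of the topological model. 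This transport, not a general ``dictionary translation,'' is what pins the honest representation $\rho$ based at $1$ out of the conjugacy class supplied by Riemann existence. Until you supply this transport (or an equivalent device), the $\mathbf{S}_d$-equivariant identification of fibers, and hence the lemma, is incomplete.
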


\begin{proof}
In this proof, all our algebro-geometric objects will be over $\mathbb{C}$ rather than $\mathbb{S}_d$. This way, we will not have to write that we are taking the base change to $\mathbb{C}$ every time.

First, we note that the maps $\mathcal{H}_{M,(\mathbb{P}^1;\infty),b}^d(\mathbb{C}) \to \mathcal{H}_{(\mathbb{P}^1;\infty),b}^d(\mathbb{C})/\sim$ and $\Hur_{\mathbf{S}_d,b}^{\tau_d} \to \Hur_{\mathbf{S}_d,b}^{\tau_d}/\mathbf{S}_d$ are both quotients by $\mathbf{S}_d$-actions. Thus, it suffices to identify $\mathcal{H}_{M,(\mathbb{P}^1;\infty),b}^d(\mathbb{C})$ and $\Hur_{\mathbf{S}_d,b}^{\tau_d}$ in an $\mathbf{S}_d$-equivariant way over $\Conf^b(\mathbb{A}^1)(\mathbb{C})$.

By \Cref{proposition:rm_hm_rep}, $\mathcal{H}_{M,(\mathbb{P}^1;\infty),b}^d(\mathbb{C}) \to \Conf^b(\mathbb{A}^1)(\mathbb{C})$ is a covering map. The inclusion $\Conf^b(D) \xhookrightarrow{} \Conf^b(\mathbb{A}^1)(\mathbb{C})$ is a homotopy equivalence with homotopy inverse $\Conf^b(\mathbb{A}^1)(\mathbb{C}) \xrightarrow[]{\sim} \Conf^b(D)$ induced by the radial homeomorphism $\mathbb{C} \xrightarrow[]{\sim} D$ we picked back in \Cref{section:hurwitz_a1}. Thus, it suffices to identify the restriction $\mathcal{H}_{M,(\mathbb{P}^1;\infty),b}^d(\mathbb{C})|_{\Conf^b(D)} \to \Conf^b(D)$ with the covering space $\Hur_{\mathbf{S}_d,b}^{\tau_d} \to \Conf^b(D)$ in an $\mathbf{S}_d$-equivariant way.

By \Cref{proposition:hurwitz_moduli}, a point of $\mathcal{H}_{M,(\mathbb{P}^1;\infty),b}^d(\mathbb{C})$ above $S \in \Conf^b(D)$ corresponds to an orbicurve $\mathbb{P}_r^1 \to \mathbb{P}^1$, a degree $d$ cover $\pi: \mathcal{E} \to \mathbb{P}_r^1$ simply ramified above $S$ and \'{e}tale elsewhere, and an identification $\alpha: F(\mathbb{C}) \xrightarrow[]{\sim} \{1,\ldots,d\}$, where $F = \Spec\mathbb{C} \times_{\psi_r,\mathbb{P}_r^1,\pi} \mathcal{E}$. We get a cover of the disc simply branched above $S$ by taking the restriction $\pi^{-1}(\overline{D}) \to \overline{D}$. To get a marking of $\pi^{-1}(1)$, we first define the section $$\psi_r^{\prime}: \Spec\mathbb{C} \xrightarrow[]{1} \mathbb{A}^1 \to [\mathbb{A}^1/\mu_r] \to \mathbb{P}_r^1.$$ By travelling along the line from 0 to 1 in $\mathbb{A}^1(\mathbb{C})$, we can parallel transport our identification $\alpha: F(\mathbb{C}) \xrightarrow[]{\sim} \{1,\ldots,d\}$ to an identification $\alpha^{\prime}: F^{\prime}(\mathbb{C}) \xrightarrow[]{\sim} \{1,\ldots,d\}$, where $F^{\prime} = \Spec\mathbb{C} \times_{\psi_r^{\prime},\mathbb{P}_r^1,\pi} \mathcal{E}$. However, the image of $\psi_r^{\prime}$ is 1, so $F^{\prime} = \pi^{-1}(1)$, and we have our desired marking $\pi^{-1}(\mathbb{C}) \xrightarrow[]{\sim} \{1,\ldots,d\}$. This gives a continuous map $\mathcal{H}_{M,(\mathbb{P}^1;\infty),b}^d(\mathbb{C})|_{\Conf^b(D)} \to \Hur_{\mathbf{S}_d,b}^{\tau_d}$ over $\Conf^b(D)$ by \Cref{corollary:tophur_degree_d}.

If we have a point of $\Hur_{\mathbf{S}_d,b}^{\tau_d}$ corresponding to a marked simply $n$-branched degree $d$ cover of the disc $(X,p,\beta,S)$, then since the inclusion $\overline{D} - S \xhookrightarrow{} \mathbb{A}^1(\mathbb{C}) - S$ is a homotopy equivalence, the covering map $p: X \to \overline{D} - S$ uniquely extends to a covering map $\pi: E^{\circ} \to \mathbb{A}^1(\mathbb{C}) - S$. By Riemann's existence theorem, $E^{\circ}$ is the space of $\mathbb{C}$-points of some smooth curve with a finite \'{e}tale map $\pi: E^{\circ} \to \mathbb{A}^1 - S$. By \Cref{lemma:etale_orbi}, we can complete $E^{\circ}$ to an orbicurve with a degree $d$ cover $\mathcal{E} \to \mathbb{P}_r^1$ (for some $r$) simply branched over $S$. We can use the same parallel transport procedure as above to produce a marking of $F$, where $F$ is defined as above. This provides a continuous inverse $\Hur_{\mathbf{S}_d,b}^{\tau_d} \to \mathcal{H}_{M,(\mathbb{P}^1;\infty),b}^d(\mathbb{C})|_{\Conf^b(D)}$ to the map we defined above.

Both these maps are $\mathbf{S}_d$-equivariant because $\mathbf{S}_d$ acts on both spaces by acting on the markings. We have thus proven the lemma.
\end{proof}

\subsection{$\mathbb{G}_m$-localization}\label{section:gm_localization}
The main way we will use the $\mathbb{G}_m$-action on $$\br_M: \mathcal{R}_{M,(\mathbb{P}^1;\infty),b}^d \to \Sym^b(\mathbb{A}^1)$$ is by applying the following lemma of Sawin. This lemma is the reason for the local-global duality we will discuss in \Cref{section:counting_fqt_algs}.

\begin{lemma}[{\cite[Lemma 2.16]{s_multiple_dirichlet}}]\label{lemma:sawin_gm_local}
Let $X$ be a affine scheme of finite type over a field $k$ with a \emph{contracting} $\mathbb{G}_m$-action, i.e. a $\mathbb{G}_m$-action such that all nonconstant $\mathbb{G}_m$-homogeneous functions on $X$ have positive degree. Let $K$ be a $\mathbb{G}_m$-invariant complex of $\ell$-adic sheaves on $X$. Let 0 be the unique $\mathbb{G}_m$-fixed $\overline{k}$-point of $X$. Then for each $i$, the pullback $$H^i(X_{\overline{k}},K) \to \mathcal{H}^i(K_0)$$ along the closed embedding $0 \xhookrightarrow{} X_{\overline{k}}$ is an isomorphism. Here, $H^i(X_{\overline{k}},K)$ refers to the $i$th cohomology of $X_{\overline{k}}$ with coefficients in $K$, while $\mathcal{H}^i(K_0)$ refers to the $i$th cohomology of the stalk of $K$ at 0.
\end{lemma}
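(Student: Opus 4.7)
My approach is to use the contracting hypothesis to extend the $\mathbb{G}_m$-action to a morphism $a \colon \mathbb{A}^1_{\overline{k}} \times X_{\overline{k}} \to X_{\overline{k}}$ and convert the resulting geometric retraction of $X$ onto the fixed point into the desired cohomological isomorphism via the $\mathbb{G}_m$-equivariance of $K$. The extension exists because, algebraically, the coaction $\mathcal{O}(X) \to \mathcal{O}(\mathbb{G}_m) \otimes \mathcal{O}(X)$ factors through $k[t] \otimes \mathcal{O}(X)$, precisely because all nonconstant homogeneous functions on $X$ have positive weight. One then has $a \circ i_1 = \id_X$ and $a \circ i_0 = c_0$, where $i_0, i_1 \colon X \hookrightarrow \mathbb{A}^1 \times X$ are the inclusions at $t = 0,1$ and $c_0$ is the constant morphism to $0$, so $a^*K$ interpolates between $K$ (at $t=1$) and the constant sheaf $K_0 \otimes \underline{\mathbb{Q}_\ell}_X$ (at $t=0$).

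The first subcase to settle is $K = \underline{\mathbb{Q}_\ell}_X$: I would show $R\Gamma(X_{\overline{k}}, \underline{\mathbb{Q}_\ell}) \cong \mathbb{Q}_\ell$. Under the isomorphism $R\Gamma(\mathbb{A}^1 \times X_{\overline{k}}, \underline{\mathbb{Q}_\ell}) \cong R\Gamma(X_{\overline{k}}, \underline{\mathbb{Q}_\ell})$ given by K\"unneth (since the projection $\pi_X$ is a trivial $\mathbb{A}^1$-bundle), both pullbacks $i_0^*$ and $i_1^*$ become the identity. Combined with $i_1^*a^* = \id^*$ and $i_0^*a^* = c_0^* = r^* \circ (\text{restriction})$, where $r \colon X \to \{0\}$ is the unique map, this forces $r^* \circ i_0^* = \id_{R\Gamma(X_{\overline{k}}, \underline{\mathbb{Q}_\ell})}$, so that $i_0^*$ is an isomorphism $R\Gamma(X_{\overline{k}}, \underline{\mathbb{Q}_\ell}) \xrightarrow{\sim} \mathbb{Q}_\ell$ with inverse $r^*$.

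For general $K$, the $\mathbb{G}_m$-equivariance provides a canonical isomorphism $a^*K|_{\mathbb{G}_m \times X} \cong p_X^*K|_{\mathbb{G}_m \times X}$ on the open locus. I would apply the excision triangle $j_!j^*F \to F \to i_*i^*F$ (for $j,i$ the open and closed inclusions of $\mathbb{G}_m \times X$ and $\{0\} \times X$) to both $F = a^*K$ and $F = p_X^*K$. For $F = p_X^*K$, the triangle reduces to $R\Gamma(j_!j^*p_X^*K) \to R\Gamma(X_{\overline{k}}, K) \xrightarrow{\id} R\Gamma(X_{\overline{k}}, K)$ (since $p_X \circ i_0 = \id$), forcing the first term to vanish; by the equivariance isomorphism on $\mathbb{G}_m \times X$, the analogous term for $a^*K$ also vanishes, yielding
$$R\Gamma(\mathbb{A}^1 \times X_{\overline{k}}, a^*K) \cong K_0 \otimes R\Gamma(X_{\overline{k}}, \underline{\mathbb{Q}_\ell}) \cong K_0,$$
where the last step uses the already-established constant-coefficient case.

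The final step is to identify $R\Gamma(\mathbb{A}^1 \times X_{\overline{k}}, a^*K)$ with $R\Gamma(X_{\overline{k}}, K)$ via the section $i_1$. For this I would push forward along the other projection $\pi \colon \mathbb{A}^1 \times X \to \mathbb{A}^1$: the complex $L \coloneqq R\pi_*a^*K$ is $\mathbb{G}_m$-equivariant on $\mathbb{A}^1$ for the scaling action, and smooth base change along $\mathbb{G}_m \hookrightarrow \mathbb{A}^1$ together with the equivariance identifies $L|_{\mathbb{G}_m}$ with the constant sheaf of stalk $R\Gamma(X_{\overline{k}}, K)$. A one-dimensional $\mathbb{G}_m$-localization lemma --- asserting that $R\Gamma(\mathbb{A}^1_{\overline{k}}, M) \cong M_0$ for any $\mathbb{G}_m$-equivariant constructible complex $M$ on $\mathbb{A}^1$, proved by reducing to the generators $j_!V$, $Rj_*V$, $i_*W$ and checking each case directly via the triangle $j_!\underline{V} \to \underline{V}_{\mathbb{A}^1} \to i_*\underline{V}$ and $\mathbb{A}^1$-acyclicity --- then combines with the section $i_1$ to produce the isomorphism $R\Gamma(\mathbb{A}^1 \times X_{\overline{k}}, a^*K) \cong R\Gamma(X_{\overline{k}}, K)$. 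The main obstacle is precisely this final identification: since $\pi$ is not proper, naive proper base change at $t = 0$ or $t = 1$ is unavailable, so the proof must use the one-dimensional lemma to bridge between the behavior of $L$ at the fixed point and over $\mathbb{G}_m$, exploiting the full strength of the $\mathbb{G}_m$-equivariance to reconcile the two computations.
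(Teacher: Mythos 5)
Your overall strategy is the natural one (extend the action to $a\colon \mathbb{A}^1\times X\to X$, use $\mathbb{A}^1$-homotopy for the constant sheaf, then run an excision argument), and your first three steps are essentially correct. In particular the computation that the open--closed triangle for $\{0\}\times X$, combined with equivariance over $\mathbb{G}_m\times X$, gives $R\Gamma(\mathbb{A}^1\times X_{\overline{k}}, a^*K)\cong K_0$ is valid. The paper itself does not prove this lemma --- it cites Sawin's Lemma 2.16 --- so the relevant standard is simply whether your argument is complete.

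There is a genuine gap in the final step, and it sits exactly where you flag the ``main obstacle.'' Your one-dimensional $\mathbb{G}_m$-localization lemma gives $R\Gamma(\mathbb{A}^1_{\overline{k}}, M)\cong M_0$, the stalk at the fixed point $0$. But what you need is $R\Gamma(\mathbb{A}^1_{\overline{k}}, L)\cong L_1$, the stalk at the generic $\mathbb{G}_m$-point $1$, since $L_1 = R\Gamma(X_{\overline{k}}, K)$ by smooth base change while $R\Gamma(\mathbb{A}^1_{\overline{k}}, L) = R\Gamma(\mathbb{A}^1\times X_{\overline{k}}, a^*K)\cong K_0$ by Leray and your excision computation. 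These two stalk statements are not the same, and the second one is false for a general $\mathbb{G}_m$-equivariant constructible $M$ on $\mathbb{A}^1$: taking $M = j_!\underline{\mathbb{Q}_\ell}_{\mathbb{G}_m}$ gives $R\Gamma(\mathbb{A}^1, M) = 0$ and $M_0 = 0$ but $M_1 = \mathbb{Q}_\ell$. So ``combining the one-dimensional lemma with the section $i_1$'' does not produce the isomorphism $R\Gamma(\mathbb{A}^1\times X,a^*K)\cong R\Gamma(X,K)$; it only recovers $R\Gamma(\mathbb{A}^1\times X,a^*K)\cong L_0$, which you already knew abstractly from the excision step.

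What is actually needed is a reason that the restriction map $R\Gamma(\mathbb{A}^1, L)\to L_1$ (equivalently $i_1^*\colon R\Gamma(\mathbb{A}^1\times X,a^*K)\to R\Gamma(X,K)$) is an isomorphism. You do get that $i_1^*$ is split surjective, with splitting $a^*$ (since $a\circ i_1=\mathrm{id}$), but injectivity does not come for free. Trying to run the excision argument at $\{1\}\times X$ instead of $\{0\}\times X$ does not work, because $a^*K$ and $p_X^*K$ fail to agree on $\{0\}\times X$; if you chase the long exact sequences, showing $R\Gamma(j'_!j'^*a^*K) = 0$ (for $j'$ the open complement of $\{1\}\times X$) turns out to be exactly equivalent to the claim you are trying to prove, so the argument is circular at this point. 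The missing ingredient is some version of a base-change statement at $0$ for the non-proper projection $\pi$, or equivalently a proof that $L$ has no $j_!$-summand --- i.e., that $L$ is the constant extension of $L|_{\mathbb{G}_m}$ across $0$ --- and this is the genuine content of the lemma (it is a one-dimensional shadow of Braden-type contraction results). As written, your proposal reduces the statement to a fact it does not establish.
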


Because the map $\br_M: \mathcal{R}_{M,(\mathbb{P}^1;\infty),b}^d \to \Sym^b(\mathbb{A}^1)$ is $\mathbb{G}_m$-equivariant, the pushforward $R\br_{M*}\underline{\mathbb{Q}_{\ell}}$ is a $\mathbb{G}_m$-invariant complex on $\Sym^b(\mathbb{A}^1)$. Moreover, for any $k$, $\Sym^b(\mathbb{A}_k^1)$ is an affine scheme whose $\mathbb{G}_m$-action contracts everything to 0.

Let $\mathcal{F}_{M,b}^d = \br_M^{-1}(0)$ be the central fiber. It is a proper algebraic space over $\mathbb{S}_d$. We can apply the $\mathbb{G}_m$-localization lemma to prove the following facts about the cohomology of $\mathcal{R}_{M,(\mathbb{P}^1;\infty),b}^d$ and $\mathcal{F}_{M,b}^d$.

\begin{lemma}\label{lemma:contract_pure}
Let $\pi: Y \to X$ be a $\mathbb{G}_m$-equivariant proper morphism, where $X$ is an affine scheme of finite type over a field $k$ with a contracting $\mathbb{G}_m$-action with fixed point 0 and $Y$ is a smooth algebraic space over $k$. Then \begin{enumerate}[(a)]
\item $H^i(Y_{\overline{k}},\mathbb{Q}_{\ell}) \cong H^i((Y_0)_{\overline{k}},\mathbb{Q}_{\ell})$

\item Suppose $k = \mathbb{F}_q$. Then both cohomology groups from (a) are pure of weight $i$ in the sense of Deligne: their Frobenius eigenvalues are algebraic integers whose Galois conjugates all have absolute value $q^{i/2}$ in $\mathbb{C}$.
\end{enumerate}
\end{lemma}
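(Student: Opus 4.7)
Part (a) should follow by directly feeding our data into Lemma \ref{lemma:sawin_gm_local}. The plan is to set $K = R\pi_*\underline{\mathbb{Q}_\ell}_Y$, a complex on $X$ that is $\mathbb{G}_m$-invariant because $\pi$ is $\mathbb{G}_m$-equivariant. On one hand, the composition of derived functors gives $H^i(X_{\overline{k}}, K) = H^i(X_{\overline{k}}, R\pi_*\underline{\mathbb{Q}_\ell}_Y) \cong H^i(Y_{\overline{k}}, \mathbb{Q}_\ell)$. On the other hand, properness of $\pi$ together with proper base change identifies the stalk $K_0$ with $R\Gamma((Y_0)_{\overline{k}}, \underline{\mathbb{Q}_\ell})$, so $\mathcal{H}^i(K_0) \cong H^i((Y_0)_{\overline{k}}, \mathbb{Q}_\ell)$. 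Sawin's lemma then furnishes an isomorphism $H^i(Y_{\overline{k}}, \mathbb{Q}_\ell) \cong H^i((Y_0)_{\overline{k}}, \mathbb{Q}_\ell)$, induced by pullback along the closed immersion of the fixed point.

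For part (b), the plan is to sandwich the weights from two sides via the standard Deligne bounds. Since $Y$ is smooth over $\mathbb{F}_q$, the Frobenius weights on $H^i(Y_{\overline{\mathbb{F}}_q}, \mathbb{Q}_\ell)$ are $\geq i$. Since $\pi$ is proper, the fiber $Y_0 = \pi^{-1}(0)$ is proper over $\mathbb{F}_q$, so the Frobenius weights on $H^i((Y_0)_{\overline{\mathbb{F}}_q}, \mathbb{Q}_\ell)$ are $\leq i$. The isomorphism in (a) comes from pullback along the Frobenius-equivariant closed embedding $0 \hookrightarrow X$, so it commutes with Frobenius; combining the two bounds forces both cohomology groups to be pure of weight exactly $i$.

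The main obstacle is purely technical: $Y$ is an algebraic space rather than a scheme, so we need Deligne's weight theorems in that generality. This can be handled by dévissage---a quasi-separated algebraic space has a dense open subscheme, and the smooth and proper weight bounds extend by an induction on dimension using excision triangles, reducing to the classical scheme case. Alternatively, one invokes an étale atlas by a scheme and hyperdescent for $\ell$-adic cohomology, as in the framework of Laszlo--Olsson cited in \Cref{section:notation_conventions}. Once this foundational input is in place, both (a) and (b) reduce to one-line applications of the tools assembled above.
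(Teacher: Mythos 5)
Your argument is correct and takes essentially the same route as the paper. Part (a) is identical: feed $K = R\pi_*\underline{\mathbb{Q}_\ell}_Y$ into Sawin's $\mathbb{G}_m$-localization lemma, identify $H^i(X_{\overline{k}},K) \cong H^i(Y_{\overline{k}},\mathbb{Q}_\ell)$ by composing derived pushforwards, and identify $\mathcal{H}^i(K_0) \cong H^i((Y_0)_{\overline{k}},\mathbb{Q}_\ell)$ by proper base change. For part (b), the only packaging difference is that you cite the lower bound ``weights $\geq i$ for smooth $Y$'' as a black box, whereas the paper spells it out as the composition of Sun's upper bound on $H_c$ for stacks with the Laszlo--Olsson Poincar\'e duality $H^i(Y) \cong H_c^{2\dim Y - i}(Y)^\vee(-\dim Y)$. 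These are the same underlying fact; but note that your proposed d\'evissage for extending the weight theorems to algebraic spaces cleanly handles the $H_c$ upper bound (which does propagate along excision triangles and open-closed decompositions), whereas the $H^i \ge i$ bound for smooth $Y$ ultimately routes through Poincar\'e duality, which on an algebraic space is its own statement and not simply an excision argument. The paper avoids that subtlety by citing Laszlo--Olsson directly, which is also what your second (hyperdescent/\'etale-atlas) alternative amounts to. In short: both halves of your proof match the paper in substance, and the d\'evissage remark would benefit from a clause acknowledging that Poincar\'e duality on algebraic spaces is part of the needed input rather than a corollary of excision.
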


\begin{proof}
\begin{enumerate}[(a)]
\item Apply \Cref{lemma:sawin_gm_local} to the $\mathbb{G}_m$-invariant complex $R\pi_*\underline{\mathbb{Q}_{\ell}}_Y$, and conclude by proper base change \cite[\S5]{lo_six_operations_i}.

\item First, observe that the isomorphism $H^i(Y_{\overline{\mathbb{F}}_q},\mathbb{Q}_{\ell}) \cong H^i((Y_0)_{\overline{\mathbb{F}}_q},\mathbb{Q}_{\ell})$ is Frobenius-equivariant because it is induced by the pullback along $0 \xhookrightarrow{} X$.

Fix $\iota: \overline{\mathbb{Q}}_{\ell} \xhookrightarrow{} \mathbb{C}$. It suffices to show that for all $\iota$, the images under $\iota$ of the Frobenius eigenvalues have absolute value $q^{i/2}$, i.e. the $\iota$-weights are $i$. By Sun's generalization of Deligne's theorem to stacks \cite[Theorem 1.4]{s_l_series}, $H_c^i(Y_{\overline{\mathbb{F}}_q},\mathbb{Q}_{\ell})$ and $H^i((Y_0)_{\overline{\mathbb{F}}_q},\mathbb{Q}_{\ell})$ ($Y_0$ is proper) have $\iota$-weights $\le i$.

Because $Y$ is smooth, its dualizing complex in the sense of \cite[\S3]{lo_six_operations_i} is $\underline{\mathbb{Q}_{\ell}}[-\dim Y]$. Thus, Laszlo-Olsson's Verdier duality for stacks \cite[Proposition 4.4.2]{lo_six_operations_i} implies that we have Poincar\'{e} duality $$H^i(Y_{\overline{\mathbb{F}}_q},\mathbb{Q}_{\ell}) \cong H_c^{2\dim Y - i}(Y_{\overline{\mathbb{F}}_q},\mathbb{Q}_{\ell})^{\vee}(-\dim Y).$$ Thus, $H^i(Y_{\overline{\mathbb{F}}_q},\mathbb{Q}_{\ell})$ has $\iota$-weights $\ge 2\dim Y - (2\dim Y - i) = i$.

We know that $H^i(Y_{\overline{\mathbb{F}}_q},\mathbb{Q}_{\ell}) \cong H^i((Y_0)_{\overline{\mathbb{F}}_q},\mathbb{Q}_{\ell})$ has $\iota$-weights $\le i$ and $\ge i$. We conclude that the $\iota$-weights must all be $i$ for any $\iota$, proving that the cohomology is pure of weight $i$.
\end{enumerate}
\end{proof}

\begin{corollary}\label{corollary:rd_fbd_pure}
\begin{enumerate}[(a)]
\item For any algebraically closed field $k$ over $\mathbb{S}_d$, the map $$H^*((\mathcal{R}_{M,(\mathbb{P}^1;\infty),b}^d)_{\overline{k}},\mathbb{Q}_{\ell}) \to H^*((\mathcal{F}_{M,b}^d)_{\overline{k}},\mathbb{Q}_{\ell})$$ is an isomorphism.

\item For any $q$ coprime to $d!$, $H^*((\mathcal{R}_{M,(\mathbb{P}^1;\infty),b}^d)_{\overline{\mathbb{F}}_q},\mathbb{Q}_{\ell}) \cong H^*((\mathcal{F}_{M,b}^d)_{\overline{\mathbb{F}}_q},\mathbb{Q}_{\ell})$ is pure of weight $i$.
\end{enumerate}
\end{corollary}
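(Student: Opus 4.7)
The corollary is essentially a direct application of Lemma \ref{lemma:contract_pure} to the specific morphism $\br_M: \mathcal{R}_{M,(\mathbb{P}^1;\infty),b}^d \to \Sym^b(\mathbb{A}^1)$, so the plan is to verify that all the hypotheses are in place and then invoke the lemma.

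First I would set up the objects and check the hypotheses on the target. The scheme $\Sym^b(\mathbb{A}^1) \cong \mathbb{A}^b$ (via elementary symmetric polynomials) is an affine scheme of finite type over $\mathbb{S}_d$, and the natural $\mathbb{G}_m$-action by scaling on $\mathbb{A}^1$ induces a contracting $\mathbb{G}_m$-action on $\Sym^b(\mathbb{A}^1)$ whose unique fixed point is $0$. The morphism $\br_M$ is $\mathbb{G}_m$-equivariant, since the $\mathbb{G}_m$-action on $\mathcal{R}_{M,(\mathbb{P}^1;\infty),b}^d$ is induced by the $\mathbb{G}_m$-action on $(\mathbb{P}^1;\infty)$ fixing $\infty$ (together with its framing) and scaling the branch divisor on $\mathbb{A}^1 = \mathbb{P}^1 - \infty$.

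Next I would verify the hypotheses on the source. By Proposition \ref{proposition:rm_hm_rep}(a), $\mathcal{R}_{M,(\mathbb{P}^1;\infty),b}^d$ is an algebraic space. Smoothness over $\overline{k}$ follows because $\mathcal{R}^d$ is smooth over $\mathbb{S}_d$ by Theorem \ref{theorem:rd_nice}(a), the fiber $\mathcal{R}_{(\mathbb{P}^1;\infty),b}^d$ of the smooth morphism of Corollary \ref{corollary:sm_smooth} is smooth over $\mathbb{S}_d$, and $\mathcal{R}_{M,(\mathbb{P}^1;\infty),b}^d \to \mathcal{R}_{(\mathbb{P}^1;\infty),b}^d$ is an $\mathbf{S}_d$-torsor, hence étale. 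Properness of $\br_M$ follows from properness of $\br$ in Theorem \ref{theorem:rd_nice}(b) combined with the fact that $\mathcal{R}_{M,(\mathbb{P}^1;\infty),b}^d \to \mathcal{R}_{(\mathbb{P}^1;\infty),b}^d$ is finite.

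With all hypotheses in place, Lemma \ref{lemma:contract_pure}(a) applied to $Y = \mathcal{R}_{M,(\mathbb{P}^1;\infty),b}^d$, $X = \Sym^b(\mathbb{A}^1)$, and $\pi = \br_M$ immediately gives part (a), since the central fiber $\pi^{-1}(0)$ is precisely $\mathcal{F}_{M,b}^d$ by definition. For part (b), Lemma \ref{lemma:contract_pure}(b) directly gives that both cohomology groups are pure of weight $i$ when $k = \mathbb{F}_q$. There is no real obstacle here; the only minor point to be careful about is ensuring that the lemma applies verbatim to a smooth algebraic space (as opposed to a scheme), which is the setting in which it is stated.
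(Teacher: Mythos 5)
Your proposal is correct and takes the same approach as the paper: apply Lemma \ref{lemma:contract_pure} to $\br_M$, using that it is proper and $\mathbb{G}_m$-equivariant. Your version is simply more explicit about verifying each hypothesis (affineness and contracting action on the target, smoothness and algebraic-space property of the source, properness of $\br_M$), and your concern at the end is already handled since the lemma is stated for $Y$ a smooth algebraic space rather than a scheme.
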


\begin{proof}
Both parts follow from the corresponding parts of \Cref{lemma:contract_pure}, applied to $\br_M$. Here, we are using that $\br_M$ is proper (\Cref{theorem:rd_nice}(b)) and $\mathbb{G}_m$-equivariant.
\end{proof}

Let $\mathcal{F}_b^d = \br^{-1}(0) \subset \mathcal{R}_{(\mathbb{P}^1;\infty),b}^d$. Then because $\mathcal{R}_{(\mathbb{P}^1;\infty),b}^d \cong [\mathcal{R}_{M,(\mathbb{P}^1;\infty),b}^d/\mathbf{S}_d]$, we have $\mathcal{F}_b^d \cong [\mathcal{F}_{M,b}^d/\mathbf{S}_d]$.

\begin{corollary}\label{corollary:rd_fbd_quotient_pure}
\begin{enumerate}[(a)]
\item For any algebraically closed field $k$ over $\mathbb{S}_d$, the map $$H^*((\mathcal{R}_{(\mathbb{P}^1;\infty),b}^d)_{\overline{k}},\mathbb{Q}_{\ell}) \to H^*((\mathcal{F}_b^d)_{\overline{k}},\mathbb{Q}_{\ell})$$ is an isomorphism.

\item For any $q$ coprime to $d!$, $H^*((\mathcal{R}_{(\mathbb{P}^1;\infty),b}^d)_{\overline{\mathbb{F}}_q},\mathbb{Q}_{\ell}) \cong H^*((\mathcal{F}_b^d)_{\overline{\mathbb{F}}_q},\mathbb{Q}_{\ell})$ is pure of weight $i$.
\end{enumerate}
\end{corollary}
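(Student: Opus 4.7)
The proof should be a quick consequence of Corollary \ref{corollary:rd_fbd_pure} together with the observation that $\mathcal{R}_{(\mathbb{P}^1;\infty),b}^d \cong [\mathcal{R}_{M,(\mathbb{P}^1;\infty),b}^d/\mathbf{S}_d]$ and $\mathcal{F}_b^d \cong [\mathcal{F}_{M,b}^d/\mathbf{S}_d]$. The plan is to reduce to the marked case by taking $\mathbf{S}_d$-invariants.

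The first step is to invoke the standard fact that, for a finite group $G$ acting on an algebraic space $Y$, the cohomology of the quotient stack with $\mathbb{Q}_\ell$ coefficients can be computed as $H^i([Y/G]_{\overline{k}},\mathbb{Q}_\ell) \cong H^i(Y_{\overline{k}},\mathbb{Q}_\ell)^G$, since the order of $G$ is invertible in $\mathbb{Q}_\ell$ (this kills the higher group cohomology in the Leray spectral sequence for $Y \to [Y/G]$). Applying this with $G = \mathbf{S}_d$ to both quotient presentations gives
\[
H^i((\mathcal{R}_{(\mathbb{P}^1;\infty),b}^d)_{\overline{k}},\mathbb{Q}_\ell) \cong H^i((\mathcal{R}_{M,(\mathbb{P}^1;\infty),b}^d)_{\overline{k}},\mathbb{Q}_\ell)^{\mathbf{S}_d}
\]
and similarly for the central fibers.

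Next I would observe that the natural map from the total space to the central fiber is $\mathbf{S}_d$-equivariant (the $\mathbf{S}_d$-action on markings and the $\mathbb{G}_m$-action by scaling $\mathbb{A}^1$ fixing $\infty$ commute, so the closed embedding of the central fiber is $\mathbf{S}_d$-equivariant). Hence the isomorphism in Corollary \ref{corollary:rd_fbd_pure}(a) is an isomorphism of $\mathbf{S}_d$-representations, and taking $\mathbf{S}_d$-invariants preserves this isomorphism. This yields part (a).

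For part (b), purity of weight $i$ is preserved under passing to subrepresentations (equivalently, to direct summands, which the $\mathbf{S}_d$-invariants are in characteristic zero coefficients), so the $\mathbf{S}_d$-invariants of a pure Frobenius module are again pure of the same weight. Combining with Corollary \ref{corollary:rd_fbd_pure}(b) finishes the argument. There is essentially no obstacle here; the only thing to verify carefully is the commutation of the $\mathbf{S}_d$- and $\mathbb{G}_m$-actions so that the $\mathbb{G}_m$-localization isomorphism from Corollary \ref{corollary:rd_fbd_pure} is automatically $\mathbf{S}_d$-equivariant, but this is clear from the construction of the two actions in Section \ref{section:stacks_marked} and the opening of Section \ref{section:covers_of_a1}.
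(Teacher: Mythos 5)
Your proof is correct and follows essentially the same route as the paper: both reduce to the marked case via the identification $H^*([X/G],\mathbb{Q}_{\ell}) \cong H^*(X,\mathbb{Q}_{\ell})^G$ (proved via the Leray spectral sequence and the invertibility of $|G|$ in $\mathbb{Q}_{\ell}$) and then apply \Cref{corollary:rd_fbd_pure}. Your additional remark checking that the restriction map is $\mathbf{S}_d$-equivariant is a reasonable point to make explicit, though the paper leaves it implicit.
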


\begin{proof}
The corollary follows from \Cref{corollary:rd_fbd_pure} and the fact that for a finite group $G$ acting on a finite type algebraic space $X$, $H^*([X/G],\mathbb{Q}_{\ell}) \to H^*(X,\mathbb{Q}_{\ell})^G$ is an isomorphism. This fact can be proven using the Leray spectral sequence for $X \to [X/G]$, as all contributions to $H^*([X/G],\mathbb{Q}_{\ell})$ coming from the higher group cohomology of $G$ vanish because $\mathbb{Q}_{\ell}$ has characteristic 0.
\end{proof}

\subsection{Counting low degree $\mathbb{F}_q[t]$-algebras}\label{section:counting_fqt_algs}
In this section, we will compute the point counts $\#\mathcal{R}_{(\mathbb{P}^1;\infty),b}^d(\mathbb{F}_q)$ for $d \in \{3,4\}$ using the corresponding Igusa zeta functions and interpret these point counts as counts of $\mathbb{F}_q[t]$-algebras with discriminant $q^b$. We begin by computing $\#\mathcal{F}_b^d(\mathbb{F}_q)$.

\begin{proposition}\label{proposition:fbd_k_pts}
For a field $k$ over $\mathbb{S}_d$, we have an equivalence of groupoids $$\mathcal{F}_b^d(k) \simeq V_d(k[[t]])_b // G_d(k[[t]]).$$
\end{proposition}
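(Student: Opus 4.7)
The plan is to construct the equivalence by restricting $\chi$ to the formal neighborhood of $0 \in \mathbb{A}^1$. A $k$-point of $\mathcal{F}_b^d$ consists of a smooth orbicurve $\mathbb{P}_r^1 \to \mathbb{P}^1$ (for some $r \mid d!$, stacky of order $r$ at $\infty$) together with a representable morphism $\chi: \mathbb{P}_r^1 \to \mathcal{X}_d$ factoring through $\mathcal{E}_d$ away from $0$ and with branch divisor $b \cdot [0]$. Since $\mathbb{P}_r^1$ is non-stacky at $0$, pulling back along $\Spec k[[t]] \to \mathbb{P}_r^1$ produces a morphism $\Spec k[[t]] \to \mathcal{X}_d$, equivalently an element of the groupoid $V_d(k[[t]])_b // G_d(k[[t]])$. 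This defines a functor $\Phi$, which I will show is an equivalence.

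To construct an inverse to $\Phi$, I would glue via Beauville-Laszlo. Given $v \in V_d(k[[t]])_b$, its restriction $v|_{\Spec k((t))}$ lies in $V_d^{ss}(k((t)))$ and, via the isomorphism $\mathcal{E}_d \cong B\mathbf{S}_d$ of \Cref{corollary:vss_bsd}, classifies a representable \'{e}tale degree-$d$ cover $\widehat{D} \to \Spec k((t))$. The key step is that $\widehat{D}$ extends uniquely (up to unique isomorphism) to a representable \'{e}tale degree-$d$ cover $\overline{D}$ of $\mathbb{P}_r^1 - \{0\}$, for a unique $r$ forced by the local monodromy of $\widehat{D}$. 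Beauville-Laszlo then glues the local datum $v$ with the \'{e}tale morphism $\mathbb{P}_r^1 - \{0\} \to \mathcal{E}_d$ classifying $\overline{D}$ along the canonical identification over $\Spec k((t))$, yielding the desired pair $(\mathbb{P}_r^1, \chi)$.

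The extension step rests on the triviality of the tame \'{e}tale fundamental group of $\mathbb{A}^1$. Over $\overline{k}$, tameness of $\widehat{D}$ (forced by $\mathrm{char}(k) \nmid d!$) factors its classifying map through $\mu_r$, where $r$ is the order of the monodromy generator; this $\mu_r$-set of size $d$ pulls back to a $\mu_r$-equivariant \'{e}tale cover of $\mathbb{A}^1$ and descends to the desired $\overline{D}$ on $[\mathbb{A}^1/\mu_r] \cong \mathbb{P}_r^1 - \{0\}$. Over a general $k$, the remaining piece is handled by Galois descent along $\overline{k}/k$. Full faithfulness of $\Phi$ is straightforward: automorphisms of $\chi$ are determined by their restriction to any dense open because $\mathcal{X}_d$ has affine diagonal, and conversely any element of $G_d(k[[t]])$ fixing $v$ glues with the identity automorphism on $\mathbb{P}_r^1 - \{0\}$ to give an automorphism of $\chi$.

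The main obstacle will be the extension step: correctly pinning down the stackiness $r$ from the local monodromy and verifying canonicity of the extension, especially over non-algebraically closed fields where Galois descent requires care. An alternative, possibly cleaner route is to invoke \Cref{proposition:local_description_fibers} directly with $p$ the point classifying $(\mathbb{P}^1; b \cdot [0]; \infty)$, reducing the task to showing that restriction from \'{e}tale degree-$d$ covers of $\mathbb{G}_m$ to those of $\Spec k((t))$ is an equivalence of groupoids with matching automorphism groups $G_D \cong G_{\widehat{D}}$.
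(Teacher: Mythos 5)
Your alternative route at the end is exactly the paper's proof: it invokes \Cref{proposition:local_description_fibers} with $p$ classifying $(\mathbb{P}^1; b\cdot[0]; \infty)$, which already packages the Beauville-Laszlo gluing and the identification of the orbicurve from the cover of $C^{\gen}-\Sigma$ (\Cref{lemma:etale_orbi}). After that reduction, the entire proof collapses to one statement: the restriction functor from degree-$d$ \'{e}tale covers of $\mathbb{G}_m$ to degree-$d$ \'{e}tale covers of $\Spec k((t))$ is an equivalence of groupoids, which also forces $G_E \cong G_{\widehat{E}_0}$. The paper proves this via the isomorphism on pro-$d!$ completions of tame fundamental groups (topological comparison over $\mathbb{C}$, Grothendieck's specialization theorem over $\overline{k}$, then the homotopy exact sequence for general $k$; a Riemann-Hurwitz argument is also sketched).

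Your main argument is correct but re-derives in this special case the content of \Cref{proposition:local_description_fibers}: restricting $\chi$ to $\Spec k[[t]]$ after trivializing the $G_d$-bundle, and gluing $v$ with the \'{e}tale classifying map on $\mathbb{P}^1_r - \{0\}$ via Beauville-Laszlo applied to the underlying vector bundles. Your Kummer-theoretic extension argument (tameness factors the monodromy through $\mu_r$, the $\mu_r$-torsor over $\Spec\overline{k}((t))$ extends along $\mathbb{A}^1 \to [\mathbb{A}^1/\mu_r]$, then push forward the $\mu_r$-set of size $d$) is a concrete version of the $\pi_1^t$ isomorphism and also recovers \Cref{lemma:etale_orbi} for $\mathbb{P}^1$. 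The Galois descent step is handled in the paper by the homotopy exact sequence, which is cleaner than leaving descent implicit; it would be worth making that precise in your write-up. Full faithfulness via the dense open is fine once you note that $\chi$ lands in $B\mathbf{S}_d$ on the dense open so its automorphism group is already finite, and that $\mathcal{C}$ separated plus $\mathcal{X}_d$ with separated diagonal gives uniqueness of extensions of 2-isomorphisms. Net: the modular route you flag at the end is the one to take, and your constructive route is a valid but redundant re-proof of the lemma it cites.
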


\begin{proof}
By \Cref{proposition:local_description_fibers}, $$\mathcal{F}_b^d(k) \simeq \bigsqcup_E\left(BG_E \times_{BG_{\widehat{E}_0}} V_d(k[[t]])_{\widehat{E}_0,b} // G_d(k[[t]])\right),$$ where \begin{itemize}
\item $E$ ranges over degree $d$ \'{e}tale covers $E \to \mathbb{A}_k^1 - 0$.

\item $\widehat{E}_0 = \Spec k[[t]] \times_{\mathbb{A}_k^1} E$, where $\Spec k[[t]] \to \mathbb{A}_k^1$ is the formal disc around 0.

\item $V_d(k[[t]])_{\widehat{E}_0,b}$ is the subset of $v \in V_d(k[[t]])_b$ whose image in $V_d^{ss}(k((t)))$ parametrizes $\widehat{E}_0 \to \Spec k((t))$.

\item $G_E$ (resp. $G_{\widehat{E}_0}$) is the group of automorphisms of the \'{e}tale cover $E \to \mathbb{A}_k^1 - 0$ (resp. $\widehat{E}_0 \to \Spec k((t))$).
\end{itemize}

The key point of this proof (and much of the reason we can get the point counts we want) is this: the functor $E \mapsto \widehat{E}_0$ is an equivalence from the groupoid of degree $d$ \'{e}tale covers of $\mathbb{A}_k^1 - 0$ to the groupoid of degree $d$ \'{e}tale covers of $\Spec k((t))$. Another way to say this is that the map of \'{e}tale fundamental groups $\pi_1(\Spec k((t)),\overline{\eta}) \to \pi_1(\mathbb{A}_k^1 - 0,\overline{\eta})$ induces an isomorphism on pro-$d!$ completions.

This isomorphism can be seen for $k = \mathbb{C}$ by the comparison with topological $\pi_1$, for algebraically closed $k$ over $\mathbb{S}_d$ by Grothendieck's specialization theorem for the tame fundamental group \cite[XIII, Corollaire 2.12]{gr_sga1}, and then for any $k$ over $\mathbb{S}_d$ by the homotopy exact sequence. Alternatively, we can use the Riemann-Hurwitz formula to see that for $d^{\prime} \le d$, any (necessarily separable) degree $d^{\prime}$ cover of $\mathbb{P}_k^1$ by a smooth curve ramified at most above 0 and $\infty$ must be $\mathbb{P}_{k^{\prime}}^1$ for some $k^{\prime}/k$ and that the pullback of the coordinate $x$ to $\mathbb{P}_{k^{\prime}}^1$ must be a scalar times $x^{d/[k^{\prime}:k]}$, exactly matching the description of extensions of $k((t))$.

With this equivalence of groupoids, we see the map $G_E \to G_{\widehat{E}_0}$ is an isomorphism and that $$\mathcal{F}_b^d(k) \simeq \bigsqcup_E\left(V_d(k[[t]])_{\widehat{E}_0,b} // G_d(k[[t]])\right) \simeq V_d(k[[t]])_b // G_d(k[[t]]).$$
\end{proof}

\begin{corollary}\label{corollary:pt_count_fbd_34}
For $d \in \{3,4\}$, $\sum_{b = 0}^{\infty}\#\mathcal{F}_b^d(\mathbb{F}_q)t^b = I_d(q,t)$.
\end{corollary}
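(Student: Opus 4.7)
The plan is to reduce the corollary directly to the local computations already packaged in \Cref{corollary:local_cover_count}, using \Cref{proposition:fbd_k_pts} as the bridge between the geometry of the central fiber and the local arithmetic over $\mathbb{F}_q[[t]]$.

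First I would specialize \Cref{proposition:fbd_k_pts} to $k = \mathbb{F}_q$. This gives an equivalence of groupoids
\[
\mathcal{F}_b^d(\mathbb{F}_q) \simeq V_d(\mathbb{F}_q[[t]])_b \mathbin{/\!/} G_d(\mathbb{F}_q[[t]]).
\]
Taking the groupoid cardinality (sum of reciprocals of automorphism groups) on both sides yields
\[
\#\mathcal{F}_b^d(\mathbb{F}_q) = \#\bigl(V_d(\mathbb{F}_q[[t]])_b \mathbin{/\!/} G_d(\mathbb{F}_q[[t]])\bigr).
\]

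Next, I would invoke \Cref{corollary:local_cover_count} with local field $K = \mathbb{F}_q((t))$, so that $\mathcal{O}_K = \mathbb{F}_q[[t]]$ has residue field $\mathbb{F}_q$. Under the parametrization results of Wood recalled in \Cref{section:prehomog_cov_res}, the groupoid $V_d(\mathcal{O}_K)_b \mathbin{/\!/} G_d(\mathcal{O}_K)$ is equivalent to the groupoid of cubic $\mathcal{O}_K$-algebras of discriminant $q^b$ (if $d=3$) or quartic $\mathcal{O}_K$-algebras with cubic resolvent of discriminant $q^b$ (if $d=4$), so its groupoid cardinality is exactly $N_{d,\mathbb{F}_q[[t]],b}^{\loc}$. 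By \Cref{corollary:local_cover_count} (which in turn is deduced from \Cref{proposition:counting_integral}, \Cref{proposition:measure_of_gd}, and Igusa's evaluation in \Cref{theorem:igusa_computation_3_and_4}), the generating function of these counts is precisely $I_d(q,t)$. Combining the two displays gives
\[
\sum_{b=0}^\infty \#\mathcal{F}_b^d(\mathbb{F}_q)\, t^b = \sum_{b=0}^\infty N_{d,\mathbb{F}_q[[t]],b}^{\loc}\, t^b = I_d(q,t),
\]
which is the claim.

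There is no real obstacle here: all the heavy lifting has been done earlier. The only point that deserves a short sentence of justification is that the groupoid-cardinality interpretation of $\#\bigl(V_d(\mathbb{F}_q[[t]])_b \mathbin{/\!/} G_d(\mathbb{F}_q[[t]])\bigr)$ matches the inversely-automorphism-weighted count appearing in \Cref{proposition:counting_integral}, but this is exactly how $\#(-//-)$ was defined in the conventions of \Cref{section:notation_conventions}. So the proof is essentially just composing \Cref{proposition:fbd_k_pts} with \Cref{corollary:local_cover_count}.
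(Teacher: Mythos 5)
Your proof is correct and follows essentially the same route as the paper, which combines \Cref{proposition:fbd_k_pts} (with $k = \mathbb{F}_q$) directly with \Cref{proposition:counting_integral}; you simply go through the already-derived reformulation \Cref{corollary:local_cover_count} (which is itself a repackaging of \Cref{proposition:counting_integral} plus Wood's parametrization), adding a small but harmless detour.
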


\begin{proof}
Combine \Cref{proposition:fbd_k_pts} for $k = \mathbb{F}_q$ and \Cref{proposition:counting_integral}.
\end{proof}

\begin{proposition}\label{proposition:coh_fbd_34}
For $d \in \{3,4\}$ and for any algebraically closed $k$ over $\mathbb{S}_d$, \begin{enumerate}[(a)]
\item $\dim H^i((\mathcal{F}_b^d)_k,\mathbb{Q}_{\ell})$ is the coefficient of $q^{i/2}t^b$ in $I_d(q,t)$. Moreover, if $k = \overline{\mathbb{F}}_q$, all Frobenius eigenvalues of $H^i((\mathcal{F}_b^d)_{\overline{\mathbb{F}}_q},\mathbb{Q}_{\ell})$ with respect to $\mathbb{F}_q$ are $q^{i/2}$.

\item $\dim H_c^i((\mathcal{R}_{(\mathbb{P}^1;\infty),b}^d)_k,\mathbb{Q}_{\ell})$ is the coefficient of $q^{b - i/2}t^b$ in $I_d(q,t)$. Moreover, if $k = \overline{\mathbb{F}}_q$, all Frobenius eigenvalues of $H_c^i((\mathcal{R}_{(\mathbb{P}^1;\infty),b}^d)_{\overline{\mathbb{F}}_q},\mathbb{Q}_{\ell})$ with respect to $\mathbb{F}_q$ are $q^{i/2}$.
\end{enumerate}
In particular, if $i$ is odd, then the cohomology groups above are 0.
\end{proposition}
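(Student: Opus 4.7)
The plan is to write $I_d(q, t) = \sum_{a, b \ge 0} c_{a, b} q^a t^b$ with nonnegative integer coefficients (as remarked after \Cref{theorem:igusa_computation_3_and_4}), reduce to the case $k = \overline{\mathbb{F}}_q$, and then match the Lefschetz trace formula against the known generating function $I_d(q, t)$, using purity from \Cref{corollary:rd_fbd_quotient_pure} to pin down the Frobenius eigenvalues exactly. The reduction to $k = \overline{\mathbb{F}}_q$ will follow by spreading out and combining the smoothness of $\mathcal{R}^d_{(\mathbb{P}^1;\infty), b}$ over $\mathbb{S}_d$ (\Cref{corollary:sm_smooth}) with the properness of $\mathcal{F}_b^d \to \mathbb{S}_d$ and the cohomology isomorphism of \Cref{corollary:rd_fbd_quotient_pure}(a), so that $\dim H^*((\mathcal{F}_b^d)_k, \mathbb{Q}_\ell)$ is independent of the choice of $k$.

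Once over $\overline{\mathbb{F}}_q$, I would invoke Sun's Lefschetz trace formula for proper Deligne-Mumford stacks \cite[Theorem 1.1]{s_l_series} (applicable because $\mathcal{F}_b^d$ is proper, being the fiber of the proper map $\br$) together with \Cref{corollary:pt_count_fbd_34} to obtain, for every $r \ge 1$,
\[
\sum_{i, j} (-1)^i \alpha_{i, j}^r \;=\; \sum_a c_{a, b} q^{ra},
\]
where $\{\alpha_{i, j}\}_j$ is the multiset of Frobenius eigenvalues on $H^i((\mathcal{F}_b^d)_{\overline{\mathbb{F}}_q}, \mathbb{Q}_\ell)$. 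By purity (\Cref{corollary:rd_fbd_quotient_pure}(b)), each $|\iota(\alpha_{i, j})| = q^{i/2}$ for any embedding $\iota : \overline{\mathbb{Q}}_\ell \hookrightarrow \mathbb{C}$. I would then invoke the classical linear independence of the sequences $r \mapsto \beta^r$ for distinct complex $\beta$: viewed as an equality of functions of $r$, the identity above forces the multiset $\{(\alpha_{i, j}, (-1)^i)\}$ on the left to coincide with $\{(q^a, c_{a, b})\}$ on the right. Because every $c_{a, b} \ge 0$, no cancellation between even and odd $i$ can occur, so every $\alpha_{i, j}$ must equal some $q^a$; combined with purity, this forces $i = 2a$ and $\alpha_{i, j} = q^{i/2}$. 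Matching multiplicities yields $\dim H^i = c_{i/2, b}$ for even $i$ and $H^i = 0$ for odd $i$, proving part (a).

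For part (b), I would use that $\mathcal{R}^d_{(\mathbb{P}^1;\infty), b}$ is smooth of dimension $b$ (\Cref{corollary:sm_smooth}) and apply Poincar\'{e} duality for smooth Deligne-Mumford stacks \cite[Proposition 4.4.2]{lo_six_operations_i}:
\[
H_c^i((\mathcal{R}^d_{(\mathbb{P}^1;\infty), b})_{\overline{\mathbb{F}}_q}, \mathbb{Q}_\ell) \;\cong\; H^{2b - i}((\mathcal{R}^d_{(\mathbb{P}^1;\infty), b})_{\overline{\mathbb{F}}_q}, \mathbb{Q}_\ell)^{\vee}(-b).
\]
Combining \Cref{corollary:rd_fbd_quotient_pure}(a) with part (a), the right-hand side has dimension $c_{b - i/2, b}$ and a single Frobenius eigenvalue $q^{b - i/2}$; dualization and the $(-b)$ Tate twist preserve the dimension and convert this eigenvalue into $q^b / q^{b - i/2} = q^{i/2}$ on $H_c^i$, giving part (b).

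The main obstacle is the matching step in part (a), which relies essentially on the nonnegativity of the $c_{a, b}$; without it, eigenvalues from even and odd $i$ could cancel and destroy the dimension count. A secondary technical point is that $\mathcal{F}_b^d$ is not known to be smooth over $\mathbb{S}_d$, so the base-change reduction to $\overline{\mathbb{F}}_q$ must be routed through the smooth $\mathcal{R}^d_{(\mathbb{P}^1;\infty), b}$ via \Cref{corollary:rd_fbd_quotient_pure}(a).
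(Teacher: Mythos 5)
Your overall strategy—equate the Lefschetz trace formula against \Cref{corollary:pt_count_fbd_34}, pin down eigenvalues by purity (\Cref{corollary:rd_fbd_quotient_pure}), then transfer to $H^*_c$ of $\mathcal{R}_{(\mathbb{P}^1;\infty),b}^d$ by Poincar\'e duality—is the same as the paper's, and the core of parts (a) and (b) is correct. Two remarks on the details.

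First, your reduction to $k = \overline{\mathbb{F}}_q$ has a genuine gap. You assert that smoothness of $\mathcal{R}^d_{(\mathbb{P}^1;\infty),b}/\mathbb{S}_d$, properness of $\mathcal{F}_b^d/\mathbb{S}_d$, and the isomorphism $H^*((\mathcal{F}_b^d)_k) \cong H^*((\mathcal{R}^d_{(\mathbb{P}^1;\infty),b})_k)$ together force $\dim H^*((\mathcal{F}_b^d)_k,\mathbb{Q}_\ell)$ to be independent of $k$, but there is no standard base-change theorem that produces constancy of cohomology from a smooth \emph{non-proper} family plus a proper \emph{non-smooth} one that happen to share cohomology fiberwise. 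The paper handles this via Deligne's generic base change theorem (\cite[Th.~finitude, Th\'eor\`eme~1.9]{d_sga4.5}): over $\mathbb{S}_d$ (a regular one-dimensional base), the sheaves $R^i\!f_*\mathbb{Q}_\ell$ for $f: \mathcal{F}_b^d \to \mathbb{S}_d$ are constructible and lisse on a dense open $U \ni$ generic point, and one then matches the generic-point value against the $\overline{\mathbb{F}}_p$-values (already determined by the Igusa computation) for $p \in U$. This is what you actually need, and it is not supplied by the ingredients you list.

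Second, a minor logical slip in your matching step: the reason no cancellation between even-$i$ and odd-$i$ eigenvalues can occur is purity (an eigenvalue in $H^i$ has absolute value $q^{i/2}$, so eigenvalues from different parities have different absolute values, hence are distinct), not nonnegativity of the $c_{a,b}$. Nonnegativity is used afterwards: once cancellation is ruled out, the $-1$-signed odd-$i$ contribution must vanish because the right-hand multiset has only nonnegative multiplicities, which forces $H^i = 0$ for $i$ odd. Reordering the logic this way tightens the argument. On part (b), you apply Laszlo--Olsson Verdier duality directly to the DM stack $\mathcal{R}^d_{(\mathbb{P}^1;\infty),b}$; the paper instead dualizes on the marked algebraic space $\mathcal{R}^d_{M,(\mathbb{P}^1;\infty),b}$ and takes $\mathbf{S}_d$-invariants. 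Both work, and yours is slightly more direct.
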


\begin{proof}
\begin{enumerate}[(a)]
\item If $k = \overline{\mathbb{F}}_q$, then the first claim follow from \Cref{corollary:rd_fbd_quotient_pure}(b), \Cref{corollary:pt_count_fbd_34}, and Sun's fixed point formula for stacks \cite[Theorem 1.1]{s_l_series}, since purity implies that only the cohomology group $H^i((\mathcal{F}_b^d)_k,\mathbb{Q}_{\ell})$ can contribute to the $q^{ir/2}t^b$ term in $\#\mathcal{F}_b^d(\mathbb{F}_{q^r})$. The second claim follows from the fact that the coefficient of $t^b$ in $I_d(q^r,t)$ is a sum of powers of $q^r$. The first claim then follows for general $k$ by a spreading out argument using Deligne's generic base change theorem \cite[Th. finitude, Th\'{e}or\`{e}me 1.9]{d_sga4.5}.

\item Both claims follow from the isomorphisms $$H_c^i((\mathcal{R}_{M,(\mathbb{P}^1;\infty),b}^d)_{\overline{\mathbb{F}}_q},\mathbb{Q}_{\ell}) \cong H^{2b - i}((\mathcal{R}_{M,(\mathbb{P}^1;\infty),b}^d)_{\overline{\mathbb{F}}_q},\mathbb{Q}_{\ell})^{\vee}(-b) \cong H^{2b - i}((\mathcal{F}_{M,b}^d)_{\overline{\mathbb{F}}_q},\mathbb{Q}_{\ell})^{\vee}(-b),$$ the first of which is Poincar\'{e} duality for the smooth algebraic space $(\mathcal{R}_{M,(\mathbb{P}^1;\infty),b}^d)_{\mathbb{F}_q}$ of dimension $r$ and the second of which is \Cref{corollary:rd_fbd_quotient_pure}(a).
\end{enumerate}
\end{proof}

In the following results, we include the case $d = 3$ for completeness, even though there is a much easier and shorter way to compute $N_{3,q,b}$ (see \Cref{remark:direct_counting}).

\begin{theorem}\label{theorem:fqt_alg_counts_34}
Let $q$ be a prime power coprime to 6.
\begin{enumerate}[(a)]
\item Let $N_{3,q,b}$ be the number of isomorphism classes of cubic $\mathbb{F}_q[t]$-algebras of discriminant $q^b$, inversely weighted by automorphisms over $\mathbb{F}_q[t]$. Then $$\sum_{b = 0}^{\infty}N_{3,q,b}t^b = I_3(q^{-1},qt) = \frac{1 + qt + q^2t^2 + q^3t^3 + q^4t^4}{(1 - q^2t^2)(1 - q^5t^6)}.$$

\item Let $N_{4,q,b}$ be the number of isomorphism classes of quartic $\mathbb{F}_q[t]$-algebras with cubic resolvent of discriminant $q^b$, inversely weighted by automorphisms over $\mathbb{F}_q[t]$. Then $$\sum_{b = 0}^{\infty}N_{4,q,b}t^b = I_4(q^{-1},qt) = \frac{f(q^{-1},qt)}{(1 - qt)(1 - q^2t^2)(1 - q^5t^6)(1 - q^6t^8)(1 - q^9t^{12})}.$$
\end{enumerate}
\end{theorem}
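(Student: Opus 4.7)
The plan is to deduce Theorem \ref{theorem:fqt_alg_counts_34} by Poincaré duality from the cohomological description of $\mathcal{R}_{(\mathbb{P}^1;\infty),b}^d$ established via $\mathbb{G}_m$-localization, after identifying $N_{d,q,b}$ with the weighted $\mathbb{F}_q$-point count of this stack. Concretely, I would proceed in three steps.

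First, I would prove that $N_{d,q,b} = \#\mathcal{R}_{(\mathbb{P}^1;\infty),b}^d(\mathbb{F}_q)$. Unpacking the definition, an $\mathbb{F}_q$-point of this stack is an orbicurve $\mathcal{C} \to \mathbb{P}^1_{\mathbb{F}_q}$ with stackiness only possibly above $\infty$, together with a representable morphism $\chi: \mathcal{C} \to \mathcal{X}_d$ étale above $\infty$ and with branch divisor of degree $b$ on $\mathbb{A}^1$. Since $\mathcal{C}$ is étale above $\infty$, the discussion in the introduction (after \Cref{theorem:intro_rd_nice}) shows that restricting to $\mathbb{A}^1$ gives a bijection between such data and morphisms $\mathbb{A}^1_{\mathbb{F}_q} \to \mathcal{X}_d$ whose branch divisor has degree $b$; moreover this bijection is compatible with automorphism groups since the compactification at $\infty$ is canonical. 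By Wood's parametrization \cite{w_quartic_arbitrary} (for $d=4$) and the isomorphism $\mathcal{X}_3 \xrightarrow{\sim} \Covers_3$ (for $d=3$), morphisms $\mathbb{A}^1_{\mathbb{F}_q} \to \mathcal{X}_d$ correspond to quartic $\mathbb{F}_q[t]$-algebras with cubic resolvent (resp.\ cubic $\mathbb{F}_q[t]$-algebras), and by \Cref{proposition:disc_agrees} the degree-$b$ branch divisor condition translates to discriminant $q^b$. Thus the weighted point count is exactly $N_{d,q,b}$.

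Second, I would apply the Grothendieck-Lefschetz trace formula for Deligne-Mumford stacks (Sun's \cite{s_l_series} generalization of Behrend's):
\[
\#\mathcal{R}_{(\mathbb{P}^1;\infty),b}^d(\mathbb{F}_q) = \sum_i (-1)^i \tr(\Frob_q \mid H_c^i((\mathcal{R}_{(\mathbb{P}^1;\infty),b}^d)_{\overline{\mathbb{F}}_q},\mathbb{Q}_\ell)).
\]
By \Cref{proposition:coh_fbd_34}(b), the cohomology vanishes in odd degrees, and $H_c^{2j}$ is pure of weight $2j$ with all Frobenius eigenvalues equal to $q^j$ and dimension equal to the coefficient of $q^{b-j}t^b$ in $I_d(q,t)$. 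Writing $I_d(q,t) = \sum_{a,b}c_{a,b}\, q^a t^b$, the trace formula becomes
\[
N_{d,q,b} = \sum_j c_{b-j,b}\, q^j.
\]

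Third, I would observe that this is precisely the coefficient of $t^b$ in $I_d(q^{-1},qt)$: substituting gives $I_d(q^{-1},qt) = \sum_{a,b} c_{a,b}\, q^{b-a} t^b$, and with $j = b-a$ the coefficient of $t^b$ is $\sum_a c_{a,b}\, q^{b-a} = \sum_j c_{b-j,b}\, q^j$. The explicit rational expressions on the right-hand side then follow from Igusa's \Cref{theorem:igusa_computation_3_and_4} by substituting $t \mapsto qt$ and $q \mapsto q^{-1}$ into the denominators $(1-t^a)$ and $(1-q^it^j)$.

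The main obstacle is the first step: matching the stacky moduli interpretation of $\mathcal{R}_{(\mathbb{P}^1;\infty),b}^d$ with the arithmetic count $N_{d,q,b}$ including the correct automorphism weights. One must argue that extending a generically étale cover $D \to \mathbb{A}^1_{\mathbb{F}_q}$ to a representable map from a (possibly stacky) $\mathbb{P}^1_{\mathbb{F}_q}$ to $\mathcal{X}_d$ étale over $\infty$ is an equivalence of groupoids, not merely a bijection of isomorphism classes — this uses the uniqueness of stacky extensions over $\infty$ established in the discussion preceding \Cref{theorem:intro_dense_small34}, together with the fact that automorphisms of such a compactified cover must fix $\infty$ and hence coincide with $\mathbb{F}_q[t]$-automorphisms of the affine cover.
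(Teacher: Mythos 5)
Your proposal is correct and follows essentially the same route as the paper: identify $N_{d,q,b}$ with the weighted $\mathbb{F}_q$-point count of $\mathcal{R}_{(\mathbb{P}^1;\infty),b}^d$ via \Cref{lemma:etale_orbi} and \Cref{proposition:disc_agrees}, then apply Sun's trace formula together with \Cref{proposition:coh_fbd_34}(b) and unwind the substitution $I_d(q,t) \rightsquigarrow I_d(q^{-1},qt)$. (Minor note: the paper's written proof states the claim as $N_{d,q,b} = \#\mathcal{R}_{M,(\mathbb{P}^1;\infty),b}^d(\mathbb{F}_q)$ with the marked subscript $M$, which is a typo — the unmarked stack, as you use, is what matches both the moduli interpretation of $N_{d,q,b}$ and the reference to \Cref{proposition:coh_fbd_34}(b).)
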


\begin{proof}
We claim that $N_{d,q,b} = \#\mathcal{R}_{M,(\mathbb{P}^1;\infty),b}^d(\mathbb{F}_q)$. Indeed, any morphism $\mathbb{A}_{\mathbb{F}_q}^1 \to \mathcal{X}_d$ sending the generic point to the $\mathcal{E}_d$ can be uniquely completed to some $\mathbb{P}_{r,\mathbb{F}_q}^1 \to \mathcal{X}_d$ sending the stacky point to $\mathcal{E}_d$ by \Cref{lemma:etale_orbi}, and for $d = 3$ (resp. $d = 4$), morphisms $\mathbb{A}_{\mathbb{F}_q}^1 \to \mathcal{X}_d$ correspond to cubic $\mathbb{F}_q[t]$-algebras (resp. quartic $\mathbb{F}_q[t]$-algebras with cubic resolvent) in a discriminant-preserving manner by \Cref{proposition:disc_agrees}.

The theorem then follows from \Cref{proposition:coh_fbd_34}(b) and Sun's fixed point formula for stacks \cite[Theorem 1.1]{s_l_series}.
\end{proof}

We get the following corollary by expanding the generating functions as power series.

\begin{corollary}\label{corollary:fqt_count_terms_34}
Let $q$ be a prime power coprime to 6.
\begin{enumerate}[(a)]
\item There are rational functions $F_b,G_b \in \mathbb{Q}(q^{1/6})$ depending only on $b \pmod{6}$ such that $$N_{3,q,b} = F_bq^b + G_bq^{5b/6}$$ for all $q,b$.

\item There are rational functions $A_b,B_b,C_b,D_b,E_b \in \mathbb{Q}(q^{1/12})$ depending only on $b \pmod{24}$ such that $$N_{4,q,b} = (A_bb + B_b)q^b + C_bq^{5b/6} + (D_bb + E_b)q^{3b/4}$$ for all $q,b$.
\end{enumerate}
\end{corollary}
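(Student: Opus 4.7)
The plan is to derive the corollary directly from the explicit rational generating functions in \Cref{theorem:fqt_alg_counts_34} via partial fraction decomposition in the variable $t$, treating $q$ as a parameter. The form of the stated expansion is forced by the locations and orders of the poles of $I_d(q^{-1},qt)$, so the argument reduces to identifying these poles.

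For part (a), we factor the denominator
\[
(1-q^2t^2)(1-q^5t^6) = (1-qt)(1+qt)\prod_{\zeta^6=1}(1 - q^{5/6}\zeta\,t).
\]
Since the numerator $1+qt+q^2t^2+q^3t^3+q^4t^4$ is coprime to each factor (one checks it is nonzero at each putative pole), the partial fraction expansion has simple poles at $t = \pm q^{-1}$ and at $t = q^{-5/6}\zeta^{-1}$ for each sixth root of unity $\zeta$. A simple pole at $t=\alpha^{-1}$ with residue $r$ contributes $r\alpha^b$ to the $b$-th coefficient. Collecting the two poles above $q^{-1}$ produces $F_b\, q^b$ with $F_b$ depending only on $b \pmod 2$, and summing the six residues at $t = q^{-5/6}\zeta^{-1}$ produces $G_b\, q^{5b/6}$ with $G_b$ depending only on $b \pmod 6$. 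Since the residues are rational expressions in $q^{1/6}$ (the roots of unity cancel into $\mathbb{Q}$-valued combinations indexed by $b\pmod 6$), we obtain $F_b,G_b \in \mathbb{Q}(q^{1/6})$, as claimed.

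For part (b), the same strategy applies to
\[
(1-qt)(1-q^2t^2)(1-q^5t^6)(1-q^6t^8)(1-q^9t^{12}).
\]
The pole locations are $t = q^{-1}$, $t = -q^{-1}$, $t = q^{-5/6}\zeta^{-1}$ for $\zeta^6=1$, $t = q^{-3/4}\omega^{-1}$ for $\omega^8=1$, and $t = q^{-3/4}\xi^{-1}$ for $\xi^{12}=1$. The factors $(1-qt)$ and $(1-q^2t^2)$ both contribute a pole at $t=q^{-1}$, giving a double pole there; similarly, the fourth roots of unity lie in both $\{\omega:\omega^8=1\}$ and $\{\xi:\xi^{12}=1\}$, producing double poles at $t = q^{-3/4}\eta^{-1}$ for $\eta^4=1$. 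All other poles are simple. One verifies using the explicit formula for $f(q^{-1},qt)$ that the numerator does not vanish at any of these locations, so the pole orders are exactly as described. A double pole at $t=\alpha^{-1}$ contributes a term of the form $(ab + c)\alpha^b$; summing over all poles and grouping by $|\alpha|$ yields exactly an expression of the form
\[
(A_b b + B_b)q^b \;+\; C_b q^{5b/6} \;+\; (D_b b + E_b)q^{3b/4}.
\]
The periodicity $b \pmod{\mathrm{lcm}(2,6,4,8,12)} = b \pmod{24}$ and the field of definition $\mathbb{Q}(q^{1/\mathrm{lcm}(6,4)}) = \mathbb{Q}(q^{1/12})$ both follow from the root-of-unity denominators of the poles.

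The only step requiring genuine verification is the non-vanishing of $f(q^{-1},qt)$ at $t = q^{-3/4}\eta^{-1}$ for $\eta^4=1$ (ensuring that the double pole there is not cancelled by the numerator); the analogous check at $t=q^{-1}$ and at the remaining simple-pole locations is straightforward. This is a finite, explicit computation which, together with the partial fraction bookkeeping above, completes the proof.
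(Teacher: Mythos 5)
Your proof is correct and is essentially the argument the paper intends: the paper simply says the corollary follows ``by expanding the generating functions as power series,'' and your partial fraction analysis supplies the details — identifying pole locations, pole orders, and grouping conjugate contributions to land in $\mathbb{Q}(q^{1/6})$ resp.\ $\mathbb{Q}(q^{1/12})$ with periodicity $b\bmod 6$ resp.\ $b\bmod 24$. One small remark: the final non-vanishing check you flag as ``the only step requiring genuine verification'' is actually unnecessary for the stated existence claim — if $f(q^{-1},qt)$ happened to vanish at a double-pole location, the pole order would drop and the corresponding coefficient ($D_b$, say) would simply be zero, which is still consistent with the corollary; non-vanishing only matters if one wants to assert the leading coefficients are generically nonzero.
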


\begin{remark}\label{remark:extending_counts_5}
All the work in this section is exactly the same in the case $d = 5$. Once we know the value of the Igusa zeta function for $(G_5,V_5)$, we can get a count for quintic $\mathbb{F}_q[t]$-algebras with sextic resolvents. It does not matter that we do not have an isomorphism between $\mathcal{X}_d$ and a stack of quintic covers with sextic resolvents because $\mathbb{F}_q[t]$ is a Dedekind domain and we can use O'Dorney's work \cite{o_quintic_dedekind}.
\end{remark}

\begin{remark}\label{remark:loc_glob_count_duality}
With cubic $\mathbb{F}_q[t]$-algebras, quartic $\mathbb{F}_q[t]$-algebras with cubic resolvents, and quintic $\mathbb{F}_q[t]$-algebras with sextic resolvents, the counting problems exhibit a kind of local-global duality in that the generating function $I_d(q,t)$ for the local problem (\Cref{corollary:local_cover_count}) and the generating function $I_d(q^{-1},qt)$ (\Cref{theorem:fqt_alg_counts_34}) for the global problem are exactly dual to one another. In particular, lower order terms for the global counts correspond to higher order terms for the local counts.

The geometric explanation of the local-global duality in this situation is rather simple: $\mathbb{G}_m$-localization and Poincar\'{e} duality. In fact, any counting problems over $\mathbb{A}^1$ that come from intermediate extensions of local systems coming from Hurwitz spaces (see \Cref{theorem:ks_nichols_ic}) will exhibit some kind of duality because the cohomology of any of these intermediate extensions satisfies Poincar\'{e} duality. If we follow the heuristics in \Cref{section:arith_nichols}, we can hope to find other counting problems with local-global duality by studying these intermediate extensions.
\end{remark}

\begin{remark}\label{remark:direct_counting}
In this remark, we describe the direct method for computing $N_{3,q,b}$ and explain why using it to compute $N_{4,q,b}$ is much more difficult. Under the parametrization given by the prehomogeneous vector space $(G_3,V_3) = (\GL_2,\Sym^3(\std_2) \otimes \det\std_2^{\vee})$, a generically \'{e}tale triple cover of $\mathbb{A}^1$ corresponds to a rank 2 bundle $\mathcal{E} = \mathcal{O}(c_1) \oplus \mathcal{O}(c_2)$ on $\mathbb{P}^1$ along with a section of $\Sym^3\mathcal{E} \otimes \det\mathcal{E}^{\vee}$ satisfying some local condition at $\infty$. For each rank 2 bundle $\mathcal{F} = \mathcal{O}(c_1) \oplus \mathcal{O}(c_2)$ over $\mathbb{P}^1$ where $c_1 + c_2 = b$, we can multiply the total number of sections of $\Sym^2(\mathcal{F}) \otimes \det\mathcal{F}^{\vee}$ by a local density to filter out the sections that correspond to a triple cover of $\mathbb{A}^1$ unramified over $\infty$ and then divide by the size of the automorphism group of $\mathcal{E}$. Similarly, if $c_1 + c_2 = b - 1$ or $c_1 + c_2 = b - 2$, we multiply by a local density to filter out triple covers simply ramified or totally ramified over $\infty$, respectively. These densities can only be nonzero if $c_1,c_2 \ge 0$, so it is clear which bundles we sum over to compute $N_{3,q,b}$.

In contrast, if we try to compute $N_{4,q,b}$ using the same straightforward approach, we run into an enormous amount of casework. In this case, a map $\mathbb{P}^1 \to [V_4/G_4]$ parametrizes a rank 3 bundle $\mathcal{F} = \mathcal{O}(c_1) \oplus \mathcal{O}(c_2) \oplus \mathcal{O}(c_3)$ and a rank 2 bundle $\mathcal{F}^{\prime} = \mathcal{O}(d_1) \oplus \mathcal{O}(d_2)$ such that $c_1 + c_2 + c_3 = d_1 + d_2$, along with a section of the associated rank $\Sym^2(\mathcal{F}) \otimes \mathcal{F}^{\prime\vee}$. While there are only a few possible cases\footnote{We just need to consider the inequality $2c_1 \ge c_2$ and the splitting type over $\infty$.} to consider when summing over bundles for degree 3 covers, there are many, many more cases for degree 4 covers where the local densities change depend on the signs of the linear combinations of the $c_1,c_2,c_3,d_1,d_2$ appearing in summands of $\Sym^2(\mathcal{F}) \otimes \mathcal{F}^{\prime\vee}$. These local densities can be very different from those calculated in \cite[\S4]{b_hcl_iii}, which apply only when all the linear combinations are positive. On the other hand, in our approach, we reduce the computation of $N_{4,q,b}$ to Igusa's computation of the local count \cite{i_stationary_phase}, which is technical but not dependent on as much casework.

There is another possible counting problem where we count quartic covers of $\mathbb{P}^1$ with cubic resolvent rather than $\mathbb{A}^1$. However, the direct approach to this problem also presents many difficulties because many sections will correspond to non-reduced covers. In the cubic case, it is not too difficult to subtract off these sections because they correspond to binary cubic forms with a square factor. In the quartic case, the bad sections correspond to pairs of conics over $\mathbb{F}_q(t)$ tangent at a point, and the inclusion-exclusion seems much more tedious than in the cubic case.
\end{remark}

\subsection{Relation with the cohomology of $\mathfrak{B}_d$}\label{section:rel_coh_bd}
The theorem of Kapranov-Schechtman (\Cref{theorem:ks_nichols_ic}) provides a geometric realization of the cohomology of certain Nichols algebras. In this section, we use our results on the geometry of $\br_M: \mathcal{R}_{M,(\mathbb{P}^1;\infty),b} \to \Sym^b(\mathbb{A}^1)$ to compute the $\mathbf{S}_d$-invariant part of the cohomology of $\mathfrak{B}_d$ for $d \in \{3,4\}$ and propose a way to compute the entire cohomology geometrically.

For this section, let $k$ be a field of characteristic 0.

\begin{theorem}\label{theorem:inv_coh_bd_34}
For $d \in \{3,4\}$, \begin{enumerate}[(a)]
\item There are $\mathbf{S}_d$-equivariant isomorphisms $$\Ext_{\mathfrak{B}_d}^{a,b}(k,k) \cong H^{b - a}(\mathcal{R}_{M,(\mathbb{P}^1;\infty),b}^d(\mathbb{C}),k)^{\vee} \cong H^{b - a}(\mathcal{F}_{M,b}^d(\mathbb{C}),k)^{\vee},$$ where the $\mathbf{S}_d$-action on the $\Ext$ groups is the geometric action.

\item Let $E^{a,b} = \dim\Ext_{\mathfrak{B}_d}^{a,b}(k,k)^{\mathbf{S}_d}$. Then $$\sum_{a = 0}^{\infty}\sum_{b = 0}^{\infty}E^{a,b}q^at^b = I_d(q^{-2},qt).$$
\end{enumerate}
\end{theorem}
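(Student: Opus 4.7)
The plan is to chain together the theorem of Kapranov--Schechtman (\Cref{theorem:ks_nichols_ic}), the topological identification of our marked Hurwitz stacks with the ones in \Cref{section:top_hurwitz} (\Cref{lemma:ag_top_comp}), the density/smallness comparison in the analytic topology (\Cref{corollary:marked_dense_small34} together with \Cref{remark:analytic_ic}), and the $\mathbb{G}_m$-localization results of \Cref{section:gm_localization}. For part (b), the extra ingredient will be the purity/point count computation of \Cref{proposition:coh_fbd_34} together with a clean bookkeeping check comparing the substitution $(q,t) \mapsto (q^{-2},qt)$ with Poincar\'{e} duality.

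For part (a), I would begin by applying \Cref{theorem:ks_nichols_ic} with $(G,c) = (\mathbf{S}_d,\tau_d)$ and $n = b$, which yields
$$\Ext_{\mathfrak{B}_d}^{a,b}(k,k) \cong H^{b-a}(\Sym^b(\mathbb{A}^1),j_{!*}\pi_*\underline{k}_{\Hur_{\mathbf{S}_d,b}^{\tau_d}})^{\vee}.$$
By \Cref{lemma:ag_top_comp} the covering space $\Hur_{\mathbf{S}_d,b}^{\tau_d} \to \Conf^b(\mathbb{A}^1)$ is identified $\mathbf{S}_d$-equivariantly with $\mathcal{H}_{M,(\mathbb{P}^1;\infty),b}^d(\mathbb{C}) \to \Conf^b(\mathbb{A}^1)(\mathbb{C})$. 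Invoking the analytic version of \Cref{corollary:marked_dense_small34}(c) mentioned in \Cref{remark:analytic_ic}, the intermediate extension $j_{!*}\pi_*\underline{k}$ equals $R\br_{M*}\underline{k}_{\mathcal{R}_{M,(\mathbb{P}^1;\infty),b}^d(\mathbb{C})}$, so by proper pushforward the cohomology of this perverse sheaf on $\Sym^b(\mathbb{A}^1)(\mathbb{C})$ equals $H^{b-a}(\mathcal{R}_{M,(\mathbb{P}^1;\infty),b}^d(\mathbb{C}),k)$. Finally, I would apply the analytic analogue of \Cref{lemma:sawin_gm_local} (or equivalently, combine \Cref{corollary:rd_fbd_pure}(a) with the comparison between Betti and $\ell$-adic cohomology) to the $\mathbb{G}_m$-equivariant proper map $\br_M$ to get the second isomorphism with $H^{b-a}(\mathcal{F}_{M,b}^d(\mathbb{C}),k)^{\vee}$. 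The $\mathbf{S}_d$-equivariance is built into each arrow: the marking action on $\mathcal{R}_M^d$ intertwines with the $\mathbf{S}_d$-action on $\Hur_{\mathbf{S}_d,b}^{\tau_d}$ from \Cref{lemma:ag_top_comp}, and this agrees with the geometric $\mathbf{S}_d$-action on $\mathfrak{B}_d$ under the Kapranov--Schechtman identification as noted in \Cref{remark:std_vs_geo}.

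For part (b), I would take $\mathbf{S}_d$-invariants in part (a). Using the equivalence $\mathcal{R}_{(\mathbb{P}^1;\infty),b}^d \cong [\mathcal{R}_{M,(\mathbb{P}^1;\infty),b}^d/\mathbf{S}_d]$ and the fact that taking $\mathbf{S}_d$-invariants is exact in characteristic zero, I get
$$E^{a,b} = \dim H^{b-a}(\mathcal{R}_{(\mathbb{P}^1;\infty),b}^d(\mathbb{C}),k).$$
By the comparison between Betti and $\ell$-adic cohomology and \Cref{corollary:rd_fbd_quotient_pure}(a), this equals $\dim H^{b-a}(\mathcal{F}_b^d(\mathbb{C}),k)$, which I can compute after spreading out to $\overline{\mathbb{F}}_q$. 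By \Cref{proposition:coh_fbd_34}(a), cohomology is concentrated in even degree and
$$\dim H^{b-a}(\mathcal{F}_b^d,\mathbb{Q}_{\ell}) = [q^{(b-a)/2}t^b]\,I_d(q,t).$$
The final step is the purely formal substitution check: each monomial $q^c t^b$ in $I_d(q,t)$ contributes the monomial $q^{-2c}(qt)^b = q^{b-2c}t^b$ in $I_d(q^{-2},qt)$, so setting $c = (b-a)/2$ shows that $E^{a,b}$ is precisely the coefficient of $q^a t^b$ in $I_d(q^{-2},qt)$.

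The most delicate step will be making sure that the analytic analogue of \Cref{corollary:marked_dense_small34}(c) really applies and that every identification is genuinely $\mathbf{S}_d$-equivariant for the \emph{geometric} action (rather than the standard one). The density and smallness in \Cref{theorem:dense_small34} are proved in the algebraic category via Igusa's computation, so I need to appeal to \Cref{remark:analytic_ic} to port the intermediate-extension identity to the analytic topology. The equivariance is ultimately governed by the action on markings, which by construction commutes with all the geometric maps involved; the one place care is required is the Kapranov--Schechtman side, where \Cref{remark:std_vs_geo} identifies the action compatible with the branch-covering picture as the geometric one. Everything else is either Poincar\'{e} duality for a smooth algebraic space of dimension $b$ or the elementary substitution above.
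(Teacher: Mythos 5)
Your proposal is correct and follows essentially the same route as the paper: Kapranov--Schechtman, the topological identification via \Cref{lemma:ag_top_comp}, the analytic intermediate-extension identity from \Cref{remark:analytic_ic}, then $\mathbb{G}_m$-localization/purity via \Cref{corollary:rd_fbd_pure} and \Cref{proposition:coh_fbd_34}, with $\mathbf{S}_d$-invariants for part (b). You spell out the substitution bookkeeping and the $\mathbf{S}_d$-equivariance slightly more explicitly than the paper does, but the argument is the same.
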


\begin{proof}
\begin{enumerate}[(a)]
\item Recall from \Cref{theorem:ks_nichols_ic} that $$\Ext_{\mathfrak{B}_d}^{a,b}(k,k) \cong H^{b - a}(\Sym^b(\mathbb{A}^1)(\mathbb{C}),j_{!*}\pi_*\underline{k}_{\Hur_{\mathbf{S}_d,b}^{\tau_d}})^{\vee},$$ where $\pi$ is the covering map $\Hur_{\mathbf{S}_d,b}^{\tau_d} \to \Conf^b(\mathbb{A}^1)(\mathbb{C})$. By \Cref{lemma:ag_top_comp}, $\pi$ is identified with the map on $\mathbb{C}$-points induced by the finite \'{e}tale map $\pi: \mathcal{H}_{M,(\mathbb{P}^1;\infty),b}^d \to \Conf^b(\mathbb{A}^1)$. By \Cref{remark:analytic_ic}, $j_{!*}\pi_*\underline{k}_{\mathcal{H}_{M,(\mathbb{P}^1;\infty),b}^d(\mathbb{C})} \simeq R\br_{M*}\underline{k}_{\mathcal{R}_{M,(\mathbb{P}^1;\infty),b}^d(\mathbb{C})}$. Thus, we get the first isomorphism $$\Ext_{\mathfrak{B}_d}^{a,b}(k,k) \cong H^{b - a}(\mathcal{R}_{M,(\mathbb{P}^1;\infty),b}^d(\mathbb{C}),k)^{\vee}.$$ The second isomorphism follows from \Cref{corollary:rd_fbd_pure}(a) and the comparison theorem between singular and \'{e}tale cohomology.

\item This follows from taking $\mathbf{S}_d$-invariants of the groups in part (a) and applying \Cref{proposition:coh_fbd_34}(a).
\end{enumerate}
\end{proof}

\begin{corollary}\label{corollary:bosonization_even_b}
Let $E_{\#}^{a,b}(k,k) = \dim\Ext_{\mathfrak{B}_d \# k[\mathbf{S}_d]}^{a,b}(k,k)$, where the bosonization $\mathfrak{B}_d \# k[\mathbf{S}_d]$ is the graded Hopf algebra defined in \cite[\S2.16]{sv_cohomology_fk_alg}. Then for $d \in \{3,4\}$ and $b$ even, $E_{\#}^{a,b}$ is the coefficient of $q^at^b$ in $I_d(q^{-2},qt)$.
\end{corollary}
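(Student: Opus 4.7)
The plan is to reduce Corollary \ref{corollary:bosonization_even_b} to Theorem \ref{theorem:inv_coh_bd_34}(b) by identifying $E_{\#}^{a,b}$ with the geometric $\mathbf{S}_d$-invariants of $\Ext_{\mathfrak{B}_d}^{a,b}(k,k)$ in the range where $b$ is even. This is a two-step identification: first relate cohomology of the bosonization to the standard $\mathbf{S}_d$-invariants of Nichols algebra cohomology, and then invoke Remark \ref{remark:std_vs_geo} to convert standard invariants into geometric invariants in even internal degree.

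For the first step, the bosonization $\mathfrak{B}_d \# k[\mathbf{S}_d]$ is a smash product of $\mathfrak{B}_d$ (viewed with its standard $\mathbf{S}_d$-action coming from $k\tau_d \otimes \sgn \in \mathcal{YD}_{\mathbf{S}_d}^{\mathbf{S}_d}$) by the group algebra of $\mathbf{S}_d$. The standard Lyndon--Hochschild--Serre argument for smash products by a finite group (see \c{S}tefan--Vay \cite{sv_cohomology_fk_alg}, which treats exactly this setup for $d = 3$) gives a natural bigraded isomorphism
\begin{equation*}
\Ext_{\mathfrak{B}_d \# k[\mathbf{S}_d]}^{a,b}(k,k) \;\cong\; \Ext_{\mathfrak{B}_d}^{a,b}(k,k)^{\mathbf{S}_d,\,\mathrm{std}},
\end{equation*}
since $\mathrm{char}(k) = 0$ makes higher group cohomology of $\mathbf{S}_d$ with coefficients in any $k[\mathbf{S}_d]$-module vanish, collapsing the Hochschild--Serre spectral sequence to the invariants row.

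For the second step, Remark \ref{remark:std_vs_geo} observes that the standard and geometric $\mathbf{S}_d$-actions on $\mathfrak{B}_d$ differ by the sign character applied once per generator in $k\tau_d$, so on the internal degree $b$ piece they differ by $\sgn^b$. Hence when $b$ is even the two actions coincide on $\Ext_{\mathfrak{B}_d}^{a,b}(k,k)$, giving
\begin{equation*}
\Ext_{\mathfrak{B}_d}^{a,b}(k,k)^{\mathbf{S}_d,\,\mathrm{std}} \;=\; \Ext_{\mathfrak{B}_d}^{a,b}(k,k)^{\mathbf{S}_d,\,\mathrm{geo}}.
\end{equation*}
Combining the two identifications yields $E_{\#}^{a,b} = E^{a,b}$ for $b$ even, and then Theorem \ref{theorem:inv_coh_bd_34}(b) identifies $E^{a,b}$ with the coefficient of $q^a t^b$ in $I_d(q^{-2}, qt)$.

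The only nontrivial point is the first step, which is a standard piece of homological algebra for smash products by finite group algebras in characteristic zero; everything else is bookkeeping with the two $\mathbf{S}_d$-actions and a direct appeal to Theorem \ref{theorem:inv_coh_bd_34}(b). I do not anticipate a genuine obstacle, though one should be careful to verify that the isomorphism in the first step is compatible with the bigrading (which it is, as the LHS grading comes from the internal grading of $\mathfrak{B}_d$, preserved by the $\mathbf{S}_d$-action).
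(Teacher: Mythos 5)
Your proposal is correct and follows essentially the same route as the paper: both identify $\Ext_{\mathfrak{B}_d \# k[\mathbf{S}_d]}^{a,b}(k,k)$ with the standard $\mathbf{S}_d$-invariants of $\Ext_{\mathfrak{B}_d}^{a,b}(k,k)$ (the paper cites \c{S}tefan--Vay, Theorem 2.17, where you unfold the smash-product spectral sequence it comes from), then observe that the standard and geometric actions agree in even internal degree via the bar complex, and finally invoke Theorem \ref{theorem:inv_coh_bd_34}(b).
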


\begin{proof}
By \cite[Theorem 2.17]{sv_cohomology_fk_alg}, $\Ext_{\mathfrak{B}_d \# k[\mathbf{S}_d]}^{a,b}(k,k)$ is isomorphic to the invariants of the standard $\mathbf{S}_d$-action on $\Ext_{\mathfrak{B}_d}^{a,b}(k,k)$. Because the standard and geometric $\mathbf{S}_d$-actions on $\mathfrak{B}_d$ agree in even degrees, the actions on $\Ext_{\mathfrak{B}_d}^{a,b}(k,k)$ agree for even $b$, as we can compute this $\Ext$ group using the normalized bar complex of $\mathfrak{B}_d$ as in \cite[\S2.8]{sv_cohomology_fk_alg}. Thus, the corollary follows from \Cref{theorem:inv_coh_bd_34}(b).
\end{proof}

\begin{remark}\label{remark:sv_compare_future}
In \cite{sv_cohomology_fk_alg}, \c{S}tefan-Vay compute the entire cohomology ring of $\mathfrak{B}_3$, so our result \Cref{theorem:inv_coh_bd_34}(b) is not new in this case. However, we hope that the algebraically-inclined reader will find the connection with geometry and the $(G_3,V_3)$ Igusa zeta function interesting. On the other hand, our result is new for $\mathfrak{B}_4$.

We can actually apply $\mathbb{G}_m$-localization to our marked compactifications $\mathcal{R}_{M,(\mathbb{P}^1;\infty),b}^3$ to reprove the computation of \c{S}tefan-Vay, though without the multiplicative structure. The $k$-points of the marked central fiber $\mathcal{F}_{M,b}^3$ correspond to triple covers of $\Spec k[[t]]$ together with an isomorphism of the generic fiber with some \'{e}tale $k((t))$-algebra $S$. If we let $R$ be the integral closure of $k[[t]]$ in $S$, then providing an isomorphism of the generic fiber with $S$ is equivalent to providing an embedding of the triple. This means that counting $k$-points of $\mathcal{F}_{M,b}^3$ is equivalent to counting cubic $k[[t]]$-subalgebras of various $R$ (e.g. $k[[t]]^{\times 3}$), with some multiplicity depending on $R$. In \cite[\S4]{d_modular}, Deopurkar explicitly describes all the $k$-points of $\mathcal{F}_{M,b}^3$ for $k$ of characteristic 0, and his work equally applies to any $k$ of characteristic not 2 or 3. If we consider the point counts as $k$ ranges over $\mathbb{F}_q$ with $(q,6) = 1$, we can get the cohomology of $\mathcal{F}_{M,b}^3(\mathbb{C})$, which gives us the full cohomology of $\mathfrak{B}_3$ by $\mathbb{G}_m$-localization. Deopurkar even shows that $\mathcal{F}_{M,b}^3$ is stratified by affine spaces, so that the cohomology of $\mathfrak{B}_3$ is all pure Tate.

This approach also applies to $\mathfrak{B}_4$ and $\mathfrak{B}_5$ where the cohomology is unknown. However, the local problem of counting degree 4 or 5 $k[[t]]$-subalgebras of various $R$ (or even just the homogeneous subalgebras, which correspond to $\mathbb{G}_m$-fixed points) is much more complicated, especially coupled with the fact that the resolvents must also be counted.
\end{remark}

%\appendix

%\input{appendix_char_0_char_p}

\bibliographystyle{alpha}
\nocite{*}
\bibliography{Bibliography}

\end{document}